\newcommand{\R}{\mathbb{R}}
\newcommand{\Z}{\mathbb{Z}}
\newcommand{\N}{\mathbb{N}}
\newcommand{\C}{\mathbb{C}}
\newcommand{\T}{\mathbb{T}}
\newcommand{\id}{\textnormal{Id}}
\newcommand{\classeC}{\mathcal{C}}
\renewcommand{\Im}{\textnormal{Im}}
\renewcommand{\Re}{\textnormal{Re}}
\newcommand{\un}{\mathds{1}}
\newcommand{\half}{\frac{1}{2}}
\newcommand{\hilbert}{H}
\newcommand{\hamilton}{\mathcal{H}}
\newcommand{\intt}{\frac{1}{2\pi}\int_0^{2\pi}}
\renewcommand\d{\,{\mathrm d}}
\newcommand\e{\,{\mathrm e}}
\newcommand{\longrightarroww}[2] {\mathop{\longrightarrow}\limits_{#1}^{#2}}
\newcommand{\weaklim}[2] {\mathop{\rightharpoonup}\limits_{#1}^{#2}}
\newtheorem{mydef}{Definition}[section]
\newtheorem{thm}[mydef]{Theorem}
\newtheorem{lem}[mydef]{Lemma}
\newtheorem{prop}[mydef]{Proposition}
\newtheorem{cor}[mydef]{Corollary}
\newtheorem{rk}[mydef]{Remark}
\title{Long time behavior of solutions for a damped Benjamin-Ono equation.}
\author{Louise Gassot}
\date{}
\newcommand{\Addresses}{{
  \footnotesize
\noindent
\textsc{Département de mathématiques et applications, École normale supérieure, CNRS, PSL University, 75005 Paris, France
\\
 Université Paris-Saclay, CNRS, Laboratoire de mathématiques d’Orsay, 91405 Orsay, France}\par\nopagebreak
  \noindent
  \textit{E-mail address:} \texttt{louise.gassot@universite-paris-saclay.fr}
}}
\begin{document}
\maketitle
\abstract
{We consider the Benjamin-Ono equation on the torus with an additional damping term on the smallest Fourier modes ($\cos$ and $\sin$). We first prove global well-posedness of this equation in $L^2_{r,0}(\T)$. Then, we describe the weak limit points of the trajectories in $L^2_{r,0}(\T)$ when time goes to infinity, and show that these weak limit points are strong limit points. Finally, we prove the boundedness of higher-order Sobolev norms for this equation. Our key tool is the Birkhoff map for the Benjamin-Ono equation, that we use as an adapted nonlinear Fourier transform.
}

\tableofcontents

\section{Introduction}

In this paper, we consider the Benjamin-Ono equation on the torus~\cite{Benjamin1967},~\cite{Ono1977}
\begin{equation}\tag{BO}\label{eq:bo}
\partial_t u
	=H\partial_{xx}u-\partial_x(u^2)
\end{equation}
with an additional weak damping term on the smallest Fourier modes defined as follows. Fix a parameter $\alpha>0$, the damped Benjamin-Ono equation writes
\begin{equation}\tag{BO-$\alpha$}\label{eq:damped}
\partial_t u+\alpha(\langle u|\cos\rangle\cos+\langle u|\sin\rangle\sin)
	=H\partial_{xx}u-\partial_x(u^2).
\end{equation}
The operator $H$ is the Hilbert transform
\[
\hilbert f(x)=\sum_{n\in\Z}-i\,\mathrm{sgn}(n)\widehat{f}(n)\e^{inx},
	\quad f=\sum_{n\in\Z}\widehat{f}(n)\e^{inx},
	\quad \widehat{f}(n)=\intt f(x)\e^{-inx}\d x,
\]
with the convention that $\mathrm{sgn}(\pm n)=\pm 1$ if $n\geq 1$, and $\mathrm{sgn}(0)=0$.

Our aim is to investigate how this damping can affect the long-time behavior of trajectories. Note that our choice of damping breaks both the Hamiltonian and the integrable structures of the Benjamin-Ono equation, moreover, the damping term is not a small perturbation of the Benjamin-Ono equation. For a general damping term with such properties, the problem seems out of the scope of usual techniques.

Our main tool is the construction and study of a Birkhoff map for the Benjamin-Ono equation by Gérard, Kappeler and Topalov in~\cite{GerardKappeler2019,GerardKappelerTopalov2020,GerardKappelerTopalov2020-2} (see Gérard~\cite{Gerard2019} for a recent survey of these results), implying that the Benjamin-Ono equation is integrable in the strongest possible sense. This transformation should be seen as a nonlinear Fourier transform adapted to the Benjamin-Ono equation. The classical Fourier transform has proven to be a powerful tool for studying nonlinear partial differential equations, for instance with the development of pseudodifferential and paradifferential calculus. In the same spirit, we will see that the Birkhoff map transforms the damped equation~\eqref{eq:damped} into a system of ODEs on the nonlinear Fourier coefficients $(\zeta_n(u))_{n\geq 1}$, which enables us to study the qualitative properties of the solutions in infinite time.

The Birkhoff map writes
\begin{equation}\label{def:birkhoff}
\Phi:u\in L^2_{r,0}(\T)\mapsto (\zeta_n(u))_{n\geq 1}\in h^{\half}_+,
\end{equation}
where $L^2_{r,0}(\T)$ is the space of real-valued functions in $L^2(\T)$ with zero mean, and
\[
h^{\half}_+=\{\zeta=(\zeta_n)_{n\geq 1}\in\C^\N\mid \|\zeta\|_{h^{\half}_+}^2=\sum_{n=1}^{+\infty}n|\zeta_n|^2<+\infty\}.
\]
This map provides a system of coordinates in which the Benjamin-Ono equation can be solved by quadrature. Indeed, if $u$ is a solution to the Benjamin-Ono equation with initial data $u_0$, then there exist frequencies $\omega_n(u_0)=n^2-2\sum_{k\geq 1}\min(k,n)|\zeta_k(u_0)|^2$, $n\geq 1$, such that  for $t\in\R$ and $n\geq 1$,
\[
\zeta_n(u(t))=\zeta_n(u_0)e^{i\omega_n(u_0)t}.
\]

\subsection{Main results}


First, we establish the global well-posedness of the damped Benjamin-Ono equation~\eqref{eq:damped} in~$L^2_{r,0}(\T)$, and the weak sequential continuity of the solution map.

\begin{thm}[Global well-posedness]\label{thm:GWP}
For all $T>0$ and $u_0\in L^2_{r,0}(\T)$, there exists a unique solution of equation~\eqref{eq:damped} in the distribution sense in $\classeC([0,T],L^2_{r,0}(\T))$ with initial data $u_0$.

Moreover, the solution map is continuous and weakly sequentially continuous from $L^2_{r,0}(\T)$ to $\classeC([0,T],L^2_{r,0}(\T))$.
\end{thm}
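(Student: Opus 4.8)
The plan is to conjugate \eqref{eq:damped} by the Birkhoff map. Writing $\zeta(t)=\Phi(u(t))$ and $P_1u:=\langle u|\cos\rangle\cos+\langle u|\sin\rangle\sin$ (a rank-two operator, with $\langle u,P_1u\rangle_{L^2}=\langle u|\cos\rangle^2+\langle u|\sin\rangle^2\geq 0$), a map $u\in\classeC([0,T],L^2_{r,0}(\T))$ solves \eqref{eq:damped} in the distribution sense if and only if $\zeta\in\classeC([0,T],h^{\half}_+)$ solves the ODE system
\[
\dot\zeta_n=i\omega_n(\zeta)\,\zeta_n+F_n(\zeta),\qquad n\geq 1,
\]
with $\omega_n(\zeta)=n^2-2\sum_{k\geq 1}\min(k,n)|\zeta_k|^2$ and $F(\zeta)=d\Phi_{\Phi^{-1}(\zeta)}\bigl[-\alpha P_1\Phi^{-1}(\zeta)\bigr]$ the push-forward of the damping vector field; this equivalence rests on $\Phi$ being a real-analytic diffeomorphism intertwining the Benjamin-Ono flow with the diagonal flow $\zeta_n\mapsto\zeta_ne^{i\omega_nt}$, together with the mapping properties of $d\Phi$ on the Sobolev-type scale, all from \cite{GerardKappeler2019,GerardKappelerTopalov2020,GerardKappelerTopalov2020-2}. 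I would then exploit two structural features: the unbounded part of the linear dynamics is the diagonal, purely imaginary operator $\zeta_n\mapsto in^2\zeta_n$, generating a unitary group $S(t)=\mathrm{diag}(e^{in^2t})$ on $h^{\half}_+$; and writing $\omega_n=n^2+\tilde\omega_n$ one has $|\tilde\omega_n(\zeta)|\leq 2\|\zeta\|_{h^{\half}_+}^2$, so $\zeta\mapsto i\tilde\omega(\zeta)\zeta+F(\zeta)$ is locally Lipschitz from $h^{\half}_+$ into itself (for $F$ this uses that $P_1$ is finite-rank and that $\Phi,\Phi^{-1}$ are $C^1$ with bounds uniform on bounded sets). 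Local well-posedness then follows from the Duhamel formulation and a Banach fixed-point argument, giving for every $\zeta_0\in h^{\half}_+$ a unique maximal solution in $\classeC([0,T^*),h^{\half}_+)$, hence existence and uniqueness for $u$ through $\Phi$.

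To globalize I would derive the a priori bound. Each $\zeta_n$ is $C^1$, and since the $\omega_n$-contribution to $\frac{d}{dt}|\zeta_n|^2$ is purely imaginary, $\frac{d}{dt}(n|\zeta_n|^2)=2n\Re(\bar\zeta_nF_n(\zeta))$; summing in $n$ (the series of derivatives being dominated by $2\|\zeta\|_{h^{\half}_+}\|F(\zeta)\|_{h^{\half}_+}$, locally bounded in time) gives $\frac{d}{dt}\|\zeta(t)\|_{h^{\half}_+}^2=2\Re\langle\zeta,F(\zeta)\rangle_{h^{\half}_+}$. By the trace identity $\|u\|_{L^2}^2=2\|\Phi(u)\|_{h^{\half}_+}^2$, the differential of $\|\Phi(\cdot)\|_{h^{\half}_+}^2$ in a direction $w$ equals $\langle u,w\rangle_{L^2}$, so the right-hand side is $\langle u,-\alpha P_1u\rangle_{L^2}=-\alpha(\langle u|\cos\rangle^2+\langle u|\sin\rangle^2)\leq 0$; this is exactly the energy identity $\tfrac12\frac{d}{dt}\|u\|_{L^2}^2=-\alpha\|P_1u\|_{L^2}^2$, the Benjamin-Ono terms contributing nothing since $H\partial_{xx}$ is skew-adjoint and $\int_\T u\,\partial_x(u^2)\,dx=0$. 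Hence $\|\zeta(t)\|_{h^{\half}_+}$ is non-increasing, the solution never leaves a fixed ball, and $T^*=+\infty$. Continuity of $u_0\mapsto u$ follows from a Gr\"onwall estimate on $\|\zeta^{(1)}(t)-\zeta^{(2)}(t)\|_{h^{\half}_+}^2$ in which the unbounded term drops out once more — being diagonal and imaginary it contributes nothing to the energy of the difference — leaving terms handled by the local Lipschitz bounds on $\tilde\omega$ and $F$; composing with the homeomorphisms $\Phi,\Phi^{-1}$ transfers this to $\classeC([0,T],L^2_{r,0}(\T))$, and uniqueness of the distributional solution follows likewise.

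The delicate point is weak sequential continuity. Let $u_0^{(k)}\rightharpoonup u_0$ in $L^2_{r,0}(\T)$, say $\|u_0^{(k)}\|_{L^2}\leq R$, with corresponding solutions $\zeta^{(k)},\zeta$ in Birkhoff coordinates. Step one: $\zeta^{(k)}(0)=\Phi(u_0^{(k)})$ is bounded in $h^{\half}_+$ by the trace identity, and each coordinate $\zeta_n$ is weakly sequentially continuous on bounded sets of $L^2_{r,0}(\T)$ — the one genuinely spectral input, coming from the description of the $\zeta_n$ through the Lax operator of $u$ and the convergence of these operators under weak convergence of $u$; thus $\zeta_n^{(k)}(0)\to\zeta_n(0)$ for each $n$, and since the $h^{\half}_+$-bound forces equismall $\ell^2$-tails, in fact $\zeta^{(k)}(0)\to\zeta(0)$ strongly in $\ell^2$. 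Step two: from the ODE system $\dot\zeta^{(k)}$ is bounded in $\classeC([0,T],h^{-\frac32}_+)$, so by the Aubin-Lions-Simon lemma $\{\zeta^{(k)}\}$ is relatively compact in $\classeC([0,T],\ell^2)$; along a subsequence $\zeta^{(k_j)}\to\xi$ in $\classeC([0,T],\ell^2)$ with $\zeta^{(k_j)}(t)\rightharpoonup\xi(t)$ in $h^{\half}_+$ for each $t$. Step three: pass to the limit in the integrated system: strong $\ell^2$-convergence of the whole trajectory makes $\tilde\omega_n(\zeta^{(k_j)}(s))\to\tilde\omega_n(\xi(s))$ (the weights $\min(k,n)\leq n$ are bounded, though $\tilde\omega_n$ is only weakly upper-semicontinuous on $h^{\half}_+$), while $F_n(\zeta^{(k_j)}(s))\to F_n(\xi(s))$ using weak continuity of $\Phi^{-1}$ on bounded sets (a formal consequence of step one and the bijectivity of $\Phi$), compactness of $P_1$, and continuity of $d\Phi$; hence $\xi$ solves the system with $\xi(0)=\zeta(0)$, so $\xi=\zeta$ by uniqueness. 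Step four: every subsequence of $\{\zeta^{(k)}\}$ has a further subsequence converging in $\classeC([0,T],\ell^2)$ to the same $\zeta$, so the whole sequence converges there, $\zeta^{(k)}(t)\rightharpoonup\zeta(t)$ in $h^{\half}_+$ for every $t$, and weak continuity of $\Phi^{-1}$ yields $u^{(k)}(t)\rightharpoonup u(t)$ in $L^2_{r,0}(\T)$; moreover the analogous compactness argument applied directly to \eqref{eq:damped} gives $u^{(k)}\to u$ strongly in $\classeC([0,T],H^{-s}(\T))$ for every $s>0$.

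The main obstacle is precisely this passage to the limit: in the $L^2$-weak framework one cannot pass to the limit in the quadratic term $\partial_x(u^2)$, whereas the Birkhoff map, used as an adapted nonlinear Fourier transform, reduces the obstruction to the single bounded functional $\tilde\omega_n$, which the $h^{\half}_+$ a priori bound then tames by compactness. The step demanding the most care in the write-up is the initial equivalence between the ODE system and the distributional form of \eqref{eq:damped}: one must track the mapping properties of $d\Phi$ across the Sobolev-type scale so that $d\Phi_u[\partial_tu]$ can be evaluated coordinate by coordinate and identified with $(i\omega_n(\zeta)\zeta_n+F_n(\zeta))_n$.
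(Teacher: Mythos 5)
Your overall architecture matches the paper's (transfer to Birkhoff coordinates, filter out the $e^{in^2t}$ oscillation, solve an ODE by Cauchy--Lipschitz, globalize via the $L^2$ Lyapunov functional, and get weak sequential continuity from compactness plus weak continuity of $\Phi$), but there is a genuine gap at the foundation: you treat as a citation the very estimates that constitute the technical core of the proof. Your local Lipschitz property of the transported damping field $F(\zeta)=\d\Phi_{\Phi^{-1}(\zeta)}[-\alpha P_1\Phi^{-1}(\zeta)]$, and likewise its weak sequential continuity, are justified by ``$\Phi,\Phi^{-1}$ are $C^1$ with bounds uniform on bounded sets'' (and even ``$\Phi$ is a real-analytic diffeomorphism''). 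No such uniform statement is available in \cite{GerardKappeler2019,GerardKappelerTopalov2020,GerardKappelerTopalov2020-2}: analyticity of $\Phi$ is not known (only the action map $u\mapsto(|\zeta_n(u)|^2)_n$ is, see~\cite{GerardKappelerTopalov2020-2}), and what the Cauchy--Lipschitz argument really needs is that the sequence $(\d\zeta_n[u].\cos)_n$, $(\d\zeta_n[u].\sin)_n$ is bounded in $h^{\half}_+$ and Lipschitz in $\zeta$ uniformly on bounded balls. Establishing exactly this is the content of Theorem~\ref{thm:dzeta.cos} and of Section~\ref{section:structure} (culminating in Corollaries~\ref{cor:<u|e^ix>_lip} and~\ref{cor:dzeta.cos}), via explicit formulas for $\delta_\parallel\langle\un|f_n\rangle$, $\delta_\perp\langle\un|f_n\rangle$ and $\d\ln(\kappa_n)$ in terms of the actions; the paper explicitly states that it thereby avoids invoking analyticity or going back and forth between $u$ and $\zeta$. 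Without this input, your fixed-point step, your Gr\"onwall continuity estimate, and the passage to the limit $F_n(\zeta^{(k_j)}(s))\to F_n(\xi(s))$ in the weak-continuity argument are all unsupported, as is the coordinatewise identification of $\d\Phi_u[\partial_t u]$ with the ODE right-hand side that you yourself flag as the delicate point.

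The remaining steps are essentially sound and partly constitute legitimate variants of the paper's route: splitting $\omega_n=n^2+\widetilde\omega_n$ and absorbing $i\widetilde\omega_n(\zeta)\zeta_n$ into the Lipschitz part is the same device as the paper's change of variables $z_n=e^{-in^2t}\zeta_n$ (Theorem~\ref{thm:F_loc_lipschitz}); globalizing directly from the monotonicity of $\|\zeta(t)\|_{h^{\half}_+}$ plus the continuation criterion is a shortcut replacing the paper's weighted-space Gr\"onwall argument (Lemmas~\ref{lem:astuce_somme} and~\ref{lem:astuce_gronwall}); your deduction of weak sequential continuity of $\Phi^{-1}$ from that of $\Phi$ together with the Parseval identity is correct, as is the Aubin--Lions compactness in $\classeC([0,T],\ell^2_+)$ with $\dot\zeta$ bounded in $h^{-3/2}_+$, which parallels the paper's Ascoli argument in $\classeC_w([0,T],h^{\half}_+)$ for $z$ (Proposition~\ref{prop:flow_weak_C0}). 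But all of these sit downstream of the missing uniform bounds on $\d\zeta_n[u].\cos$ and $\d\zeta_n[u].\sin$, so the proposal as written does not yet prove the theorem.
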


The solutions are global thanks a Lyapunov functional controlling the $L^2$ norm
\begin{equation}\label{eq:normL2}
\frac{\d}{\d t}\|u(t)\|_{L^2(\T)}^2+2\alpha|\langle u(t)|e^{ix}\rangle|^2
	=0.
\end{equation}

In order to establish global well-posedness of equation~\eqref{eq:damped} in $L^2_{r,0}(\T)$, one could also apply the techniques used for the Benjamin-Ono equation from Molinet~\cite{Molinet2008} and Molinet, Pilod~\cite{MolinetPilod2012CauchyBO}. However, their techniques do not lead to the weak sequential continuity of the flow map, which is needed for describing the weak limit points of~$u(t)$ as $t\to+\infty$.
\newline

Then, we show that the weak limit points of trajectories as time goes to infinity follow the LaSalle principle. Moreover, we prove that the trajectories are relatively compact in $L^2_{r,0}(\T)$, in other words, all weak limit points are strong limit points.

\begin{thm}[Weak limit points]\label{thm:LaSalle}
Let $u$ be a solution to the damped Benjamin-Ono equation~\eqref{eq:damped} in $L^2_{r,0}(\T)$. Let $v_0$ be a limit point of the sequence $(u(t))_{t\geq 0}$ for the weak topology in $L^2_{r,0}(\T)$ as $t\to+\infty$. Then we have the following results.

\begin{enumerate}
\item The solution $v$ to the Benjamin-Ono equation~\eqref{eq:bo} with initial data $v_0$ satisfies:
\begin{equation}\label{eq:Lasalle}
\forall t\in\R,\quad\langle v(t)|e^{ix}\rangle=0.
\end{equation}

Moreover, condition~\eqref{eq:Lasalle} is equivalent to the fact that the weak limit point $v_0$ does not have two consecutive nonzero Birkhoff coordinates:
\[
\forall n\in\N, \quad \zeta_n(v_0)\zeta_{n+1}(v_0)=0,
\]
with the convention $\zeta_0(v_0)=1$.

\item Given a solution $u$, all the limit points for $u$ have the same actions: there exists a sequence $(\gamma_n^\infty)_{n\geq 1}$ such that for all limit point $v_0$ associated to $u$, for all $n\geq 1$,
\[
|\zeta_n(v_0)|^2=\gamma_n^{\infty}.
\]

\item The convergence is strong in $L^2_{r,0}(\T)$: if $v_0$ is a weak limit point associated to the sequence $(u(t_k))_k$, then
\[
\|u(t_k)-v_0\|_{L^2(\T)}\longrightarroww{k\to+\infty}{} 0.
\]
\end{enumerate}
\end{thm}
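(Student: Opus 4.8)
The plan is to run a LaSalle-type argument based on the Lyapunov identity~\eqref{eq:normL2} and the weak sequential continuity from Theorem~\ref{thm:GWP}, and then to read off the finer information from the system of ODEs that the Birkhoff map produces on the coordinates $(\zeta_n)$. By~\eqref{eq:normL2} the function $t\mapsto\|u(t)\|_{L^2(\T)}^2$ is non-increasing and bounded below, hence it converges; integrating~\eqref{eq:normL2} gives $\int_0^{+\infty}|\langle u(t)|e^{ix}\rangle|^2\d t<+\infty$ (and, since $t\mapsto\langle u(t)|e^{ix}\rangle$ is uniformly continuous, $\langle u(t)|e^{ix}\rangle\to 0$), while the trajectory $\{u(t):t\ge 0\}$ is bounded in $L^2_{r,0}(\T)$. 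Now fix $t_k\to+\infty$ with $u(t_k)\rightharpoonup v_0$. By Theorem~\ref{thm:GWP} the translates $u(t_k+\cdot)$ converge, weakly in $L^2_{r,0}(\T)$ and locally uniformly in time, to the solution $w$ of~\eqref{eq:damped} with data $v_0$; since $h\mapsto\langle h|e^{ix}\rangle$ is weakly continuous, $\langle u(t_k+t)|e^{ix}\rangle\to\langle w(t)|e^{ix}\rangle$ for each $t$, and as $\int_{t_k}^{t_k+T}|\langle u(s)|e^{ix}\rangle|^2\d s\to 0$ with bounded integrand, Fatou's lemma forces $\int_0^T|\langle w(t)|e^{ix}\rangle|^2\d t=0$, hence $\langle w(t)|e^{ix}\rangle=0$ for all $t\ge 0$ by continuity. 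Then the damping term $\langle w|\cos\rangle\cos+\langle w|\sin\rangle\sin=\Re\!\big(\langle w|e^{ix}\rangle\, e^{ix}\big)$ vanishes identically, so $w$ solves~\eqref{eq:bo}; by uniqueness $w$ coincides on $[0,+\infty)$ with the Benjamin--Ono solution $v$ from $v_0$, so $\langle v(t)|e^{ix}\rangle=0$ for all $t\ge 0$.

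For the characterization of this condition in item~1, I would use the expansion of the first Fourier coefficient in Birkhoff coordinates (see~\cite{GerardKappelerTopalov2020,Gerard2019}), which has the form $\langle u|e^{ix}\rangle=\sum_{n\ge 0}c_n(u)\,\overline{\zeta_n(u)}\,\zeta_{n+1}(u)$ with $\zeta_0:=1$ and weights $c_n(u)>0$ depending only on the actions $(|\zeta_k(u)|^2)_k$. Along~\eqref{eq:bo} the actions are conserved, so inserting $\zeta_n(v(t))=\zeta_n(v_0)e^{i\omega_n(v_0)t}$ yields the series, absolutely convergent in $t$ by Cauchy--Schwarz (using $\sum_n n|\zeta_n(v_0)|^2<+\infty$),
\[
\langle v(t)|e^{ix}\rangle=\sum_{n\ge 0}c_n(v_0)\,\overline{\zeta_n(v_0)}\,\zeta_{n+1}(v_0)\, e^{i g_n(v_0) t},\qquad g_n(v_0)=\omega_{n+1}(v_0)-\omega_n(v_0)=2n+1-2\!\!\sum_{k\ge n+1}\!\!|\zeta_k(v_0)|^2.
\]
Since $g_{n+1}(v_0)-g_n(v_0)=2+2|\zeta_{n+1}(v_0)|^2>0$, the frequencies $g_n(v_0)$ are pairwise distinct, so this almost periodic function of $t$ vanishes on $[0,+\infty)$ (equivalently on $\R$) if and only if every coefficient vanishes, that is, if and only if $\zeta_n(v_0)\zeta_{n+1}(v_0)=0$ for all $n\ge 0$. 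This gives~\eqref{eq:Lasalle} for all $t\in\R$ together with the stated equivalence.

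Items~2 and~3 rest on the system of ODEs that $\Phi$ produces from~\eqref{eq:damped}, namely $\dot\zeta_n=i\omega_n(u)\zeta_n+\alpha\,\rho_n(u)$ with $\rho_n(u)=-\,\mathrm{d}\zeta_n(u)\big[\langle u|\cos\rangle\cos+\langle u|\sin\rangle\sin\big]$; because multiplication by $e^{\pm ix}$ couples only neighbouring Birkhoff modes, $\rho_n$ has a tridiagonal form — essentially $\langle u|e^{ix}\rangle$ (or its conjugate) times bounded functions of the actions times $\zeta_{n-1}$ and $\zeta_{n+1}$. From this I would extract two facts. First, a uniform tail estimate: the flux of $\sum_n n|\zeta_n|^2$ (comparable to $\|u\|_{L^2(\T)}^2$, and conserved by~\eqref{eq:bo}) across a level $N$ is localized near $n=N$, so a Gr\"onwall argument propagates the smallness of $\sum_{n\ge N}n|\zeta_n(u_0)|^2$ into $\sup_{t\ge 0}\sum_{n\ge N'}n|\zeta_n(u(t))|^2\to 0$ as $N'\to+\infty$; together with the $L^2$ bound this makes $\{u(t):t\ge 0\}$ relatively compact in $L^2_{r,0}(\T)$ through $\Phi^{-1}$, so any $u(t_k)\rightharpoonup v_0$ has a strongly convergent subsequence, whose limit must be $v_0$, and since this holds for every subsequence, $u(t_k)\to v_0$ in $L^2_{r,0}(\T)$ — this is item~3. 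Second, each action $|\zeta_n(u(t))|^2$ converges to some $\gamma_n^\infty$ as $t\to+\infty$, using $\langle u(t)|e^{ix}\rangle\to 0$, the bound $\int_0^{+\infty}|\langle u(t)|e^{ix}\rangle|^2\d t<+\infty$, and the fact that away from the damping the $\zeta_n$ evolve almost periodically in time; combined with item~3 and the continuity of $\Phi$ this gives $|\zeta_n(v_0)|^2=\gamma_n^\infty$ for every limit point $v_0$, which is item~2. The main obstacle is precisely this last paragraph: establishing the ODE system and its nearest-neighbour structure (which relies on the differentiability and the spectral description of the Birkhoff map), and upgrading the single scalar dissipation $\int_0^{+\infty}|\langle u|e^{ix}\rangle|^2\d t<+\infty$ into both the uniform tail estimate and the convergence of all actions — this is where the integrable structure of~\eqref{eq:bo} is used decisively, since under~\eqref{eq:bo} the actions are conserved and the damping is the only, and only nearest-neighbour, channel for transferring energy between Birkhoff modes.
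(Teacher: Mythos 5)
Your treatment of point 1 is correct and is essentially the paper's own argument: the Lyapunov identity~\eqref{eq:normL2} together with the weak sequential continuity of the flow map gives $\langle v(t)|e^{ix}\rangle=0$ along any weak limit (your Fatou variant is an acceptable substitute for the paper's observation that $\frac{\d}{\d t}|\langle u(t)|e^{\pm ix}\rangle|^2$ is bounded, hence $\langle u(t)|e^{ix}\rangle\to0$), and the equivalence with $\zeta_n(v_0)\zeta_{n+1}(v_0)=0$ follows from the Birkhoff expansion of $\langle u|e^{ix}\rangle$ with positive action-dependent coefficients and the strictly increasing frequency gaps $\omega_{n+1}-\omega_n$, exactly as in part~\ref{part:LaSalle}.

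The gap is in points 2 and 3. First, your ``uniform tail estimate via Gr\"onwall'' does not work as stated: the only global information on the damping is $\int_0^{+\infty}|\langle u(t)|e^{ix}\rangle|^2\d t\leq C(R)$, so $\int_0^t|\langle u(s)|e^{ix}\rangle|\d s\lesssim\sqrt t$, and a Gr\"onwall bound on any weighted tail functional produces a factor $e^{C(R)\sqrt t}$ --- this is precisely what Lemma~\ref{lem:astuce_gronwall} yields, and it rules out finite-time blowup but gives no uniform-in-time control, hence no precompactness of the orbit. Second, for the convergence of the actions, inequality~\eqref{ineq:gamma_n_dot} gives $|\dot\gamma_n|\leq C(R)|\langle u|e^{ix}\rangle|\,(|\zeta_{n-1}\zeta_n|+|\zeta_n\zeta_{n+1}|)$; knowing only that $|\langle u|e^{ix}\rangle|\in L^2_t$ and that the second factor is bounded gives $\dot\gamma_n\in L^2_t$, which does \emph{not} imply that $\gamma_n(t)$ converges. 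The decisive missing ingredient is the time integrability $\int_0^{+\infty}\sum_n\gamma_n(t)\gamma_{n+1}(t)\d t<+\infty$ of Proposition~\ref{prop:I(T,T')_n,p}, obtained by expanding $|\langle u|e^{ix}\rangle|^2$ in Birkhoff coordinates and killing the non-resonant cross terms $\overline{\zeta_n}\zeta_{n+1}\zeta_p\overline{\zeta_{p+1}}$, $n\neq p$, by an integration by parts based on $|\Omega_{n,p}|\geq 2|n-p|$, with Theorem~\ref{thm:dzeta.cos} needed to control the damping terms $Z_n$ that this produces; only then does Cauchy--Schwarz give $\dot\gamma_n\in L^1_t$ and hence point 2. (Incidentally, the vector field is not purely nearest-neighbour: besides $p_n^*\zeta_{n-1}+q_n^*\zeta_{n+1}$ there is a diagonal contribution $\zeta_n$ multiplied by a full sum $\sum_k A^*_{n,k}\overline{\zeta_k}\zeta_{k+1}+B^*_{n,k}\zeta_k\overline{\zeta_{k+1}}$.) Finally, for point 3 the paper does not argue through orbit compactness at all: in part~\ref{part:strong_convergence} it proves $\|u(t)\|_{L^2}\to\|u_\infty\|_{L^2}$ by showing that the generating function $\hamilton_\mu(u(t))$ is almost conserved with a quantitative rate in $\mu$ (Lemma~\ref{lem:dot_hamilton}, again resting on Proposition~\ref{prop:I(T,T')_n,p}) and reading the $L^2$ norm off the $\mu^{-3}$ coefficient of its large-$\mu$ expansion; some argument of this type, exploiting the non-resonant oscillations rather than a crude Gr\"onwall bound, is required and is absent from your proposal.
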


Gérard, Kappeler and Topalov proved that the trajectories of the undamped Benjamin-Ono equation~\eqref{eq:bo} are almost-periodic in $H^s_{r,0}(\T)$ for all $s>-\half$ (see~\cite{GerardKappeler2019}, Theorem 1.3 and~\cite{GerardKappelerTopalov2020-2}, Theorem 3 and Corollary 8).  
As a consequence, the solutions are recurrent (see~\cite{GerardKappeler2019}, Remark~1.4~(ii)): for all solutions $u$ with initial data $u_0\in L^2_{r,0}(\T)$, there exists a sequence $t_n\to+\infty$ such that $u(t_n)$ tends to $u_0$ in $L^2_{r,0}(\T)$. Both properties do not hold for the damped equation~\eqref{eq:damped} because of characterization~\eqref{eq:Lasalle}.

In~\cite{Tzvetkov2010,TzvetkovVisciglia2013,TzvetkovVisciglia2014, TzvetkovVisciglia2015,Deng2015,DengTzvetkovVisciglia2015}, the authors build a sequence $(\mu_n)_n$ of gaussian measures on $L^2(\T)$, invariant by the flow of the periodic Benjamin-Ono equation~\eqref{eq:bo}, and associated to the conservation laws for this equation. Each measure $\mu_n$ is concentrated on $H^s(\T)$ for $s<n-\half$, and satisfies $\mu_n(H^{n-\half}(\T))=0$. Formally, it is defined from the $n$-th conservation law $\hamilton_n$ as a renormalization of the formula $\d\mu_n=e^{-\hamilton_n(u)}\d u$. 
Sy~\cite{Sy2018} extends this construction to get a measure concentrated on $\classeC^{\infty}(\T)$. With this approach, it was already possible to prove that with probability $1$ with respect to the corresponding measure, the solutions of~\eqref{eq:bo} display the recurrence property mentionned above (see~\cite{TzvetkovVisciglia2014}, Corollary~1.3 and~\cite{Sy2018}, Corollary~1.2).

An equation similar to~\eqref{eq:damped} has been introduced and studied by Gérard and Grellier in~\cite{GerardGrellier2019} starting from the Szeg\H{o} equation. For $\alpha>0$, the damped Szeg\H{o} equation on the torus writes
\begin{equation}\tag{Sz-$\alpha$}\label{eq:damped_szego}
i\partial_t u+i\alpha\langle u|\un\rangle=\Pi(|u|^2u),
\end{equation}
where $\Pi$ is the Szeg\H{o} projector from $L^2(\T)$ onto the space 
\[L^2_+(\T)=\{u\in L^2(\T)\mid \forall n<0, \widehat{u}(n)=0\}.\] 
Characterization~\eqref{eq:Lasalle} is then similar for~\eqref{eq:damped} and~\eqref{eq:damped_szego}. Indeed, let $v_0$ be a weak limit point of a solution $(u(t))_{t\in\R}$ of~\eqref{eq:damped_szego} for the weak topology of the natural energy space $H^{\half}_+(\T):=\Pi(H^{\half}(\T))$, then the solution $v$ to the cubic Szeg\H{o} equation with initial data $v_0$ satisfies $\langle v(t)|\un\rangle=0$ for all times. As a consequence, the trajectories of~\eqref{eq:damped_szego} cannot be almost-periodic in $H^{\half}_+(\T)$, whereas almost-periodicity holds for the Szeg\H{o} equation~\cite{GerardGrellier2017}, similarly as the Benjamin-Ono case.

However, our result implies that the trajectories for~\eqref{eq:damped} are relatively compact in~$L^2_{r,0}(\T)$, whereas many initial data for~\eqref{eq:damped_szego} lead to trajectories which are not relatively compact in the natural energy space $H^{\half}_+(\T)$ (this fact is a consequence of the proof of Theorem 3.1 in~\cite{GerardGrellier2019}). One can also note that in the study of~\eqref{eq:damped_szego}, the authors strongly use the fact that one of the two known Lax pairs for the Szeg\H{o} equation still holds, whereas there is no known Lax pair for~\eqref{eq:damped}.
\newline


The last part of our paper is devoted to the boundedness for higher-order Sobolev norms.  For the Benjamin-Ono equation on the torus~\eqref{eq:bo}, the infinite number conservation laws from Nakamura~\cite{Nakamura1979} and Bock, Kruskal~\cite{BockKruskal1979} provide bounds on all the Sobolev norms $H^s$, $s\geq 0$. We prove that the $H^s$ Sobolev norms also stay bounded when $0\leq s<\frac{3}{2}$ for the damped Benjamin-Ono equation~\eqref{eq:damped}.
\begin{thm}[Higher-order Sobolev norms]\label{thm:higher_Hs}
Fix $0\leq s<\frac{3}{2}$. Let $u$ be a solution to the damped equation~\eqref{eq:damped} in $L^2_{r,0}(\T)$ such that the initial data $u_0$ belongs to $H^s(\T)$. Then there exists some constant $C_s(\|u_0\|_{H^s(\T)})>0$ such that for all $t\in\R_+$,
\[
\|u(t)\|_{H^s(\T)}\leq C_s(\|u_0\|_{H^s(\T)}).
\]
\end{thm}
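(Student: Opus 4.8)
The plan is to transport the problem to the Birkhoff coordinates and exploit the fact that, in these coordinates, the damped equation~\eqref{eq:damped} becomes an explicit system of ODEs on $(\zeta_n)_{n\geq 1}$ in which the damping enters as a term that only depends on the low-frequency part of $u$. The first step is to recall the characterization of $H^s_{r,0}(\T)$ in terms of weighted $\ell^2$ conditions on the Birkhoff coordinates: by the work of Gérard, Kappeler and Topalov, $u\in H^s_{r,0}(\T)$ is equivalent to $(\zeta_n(u))_n \in h^{s+\frac12}_+$, i.e. $\sum_n n^{2s+1}|\zeta_n|^2 <\infty$, with the Birkhoff map and its inverse bi-Lipschitz on bounded sets. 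Since the $L^2$ norm of $u$ is already controlled by the Lyapunov functional~\eqref{eq:normL2}, we know $\sup_t \sum_n n|\zeta_n(u(t))|^2 < \infty$, so the whole trajectory stays in a bounded subset of $h^{1/2}_+$; it then suffices to estimate the growth of $\sum_n n^{2s+1}|\zeta_n(u(t))|^2$.

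Next I would write down the evolution equation satisfied by each $\zeta_n(u(t))$ along the damped flow. Writing~\eqref{eq:damped} as $\partial_t u = X_{BO}(u) - \alpha L u$ where $L u = \langle u|\cos\rangle\cos + \langle u|\sin\rangle\sin$ is the damping operator (a rank-two projection onto the first Fourier mode), and using that the Birkhoff map conjugates the Benjamin-Ono vector field $X_{BO}$ to the linear diagonal flow $\dot\zeta_n = i\omega_n(u)\zeta_n$, one obtains
\begin{equation}\label{eq:zeta-ode}
\frac{\d}{\d t}\zeta_n(u(t)) = i\omega_n(u(t))\zeta_n(u(t)) - \alpha\, \mathrm{d}_u\zeta_n\big(Lu(t)\big).
\end{equation}
Since the rotation term $i\omega_n \zeta_n$ preserves $|\zeta_n|$, differentiating $|\zeta_n|^2$ kills it, and one is left with
$\frac{\d}{\d t}|\zeta_n|^2 = -2\alpha\,\Re\langle \overline{\zeta_n}\,\mathrm{d}_u\zeta_n, Lu\rangle$. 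The key structural input is a good bound on the differential $\mathrm{d}_u\zeta_n$ tested against $Lu$: because $Lu$ is supported on the lowest Fourier mode and has $L^2$-norm controlled by $\|u\|_{L^2}$, and because $\mathrm{d}_u\zeta_n$ enjoys polynomial-in-$n$ bounds on bounded subsets of $L^2_{r,0}$ (again from the Gérard--Kappeler--Topalov analysis of the Birkhoff map, in the spirit of the estimates used there for the well-posedness and the Sobolev characterization), one should get a bound of the form $|\mathrm{d}_u\zeta_n(Lu)| \lesssim n^{-N_0}|\zeta_{\text{something}}| + \dots$, or at worst a bound with a fixed polynomial loss in $n$ that is summable against the weight after using the a priori $h^{1/2}$ control. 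Summing~\eqref{eq:zeta-ode} against $n^{2s+1}$ and using Cauchy--Schwarz then yields a differential inequality $\frac{\d}{\d t}\|u(t)\|_{H^s}^2 \lesssim_{s,\|u_0\|_{L^2}} 1 + \|u(t)\|_{H^s}^2$ (or even without the linear term, if the damping contribution turns out to be an integrable-in-time perturbation), from which Grönwall's lemma would at first give only exponential growth — so more care is needed.

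To upgrade exponential growth to a uniform bound, I would use that the damping term, while not small in $L^2$, becomes negligible at high frequencies: the operator $L$ maps any bounded set of $L^2_{r,0}$ into a fixed finite-dimensional space, so $Lu(t)$ lies in a compact set, and in particular $\|Lu(t)\|_{H^s} \le C\|u(t)\|_{L^2}$ with $C$ independent of $s$. The cleanest route is to observe that in Birkhoff coordinates the forcing in~\eqref{eq:zeta-ode} decays fast in $n$, so the "high modes" $\sum_{n\geq N} n^{2s+1}|\zeta_n|^2$ satisfy a closed differential inequality with a forcing term that is already bounded by the $L^2$-a priori bound; choosing the cutoff $N$ appropriately and treating the finitely many low modes $\zeta_1,\dots,\zeta_{N-1}$ separately (for those one still has $|\zeta_n(t)|\le\|\zeta(t)\|_{h^{1/2}}\lesssim 1$ trivially for $n$ in a finite range, since each $|\zeta_n|$ is individually bounded), one assembles a uniform-in-time bound on $\|u(t)\|_{H^s}^2$, with the constant depending only on $s$ and $\|u_0\|_{H^s}$. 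The restriction $s<\frac32$ will enter precisely here: it corresponds to the trajectory staying in $h^{s+1/2}_+$ with $s+\frac12 < 2$, which is the threshold below which the frequencies $\omega_n(u_0) = n^2 - 2\sum_k\min(k,n)|\zeta_k|^2$ and the associated Birkhoff-coordinate estimates are uniformly controlled by the $L^2$ norm alone, and above which the quadratic term in $\omega_n$ (which is $O(n\|u\|_{L^2}^2)$) would no longer be a lower-order correction relative to the weight $n^{2s+1}$ in the needed commutator estimates.

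The main obstacle I anticipate is making the estimate on $\mathrm{d}_u\zeta_n(Lu)$ quantitative enough and uniform on the bounded set $\{\|u\|_{L^2}\le\|u_0\|_{L^2}\}$: one needs that the differential of the $n$-th Birkhoff coordinate, paired against a function supported on the first Fourier mode, decays in $n$ fast enough (or loses at most a controlled power of $n$) so that the resulting differential inequality for $\|u(t)\|_{H^s}^2$ does not degrade the weight. This should follow from the Lax-pair / generating-function formulas for $\zeta_n$ in~\cite{GerardKappelerTopalov2020,GerardKappelerTopalov2020-2} — in particular from the explicit expression of $\zeta_n$ as an inner product involving the resolvent of the Lax operator, whose derivative in $u$ one can bound mode by mode — but extracting the precise $n$-dependence, uniformly on bounded sets and for the full range $s<\frac32$, is the technical heart of the argument.
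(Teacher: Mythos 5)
There is a genuine gap: the proposal never identifies the mechanism that actually yields a \emph{uniform-in-time} bound, and the structural claim it relies on is false. In Birkhoff coordinates the damping does \emph{not} produce a forcing that ``decays fast in $n$'': by the structure theorem for the vector field (Theorem~\ref{thm:dzeta.cos}), $\d\zeta_n[u].\cos$ and $\d\zeta_n[u].\sin$ are of the form $p_n^*(\gamma)\zeta_{n-1}+q_n^*(\gamma)\zeta_{n+1}+(\cdots)\zeta_n$ with $p_n^*,q_n^*$ bounded \emph{below} as well as above on $L^2$-bounded sets, so the best pointwise bound is $\bigl|\tfrac{\d}{\d t}\gamma_n\bigr|\leq C(R)\,|\langle u|e^{ix}\rangle|\,\bigl(|\zeta_{n-1}\zeta_n|+|\zeta_n\zeta_{n+1}|\bigr)$, with no gain in $n$. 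Summing this against $n^{1+2s}$ and applying Gr\"onwall together with $|\langle u(\cdot)|e^{ix}\rangle|\in L^2(\R_+)$ gives only $P_s(t)\leq P_s(0)e^{C\sqrt{t}}$ — exactly the estimate the paper uses for global well-posedness (Lemma~\ref{lem:astuce_gronwall}) — and your cutoff-in-$n$ ``upgrade'' does not improve this, because the high-mode forcing is not integrable in time from the $L^2$ a priori bound alone. What the paper actually does is exploit two ingredients your sketch omits entirely: (i) the sign structure of the diagonal part of $\tfrac{\d}{\d t}P_s$, namely the dissipative term $-\alpha\sum_n c_n a_n^*(t)^2\gamma_n\gamma_{n+1}$, which controls the weighted integral $I^s(T)=\int_0^T\sum_n c_n\gamma_n\gamma_{n+1}\,\d t$; and (ii) a normal-form/integration-by-parts-in-time argument using the non-resonance bound $|\Omega_{n,p}|\geq 2|n-p|$ (and its analogues $\Omega^{\sigma}_{n,p,q}$, $\Omega_{n,p,q,k}$) to show that the off-diagonal oscillating sums $J^s(T)$, $K^s(T)$ are bounded by $C_s(R,\varepsilon)+C_s(R,\varepsilon)\sqrt{I^s(T)}+\varepsilon I^s(T)$, so they are absorbed by the dissipation. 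The preliminary fact $\int_0^{+\infty}\sum_n\gamma_n\gamma_{n+1}\,\d t<+\infty$ (Proposition~\ref{prop:I(T,T')_n,p}) is itself proved by the same oscillatory-integral device and is indispensable; none of this appears in your plan.

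In addition, your explanation of the restriction $s<\frac32$ is not the right one. The frequencies $\omega_n=n^2-2\sum_k\min(k,n)\gamma_k$ are controlled by the $L^2$ norm for every $s$; the threshold in the paper arises purely from summability of weighted resonance sums such as $\sum_{p\neq n}\frac{p^{2s}}{|n-p|^2(1+|q-k+n-p|)^2}$ in the estimates of the ``border'' and ``central'' terms (parts~\ref{subpart:5.3.1}--\ref{subpart:5.3.2}), where one needs $\sum_p p^{2s-4}<\infty$, i.e.\ $s<\frac32$; indeed Remark~\ref{rk:higher_Hs} notes the method extends to all $s\geq 0$ at the price of further decompositions, so there is no intrinsic obstruction at $s=\frac32$ of the kind you describe. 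As written, the proposal would at best reproduce the $e^{C\sqrt t}$ growth bound and does not prove the theorem.
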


The proof of Theorem~\ref{thm:higher_Hs} could be adapted to show that there is no growth of Sobolev norms in $H^s_{r,0}(\T)$ for any $s\geq 0$. However, our method becomes more and more technical as the exponent $s$ increases, this is why we chose to stop at $s=\frac{3}{2}$. See Remark~\ref{rk:higher_Hs} for more details.

The higher-order Sobolev norms for the Benjamin-Ono equation with and without damping have the same behavior, which is in contrast with the Szeg\H{o} equation. Indeed, Gérard and Grellier proved in~\cite{GerardGrellier2019} that for the damped Szeg\H{o} equation~\eqref{eq:damped_szego}, there exists a non empty open subset of $H^{\half}_+(\T)$ for which every solution $u$ with initial data in $H^s(\T)$, $s>\half$, has an exploding orbit in~$H^s(\T)$:
\[
\|u(t)\|_{H^s(\T)}\longrightarroww{t\to+\infty}{} +\infty.
\]
However, the undamped Szeg\H{o} equation exhibits weak turbulence~\cite{GerardGrellier2017}, in the sense that there exists a dense $G_\delta$ subset of initial data in $L^2_+\cap\classeC^\infty$ such that for every $s>\half$, the solution $u$ satisfies $\limsup_t\|u(t)\|_{H^s(\T)}=+\infty$, whereas $\liminf_t\|u(t)\|_{H^s(\T)}<+\infty$.

In order to prove Theorem~\ref{thm:higher_Hs}, we introduce Lyapunov functionals controlling the higher-order Sobolev norms in Birkhoff coordinates. Note that from~\cite{GerardKappelerTopalov2020}, Proposition 5 in Appendix~A, the Birkhoff map $\Phi$ and its inverse map bounded subsets of the Sobolev space $H^s_{r,0}(\T)$ to bounded subsets of sequences in 
\[h^{\half+s}_+=\{\zeta=(\zeta_n)_{n\geq 1}\in\C^\N\mid\|\zeta\|_{h^{\half+s}_+}^2= \sum_{n}n^{1+2s}|\zeta_n|^2<+\infty\}.
\]
This leads us to consider higher-order Lyapunov functionals in Birkhoff coordinates of the form
\[
P_s(u)
	=\sum_{n\geq 1}w_n\gamma_n(u),
\]
where we choose weights $w_n$ satisfying $w_n\approx n^{1+2s}$. The strategy of introducing Lyapunov functionals controlling higher-order Sobolev norms has been implemented for instance in~\cite{AlazardBresch2020} and~\cite{AlazardMeunierSmets2019} for various equations describing free surface flows in fluid dynamics. In our paper, this strategy is implemented for the first time using the Birkhoff coordinates of a close integrable system.

\subsection{Long-time behavior for equations of Benjamin-Ono type}

As a comparison, let us now recall how the long-time dynamics for the Benjamin-Ono equation would be affected by the change of geometry, the introduction of a perturbation or the addition of a diffusive damping term. We refer to Saut~\cite{Saut2018} for a general survey on the Benjamin-Ono equation.

\paragraph{Benjamin-Ono equation on the real line}
On the real line, one expects the dispersion to change the behavior of Benjamin-Ono solutions. Ifrim and Tataru~\cite{IfrimTataru2019} proved that for small localized initial data, there holds linear dispersive decay for large time scales. It is also conjectured that a solution associated to small initial data either is dispersive, or decomposes into the sum of a soliton and a dispersive part (soliton resolution conjecture).

The Benjamin-Ono equation on the line is also integrable. The Lax pair formulation is the same as on the torus, however, the inverse spectral theory, formally described in Ablowitz, Fokas~\cite{AblowitzFokas1983}, is only performed for sufficiently small and decaying initial data~\cite{CoifmanWickerhauser1990}. Wu~\cite{Wu2016} studied the spectrum of the Lax operator for the Benjamin-Ono equation, then solved the direct scattering problem for arbitrary sufficiently decaying initial data~\cite{Wu2017}. The recent construction of the Birkhoff map for multisolitons by Sun~\cite{Sun2020} constitutes a first step towards the soliton resolution conjecture.

\paragraph{Perturbations of the Benjamin-Ono equation on the torus}
Thanks to extensions of the KAM theory, one can investigate if the periodic, quasi periodic or almost periodic nature of trajectories is preserved by perturbation of an integrable equation. This kind of studies goes back to Kuksin~\cite{Kuksin1987} and Wayne~\cite{Wayne1990}, we refer to Craig~\cite{Craig2000} and Berti~\cite{Berti2019} for a detailed survey.

Concerning perturbations of the Benjamin-Ono equation in the neighborhood of the origin, we mention the following results. In~\cite{LiuYuan2011}, Liu and Yuan consider a class of unbounded Hamiltonian perturbations of the periodic Benjamin-Ono equation~\eqref{eq:bo}, construct KAM tori and deduce that there exist quasi-periodic trajectories. Mi and Zhang~\cite{MiZhang2014} extend this result to more general unbounded quasi-periodic Hamiltonian perturbations. 
In~\cite{Baldi2013}, Baldi proves that there exist periodic solutions for some non Hamiltonian reversible perturbations of a nonlinear cubic Benjamin-Ono equation
\[
\partial_t u+\hilbert\partial_{xx}u+\partial_x(u^3)=0.
\]
For generalized Benjamin-Ono equations of the form
\[
\partial_t u+\hilbert\partial_{xx}u+\partial_x(f(u))=0,
\]
Bernier and Grébert~\cite{BernierGrebert2020} establish approximations of trajectories on large time scales for generic initial data close to the origin.


In order to investigate further Hamiltonian perturbations of the Benjamin-Ono equation through the KAM theory and tackle trajectories which are not necessarily close to the origin, it would be useful to prove that the Birkhoff map $\Phi$ is real analytic. As a first step, Gérard, Kappeler et Topalov prove in~\cite{GerardKappelerTopalov2020-2} that the action map $u\mapsto (|\zeta_n(u)|^2)_n$ is real analytic from Sobolev spaces $H^s_{r,0}(\T)$ onto weighted $\ell^1$ spaces.

\paragraph{Damped Benjamin-Ono equations} Adding a damping of diffusive type can lead to a decay of solutions or the existence of a global attractor. A global attractor is a compact set, invariant by the flow of the equation, and attracting all trajectories uniformly on bounded sets of initial data.

Several physical damping terms can be chosen depending on the nature of dissipation, in the same way as for the Korteweg-de Vries equation, see Ott and Sudan~\cite{OttSudan1970}. Grimshaw, Smyth and Stepanyants  (see~\cite{GrimshawSmythStepanyants2018} and references inside) introduce and study numerically a general model of dispersive equations under the form
\[
\partial_t u+\alpha u\partial_x u+\beta H\partial_{xx}u+\delta\mathcal{D}[u]=0,
\]
where $\mathcal{D}(u)$ is a Fourier multiplier. For instance, the operator $\mathcal{D}$ can describe the Rayleigh dissipation $\mathcal{D}u=u$, the Reynolds dissipation $\mathcal{D}u=-\partial_{xx}u$ leading to the Benjamin-Ono-Burgers equation, the Landau damping $\mathcal{D}u=H\partial_x u$, the dissipation of
internal solitary waves over a rough bottom $\mathcal{D}u=|u|u$~\cite{Grimshaw2003}, etc. 

The decay to zero and an asymptotic profile of solutions as $t\to+\infty$ can be found in Dix~\cite{Dix1991} for Reynolds dissipation, and Bona, Luo~\cite{BonaLuo2011} for generalized Benjamin-Ono-Burgers equations (see also~\cite{MatsunoShchesnovichKamchatnovKraenkel2007} for dispersive shockwaves when the damping is small). For a fractional dissipation $\mathcal{D}u=|D|^qu$, $q>0$, the asymptotic behavior and the existence of global attractors is investigated in Dix~\cite{Dix1992} and Alarcon~\cite{Alarcon1993}.  Guo and Huo~\cite{GuoHuo2006} prove the existence of a global attractor in $L^2(\R)$, compact in $H^3(\R)$, for generalized weakly-damped KdV-Benjamin-Ono equations, under the form
\[
\partial_t u+\alpha \hilbert \partial_{xx}u+\beta\partial_{xxx}u+\mu\partial_xu+\lambda u+u\partial_xu=f.
\]
The existence of a global attractor in $H^1(\R)$ for the weakly-damped Benjamin-Ono equation had already been proven in~\cite{BolingWu1995}.


We also mention the works of Burq, Raugel and Schlag~\cite{BurqRaugelSchlag2015},~\cite{BurqRaugelSchlag2018} concerning weakly-damped Klein-Gordon equations.

Note that in equation~\eqref{eq:damped}, the damping term is weaker than the ones mentioned above. Indeed, there are many trajectories which do not go to zero as time goes to infinity, for instance, this is the case for the solutions satisfying condition~\eqref{eq:Lasalle} since the $L^2$ norm of these solutions is preserved. Numerical simulations implemented by Klein~\cite{Klein_numerics2020} tend to suggest that there are many other trajectories which do not go to zero, and that a global attractor may not exist (see related open questions in part~\ref{part:open_questions}).

\subsection{Equation in Birkhoff coordinates}
In the study of the damped Benjamin-Ono equation~\eqref{eq:damped}, our general strategy consists in applying the Birkhoff map $\Phi$ and transform this equation into a system of ODEs on the Birkhoff coordinates
\begin{equation}\label{eq:dot_zeta}
\frac{\d}{\d t}\zeta_n(u(t))
	=i\omega_n(\zeta(u(t)))\zeta_n(u(t))-\alpha\left(\langle u(t)|\cos\rangle \d\zeta_n[u(t)].\cos +\langle u(t)|\sin\rangle \d\zeta_n[u(t)].\sin\right),
\end{equation}
where we recall that $\omega_n(\zeta)=n^2-2\sum_{k\geq 1}\min(k,n)|\zeta_k|^2$ for $n\geq 1$.

Our key result is a simplification of the right-hand side of equation~\eqref{eq:dot_zeta}, in particular the terms $\d\zeta_n[u(t)].\cos$ and $\d\zeta_n[u(t)].\sin$, as a function of the Birkhoff coordinates $\zeta=(\zeta_n)_{n\geq 1}$, and of their actions $\gamma=(|\zeta_n|^2)_{n\geq 1}$.
Before stating the theorem, we introduce some notation for the spaces at stake. If $u\in L^2_{r,0}(\T)$, then the Birkhoff coordinates $\zeta$ belong to $h^\half_+\subset \ell^2_+$ , where~$\ell^p_+$ denotes the space of complex sequences
\[
\ell^p_+:=\{z=(z_n)_{n\geq 1}\in\C^{\N}\mid\|z\|_{\ell^p_+}^p=\sum_{n\geq 1}|z_n|^p<+\infty\}.
\]
In particular, the actions $\gamma$ belong to the space $\ell^1_+$.
\begin{thm}[Structure of the vector field]\label{thm:dzeta.cos}
There exist functions $p_n^*, q_n^*, A_{n,k}^*$ and $B_{n,k}^*$ such that for all $u\in L^2_{r,0}(\T)$, if $\zeta=\Phi(u)$, then 
\begin{equation*}
\d\zeta_n[u].\cos
	=p_n^*(\gamma)\zeta_{n-1}
	+q_n^*(\gamma)\zeta_{n+1}
	+\left(\sum_{k=0}^{+\infty}A_{n,k}^*(\gamma)\overline{\zeta_k}\zeta_{k+1}+B_{n,k}^*(\gamma)\zeta_k\overline{\zeta_{k+1}}\right)\zeta_n,
\end{equation*}
with the convention $\zeta_0=1$ and the notation $\gamma=(|\zeta_n|^2)_{n\geq 1}$.
The terms $p_n^*, q_n^*, A_{n,k}^*$ and $B_{n,k}^*$ are  $\classeC^1$ functions of the actions $\gamma\in\ell^1_+$, they are uniformly bounded and Lipschitz on finite balls: for all $\gamma\in \ell^1_+$ satisfying $\|\gamma\|_{\ell^1_+}\leq R$, for all $n,k\geq 0$,
\begin{equation*}
|p_n^*(\gamma)|+|q_n^*(\gamma)|+|A_{n,k}^*(\gamma)|+|B_{n,k}^*(\gamma)|\leq C(R),
\end{equation*}
moreover, for all $h\in \ell^1_+$,
\begin{equation*}
|\d p_n^*(\gamma).h|+|\d q_n^*(\gamma).h|+|\d A_{n,k}^*(\gamma).h|+|\d B_{n,k}^*(\gamma).h|\leq C(R)\|h\|_{\ell^1_+}.
\end{equation*}

The same holds for $\d\zeta_n[u].\sin$, which also writes under the form
\begin{equation*}
\d\zeta_n[u].\sin
	=p_n^*(\gamma)\zeta_{n-1}
	+q_n^*(\gamma)\zeta_{n+1}
	+\left(\sum_{k=0}^{+\infty}A_{n,k}^*(\gamma)\overline{\zeta_k}\zeta_{k+1}+B_{n,k}^*(\gamma)\zeta_k\overline{\zeta_{k+1}}\right)\zeta_n,
\end{equation*}
with similar estimates for the new terms $p_n^*,q_n^*, A_{n,k}^*$ and $B_{n,k}^*$.
\end{thm}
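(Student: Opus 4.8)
The plan is to compute the differential $\d\zeta_n[u].\cos$ and $\d\zeta_n[u].\sin$ using the known formulas for the Birkhoff coordinates of Gérard--Kappeler--Topalov, and then to reorganize the result into the stated structure. The starting point is the explicit description of $\zeta_n(u)$ in terms of the Lax operator $L_u = D - T_u$ (where $T_u$ is the Toeplitz operator with symbol $u$, $D = -i\partial_x$) and its eigenfunctions $f_n$: one has a formula of the shape $\zeta_n(u) = \kappa_n \langle (1 - \text{something})^{-1} \ldots \rangle$, more precisely $\zeta_n(u)$ is built from the spectral data $\lambda_n(u)$ and the quantities $\langle f_n | 1 \rangle$, $\langle f_n | e^{ix} \rangle$. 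Differentiating $\zeta_n$ along a direction $h \in L^2_{r,0}(\T)$ reduces, via standard perturbation theory for the simple eigenvalues $\lambda_n$ of $L_u$, to pairing $h$ against rank-one-type quantities $\langle \cdot f_j | f_k \rangle$. The central observation is that $\cos = \tfrac12(e^{ix} + e^{-ix})$ and $\sin = \tfrac{1}{2i}(e^{ix} - e^{-ix})$, so everything comes down to understanding $\d\zeta_n[u].e^{ix}$ and $\d\zeta_n[u].e^{-ix}$; the shift operator $e^{ix}$ acts on the Fourier side by translation, which is precisely what will produce the neighbor terms $\zeta_{n-1}$ and $\zeta_{n+1}$.

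First I would recall, or re-derive from \cite{GerardKappelerTopalov2020}, the generating-function identities for $\zeta_n$ and their first derivatives — in particular the formulas expressing $\d\lambda_n[u].h$ and $\d(\langle f_n|1\rangle)[u].h$ as spectral sums. Specializing $h = e^{\pm ix}$ and using that multiplication by $e^{ix}$ intertwines the shifted Lax operators (the key algebraic relation underlying the whole Birkhoff construction, namely $L_{u}$ composed with the shift relates to $L_u$ on $L^2_+$ shifted by one), I expect the $e^{ix}$-derivative to collapse to a sum of a "diagonal" piece proportional to $\zeta_n$ times a bilinear expression $\sum_k (\cdots)\overline{\zeta_k}\zeta_{k+1}$ and an "off-diagonal" piece proportional to $\zeta_{n+1}$; symmetrically, the $e^{-ix}$-derivative produces a piece proportional to $\zeta_{n-1}$ together with the conjugate bilinear diagonal term $\sum_k(\cdots)\zeta_k\overline{\zeta_{k+1}}$. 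Adding and subtracting the two then yields the claimed form for $\d\zeta_n[u].\cos$ and $\d\zeta_n[u].\sin$ with the same functions $p_n^*, q_n^*, A_{n,k}^*, B_{n,k}^*$ of the actions $\gamma$. The fact that the coefficients depend only on $\gamma = (|\zeta_k|^2)_k$, and not on the phases, should follow from the gauge covariance of the Birkhoff map under the torus action $\zeta_k \mapsto e^{i\theta_k}\zeta_k$: the combinations $\overline{\zeta_k}\zeta_{k+1}$ and $\zeta_k\overline{\zeta_{k+1}}$ are exactly the phase-covariant monomials of the right weight, and matching phases on both sides forces the prefactors to be phase-independent.

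For the analytic estimates, I would use that $\lambda_n(u) - n$ and the relevant spectral quantities are bounded in terms of $\|u\|_{L^2} \approx \|\gamma\|_{\ell^1_+}^{1/2}$, together with the known product formulas (convergent infinite products over the $\lambda_k$'s) that express the $p_n^*, q_n^*$ and the kernels $A_{n,k}^*, B_{n,k}^*$; uniform boundedness on $\{\|\gamma\|_{\ell^1_+} \le R\}$ and the $\classeC^1$ character with Lipschitz bounds on the differentials then come from differentiating these products and estimating term by term, exactly as in the regularity statements for the Birkhoff map already quoted from \cite{GerardKappelerTopalov2020, GerardKappelerTopalov2020-2}. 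The main obstacle I anticipate is bookkeeping: carrying out the resummation of the spectral sums into the clean three-term-plus-bilinear form, and in particular isolating which contributions are genuinely "off-diagonal" (the $\zeta_{n\pm1}$ terms) versus which recombine into the $\zeta_n$ prefactor, requires careful use of the interlacing relations and the explicit normalization constants $\kappa_n$ in the definition of $\zeta_n$; the estimates themselves, while lengthy, are routine once the structure is in place.
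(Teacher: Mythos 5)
Your plan follows essentially the same route as the paper: differentiate the spectral formulas for $\zeta_n$ via eigenvalue/eigenfunction perturbation theory for $L_u$, specialize to $h=e^{\pm ix}$ so that the matrix elements $\langle f_p|Sf_n\rangle$ of the shift operator produce the $\zeta_{n\pm1}$ terms and the $\overline{\zeta_k}\zeta_{k+1}$, $\zeta_k\overline{\zeta_{k+1}}$ blocks, and bound everything through $\kappa_n,\mu_n,\lambda_n$ viewed as functions of $\gamma$ on finite balls of $\ell^1_+$. One caveat: the paper obtains the action-only dependence of the coefficients directly from these explicit formulas rather than from the gauge-covariance heuristic you invoke (which does not work as stated, since the fixed direction $\cos$ is not invariant under the torus action), and the genuinely delicate step you should not gloss over is the diagonal part of $\d f_n[u].h$ determined by the phase normalizations $\langle\un|f_0\rangle>0$ and $\langle f_k|Sf_{k-1}\rangle>0$, which is what produces the $\zeta_n$-proportional contribution.
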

Precise formulas for $p_n^*$ and $q_n^*$ in the case of $\d\zeta_n[u].\cos$ are given by equalities~\eqref{def:pn} and~\eqref{def:qn} below.

Similarly, as proven in~\cite{GerardKappelerTopalov2020}, Lemma 13, there exist functions $a_k^*$ of the actions, which are uniformly bounded and Lipschitz on finite balls of $\ell^1_+$, such that
\begin{equation}\label{eq:<u|e^ix>}
\langle u|\cos\rangle-i\langle u|\sin\rangle
	=-\sum_{k\geq 0}a_k^*(\gamma)\overline{\zeta_k}\zeta_{k+1}.
\end{equation}

The decomposition of the vector field into blocks of the form
\begin{equation}\label{eq:A}
A(\zeta):=\sum_{k\geq 0}A_{k}^*(\gamma)\overline{\zeta_k}\zeta_{k+1},
\end{equation}
where $A_k^*$ are functions of the actions only, enables us to get bounds and Lipschitz estimates on finite balls, so that one can apply the Cauchy-Lipschitz theorem for ODEs. Thanks to this, we avoid going back and forth from $u$ to~$\zeta$ in the equation, and we do not need to use analicity results on the Birkhoff map (see~\cite{GerardKappelerTopalov2020-2} for results in this direction).

We now explain the main idea in the study of the long time asymptotics. Functions of the actions $\gamma$ are slowly varying, indeed, we will establish in inequality~\eqref{ineq:dot_gamma} below that if $\|u_0\|_{L^2(\T)}\leq R$, then for all $t\geq 0$,
\[
\sum_{n\geq 1}\left|\frac{\d}{\d t}\gamma_n(t)\right|
	\leq C(R)|\langle u(t)|e^{ix}\rangle|,
\]
where the right-hand side is square integrable over time thanks to the Lyapunov functional~\eqref{eq:normL2}. 
However, the Birkhoff coordinates $\zeta$ are highly oscillating over time, because of the phase factor $i\omega_n(\zeta)\zeta_n\approx in^2\zeta_n$ appearing in the formula~\eqref{eq:dot_zeta} of its time derivative. Moreover, interactions of different modes under the form $\overline{\zeta_k}\zeta_{k+1}\zeta_n\overline{\zeta_{n+1}}$, $k\neq n$, are also highly oscillating over time,  again because of the phase factor
\[i(-\omega_k(\zeta)+\omega_{k+1}(\zeta)+\omega_n(\zeta)-\omega_{n+1}(\zeta))\overline{\zeta_k}\zeta_{k+1}\zeta_n\overline{\zeta_{n+1}}
	\approx 2i(k-n)\overline{\zeta_k}\zeta_{k+1}\zeta_n\overline{\zeta_{n+1}}
\]
in the time derivative. By analogy with usual Fourier coefficients, such highly oscillating terms have a small time integral. We retrieve this fact here by performing an integration by parts.

Using these remarks, we start from the observation that $|\langle u|e^{ix}\rangle|^2$ is integrable over time, and expand the square modulus of formula~\eqref{eq:<u|e^ix>}. By discarding the interactions of different modes $\overline{\zeta_k}\zeta_{k+1}\zeta_n\overline{\zeta_{n+1}}$, $k\neq n$, we deduce that 
\[
\int_0^{+\infty}\sum_{n=0}^{+\infty}\gamma_n(t)\gamma_{n+1}(t)\d t<+\infty
\]
(see Proposition~\ref{prop:I(T,T')_n,p} for more details). Successive integration by parts allow us to go even further and prove that for higher and higher exponents s,
\[
\int_0^{+\infty}\sum_{n=0}^{+\infty}n^{2s}\gamma_n(t)\gamma_{n+1}(t)\d t<+\infty.
\]
Those latter integrals enable us to control the time evolution of our candidate higher-order Lyapunov functionals.

\subsection{Open questions}\label{part:open_questions}

Let us now mention some open questions about equation~\eqref{eq:damped}.

First, thanks to the Lyapunov functional~\eqref{eq:normL2} and Theorem~\ref{thm:LaSalle}, one knows that the map
\[u_0\in L^2_{r,0}(\T)\mapsto \lim_{t\to+\infty}\|u(t)\|_{L^2}^2\in\R_+\] is well-defined, upper semi-continuous, and that for every weak limit point $u_\infty$ of the sequence $(u(t))_{t\geq 0}$ as $t\to+\infty$, one has $\lim_{t\to+\infty}\|u(t)\|_{L^2}^2=\|u_{\infty}\|_{L^2}^2$. It would be interesting to have more information about this map, for instance, one could ask whether it is possible to have $u_\infty=0$ for some nonzero initial data $u_0$. According to numerical simulations implemented by Klein~\cite{Klein_numerics2020}, there exist families of initial data which seem not to decay to zero as time goes to infinity. For instance, this seems to be the case for a family of potentials with only one nonzero gap $\gamma_1$ at index $1$, given for some $r\in[0,1[$ by (see~\cite{GerardKappeler2019}, Appendix B)
\[
u_0(x)=\frac{1-r^2}{1-2\cos(x)r+r^2}-1.
\]

Finally, one knows from Theorem~\ref{thm:LaSalle}, point 2, that given an initial data $u_0$, all weak limit points $u_\infty$ have the same actions $(\gamma_n^{\infty})_n$, so that these limit points belong to the same Benjamin-Ono torus. One could ask if for some initial data $u_0$, it is possible to provide a precise description of the limiting torus. Again, the numerical simulations from Klein~\cite{Klein_numerics2020} suggest that even for potentials with only one nonzero gap $\gamma_1$ at index $1$, the weak limits could have more than one gap. This is linked to the structure of the equation in Theorem~\ref{thm:dzeta.cos}, since the terms $p_n^*$ and $q_n^*$ given by formulas~\eqref{def:pn} and~\eqref{def:qn} are nonzero and lead to the emergence of new modes in the time evolution. As a consequence, finite gap manifolds are not invariant by the flow of equation~\eqref{eq:damped}, whereas as a comparison, there exist finite dimensional invariant manifolds for equation~\eqref{eq:damped_szego} (see~\cite{GerardGrellier2019}).

\subsection{Plan of the paper}
First, in section~\ref{section:structure}, we write the damped Benjamin-Ono equation~\eqref{eq:damped} in Birkhoff coordinates as a system of ODEs, in particular, we prove Theorem~\ref{thm:dzeta.cos} about the decomposition of the terms $\d\zeta_n[u].\cos$ and $\d\zeta_n[u].\sin$.

Then, in section~\ref{section:flow_map}, we establish the global well-posedness of the damped Benjamin-Ono equation~\eqref{eq:damped} and the weak sequential continuity of the solution map (Theorem~\ref{thm:GWP}).

Section~\ref{section:limit_points} is devoted to the characterization of the weak limit points for the trajectories according to the LaSalle principle, and the fact that the convergence is actually strong in~$L^2_{r,0}(\T)$ (Theorem~\ref{thm:LaSalle}).

Finally, we bound the higher-order Sobolev norms in section~\ref{section:higher_Hs} by studying corresponding Lyapunov functionals in Birkhoff coordinates (Theorem~\ref{thm:higher_Hs}).

\paragraph{Acknowledgments} The author would like to thank her PhD advisor Patrick Gérard who introduced her to this problem and provided generous advice and encouragement. She also warmly thanks Christian Klein for valuable discussions and numerical simulations.

\section{Structure of the equation}\label{section:structure}

Let us consider the flow map for the damped Benjamin-Ono equation~\eqref{eq:damped} in Birkhoff coordinates. Fix $n\geq 1$, then the time derivative of the $n$-th Birkhoff coordinate $\zeta_n$ is
\begin{equation*}
\frac{\d}{\d t}\zeta_n(u(t))
	=\d\zeta_n[u(t)].\partial_tu(t).
\end{equation*}
Using the partial differential equation~\eqref{eq:damped} satisfied by $u$, this time derivative  becomes~\eqref{eq:dot_zeta}
\begin{equation*}
\frac{\d}{\d t}\zeta_n(u(t))
	=i\omega_n(\zeta(u(t)))\zeta_n(u(t))-\alpha\left(\langle u(t)|\cos\rangle \d\zeta_n[u(t)].\cos +\langle u(t)|\sin\rangle \d\zeta_n[u(t)].\sin\right),
\end{equation*}
with $\omega_n(\zeta)=n^2-2\sum_{k\geq 1}\min(k,n)|\zeta_k|^2$.

In this section, we simplify the different terms of the right-hand side of this equation, then study their Lipschitz properties.
Part~\ref{part:def_spectral} is devoted to the study of the terms which only depend on the actions $\gamma_n=|\zeta_n|^2$, $n\geq 1$, for which we establish bounds and Lipschitz properties on balls of finite radius in~$\ell^1_+$. In part~\ref{part:oscillating_terms}, we observe that blocks under the form~\eqref{eq:A}
\begin{equation*}
A(\zeta)=\sum_{k\geq 0}A_{k}^*(\gamma)\overline{\zeta_k}\zeta_{k+1},
\end{equation*}
where $A_k^*$ is a function of the actions $\gamma$ only, are also bounded and Lipschitz on finite balls as soon as the $A_k^*$ are. This is for instance the case for $\langle u|\cos\rangle-i\langle u|\sin\rangle$, which can be expressed under this form. In part~\ref{part:differential_birkhoff}, we establish a formula for the differential~$\d\zeta_n[u].h$ of the $n$-th Birkhoff coordinate at $u\in L^2_{r,0}(\T)$ applied to $h\in L^2_{r,0}(\T)$. Finally, in part~\ref{part:dzeta.cos}, we simplify this formula when $h=\cos,\sin$ and prove Theorem~\ref{thm:dzeta.cos}. Since $\d\zeta_n[u].\cos$ and $\d\zeta_n[u].\sin$ have an expression based on blocks of the above type~\eqref{eq:A}, we deduce bounds and Lipschitz properties on finite balls for these terms.

\subsection{Functions of the actions}\label{part:def_spectral}

In this part, we consider some spectral parameters which will appear all along our study of the damped Benjamin-Ono equation~\eqref{eq:damped}. These parameters are functions of the actions,
defined for $n\geq 1$ as
\(
\gamma_n(u)=|\zeta_n(u)|^2.
\)

First, let us recall some links between the Lax operator for the Benjamin-Ono equation (see~\cite{Nakamura1979},~\cite{BockKruskal1979}) and the actions. With the notation from~\cite{GerardKappeler2019}, Appendix A, the Lax operator $L_u$ writes
\(
L_u=D-T_u,
\)
where $ D=-i\partial_x$, and $T_u$ is the Toeplitz operator
\[
T_u:h\in L^2_+(\T)\mapsto \Pi(uh)\in L^2_+(\T).
\]
defined from the Szeg\H{o} projector $\Pi:L^2(\T)\to L^2_+(\T)$, where
\[
L^2_+(\T)=\{h\in L^2(\T)\mid \forall n<0,\quad \widehat{h}(n)=0\}.
\]

The eigenvalues $(\lambda_n)_{n\geq 0}$ of $L_u$ satisfy (see identity (3.13) in~\cite{GerardKappeler2019})
\begin{equation*}
\lambda_n(u)=n-\sum_{k=n+1}^{+\infty}\gamma_k(u), \quad n\geq 0.
\end{equation*}
Therefore, for all $n,p\in\N$, we have the inequalities
\[
|p-n|\leq |\lambda_p-\lambda_n|\leq |p-n|+\sum_{k\geq 1}\gamma_k.
\]

We now define the spectral parameters $\kappa_n$ and $\mu_n$ as in Corollary~3.4 and formula~(4.9) from~\cite{GerardKappeler2019}.
\begin{mydef}\label{def:kappa}
Let $u\in L^2_{r,0}(\T)$. We define
\[
\kappa_0(u)=\prod_{p=1}^{+\infty}\left(1-\frac{\gamma_p}{\lambda_p-\lambda_0}\right)
\]
and for $n\geq 1$,
\[
\kappa_n(u)=\frac{1}{\lambda_n-\lambda_0}\prod_{\substack{p=1 \\p\neq n}}^{+\infty}\left(1-\frac{\gamma_p}{\lambda_p-\lambda_n}\right).
\]
For $n\geq 1$, we also set
\begin{equation*}
\mu_n(u)
	=\left(1-\frac{\gamma_n}{\lambda_n-\lambda_0}\right)
	\prod_{\substack{p=1 \\p\neq n}}^{+\infty}
	\frac{\left(1-\frac{\gamma_p}{\lambda_p-\lambda_n}\right)}
	{\left(1-\frac{\gamma_p}{\lambda_p-\lambda_{n-1}-1}\right)}.
\end{equation*}
\end{mydef}

The parameters $\kappa_n$ and $\mu_n$ are bounded above and below.

\begin{lem}\label{lem:kappa}
For all $R>0$, there exists $C(R)>0$ such that for all $u\in L^2_{r,0}(\T)$ satisfying $\|\gamma(u)\|_{\ell^1_+}\leq R$ the following holds: for all $n\in\N$,
\[
\frac{1}{C(R)(n+1)}\leq \kappa_n(u)\leq \frac{C(R)}{n+1},
\]
moreover, for all $n\geq 1$,
\[
\frac{1}{C(R)}
	\leq \mu_n(u)
	\leq C(R).
\]
\end{lem}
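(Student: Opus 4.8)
\emph{Proof proposal.} The whole argument rests on making the explicit eigenvalue formula $\lambda_m(u)=m-\sum_{k>m}\gamma_k(u)$ do the work. First I would record the exact gaps: for $p>n$,
\[
\lambda_p-\lambda_n=(p-n)+\sum_{k=n+1}^{p}\gamma_k,\qquad \lambda_n-\lambda_0=n+\sum_{k=1}^{n}\gamma_k,
\]
so that $|p-n|\le|\lambda_p-\lambda_n|\le|p-n|+R$ and $n\le\lambda_n-\lambda_0\le n+R$ whenever $\|\gamma\|_{\ell^1_+}\le R$; in particular $\gamma_p/|\lambda_p-\lambda_n|\le\gamma_p/|p-n|\le\gamma_p$, which guarantees absolute convergence of all the products in play. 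I would also use throughout the elementary inequality $\log(1-x)\ge-2x$ valid for $0\le x\le\tfrac12$.

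\emph{The parameter $\kappa_n$.} For $n\ge1$ I would split the defining product into the factors with $p<n$, which are all $\ge1$ and whose product is at most $\prod_{p<n}(1+\gamma_p)\le e^{R}$, and the factors with $p>n$, each of which lies in $(0,1)$ because $\lambda_p-\lambda_n\ge(p-n)+\gamma_p>\gamma_p$. Together with $\tfrac1{\lambda_n-\lambda_0}\le\tfrac1n$ this already gives $\kappa_n\le 2e^{R}/(n+1)$, and $\kappa_0\le1$ directly. For the lower bounds the key device is to separate the finitely many \emph{resonant} indices $|p-n|\le 2R$, of which there are $O(R)$ independently of $n$, from the non-resonant tail. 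A resonant factor with $p>n$ is $\ge(p-n)/((p-n)+\gamma_p)\ge 1/(1+R)$; a non-resonant one has $\gamma_p/(\lambda_p-\lambda_n)\le R/(p-n)\le\tfrac12$, hence contributes at least $e^{-2\gamma_p}$, so the tail costs only $e^{-2R}$ in the product. The factors with $p<n$ only help. This gives $\kappa_n\ge c(R)/(n+1)$ for $n\ge1$ with $c(R)$ depending on $R$ alone; running the same split over all $p\ge1$ handles $\kappa_0$.

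\emph{The parameter $\mu_n$.} The decisive observation is the telescoping identity $\lambda_{n-1}+1=\lambda_n-\gamma_n$, immediate from the eigenvalue formula, which rewrites the definition of $\mu_n$ as
\[
\mu_n=\Bigl(1-\frac{\gamma_n}{\lambda_n-\lambda_0}\Bigr)\prod_{\substack{p\ge1\\p\ne n}}\frac{1-\gamma_p/(\lambda_p-\lambda_n)}{1-\gamma_p/(\lambda_p-\lambda_n+\gamma_n)}.
\]
The prefactor lies in $[\tfrac1{1+R},1]$ since $\lambda_n-\lambda_0\ge1+\gamma_n$. For the product, a short computation, separating $p>n$ from $p<n$ (where the two denominators become negative), collapses each factor into $1-\gamma_n\gamma_p/d_{n,p}$, where $d_{n,p}$ is a product of two positive numbers each $\ge|p-n|$, so that $d_{n,p}\ge(|p-n|+\gamma_n)(|p-n|+\gamma_p)\ge|p-n|^2+\gamma_n\gamma_p\ge1+\gamma_n\gamma_p$. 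Hence every factor lies in $(0,1)$, which already gives $\mu_n\le1$, and for the lower bound I would run exactly the same resonant/non-resonant dichotomy as for $\kappa_n$, now with the summable weights $\gamma_n\gamma_p$ whose total is $\le\gamma_nR\le R^2$: resonant factors are $\ge1/(1+R^2)$, the non-resonant tail costs at least $e^{-2R^2}$ in the product, and there are only $O(R)$ resonant indices. This yields $\mu_n\ge c'(R)>0$ uniformly in $n$.

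\emph{Main obstacle.} The size and convergence bookkeeping is routine; the genuinely delicate point is the lower bound for the infinite product in $\mu_n$, which is a priori a quotient of two merely comparable products. What makes it tractable — and, crucially, uniform in $n$ — is first recognizing the identity $\lambda_{n-1}+1=\lambda_n-\gamma_n$ and then the cancellation that reduces each quotient factor to $1-\gamma_n\gamma_p/d_{n,p}$ with $d_{n,p}\gtrsim|p-n|^2$. The other point requiring care, in every product, is that the number of "resonant" indices must be controlled by a function of $R$ and not of $n$, which is why one splits at $|p-n|\asymp R$ rather than at $p\asymp R$.
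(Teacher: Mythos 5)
Your proposal is correct, and its backbone — the explicit eigenvalue formula, the gap bounds $|\lambda_p-\lambda_n|\geq|p-n|$ (and $\geq |p-n|+\gamma_p$ for $p>n$), and $\ell^1$-summability of the actions to control infinite products — is exactly what the paper uses. The differences are in the bookkeeping. For the lower bounds you introduce a resonant/non-resonant dichotomy at $|p-n|\asymp R$ together with $\ln(1-x)\geq-2x$; the paper avoids any splitting by observing that \emph{every} factor with $p>n$ satisfies $\frac{\gamma_p}{\lambda_p-\lambda_n}\leq\frac{\gamma_p}{1+\gamma_p}\leq\frac{R}{1+R}$, so that $\ln$ is $C(R)$-Lipschitz on the relevant interval and $\bigl|\sum_{p\neq n}\ln\bigl(1-\frac{\gamma_p}{\lambda_p-\lambda_n}\bigr)\bigr|\leq C(R)\sum_p\gamma_p\leq C(R)R$ in one stroke, giving upper and lower bounds simultaneously; your split is therefore harmless but not needed. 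For $\mu_n$ you go further than the paper: using $\lambda_{n-1}+1=\lambda_n-\gamma_n$ you collapse each quotient factor into $1-\gamma_n\gamma_p/d_{n,p}$ with $d_{n,p}\geq(|p-n|+\gamma_n)(|p-n|+\gamma_p)$, exploiting cancellation between numerator and denominator, whereas the paper simply runs the same log-sum estimate on the numerator product and the denominator product separately (for the denominator, $\lambda_p-\lambda_{n-1}-1=\lambda_p-\lambda_n+\gamma_n\geq 1+\gamma_p$ when $p>n$, and the factors with $p<n$ lie in $[1,1+\gamma_p]$), and concludes "the proof is the same". Your treatment of $\mu_n$ is finer than necessary for the stated bounds, but it is sound and would even yield the sharper information that the deviation of $\mu_n$ from $1$ is controlled by $\gamma_n\|\gamma\|_{\ell^1_+}$; the paper's route is shorter and uniform across $\kappa_n$ and $\mu_n$.
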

\begin{proof}
$\bullet $ As a first step, we prove that for all $n\in\N$,
\[
\left|\sum_{\substack{p=1 \\p\neq n}}^{+\infty}\ln\left(1-\frac{\gamma_p}{\lambda_p-\lambda_n}\right)\right|
	\leq C(R).
\]
Let us first show that the sum of logarithms has a lower bound. For $p<n$, since $\lambda_p<\lambda_n$, we have $-\ln(1-\frac{\gamma_p}{\lambda_p-\lambda_n})\leq 0$. Consequently, we discard the negative terms in the following sum:
\begin{align*}
-\sum_{p\neq n}\ln\left(1-\frac{\gamma_p}{\lambda_p-\lambda_n}\right)
	\leq -\sum_{p\geq n+1}\ln\left(1-\frac{\gamma_p}{\lambda_p-\lambda_n}\right).
\end{align*}
For $p\geq n+1$, since by assumption $\gamma_p\in[0,R]$,
\begin{align*}
0
	\leq\frac{\gamma_p}{\lambda_p-\lambda_n}
	\leq \frac{\gamma_p}{1+\gamma_p}
	\leq \frac{R}{1+R},
\end{align*}
therefore one can use that $\ln$ is $C(R)$-Lipschitz on $[\frac{1}{1+R},1]$ to get
\begin{align*}
-\sum_{p\geq n+1}\ln\left(1-\frac{\gamma_p}{\lambda_p-\lambda_n}\right)
	&\leq C(R)\sum_{p\geq n+1}\frac{\gamma_p}{\lambda_p-\lambda_n}\\
	&\leq C(R)R.
\end{align*}

Similarly, we find an upper bound for the sum of logarithms. We first discard the negative terms in the sum
\begin{align*}
\sum_{p\neq n}\ln\left(1-\frac{\gamma_p}{\lambda_p-\lambda_n}\right)
	\leq \sum_{p\leq n-1}\ln\left(1+\frac{\gamma_p}{\lambda_n-\lambda_p}\right).
\end{align*}
Then, since $0\leq \frac{\gamma_p}{\lambda_n-\lambda_p}\leq R$ for $p\leq n-1$, and since $\ln$ is $C(R)$-Lipschitz on $[1,R+1]$, we get
\begin{align*}
\sum_{p\leq n-1}\ln\left(1+\frac{\gamma_p}{\lambda_n-\lambda_p}\right)
	&\leq C(R)\sum_{p\leq n-1}\frac{\gamma_p}{\lambda_n-\lambda_p}\\
	&\leq C(R)R.
\end{align*}

$\bullet$ Now, we prove the inequalities for $\kappa_n$, $n\in\N$. 
For $n\geq 1$, we have
\begin{align*}
|\ln(\lambda_n-\lambda_0)+\ln(\kappa_n)|
	&=\left|\sum_{p\neq n}\ln\left(1-\frac{\gamma_p}{\lambda_p-\lambda_n}\right)\right|\\
	&\leq C(R),
\end{align*}
where $|\lambda_n-\lambda_0-n|\leq C(R)$.
Similarly,
\begin{align*}
|\ln(\kappa_0)|
	&=\left|\sum_{p\geq 1}\ln\left(1-\frac{\gamma_p}{\lambda_p-\lambda_0}\right)\right|\\
	&\leq C(R).
\end{align*}
The proof is the same for $\mu_n$, $n\geq 1$.
\end{proof}

Since the formulas for $\kappa_n$ and $\mu_n$ only depend on the sequence of actions $\gamma(u)=(|\zeta_n(u)|^2)_n$, one can also consider these parameters as functions of the variable $\gamma\in\ell^1_+$ only. In order to avoid confusion, we will denote $\kappa^*_n(\gamma)$ and $\mu^*_n(\gamma)$ in this case. For all $u\in L^2_{r,0}(\T)$, we therefore have the link
\[
\kappa_n^*(\gamma(u))=\kappa_n(u)
\quad
\text{and}
\quad
\mu_n^*(\gamma(u))=\mu_n(u).
\]

We now estimate the differential of the spectral parameters $\kappa_n^*$ and $\mu_n^*$ with respect to the action variables. 

\begin{lem}\label{lem:dkappa}
Let $R>0$ Then there exists $C(R)>0$ such that for all $\gamma\in\ell^1_+$ satisfying $\|\gamma\|_{\ell^1_+}\leq R$ and for all $h\in\ell^1_+$,
\begin{equation*}
|\d\ln(\kappa_n^*)[\gamma].h|\leq C(R)\|h\|_{\ell^1_+},
	\quad n\geq 0
\end{equation*}
and
\begin{equation*}
|\d\ln(\mu_n^*)[\gamma].h|\leq C(R)\|h\|_{\ell^1_+},
	\quad n\geq 1.
\end{equation*}
\end{lem}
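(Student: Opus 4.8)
The strategy is to differentiate the explicit product formulas for $\kappa_n^*$ and $\mu_n^*$ from Definition~\ref{def:kappa} term by term, reducing everything to the estimate on logarithmic sums already obtained in the proof of Lemma~\ref{lem:kappa}. The key point is that $\ln\kappa_n^*$ and $\ln\mu_n^*$ are, up to the harmless factor $-\ln(\lambda_n-\lambda_0)$, sums of terms of the form $\ln\bigl(1-\tfrac{\gamma_p}{\lambda_p-\lambda_m}\bigr)$, and each such term depends on $\gamma$ both directly (through the numerator $\gamma_p$) and indirectly (through the eigenvalues $\lambda_q = q - \sum_{k>q}\gamma_k$, which are affine in $\gamma$ with $\ell^\infty$-bounded coefficients).

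First I would record the differential of the eigenvalues: since $\lambda_q(\gamma) = q - \sum_{k > q}\gamma_k$, we have $\d\lambda_q[\gamma].h = -\sum_{k>q} h_k$, so $|\d\lambda_q[\gamma].h| \le \|h\|_{\ell^1_+}$ uniformly in $q$, and likewise $|\d(\lambda_p - \lambda_m)[\gamma].h| \le \|h\|_{\ell^1_+}$. Next, for $p \neq m$ with $\|\gamma\|_{\ell^1_+}\le R$, set $x_{p,m}(\gamma) = \tfrac{\gamma_p}{\lambda_p - \lambda_m}$; the bounds used in Lemma~\ref{lem:kappa} give $x_{p,m} \in \bigl[-R,\tfrac{R}{1+R}\bigr]$ when $p > m$ (resp.\ a symmetric interval when $p<m$), so $1 - x_{p,m}$ stays in a fixed compact subinterval of $(0,+\infty)$ and $\ln$ has bounded derivative there. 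Differentiating, $\d\ln(1-x_{p,m})[\gamma].h = -\tfrac{\d x_{p,m}[\gamma].h}{1-x_{p,m}}$, and using the quotient rule on $x_{p,m}$ together with $|\lambda_p-\lambda_m|\ge |p-m| \ge 1$ for $p\ne m$ and $\gamma_p \le R$, one gets
\[
|\d\ln(1-x_{p,m})[\gamma].h| \le C(R)\bigl(|h_p| + \gamma_p\,\|h\|_{\ell^1_+}\bigr).
\]

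Summing this bound over the relevant $p$ and invoking $\sum_p |h_p| = \|h\|_{\ell^1_+}$ and $\sum_p \gamma_p \le R$ yields $\bigl|\sum_{p\ne n}\d\ln(1-x_{p,n})[\gamma].h\bigr|\le C(R)\|h\|_{\ell^1_+}$; adding $|\d\ln(\lambda_n-\lambda_0)[\gamma].h| \le \tfrac{\|h\|_{\ell^1_+}}{|\lambda_n-\lambda_0|} \le C(R)\|h\|_{\ell^1_+}$ (using $\lambda_n - \lambda_0 \ge n \ge 1$ for $n\ge 1$, and handling $n=0$ directly from the product for $\kappa_0$) gives the claim for $\kappa_n^*$. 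For $\mu_n^*$ the formula is a ratio of two such products with shifted indices $\lambda_n$ and $\lambda_{n-1}+1$; since $\lambda_{n-1}+1 = \lambda_n + \gamma_n$ differs from $\lambda_n$ by $\gamma_n \in [0,R]$, the same interval estimates apply to the denominators, and $\d\ln\mu_n^* = \d\ln\bigl(1-\tfrac{\gamma_n}{\lambda_n-\lambda_0}\bigr) + \sum_{p\ne n}\bigl(\d\ln(1-x_{p,n}) - \d\ln(1-\widetilde{x}_{p,n})\bigr)$ with $\widetilde{x}_{p,n}=\tfrac{\gamma_p}{\lambda_p - \lambda_{n-1}-1}$, each term estimated exactly as above.

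The main obstacle — and the only point requiring care — is confirming that the differentiation of the infinite products is legitimate and that all the series converge absolutely and uniformly on $\|\gamma\|_{\ell^1_+}\le R$, i.e.\ that $\kappa_n^*$ and $\mu_n^*$ are genuinely $\classeC^1$ on $\ell^1_+$ rather than merely formally differentiable; this is handled by the normal convergence of $\sum_{p\ne n}\ln(1-x_{p,n})$ established in Lemma~\ref{lem:kappa} together with the normal convergence of the differentiated series just bounded, which lets one differentiate under the sum. Everything else is the routine quotient-rule bookkeeping sketched above, with the uniform lower bound $|p - m| \ge 1$ on eigenvalue gaps doing all the work of controlling denominators.
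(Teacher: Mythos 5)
Your proof is correct and follows essentially the same route as the paper: differentiate the explicit product formulas for $\kappa_n^*$ and $\mu_n^*$ term by term, use $|\d\lambda_q^*[\gamma].h|\le\|h\|_{\ell^1_+}$ together with the lower bounds on $1-\tfrac{\gamma_p}{\lambda_p-\lambda_n}$ and on the eigenvalue gaps $|\lambda_p-\lambda_n|\ge|p-n|$, and sum over $p$. (Only a harmless slip: $\lambda_{n-1}+1=\lambda_n-\gamma_n$, not $\lambda_n+\gamma_n$; the magnitude estimate you invoke is unaffected.)
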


\begin{proof}
For instance in the case $n\geq 1$, the differential of $\ln(\kappa_n^*)$ at $\gamma$ is given by
\begin{multline*}
\d\ln(\kappa_n^*)[\gamma].h
	=-\frac{\d\lambda_n^*[\gamma].h-\d\lambda_0^*[\gamma].h}{\lambda_n-\lambda_0}\\
	+\sum_{p\neq n}\frac{1}{1-\frac{\gamma_p}{\lambda_p-\lambda_n}}\left(\frac{\d\lambda_p^*[\gamma].h-\d\lambda_n^*[\gamma].h}{(\lambda_p-\lambda_n)^2}\gamma_p-\frac{h_p}{\lambda_p-\lambda_n}\right),
\end{multline*}
where $h=(h_p)_{p\geq 1}\in\ell^1_+$. We write $\d\lambda_n^*$ in order to specify that we differentiate the eigenvalue $\lambda_n=n-\sum_{k\geq n+1}\gamma_k$ as a function of $\gamma$. In particular, for all $\gamma,h\in\ell^1_+$, we have
\[
|\d\lambda_n^*[\gamma].h|\leq \|h\|_{\ell^1_+}.
\]
The conclusion follows. We proceed similarly for  $\ln(\kappa_0^*)$ and for $\ln(\mu_n^*)$, $n\geq 1$.
\end{proof}

\subsection{Lipschitz properties of blocks}\label{part:oscillating_terms}

In this part, we start from bounds and Lipschitz properties on finite balls for general blocks of the form~\eqref{eq:A}
\begin{equation*}
A(\zeta)=\sum_{k\geq 0}A_{k}^*(\gamma)\overline{\zeta_k}\zeta_{k+1}.
\end{equation*}
We then prove that the terms $\langle u|\cos\rangle$ and $\langle u|\sin\rangle$ can be written under this form, then deduce bounds and Lipschitz estimates on finite balls.

\begin{prop}\label{prop:A(zeta)}
Let $A_{k}^*$, $k\geq 0$, be uniformly bounded and Lipschitz functions on finite balls of~$\ell^1_+$: for all $R>0$, there exists $C_0(R)>0$ such that for all $\gamma^{(1)},\gamma^{(2)}\in\ell^1_+$  satisfying $\|\gamma^{(1)}\|_{\ell^1_+}\leq R$ and $\|\gamma^{(2)}\|_{\ell^1_+}\leq R$, then for all $k\geq 0$,
\[
|A_k^*(\gamma^{(1)})|+|A_k^*(\gamma^{(2)})|\leq C_0(R)
\]
and
\[
|A_k^*(\gamma^{(1)})-A_k^*(\gamma^{(2)})|
	\leq C_0(R)\|\gamma^{(1)}-\gamma^{(2)}\|_{\ell^1_+}.
\]
For $\zeta\in\ell^2_+$  we write $\gamma=(|\zeta_n|^2)_{n\geq 1}$ and adopt the convention $\zeta_0=1$. Let us consider
\[
A(\zeta):=\sum_{k\geq 0}A_{k}^*(\gamma)\overline{\zeta_k}\zeta_{k+1}.
\]
Then $A$ is a bounded Lipschitz function of $\zeta$ on balls of finite radius in $\ell^{2}_+$: there exists $C(R)>0$ such that for all $\zeta^{(1)}, \zeta^{(2)}\in \ell^2_+$ satisfying $\|\zeta^{(1)}\|_{\ell^2_+}^2\leq R$ and $\|\zeta^{(2)}\|_{\ell^2_+}^2\leq R$, there holds
\[
|A(\zeta^{(1)})|+|A(\zeta^{(2)})|\leq C(R)
\]
and
\[
|A(\zeta^{(1)})-A(\zeta^{(2)})|
	\leq C(R)\|\zeta^{(1)}-\zeta^{(2)}\|_{\ell^2_+}.
\]
\end{prop}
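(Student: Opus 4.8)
The plan is to estimate the series $A(\zeta)=\sum_{k\geq 0}A_k^*(\gamma)\overline{\zeta_k}\zeta_{k+1}$ directly, using the uniform bound on the coefficients $A_k^*$ together with the Cauchy--Schwarz inequality, and then to handle the Lipschitz estimate by the standard telescoping decomposition into a term where the coefficients vary and terms where the sequence entries vary. First I would establish the pointwise bound: with $\|\zeta\|_{\ell^2_+}^2\leq R$, the associated actions satisfy $\|\gamma\|_{\ell^1_+}=\sum_{n\geq 1}|\zeta_n|^2\leq R$, so $|A_k^*(\gamma)|\leq C_0(R)$ for every $k$. Hence
\[
|A(\zeta)|
	\leq C_0(R)\sum_{k\geq 0}|\zeta_k|\,|\zeta_{k+1}|
	\leq C_0(R)\Bigl(\sum_{k\geq 0}|\zeta_k|^2\Bigr)^{\!1/2}\Bigl(\sum_{k\geq 0}|\zeta_{k+1}|^2\Bigr)^{\!1/2},
\]
and since $\zeta_0=1$ the first factor is $(1+\|\zeta\|_{\ell^2_+}^2)^{1/2}\leq(1+R)^{1/2}$ and the second is $\|\zeta\|_{\ell^2_+}\leq R^{1/2}$, giving $|A(\zeta)|\leq C_0(R)\sqrt{R(1+R)}=:C(R)$. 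Applying this to $\zeta^{(1)}$ and $\zeta^{(2)}$ yields the first claimed inequality.

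For the Lipschitz estimate I would write, with $\gamma^{(j)}=(|\zeta_n^{(j)}|^2)_{n\geq 1}$,
\[
A(\zeta^{(1)})-A(\zeta^{(2)})
	=\sum_{k\geq 0}\bigl(A_k^*(\gamma^{(1)})-A_k^*(\gamma^{(2)})\bigr)\overline{\zeta_k^{(1)}}\zeta_{k+1}^{(1)}
	+\sum_{k\geq 0}A_k^*(\gamma^{(2)})\bigl(\overline{\zeta_k^{(1)}}\zeta_{k+1}^{(1)}-\overline{\zeta_k^{(2)}}\zeta_{k+1}^{(2)}\bigr),
\]
and further split the last difference of products as $\overline{\zeta_k^{(1)}-\zeta_k^{(2)}}\,\zeta_{k+1}^{(1)}+\overline{\zeta_k^{(2)}}\,(\zeta_{k+1}^{(1)}-\zeta_{k+1}^{(2)})$. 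For the second and third pieces, the uniform bound $|A_k^*(\gamma^{(2)})|\leq C_0(R)$ and Cauchy--Schwarz (exactly as above, using $\zeta_0^{(1)}-\zeta_0^{(2)}=0$ so the convention causes no trouble) give a bound $C(R)\|\zeta^{(1)}-\zeta^{(2)}\|_{\ell^2_+}$. For the first piece I would use the Lipschitz hypothesis $|A_k^*(\gamma^{(1)})-A_k^*(\gamma^{(2)})|\leq C_0(R)\|\gamma^{(1)}-\gamma^{(2)}\|_{\ell^1_+}$, pull this factor out of the sum, bound the remaining $\sum_k|\zeta_k^{(1)}||\zeta_{k+1}^{(1)}|\leq\sqrt{R(1+R)}$, and then convert the $\ell^1_+$ distance between actions into an $\ell^2_+$ distance between coordinates via
\[
\|\gamma^{(1)}-\gamma^{(2)}\|_{\ell^1_+}
	=\sum_{n\geq 1}\bigl||\zeta_n^{(1)}|^2-|\zeta_n^{(2)}|^2\bigr|
	=\sum_{n\geq 1}\bigl||\zeta_n^{(1)}|-|\zeta_n^{(2)}|\bigr|\,\bigl(|\zeta_n^{(1)}|+|\zeta_n^{(2)}|\bigr)
	\leq 2\sqrt{R}\,\|\zeta^{(1)}-\zeta^{(2)}\|_{\ell^2_+},
\]
where the last step is Cauchy--Schwarz together with the reverse triangle inequality $\bigl||\zeta_n^{(1)}|-|\zeta_n^{(2)}|\bigr|\leq|\zeta_n^{(1)}-\zeta_n^{(2)}|$. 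Collecting the three contributions gives the desired Lipschitz bound with a constant depending only on $R$.

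There is no serious obstacle here; the only mild subtlety is bookkeeping the $\zeta_0=1$ convention correctly (it is harmless because the two sequences share the same zeroth entry, so it drops out of every difference) and making sure the conversion from the $\ell^1_+$ metric on actions to the $\ell^2_+$ metric on coordinates is done cleanly, which is the inequality displayed just above. Everything else is a routine application of Cauchy--Schwarz and the triangle inequality, and the constant $C(R)$ can be tracked explicitly if desired.
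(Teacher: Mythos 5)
Your proof is correct, and it is exactly the ``quite direct'' argument the paper has in mind: the paper in fact omits the proof of this proposition, so your Cauchy--Schwarz bound, telescoping decomposition, and conversion of the $\ell^1_+$ distance on actions into the $\ell^2_+$ distance on coordinates (with the harmless $\zeta_0=1$ convention dropping out of the differences) supply precisely the intended routine details. Nothing is missing.
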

The proof is quite direct and omitted here.
%

We now establish a formula of the above form~\eqref{eq:A} for $\langle u|\cos\rangle$ and $\langle u|\sin\rangle$.

Let $(f_p)_{p\geq 0}$ be the orthonormal basis of eigenfunctions for the Lax operator $L_u$ associated to the Benjamin-Ono equation in $L^2_{r,0}(\T)$ uniquely determined by the additional conditions $\langle \un|f_0\rangle>0$ and $\langle f_{p}|Sf_{p-1}\rangle >0$ for $p\geq 1$ (see Definition 2.8 in~\cite{GerardKappeler2019}). The formula for $\langle u|\cos\rangle$ and $\langle u|\sin\rangle$ depends on the matrix~$M$ of the adjoint operator $S^*=T_{e^{-ix}}$ of the shift operator $S:h\in L^2_+(\T)\mapsto e^{ix}h\in L^2_+(\T)$.

\begin{mydef}\label{def:M}
For all $n,p\geq 0$, let
\begin{align*}
M_{n,p}
	&=\langle f_p|Sf_n\rangle\\
	&=\begin{cases}
\frac{\sqrt{\mu_{n+1}}}{\kappa_{n+1}}\langle f_p|\un\rangle\overline{\langle f_{n+1}|\un\rangle}\frac{1}{\lambda_p-\lambda_n-1} & \text{ if } \zeta_{n+1}\neq 0
\\
\delta_{p,n+1} & \text{ if } \zeta_{n+1}=0
\end{cases}.
\end{align*}
In other words, since
\(
\zeta_n=\frac{\langle \un|f_n\rangle}{\sqrt{\kappa_n}}, \)
for $n\geq 0$, (see  equality (4.1) in~\cite{GerardKappeler2019}), we have
\begin{align*}
M_{n,p}
	&=\begin{cases}
\sqrt{\mu_{n+1}}\frac{\sqrt{\kappa_p}}{\sqrt{\kappa_{n+1}}}\overline{\zeta_p}\zeta_{n+1}\frac{1}{\lambda_p-\lambda_n-1}
& \text{ if } p\neq n+1
\\
\sqrt{\mu_{n+1}}
& \text{ if } p= n+1
\end{cases}.
\end{align*}
\end{mydef}
%
\begin{lem}\label{lem:<u|e^ix>}
Let $u\in L^2_{r,0}(\T)$. Then
\[
\langle u|e^{ix}\rangle
	=-\sum_{n\geq 0}\overline{\zeta_n}\zeta_{n+1}\sqrt{\frac{\kappa_n}{\kappa_{n+1}}}\sqrt{\mu_{n+1}}
\]
with the convention $\zeta_0=1$.
\end{lem}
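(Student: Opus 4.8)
The plan is to turn $\langle u\,|\,e^{ix}\rangle$ into a spectral quantity for the Lax operator $L_u$, expand it in the orthonormal eigenbasis $(f_p)_{p\geq 0}$, and insert the explicit shape of the matrix $M$ from Definition~\ref{def:M}. Throughout, write $c_p:=\langle\un\,|\,f_p\rangle$, so that $c_p=\sqrt{\kappa_p}\,\zeta_p$ (with $c_0=\sqrt{\kappa_0}$ since $\zeta_0=1$) and $|c_p|^2=\kappa_p\gamma_p$, the sequence $(\kappa_p\gamma_p)_p$ being summable by Lemma~\ref{lem:kappa}. Since $u$ is real-valued with zero mean, $\Pi u=T_u\un$; since moreover $e^{ix}=S\un\in L^2_+(\T)$, one gets $\langle u\,|\,e^{ix}\rangle=\langle\Pi u\,|\,e^{ix}\rangle=\langle T_u\un\,|\,S\un\rangle$, and using $T_u=D-L_u$ together with $D\un=0$,
\[
\langle u\,|\,e^{ix}\rangle=-\langle L_u\un\,|\,S\un\rangle.
\]

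\textbf{Expansion in the eigenbasis.} Writing $S\un=\sum_k c_k\,Sf_k$ gives $\langle L_u\un\,|\,S\un\rangle=\sum_{k\geq 0}\overline{c_k}\,\langle L_u\un\,|\,Sf_k\rangle$, where $\langle L_u\un\,|\,Sf_k\rangle=\sum_{p\geq 0}\lambda_p c_p\,M_{k,p}$. If $\zeta_{k+1}=0$ then $c_{k+1}=0$ and $M_{k,p}=\delta_{p,k+1}$, so this term vanishes. If $\zeta_{k+1}\neq 0$, then Definition~\ref{def:M} together with $c_p=\sqrt{\kappa_p}\,\zeta_p$ gives, for every $p\geq 0$, $M_{k,p}=\frac{\sqrt{\mu_{k+1}}}{\kappa_{k+1}}\,\overline{c_p}\,c_{k+1}\,(\lambda_p-\lambda_k-1)^{-1}$, the denominator being nonzero because $\lambda_{k+1}-\lambda_k-1=\gamma_{k+1}>0$ forces $\lambda_k+1$ to lie strictly between two consecutive eigenvalues. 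Hence
\[
\langle L_u\un\,|\,Sf_k\rangle=\frac{\sqrt{\mu_{k+1}}\,c_{k+1}}{\kappa_{k+1}}\sum_{p\geq 0}\frac{\lambda_p\,|c_p|^2}{\lambda_p-\lambda_k-1},
\]
all series converging absolutely in view of Lemma~\ref{lem:kappa} and $|c_p|^2\in\ell^1_+$.

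\textbf{The key identity.} It remains to prove $\sum_{p}\frac{\lambda_p\,|c_p|^2}{\lambda_p-\lambda_k-1}=1$. Writing $\lambda_p=(\lambda_p-\lambda_k-1)+(\lambda_k+1)$ splits the sum as $\sum_p|c_p|^2+(\lambda_k+1)\sum_p\frac{|c_p|^2}{\lambda_p-\lambda_k-1}$; the first term is $\|\un\|_{L^2}^2=1$, and the second sum vanishes. Indeed, by completeness of $(f_p)$ and since $S^*\un=T_{e^{-ix}}\un=\Pi(e^{-ix})=0$,
\[
\sum_{p\geq 0}c_p\,M_{k,p}=\sum_{p\geq 0}\langle\un\,|\,f_p\rangle\langle f_p\,|\,Sf_k\rangle=\langle\un\,|\,Sf_k\rangle=\langle S^*\un\,|\,f_k\rangle=0,
\]
and inserting $M_{k,p}=\frac{\sqrt{\mu_{k+1}}}{\kappa_{k+1}}\,\overline{c_p}\,c_{k+1}\,(\lambda_p-\lambda_k-1)^{-1}$ and dividing by $\frac{\sqrt{\mu_{k+1}}\,c_{k+1}}{\kappa_{k+1}}\neq 0$ (using $\mu_{k+1}>0$ from Lemma~\ref{lem:kappa}) yields $\sum_p\frac{|c_p|^2}{\lambda_p-\lambda_k-1}=0$.

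\textbf{Conclusion and main obstacle.} Putting the pieces together, $\langle L_u\un\,|\,Sf_k\rangle=\frac{\sqrt{\mu_{k+1}}\,c_{k+1}}{\kappa_{k+1}}$ when $\zeta_{k+1}\neq 0$ and $0$ otherwise, so $\langle L_u\un\,|\,S\un\rangle=\sum_{k:\,\zeta_{k+1}\neq 0}\frac{\sqrt{\mu_{k+1}}}{\kappa_{k+1}}\,\overline{c_k}\,c_{k+1}$; substituting $c_k=\sqrt{\kappa_k}\,\zeta_k$ rewrites the $k$-th summand as $\sqrt{\mu_{k+1}}\,\sqrt{\kappa_k/\kappa_{k+1}}\,\overline{\zeta_k}\zeta_{k+1}$, and the indices with $\zeta_{k+1}=0$ may be reinstated since they contribute zero, giving $\langle u\,|\,e^{ix}\rangle=-\sum_{n\geq 0}\overline{\zeta_n}\zeta_{n+1}\sqrt{\kappa_n/\kappa_{n+1}}\sqrt{\mu_{n+1}}$. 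The only substantial step is the vanishing $\sum_p\frac{|c_p|^2}{\lambda_p-\lambda_k-1}=0$, i.e.\ that $\lambda_k+1$ is a zero of the generating function $z\mapsto\langle(L_u-z)^{-1}\un\,|\,\un\rangle$; once one recognizes that $\sum_p c_p M_{k,p}$ is nothing but $\langle\un\,|\,Sf_k\rangle$ and uses $S^*\un=0$ (equivalently $\widehat{f_k}(-1)=0$), it is immediate. Everything else is bookkeeping — the absolute convergence of the series via Lemma~\ref{lem:kappa}, and the separate treatment of the degenerate indices $k$ with $\zeta_{k+1}=0$.
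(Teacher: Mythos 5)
Your proof is correct, and its skeleton coincides with the paper's: reduce $\langle u|e^{ix}\rangle$ to a double expansion in the eigenbasis $(f_p)$, insert the explicit form of $M_{k,p}$ from Definition~\ref{def:M} (treating the degenerate indices with $\zeta_{k+1}=0$ separately, as the paper implicitly does), and reduce everything to the identity $\sum_p\frac{\lambda_p|\langle\un|f_p\rangle|^2}{\lambda_p-\lambda_k-1}=1$. The genuine difference is how that identity is obtained. The paper splits off $(\lambda_k+1)\hamilton_{-\lambda_k-1}(u)$ and kills it by invoking the product representation of the generating function (Proposition 3.1\,(i) of~\cite{GerardKappeler2019}), where the factor at $p=k+1$ vanishes. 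You instead derive the vanishing $\sum_p\frac{|\langle\un|f_p\rangle|^2}{\lambda_p-\lambda_k-1}=0$ from the elementary observation $\langle\un|Sf_k\rangle=\langle S^*\un|f_k\rangle=0$ (since $S^*\un=\Pi(e^{-ix})=0$), expanded via completeness and the same formula for $M_{k,p}$, then dividing by the nonzero prefactor $\sqrt{\mu_{k+1}}\,c_{k+1}/\kappa_{k+1}$ — which is legitimate precisely in the case $\zeta_{k+1}\neq 0$ where it is needed. Your route is more self-contained (it needs only Definition~\ref{def:M}, the positivity of $\kappa_n,\mu_n$, and the triviality $S^*\un=0$, not the product formula for $\hamilton_\lambda$), at the cost of re-deriving a fact — that $-\lambda_k-1$ is a zero of the generating function when $\gamma_{k+1}\neq0$ — that the paper simply reads off from the known formula; the paper's version makes the link to the spectral-theoretic structure of $\hamilton_\lambda$ explicit. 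Your preliminary reduction $\langle u|e^{ix}\rangle=-\langle L_u\un|S\un\rangle$ is just a repackaging of the identity $\langle\Pi u|f_p\rangle=-\lambda_p\langle\un|f_p\rangle$ used in the paper, and your convergence remarks (via Lemma~\ref{lem:kappa} and $\sum_p|\langle\un|f_p\rangle|^2=1$) are adequate.
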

This formula for $\langle u|\cos\rangle$ and $\langle u|\sin\rangle$ was already proven in~\cite{GerardKappelerTopalov2020}, Lemma 13, in the special case when $\gamma_n(u)\neq 0$ for all $n$. We justify here how the proof stays valid in the general case.

\begin{proof}
We note that $\langle u|e^{ix}\rangle=\langle \Pi u|e^{ix}\rangle$, then decompose $\Pi u$ and $e^{ix}$ in the orthonormal basis $(f_p)_{p\geq 0}$
\begin{equation*}
\langle u|e^{ix}\rangle
	=\sum_{p\geq 0}\langle \Pi u|f_p\rangle
\langle f_p|e^{ix}\rangle.
\end{equation*}
We now remark that $\langle f_p|e^{ix}\rangle=\langle S^*f_p|\un\rangle$, and decompose $S^*f_p$ and $\un$ along the orthonormal basis $(f_n)_{n\geq 0}$
\begin{align*}
\langle f_p|e^{ix}\rangle
	&=\sum_{n\geq 0}\langle S^*f_p|f_n\rangle\langle f_n|\un\rangle\\
	&=\sum_{n\geq 0}M_{n,p}\langle f_n|\un\rangle\\
	&=\sum_{\substack{n\geq 0 \\\gamma_{n+1}\neq 0}}\frac{\sqrt{\mu_{n+1}}}{\kappa_{n+1}}\langle f_p|\un\rangle\overline{\langle f_{n+1}|\un\rangle}\frac{1}{\lambda_p-\lambda_n-1}\langle f_n|\un\rangle.
\end{align*}
From the identity $\langle \Pi u|f_p\rangle=-\lambda_p\langle \un|f_p\rangle$, we get 
\begin{align*}
\langle u|e^{ix}\rangle
	&=\sum_{p\geq 0}-\lambda_p\langle \un|f_p\rangle\sum_{\substack{n\geq 0 \\\gamma_{n+1}\neq 0}}\frac{\sqrt{\mu_{n+1}}}{\kappa_{n+1}}\langle f_p|\un\rangle\overline{\langle f_{n+1}|\un\rangle}\frac{1}{\lambda_p-\lambda_n-1}\langle f_n|\un\rangle\\
	&=-\sum_{\substack{n\geq 0 \\\gamma_{n+1}\neq 0}}\frac{\sqrt{\mu_{n+1}}}{\kappa_{n+1}}\langle f_n|\un\rangle\overline{\langle f_{n+1}|\un\rangle}\sum_{p\geq 0}\frac{\lambda_p|\langle f_p|\un\rangle|^2}{\lambda_p-\lambda_n-1}.
\end{align*}
Recall the definition of the generating function $\hamilton_\lambda$ for $\lambda\in\C$ (formula (3.2) in~\cite{GerardKappeler2019})
\[
\hamilton_\lambda(u)=\sum_{p\geq0}\frac{|\langle \un|f_p\rangle|^2}{\lambda_p+\lambda},
\]
then if $\gamma_{n+1}\neq 0$, we have the identity
\begin{align*}
\sum_{p\geq 0}\frac{\lambda_p|\langle f_p|\un\rangle|^2}{\lambda_p-\lambda_n-1}
	&=(\lambda_n+1)\sum_{p\geq 0}\frac{|\langle f_p|\un\rangle|^2}{\lambda_p-\lambda_n-1}+\sum_{p\geq 0}|\langle f_p|\un\rangle|^2\\
	&=(\lambda_n+1)\hamilton_{-\lambda_n-1}(u)+1.
\end{align*}
Moreover, the formula from~\cite{GerardKappeler2019} (see Proposition 3.1 (i))
\[
\hamilton_{-\lambda_n-1}(u)
	=-\frac{1}{\lambda_n+1-\lambda_0}\prod_{p=1}^{+\infty}\left(1-\frac{\gamma_p}{\lambda_p-\lambda_n-1}\right)
\]
implies that $\hamilton_{-\lambda_n-1}(u)=0$ because of the term appearing in the product when $p=n+1$.
Using that $\zeta_n=\frac{\langle \un|f_n\rangle}{\sqrt{\kappa_n}}$, we conclude the proof of the lemma.
\end{proof}

Let us write
\[
\langle u|e^{ix}\rangle=\sum_{n\geq 0}-a_n^*(\gamma)\overline{\zeta_n}\zeta_{n+1},
\]
where
\begin{equation}\label{def:a_n}
a_n^*(\gamma)=\sqrt{\mu_{n+1}^*(\gamma)}\frac{\sqrt{\kappa_n^*(\gamma)}}{\sqrt{\kappa_{n+1}^*(\gamma)}}.
\end{equation}
Thanks to part~\ref{part:def_spectral}, we know that if  $\|\gamma\|_{\ell^1_+}\leq R$, then there exists $C(R)>0$ such that for all $n\geq 0$, 
\begin{equation}\label{ineq:a_n_bound}
\frac{1}{C(R)}\leq a_n^*(\gamma)\leq C(R)
\end{equation}
and for $h\in\ell^1_+$,
\begin{equation}\label{ineq:a_n_lipschitz}
|\d a_n^*[\gamma].h|\leq C(R)\|h\|_{\ell^1_+}.
\end{equation}
Proposition~\ref{prop:A(zeta)} implies that the maps $\zeta\in h^{\half}_+\mapsto \langle \Phi^{-1}(\zeta)|\cos\rangle$ and $\zeta\in h^{\half}_+\mapsto \langle \Phi^{-1}(\zeta)|\sin\rangle$ are bounded and Lipschitz on finite balls of $\ell^2_+$.

\begin{cor}\label{cor:<u|e^ix>_lip}
Fix $R>0$. Then there exists $C(R)>0$ such that for all  $\zeta^{(1)}, \zeta^{(2)}\in h^\half_+$ satisfying $\|\zeta^{(1)}\|_{\ell^2_+}\leq R$ and $\|\zeta^{(2)}\|_{\ell^2_+}\leq R$, writing $u^{(1)}=\Phi^{-1}(\zeta^{(1)})$ and $u^{(2)}=\Phi^{-1}(\zeta^{(2)})$, we have
\[
|\langle u^{(1)}|\cos\rangle|+|\langle u^{(1)}|\sin\rangle|+|\langle u^{(2)}|\cos\rangle|+|\langle u^{(2)}|\sin\rangle|\leq C(R)
\]
and
\[
|\langle u^{(1)}|\cos\rangle-\langle u^{(2)}|\cos\rangle|
	+|\langle u^{(1)}|\sin\rangle-\langle u^{(2)}|\sin\rangle|
	\leq C(R)\|\zeta^{(1)}-\zeta^{(2)}\|_{\ell^2_+}.
\]
\end{cor}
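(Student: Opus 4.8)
The plan is to deduce Corollary~\ref{cor:<u|e^ix>_lip} directly from Lemma~\ref{lem:<u|e^ix>}, the estimates~\eqref{ineq:a_n_bound}--\eqref{ineq:a_n_lipschitz} on the coefficients $a_n^*$, and the general block estimate of Proposition~\ref{prop:A(zeta)}. First I would record that, by Lemma~\ref{lem:<u|e^ix>} and the definition~\eqref{def:a_n}, for $u=\Phi^{-1}(\zeta)$ one has
\[
\langle u|e^{ix}\rangle=-\sum_{n\geq 0}a_n^*(\gamma)\overline{\zeta_n}\zeta_{n+1},
\qquad \gamma=(|\zeta_n|^2)_{n\geq 1},\ \zeta_0=1,
\]
which is exactly a block of the form~\eqref{eq:A} with $A_n^*=-a_n^*$. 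Since $\|\zeta\|_{\ell^2_+}\leq R$ forces $\|\gamma\|_{\ell^1_+}=\|\zeta\|_{\ell^2_+}^2\leq R^2$, the bounds~\eqref{ineq:a_n_bound} and the Lipschitz estimate~\eqref{ineq:a_n_lipschitz} (integrated along a segment in $\ell^1_+$, using that $\gamma\mapsto a_n^*$ is $\classeC^1$) show that the coefficients $A_n^*=-a_n^*$ are uniformly bounded and uniformly Lipschitz on the ball $\{\|\gamma\|_{\ell^1_+}\leq R^2\}$, with a constant depending only on $R$. Hence the hypotheses of Proposition~\ref{prop:A(zeta)} are met, and that proposition gives the bound $|\langle u^{(1)}|e^{ix}\rangle|+|\langle u^{(2)}|e^{ix}\rangle|\leq C(R)$ together with $|\langle u^{(1)}|e^{ix}\rangle-\langle u^{(2)}|e^{ix}\rangle|\leq C(R)\|\zeta^{(1)}-\zeta^{(2)}\|_{\ell^2_+}$.

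Next I would pass from $e^{ix}$ to $\cos$ and $\sin$. Since $u$ is real-valued, $\langle u|\cos\rangle=\Re\langle u|e^{ix}\rangle$ and $\langle u|\sin\rangle=-\Im\langle u|e^{ix}\rangle$ (up to the harmless normalization constants in the inner product convention), so each of $\langle u|\cos\rangle$ and $\langle u|\sin\rangle$ is bounded in absolute value by $|\langle u|e^{ix}\rangle|$, and the difference $\langle u^{(1)}|\cos\rangle-\langle u^{(2)}|\cos\rangle$ (resp.\ with $\sin$) is bounded by $|\langle u^{(1)}|e^{ix}\rangle-\langle u^{(2)}|e^{ix}\rangle|$. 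Summing the four contributions and absorbing the factor into $C(R)$ yields both displayed inequalities of the corollary. One small bookkeeping point: the hypothesis $\zeta^{(i)}\in h^{\half}_+$ guarantees $\zeta^{(i)}\in\ell^2_+$, so Proposition~\ref{prop:A(zeta)} applies verbatim; the finer $h^{\half}_+$ structure is not needed here.

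There is essentially no obstacle: the only mild subtlety is the verification that the pointwise derivative bound~\eqref{ineq:a_n_lipschitz} upgrades to a genuine Lipschitz bound uniform in $n$ on the ball, which follows by the fundamental theorem of calculus along the segment $t\mapsto (1-t)\gamma^{(1)}+t\gamma^{(2)}$ (contained in the ball by convexity) since $a_n^*\in\classeC^1(\ell^1_+)$ and $\gamma\mapsto a_n^*(\gamma)$ has derivative of norm $\leq C(R)$ throughout. Everything else is the direct application of Proposition~\ref{prop:A(zeta)}, which is why this step of the paper is short.
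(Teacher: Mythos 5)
Your proposal is correct and follows exactly the paper's route: the corollary is obtained by writing $\langle u|e^{ix}\rangle=-\sum_{n\geq 0}a_n^*(\gamma)\overline{\zeta_n}\zeta_{n+1}$ via Lemma~\ref{lem:<u|e^ix>}, invoking the bounds~\eqref{ineq:a_n_bound} and~\eqref{ineq:a_n_lipschitz} on the action-dependent coefficients, and applying Proposition~\ref{prop:A(zeta)}, then taking real and imaginary parts to recover $\cos$ and $\sin$. Your extra remark on upgrading the differential bound to a Lipschitz bound along a segment is exactly the implicit step in the paper, so nothing is missing.
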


\subsection{Formula for the differential of the Birkhoff map}\label{part:differential_birkhoff}

In this part, we establish a formula for the differential of the normalized Birkhoff coordinates $\zeta_n$ (with $\zeta_0=1$), which are linked to the unnormalized Birkhoff coordinates $\langle \un|f_n\rangle$ through the formulas (see equality (4.1) in~\cite{GerardKappeler2019})
\[
\zeta_n=\frac{\langle \un|f_n\rangle}{\sqrt{\kappa_n}}, \quad n\geq 0.
\]
These coordinates are known to admit a differential thanks to Lemma~3.4 in~\cite{GerardKappeler2019}. The differential of $\zeta_n$ at $u\in L^2_{r,0}(\T)$ therefore satisfies: for all $h\in L^2_{r,0}(\T)$,
\begin{equation}\label{eq:dzeta}
\d\zeta_n[u].h
	=\d\langle \un|f_n\rangle[u].h\frac{1}{\sqrt{\kappa_n}}
	-\frac{1}{2}\zeta_n\d\ln(\kappa_n)[u].h.
\end{equation}
We prove the following decomposition of the differential of the unnormalized Birkhoff map.
\begin{prop}[Differential of the unnormalized Birkhoff map]\label{prop:decomposition_unnormalized_birkhoff}
Let $u,h\in L^2_{r,0}(\T)$, then for all $n\geq 0$, $\d\langle \un|f_n\rangle[u].h$ writes
\[
\d\langle \un|f_n\rangle[u].h
	=\delta_\parallel\langle \un|f_n\rangle
	+\delta_\perp\langle \un|f_n\rangle,
\]
where
\[
\delta_\parallel\langle \un|f_n\rangle
	=-i\Im (\langle\widetilde{\xi_n}|h\rangle) \langle\un|f_n\rangle,
\]
\[
\widetilde{\xi_n}=\frac{1}{\langle f_0|\un\rangle}\sum_{p=1}^{+\infty}\frac{\langle \un|f_p\rangle}{\lambda_p-\lambda_0}\overline{f_0}f_p
	-\sum_{k=1}^n \frac{1}{\langle f_k|Sf_{k-1}\rangle}\psi_k,
\]
\[
\psi_n
	=\sum_{\substack{p\geq 0\\p\neq n}}\frac{\langle f_p|Sf_{n-1}\rangle}{\lambda_p-\lambda_n}f_n\overline{f_p}
	-\sum_{\substack{p\geq 0\\p\neq n-1}}\frac{\langle Sf_p|f_{n}\rangle}{\lambda_p-\lambda_{n-1}}f_{n-1}\overline{f_p}
\]
and
\[
\delta_\perp\langle \un|f_n\rangle
	=\sum_{\substack{p\geq 0\\p\neq n}}\frac{\langle f_p\overline{f_n}|h \rangle}{\lambda_p-\lambda_n}\langle\un|f_p\rangle.
\]
\end{prop}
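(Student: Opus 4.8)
The plan is to differentiate the defining relations of the normalized eigenfunctions $(f_n)_{n\geq 0}$ of the Lax operator $L_u = D - T_u$. First I would recall that $f_n$ is characterized by three conditions: it is a unit eigenvector, $L_u f_n = \lambda_n f_n$, together with the normalization $\langle \un | f_0\rangle > 0$ and the phase-fixing conditions $\langle f_k | S f_{k-1}\rangle > 0$ for $k\geq 1$. Since $L_u$ depends affinely on $u$ via $T_u$ (with $\d T_u[u].h = T_h$), standard perturbation theory of simple eigenvalues gives $\d\lambda_n[u].h = \langle T_h f_n | f_n\rangle$ and a formula for $\d f_n[u].h$ modulo the choice of phase/normalization. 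The orthogonal part is forced: expanding $\d f_n[u].h$ in the basis $(f_p)_p$, the coefficient along $f_p$ for $p\neq n$ comes from differentiating $L_u f_n = \lambda_n f_n$ and pairing with $f_p$, yielding
\[
\langle \d f_n[u].h \,|\, f_p\rangle = \frac{\langle T_h f_n | f_p\rangle}{\lambda_n - \lambda_p} = \frac{\langle f_p \overline{f_n} | h\rangle}{\lambda_n - \lambda_p}, \qquad p\neq n,
\]
using that $T_h$ is the Toeplitz operator with symbol $h$ so $\langle T_h f_n | f_p\rangle = \langle h f_n | f_p\rangle = \langle h | \overline{f_n} f_p\rangle = \langle f_p \overline{f_n} | h\rangle$ (with $h$ real-valued). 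Pairing this with $\un$ and summing over $p\neq n$ produces exactly $\delta_\perp\langle\un|f_n\rangle$.

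Next I would pin down the component of $\d f_n[u].h$ along $f_n$ itself, i.e. the real part governing unit norm and the imaginary part governing the phase. Differentiating $\|f_n\|^2 = 1$ gives $\Re\langle \d f_n[u].h | f_n\rangle = 0$, so only an imaginary multiple $i\theta_n$ of $f_n$ is undetermined by the eigenvalue equation; this $\theta_n$ is fixed by differentiating the phase conditions. Differentiating $\langle f_0 | \un\rangle \in \R_{>0}$ forces $\Im\langle \d f_0[u].h | \un\rangle$-type relations determining $\theta_0$; then differentiating each $\langle f_k | S f_{k-1}\rangle \in \R_{>0}$ recursively determines $\theta_k$ in terms of $\theta_{k-1}$ and the orthogonal parts of $\d f_k, \d f_{k-1}$. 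Carrying this recursion through, and recognizing that $S^* = T_{e^{-ix}}$ acts on the basis via the matrix $M$ from Definition \ref{def:M}, is what assembles the vector $\widetilde{\xi_n}$: the first sum in $\widetilde{\xi_n}$ is the contribution of the $f_0$-phase condition (note the $\frac{1}{\lambda_p-\lambda_0}$ weights and the $\overline{f_0}f_p$ structure matching $\delta_\perp\langle\un|f_0\rangle$), and the telescoping $\sum_{k=1}^n \frac{1}{\langle f_k|Sf_{k-1}\rangle}\psi_k$ collects the successive corrections, with $\psi_k$ being precisely the combination of $\d f_k$ and $\d f_{k-1}$ (expressed through $\langle f_p | S f_{k-1}\rangle$ and $\langle S f_p | f_k\rangle$ over the $\lambda$-denominators) that appears when one differentiates $\langle f_k | S f_{k-1}\rangle$. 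Then $\delta_\parallel\langle\un|f_n\rangle = \langle \un | \d f_n[u].h\rangle$ restricted to its $f_n$-component equals $\overline{i\theta_n}\langle\un|f_n\rangle = -i\theta_n \langle\un|f_n\rangle$, and the computation identifies $\theta_n = \Im\langle\widetilde{\xi_n}|h\rangle$.

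The main obstacle will be the bookkeeping in the recursive phase computation: correctly tracking how each normalization constraint $\langle f_k | S f_{k-1}\rangle > 0$ contributes, keeping the real/imaginary parts straight (only imaginary parts enter $\theta_k$, the real parts being automatically consistent), and verifying that the telescoping structure produces exactly the stated $\widetilde{\xi_n}$ with the correct signs and the correct exclusion of diagonal terms ($p\neq n$ in the first sum of $\psi_n$, $p\neq n-1$ in the second). The orthogonal part $\delta_\perp$ is routine first-order perturbation theory; the real subtlety is entirely in the phase normalization. Once $\theta_n = \Im\langle\widetilde{\xi_n}|h\rangle$ is established, the decomposition $\d\langle\un|f_n\rangle[u].h = \delta_\parallel\langle\un|f_n\rangle + \delta_\perp\langle\un|f_n\rangle$ follows immediately by splitting $\langle\un|\d f_n[u].h\rangle$ into its $f_n$-component and its orthogonal complement.
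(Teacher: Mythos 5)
Your plan follows essentially the same route as the paper's proof: differentiate the eigenvalue equation to obtain the components of $\d f_n[u].h$ orthogonal to $f_n$ (giving $\delta_\perp\langle\un|f_n\rangle$), note that the $f_n$-component is purely imaginary because $\|f_n\|_{L^2}=1$, and determine that imaginary part recursively by differentiating $\langle\un|f_0\rangle>0$ and the phase conditions $\langle f_k|Sf_{k-1}\rangle>0$, whose telescoping produces $\widetilde{\xi_n}$ and $\delta_\parallel\langle\un|f_n\rangle=-i\,\Im(\langle\widetilde{\xi_n}|h\rangle)\langle\un|f_n\rangle$. One bookkeeping caveat: since $L_u=D-T_u$, differentiation gives $(L_u-\lambda_n)\d f_n[u].h=\d\lambda_n[u].h\,f_n+T_hf_n$, hence $\d\lambda_n[u].h=-\langle T_hf_n|f_n\rangle$ and $\langle \d f_n[u].h\,|\,f_p\rangle=\frac{\langle f_n\overline{f_p}|h\rangle}{\lambda_p-\lambda_n}$ for $p\neq n$, whereas your intermediate formula has the denominator sign flipped and the numerator conjugated (you implicitly used $\d L_u[u].h=+T_h$ and identified $\langle h|\overline{f_n}f_p\rangle$ with $\langle f_p\overline{f_n}|h\rangle$ rather than $\langle f_n\overline{f_p}|h\rangle$); with these slips corrected, pairing with $\un$ does yield exactly the stated $\delta_\perp$.
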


\begin{proof}
For $n\geq 0$, since $\d\langle \un|f_n\rangle[u].h=\langle \un |\d f_n[u].h\rangle$, we decompose the two terms inside the brackets along the orthonormal basis $(f_p)_{p\geq 0}$ and get
\[
\d\langle \un|f_n\rangle[u].h
	=\sum_{p\geq 0}\langle\un|f_p \rangle\langle f_p |\d f_n[u].h\rangle.
\]
We now compute $\langle\d f_n[u].h|f_p\rangle$ for $n,p\geq 0$.

For $n\geq 0$, recall that $f_n$ is the $L^2$-normalized eigenfunction of $L_u=D-T_u$ associated to the eigenvalue~$\lambda_n$, and that the family $(f_n)_n$ is uniquely determined by the additional conditions $\langle \un|f_0\rangle>0$ and $\langle f_{n}|Sf_{n-1}\rangle >0$ for $n\geq 1$. Therefore, we get by differentiating
\[
(L_u-\lambda_n)\d f_n[u].h=\d\lambda_n[u].hf_n+T_hf_n.
\]

For $p\neq n$, $f_p$ is orthogonal to $f_n$ and therefore
\[
\langle \d f_n[u].h | f_p\rangle=\frac{\langle f_n\overline{f_p}|h\rangle}{\lambda_p-\lambda_n}.
\]
This gives the formula for $\delta_\perp\langle \un|f_n\rangle$.

For $p=n$, we see that $\langle \d f_n[u].h | f_n\rangle$ is purely imaginary because $\|f_n\|_{L^2(\T)}^2=1$. Moreover, the conditions $\langle f_0|\un\rangle>0$ and $\langle f_n|Sf_{n-1}\rangle>0$ for $n\geq 1$ imply that
\[
\Im(\langle \d f_0[u].h|\un\rangle)=0
\]
and
\[
\Im(\langle \d f_n[u].h|Sf_{n-1}\rangle+\langle f_n|S\d f_{n-1}[u].h\rangle)=0
	,\quad n\geq 1.
\]
Decomposing $\un$ in the orthonormal basis $(f_p)_{p\geq 0}$, we get for $n=0$
\begin{align*}
0
	&=\Im\left(\langle \d f_0[u].h|f_0\rangle\langle f_0|\un\rangle+\sum_{p=1}^{+\infty}\langle \d f_0[u].h|f_p\rangle\langle f_p|\un\rangle\right)\\
	&=\Im\left(\langle \d f_0[u].h|f_0\rangle\langle f_0|\un\rangle+\sum_{p=1}^{+\infty}\frac{\langle f_0\overline{f_p}|h\rangle}{\lambda_p-\lambda_0}\langle f_p|\un\rangle\right),
\end{align*}
or
\begin{align*}
\langle \d f_0[u].h|f_0\rangle
	&=-i\Im\left(\frac{1}{\langle f_0|\un\rangle}\sum_{p=1}^{+\infty}\frac{\langle f_0\overline{f_p}|h\rangle}{\lambda_p-\lambda_0}\langle f_p|\un\rangle\right)\\
	&=i\Im(\langle \widetilde{\xi_0}|h\rangle).
\end{align*}
Similarly, for $n\geq 1$, decomposing $Sf_{n-1}$ and $S^*f_{n}$ in the orthonormal basis $(f_p)_{p\geq 0}$, we have
\begin{align*}
0
	&=\Im\left(\sum_{p\geq0}\langle \d f_n[u].h|f_p\rangle\langle f_p|Sf_{n-1}\rangle+\langle S^*f_n|f_p\rangle\langle f_p|\d f_{n-1}[u].h\rangle\right),
\end{align*}
and by isolating the index $p=n$ in the first term and $p=n-1$ in the second term, we get the formula
\begin{multline*}
\langle f_n|Sf_{n-1}\rangle \Im(\langle \d f_n[u].h|f_n\rangle-\langle \d f_{n-1}[u].h|f_{n-1}\rangle)\\
	=-\Im\left(\sum_{p\neq n}\langle \d f_n[u].h|f_p\rangle\langle f_p|Sf_{n-1}\rangle-\sum_{p\neq n-1}\overline{\langle f_n|Sf_p\rangle}\langle \d f_{n-1}[u].h|f_p\rangle\right),
\end{multline*}
leading to
\begin{multline*}
\langle f_n|Sf_{n-1}\rangle \Im(\langle \d f_n[u].h|f_n\rangle-\langle \d f_{n-1}[u].h|f_{n-1}\rangle)\\
	=-\Im\left(\sum_{p\neq n}\frac{\langle f_n\overline{f_p}|h\rangle}{\lambda_p-\lambda_n}\langle f_p|Sf_{n-1}\rangle-\sum_{p\neq n-1}\langle Sf_p|f_n\rangle\frac{\langle f_{n-1}\overline{f_p}|h\rangle}{\lambda_p-\lambda_{n-1}}\right).
\end{multline*}
We retrieve the expression of $\delta_\parallel\langle \un|f_n\rangle$.
\end{proof}

\subsection{Decomposition for \texorpdfstring{$\d\zeta_n[u].\cos$}{dzeta.cos} and \texorpdfstring{$\d\zeta_n[u].\sin$}{dzeta.sin} }\label{part:dzeta.cos}

In this part, we establish Theorem~\ref{thm:dzeta.cos}: we decompose $\d\zeta_n[u].\cos$ and $\d\zeta_n[u].\sin$ using blocks of the form~\eqref{eq:A} $\sum_{k\geq 0}A_k^*(\gamma)\overline{\zeta_k}\zeta_{k+1}$, where $A_k^*$ are functions of the actions only.

Let us now give the organization of this part. Recall that since $\zeta_n=\frac{\langle \un|f_n\rangle}{\sqrt{\kappa_n}}$, we have equality~\eqref{eq:dzeta}
\begin{align*}
\d\zeta_n[u].h
	&=\d\langle \un|f_n\rangle[u].h\frac{1}{\sqrt{\kappa_n}}
	-\frac{1}{2}\zeta_n\d\ln(\kappa_n)[u].h,
\end{align*}
where $\d\langle \un|f_n\rangle[u].h$ decomposes from Proposition~\ref{prop:decomposition_unnormalized_birkhoff} as
\[
\d\langle \un|f_n\rangle[u].h
	=\delta_\parallel\langle \un|f_n\rangle
	+\delta_\perp\langle \un|f_n\rangle.
\]
This leads us to study successively each part of the decomposition: $\d\ln(\kappa_n)[u].h$, $\delta_\parallel\langle \un|f_n\rangle$ and~$\delta_\perp\langle \un|f_n\rangle$, in the particular cases $h=\cos$ and $h=\sin$.  Then, in subpart~\ref{part:dzeta_lipschitz}, we use part~\ref{part:oscillating_terms} in order to deduce that $\d\zeta_n[u].\cos$ and $\d\zeta_n[u].\sin$ are bounded and Lipschitz on balls of finite radius.

\subsubsection{The case of \texorpdfstring{$\d\ln(\kappa_n)[u].h$}{d ln(kappa_n)}}

For $\zeta\in h^{\half}_+$, we denote $u=\Phi^{-1}\left(\zeta\right)$ and define
\[
\delta\kappa_n(\zeta):=\d\ln(\kappa_n)[u].\cos-i\d\ln(\kappa_n)[u].\sin,
\quad n\in\N.
\]
To avoid confusion, we shall precise that $\ln(\kappa_n)$ is a function of $u\in L^2_{r,0}(\T)$, and that $\d\ln(\kappa_n)[u].\cos$ denotes its differential with respect to the variable $u$ applied to $\cos$. However, $\delta\kappa_n$ is a function of the Birkhoff coordinates $\zeta$, since $u=\Phi^{-1}(\zeta)$ in the definition. 

\begin{lem}\label{lem:dkappa_lip}
There exist functions $A_{n,k}^*\in\classeC^1(\ell^1_+,\R)$, $n,k\in\N$, such that for all $n\geq 0$, for all $\zeta\in h^{\half}_+$,
\[
\delta{\kappa_n}(\zeta)=\sum_{k\geq 0}A_{n,k}^*(\gamma)\overline{\zeta_k} \zeta_{k+1}.
\]
Moreover, for all $R>0$, there exists $C(R)>0$ such that for all $\gamma\in \ell^1_+$ satisfying $\|\gamma\|_{\ell^1_+}\leq R$, for all $n,k\geq 0$,
\[
|A_{n,k}^*(\gamma)|\leq C(R)
\]
and for all $h\in \ell^1_+$,
\[
|\d A_{n,k}^*(\gamma).h|\leq C(R)\|h\|_{\ell^1_+}.
\]
\end{lem}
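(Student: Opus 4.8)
The plan is to use that $\kappa_n$ depends on $u$ only through the actions $\gamma=(|\zeta_p|^2)_{p\ge1}$. Writing $u=\Phi^{-1}(\zeta)$ and setting
\[
\delta\gamma(\zeta):=\big(\d\gamma_p[u].\cos-i\,\d\gamma_p[u].\sin\big)_{p\ge1},
\]
the chain rule (the action map $u\mapsto\gamma(u)$ being differentiable with values in $\ell^1_+$, see~\cite{GerardKappelerTopalov2020-2}) together with the $\C$-linearity of the differential $\d\ln(\kappa_n^*)[\gamma]$ gives
\[
\delta\kappa_n(\zeta)=\d\ln(\kappa_n^*)[\gamma].\big(\delta\gamma(\zeta)\big).
\]
Thus the proof splits into two parts: (i) compute $\delta\gamma(\zeta)$ and recognize it as a combination of the blocks $\overline{\zeta_k}\zeta_{k+1}$ with coefficients depending on $\gamma$ only; (ii) apply the continuous linear form $\d\ln(\kappa_n^*)[\gamma]$, read off $A_{n,k}^*$, and prove the estimates.

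For part (i) I would use the identity $\gamma_p=\lambda_p-\lambda_{p-1}-1$, valid for $p\ge1$ as an immediate consequence of $\lambda_n=n-\sum_{k\ge n+1}\gamma_k$, together with first-order perturbation theory for the simple eigenvalue $\lambda_p$ of the self-adjoint operator $L_u=D-T_u$: for real $h$, $\d\lambda_p[u].h=-\langle hf_p|f_p\rangle=-\frac1{2\pi}\int_0^{2\pi}h\,|f_p|^2$. Hence $\d\gamma_p[u].h=\frac1{2\pi}\int_0^{2\pi}h\,(|f_{p-1}|^2-|f_p|^2)$, and combining the $\cos$ and $\sin$ evaluations picks out the first Fourier coefficient,
\[
\delta\gamma_p(\zeta)=\widehat{|f_{p-1}|^2}(1)-\widehat{|f_p|^2}(1).
\]
One then identifies $\widehat{|f_p|^2}(1)=\langle f_p|Sf_p\rangle=M_{p,p}$, and reads from Definition~\ref{def:M} (the off-diagonal branch, where the denominator $\lambda_p-\lambda_p-1$ equals $-1$) that $M_{p,p}=-a_p^*(\gamma)\,\overline{\zeta_p}\zeta_{p+1}$, with $a_p^*$ the coefficient of~\eqref{def:a_n} — the degenerate case $\zeta_{p+1}=0$ being consistent since then $M_{p,p}=\delta_{p,p+1}=0$. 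Therefore, with the convention $\zeta_0=1$,
\[
\delta\gamma_p(\zeta)=a_p^*(\gamma)\,\overline{\zeta_p}\zeta_{p+1}-a_{p-1}^*(\gamma)\,\overline{\zeta_{p-1}}\zeta_p ,
\]
which belongs to $\ell^1_+$ for $\zeta\in\ell^2_+$ because $|a_k^*|\le C(R)$ on finite balls by~\eqref{ineq:a_n_bound} and the blocks $\overline{\zeta_k}\zeta_{k+1}$ are summable for $\zeta\in\ell^2_+$. This is, in essence, the computation behind Lemma~\ref{lem:<u|e^ix>}, performed gap by gap on the eigenfunctions $f_p$.

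For part (ii) I would reindex $\delta\gamma(\zeta)=\sum_{k\ge0}a_k^*(\gamma)\,\overline{\zeta_k}\zeta_{k+1}\,(e_k-e_{k+1})$, where $(e_j)_{j\ge1}$ is the canonical basis of $\ell^1_+$ and $e_0:=0$; the series converges absolutely in $\ell^1_+$. Since $\d\ln(\kappa_n^*)[\gamma]$ has operator norm $\le C(R)$ on finite balls (Lemma~\ref{lem:dkappa}), one may exchange it with the series to obtain $\delta\kappa_n(\zeta)=\sum_{k\ge0}A_{n,k}^*(\gamma)\overline{\zeta_k}\zeta_{k+1}$ with
\[
A_{n,k}^*(\gamma)=a_k^*(\gamma)\,\d\ln(\kappa_n^*)[\gamma].(e_k-e_{k+1}).
\]
The pointwise bound $|A_{n,k}^*(\gamma)|\le C(R)$ then follows at once from~\eqref{ineq:a_n_bound}, Lemma~\ref{lem:dkappa} and $\|e_k-e_{k+1}\|_{\ell^1_+}=2$. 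As for the $\classeC^1$ regularity and the differential bound: $a_k^*$ is $\classeC^1$ with $|\d a_k^*[\gamma].h|\le C(R)\|h\|_{\ell^1_+}$ by~\eqref{ineq:a_n_lipschitz}, so by the Leibniz rule it suffices to know that $\gamma\mapsto\d\ln(\kappa_n^*)[\gamma].(e_k-e_{k+1})$ is $\classeC^1$ with differential bounded by $C(R)\|h\|_{\ell^1_+}$, uniformly in $n$ and $k$.

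This last uniform second-order estimate on $\ln(\kappa_n^*)$ is, I expect, the only genuine work beyond Lemma~\ref{lem:dkappa}: it is obtained by differentiating the product formula of Definition~\ref{def:kappa} twice and bounding each resulting term exactly as in the proof of Lemma~\ref{lem:dkappa}, using that $\lambda_p$ is affine in $\gamma$ with $|\d\lambda_p^*[\gamma].h|\le\|h\|_{\ell^1_+}$, the gap estimates $|p-q|\le|\lambda_p-\lambda_q|\le|p-q|+\|\gamma\|_{\ell^1_+}$, and the Lipschitz bounds for $\ln$ and $x\mapsto1/x$ on the relevant intervals; uniformity in $k$ comes for free since $e_k-e_{k+1}$ has $\ell^1_+$-norm $2$ independently of $k$. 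The same argument applies to $\ln(\mu_n^*)$, although it is already absorbed into the coefficient $a_k^*$ here.
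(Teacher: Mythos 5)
Your argument is correct and rests on the same ingredients as the paper's proof: the perturbation formula $\d\lambda_p[u].h=-\langle |f_p|^2|h\rangle$, the resulting identity $\d\gamma_p[u].\cos-i\,\d\gamma_p[u].\sin=M_{p-1,p-1}-M_{p,p}$ with $M_{p,p}=-a_p^*(\gamma)\overline{\zeta_p}\zeta_{p+1}$ (Definition~\ref{def:M} and~\eqref{def:a_n}), and the differential of $\ln(\kappa_n^*)$ with respect to the actions. The difference is organizational. The paper substitutes $m_p=M_{p,p}$ into the differentiated product formula for $\kappa_n$ and writes out $A_{n,k}^*(\gamma)$ explicitly, then verifies the bounds term by term; you keep $\d\ln(\kappa_n^*)[\gamma]$ abstract and set $A_{n,k}^*(\gamma)=a_k^*(\gamma)\,\d\ln(\kappa_n^*)[\gamma].(e_k-e_{k+1})$, so that $|A_{n,k}^*(\gamma)|\le C(R)$ is immediate from Lemma~\ref{lem:dkappa}, inequality~\eqref{ineq:a_n_bound} and $\|e_k-e_{k+1}\|_{\ell^1_+}=2$; as a side benefit the restriction $p\neq n$ is built in, so the case $k+1=n$, where the paper's displayed generic formula as written has the vanishing denominator $\lambda_{k+1}-\lambda_n$, is handled automatically. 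What your packaging costs is the $\classeC^1$ statement: you need $\gamma\mapsto\d\ln(\kappa_n^*)[\gamma].(e_k-e_{k+1})$ to be $\classeC^1$ with differential bounded by $C(R)\|h\|_{\ell^1_+}$ uniformly in $n,k$, i.e.\ a uniform second-order estimate for $\ln(\kappa_n^*)$ on finite balls of $\ell^1_+$, which you only sketch; the sketch is the right one (differentiate the expression in the proof of Lemma~\ref{lem:dkappa} once more and bound it with the same gap estimates and $|\d\lambda_p^*[\gamma].h|\le\|h\|_{\ell^1_+}$), and it is of the same nature as the verification the paper itself leaves implicit when asserting $|\d A_{n,k}^*(\gamma).h|\le C(R)\|h\|_{\ell^1_+}$ from its explicit formulas. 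Finally, your chain-rule step silently requires differentiability of $u\mapsto\gamma(u)$ with values in $\ell^1_+$, which the reference you invoke does provide; the paper's derivation is the same chain rule written out term by term, so the two proofs are substantively equivalent.
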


\begin{proof}
We first compute the differential of the eigenvalues $\lambda_n$, $ n\in\N$, with respect to the variable $u$ and applied to $h=\cos$ and $h=\sin$. From~\cite{GerardKappeler2019}, Corollary~5.3, we know that
\[
\d \lambda_n[u].h
	=-\langle |f_n|^2|h\rangle,
	\quad h\in L^2_{r,0}(\T),
\]
which leads to a formula for the differential of the moments $\gamma_n=\lambda_n-\lambda_{n-1}-1$ for $n\geq 1$
\begin{equation}\label{eq:dgamma_n}
\d\gamma_n[u].h
	=\langle |f_{n-1}|^2-|f_n|^2|h\rangle,
	\quad h\in L^2_{r,0}(\T).
\end{equation}
We now simplify
\begin{align*}
\d\gamma_n[u].\cos-i\d\gamma_n[u].\sin
	&=\langle |f_{n-1}|^2-|f_n|^2 | e^{ix}\rangle\\
	&=M_{n-1,n-1}-M_{n,n}.
\end{align*}
We denote
\[
m_n=M_{n,n}=-a_n^*(\gamma)\overline{\zeta_n}\zeta_{n+1},
\]
where we recall that $a_n^*$ is a function of $\gamma$ defined as
\[
a_n^*=\sqrt{\mu_{n+1}^*}\frac{\sqrt{\kappa_n^*}}{\sqrt{\kappa_{n+1}^*}}.
\]
In the rest of the proof, we drop the star exponent for $a_n^*$, $\lambda_n^*$, $\kappa_n^*$ and $\mu_n^*$ in order to avoid heaviness.
With this notation, we have
$$\d\gamma_n[u].\cos-i\d\gamma_n[u].\sin=m_{n-1}-m_n$$
and
$$\d\lambda_n[u].\cos-i\d\lambda_n[u].\sin=-m_n.$$

We are now ready to study the differential of $\ln(\kappa_n)$. Recall that for $n\geq 1$,
\[
\kappa_n(u)=\frac{1}{\lambda_n-\lambda_0}\prod_{\substack{p=1 \\p\neq n}}^{+\infty}\left(1-\frac{\gamma_p}{\lambda_p-\lambda_n}\right)
\]
and
\[
\kappa_0(u)=\prod_{p=1}^{+\infty}\left(1-\frac{\gamma_p}{\lambda_p-\lambda_0}\right).
\]
We get that for $n\geq 1$,
\begin{align*}
\delta{\kappa_n}
	&=\frac{m_n-m_0}{\lambda_n-\lambda_0}+\sum_{\substack{p\geq 1\\p\neq n}}\frac{1}{1-\frac{\gamma_p}{\lambda_p-\lambda_n}}\left(\frac{m_n-m_p}{(\lambda_p-\lambda_n)^2}\gamma_p-\frac{m_{p-1}-m_p}{\lambda_p-\lambda_n}\right)
\end{align*}
and similarly,
\begin{align*}
\delta{\kappa_0}
	&=\sum_{p\geq 1}\frac{1}{1-\frac{\gamma_p}{\lambda_p-\lambda_0}}\left(\frac{m_0-m_p}{(\lambda_p-\lambda_0)^2}\gamma_p-\frac{m_{p-1}-m_p}{\lambda_p-\lambda_0}\right).
\end{align*}

For $n\geq 1$, one can therefore write $\delta{\kappa_n}(\zeta)=\sum_kA_{n,k}^*(\gamma)\overline{\zeta_k} \zeta_{k+1}$, where for $k\not\in \{0;n\}$,
\[
A_{n,k}^*(\gamma)
	=\frac{1}{1-\frac{\gamma_k}{\lambda_k-\lambda_n}}\left(\frac{a_k}{(\lambda_k-\lambda_n)^2}\gamma_k-\frac{a_k}{\lambda_k-\lambda_n}\right)+\frac{1}{1-\frac{\gamma_{k+1}}{\lambda_{k+1}-\lambda_n}}\frac{a_k}{\lambda_{k+1}-\lambda_n},
\]
for $k=n$,
\[
A_{n,n}^*(\gamma)
	=-\frac{a_n}{\lambda_n-\lambda_0}
	+\left(\sum_{k\neq n}\frac{1}{1-\frac{\gamma_k}{\lambda_k-\lambda_n}}\frac{-a_n}{(\lambda_k-\lambda_n)^2}\gamma_k\right)
	+\frac{1}{1-\frac{\gamma_{n+1}}{\lambda_{n+1}-\lambda_n}}\frac{a_n}{\lambda_{n+1}-\lambda_n},
\]
and for $k=0$,
\[
A_{n,0}^*(\gamma)
	=\frac{a_0}{\lambda_n-\lambda_0}
	+\frac{1}{1-\frac{\gamma_{1}}{\lambda_{1}-\lambda_n}}\frac{a_0}{\lambda_{1}-\lambda_n}.
\]
In the case $n=0$, we have for $k\geq1$,
\[
A_{0,k}^*(\gamma)
	=\frac{1}{1-\frac{\gamma_k}{\lambda_k-\lambda_0}}\left(\frac{a_k}{(\lambda_k-\lambda_0)^2}\gamma_k-\frac{a_k}{\lambda_k-\lambda_0}\right)+\frac{1}{1-\frac{\gamma_{k+1}}{\lambda_{k+1}-\lambda_0}}\frac{a_k}{\lambda_{k+1}-\lambda_0}
\]
and for $k=0$,
\[
A_{0,0}^*(\gamma)
	=\left(\sum_{k\geq 1}\frac{1}{1-\frac{\gamma_k}{\lambda_k-\lambda_0}}\frac{-a_0}{(\lambda_k-\lambda_0)^2}\gamma_k\right)
	+\frac{1}{1-\frac{\gamma_{1}}{\lambda_{1}-\lambda_0}}\frac{a_0}{\lambda_{1}-\lambda_0}.
\]

We have seen in~\eqref{ineq:a_n_bound} and~\eqref{ineq:a_n_lipschitz} that if $\|\gamma\|_{\ell^1_+}\leq R$, then there exists $C(R)>0$ such that for all $n\in\N$,
\[
\frac{1}{C(R)}\leq a_n(\gamma)\leq C(R)
\]
and for all $h\in\ell^1_+$,
\[
|\d a_n(\gamma).h|\leq C(R)\|h\|_{\ell^1_+}.
\]
Moreover, the estimate $|\d\lambda_n^*(\gamma).h|\leq \|h\|_{\ell^1_+}$ holds for the differential of $\lambda_n^*$ with respect to~$\gamma$.
We deduce that for all $n,k\in\N$,
\[|A_{n,k}^*(\gamma)|\leq C(R)\]
and for all $h\in\ell^1_+$,
\[
\left|\d A_{n,k}^*(\gamma).h\right|\leq C(R)\|h\|_{\ell^1_+}.
\]
\end{proof}

\subsubsection{The case of \texorpdfstring{$\delta_\parallel\langle \un|f_n\rangle$}{delta_parallel}}

We now study, for $h=\cos$ and $h=\sin$, the term
\[
\delta_\parallel\langle \un|f_n\rangle
	=-i\Im (\langle\widetilde{\xi_n}|h\rangle) \langle\un|f_n\rangle,
\]
where we recall that
\[
\widetilde{\xi_n}=\frac{1}{\langle f_0|\un\rangle}\sum_{p=1}^{+\infty}\frac{\langle \un|f_p\rangle}{\lambda_p-\lambda_0}\overline{f_0}f_p
	-\sum_{k=1}^n \frac{1}{\langle f_k|Sf_{k-1}\rangle}\psi_k
\]
and
\[
\psi_k
	=\sum_{\substack{p\geq 0\\p\neq k}}\frac{\langle f_p|Sf_{k-1}\rangle}{\lambda_p-\lambda_k}f_k\overline{f_p}
	-\sum_{\substack{p\geq 0\\p\neq k-1}}\frac{\langle Sf_p|f_{k}\rangle}{\lambda_p-\lambda_{k-1}}f_{k-1}\overline{f_p}.
\]

\begin{lem}\label{lem:delta_parallel_lip}
For all $n\geq 0$, denote
\[
c_n^\pm:=\langle\widetilde{\xi_n}|e^{\pm ix}\rangle,
\]
then for $n,k\geq 0$, there exists $A^{\pm}_{n,k}\in\classeC^1(\ell^1_+,\R)$, such that for all $n\geq0$, for all $\zeta\in h^{\half}_+$,
 \[c_n^+(\zeta)=\sum_{k=0}^{+\infty}A^+_{n,k}(\gamma)\overline{\zeta_k}\zeta_{k+1}\quad \text{and} \quad c_n^-(\zeta)=\sum_{k=0}^{+\infty}A^-_{n,k}(\gamma)\zeta_k\overline{\zeta_{k+1}}.\]
Moreover, for all $R>0$, there exists $C(R)>0$ such that for all $\gamma\in\ell^1_+$ satisfying $\|\gamma\|_{\ell^1_+}\leq R$,
\[|A^\pm_{n,k}(\gamma)|\leq C(R)\] and for all $h\in\ell^1_+$,
\[
\left|\d A^\pm_{n,k}(\gamma).h\right|\leq C(R)\|h\|_{\ell^1_+}.
\]
\end{lem}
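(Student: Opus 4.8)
The plan is to compute $c_n^\pm = \langle\widetilde{\xi_n}|e^{\pm ix}\rangle$ term by term, using the explicit decomposition of $\widetilde{\xi_n}$ into the "$f_0$-block" and the sum of $\psi_k$'s, and to show that each scalar product against $e^{\pm ix}$ produces exactly a block of the type $\sum_k A_k^*(\gamma)\overline{\zeta_k}\zeta_{k+1}$ (resp.\ $\sum_k A_k^*(\gamma)\zeta_k\overline{\zeta_{k+1}}$). The two key identities I would use are: first, $\langle \overline{f_a}f_b|e^{ix}\rangle = \langle S^*(\overline{f_a}f_b)|\un\rangle$-type manipulations, or more directly $\langle \overline{f_a}f_b \mid e^{ix}\rangle = \widehat{\overline{f_a}f_b}(1)$, which can be expanded in the eigenbasis via the matrix $M$ of $S^* = T_{e^{-ix}}$ from Definition~2.13; second, the formula $\langle \un|f_n\rangle = \sqrt{\kappa_n}\,\zeta_n$ together with the expressions for $\kappa_n$, $\mu_n$ as functions of the actions, and for $\langle f_p|Sf_{n-1}\rangle$ in terms of $\sqrt{\mu_n}$, $\sqrt{\kappa_p/\kappa_n}$, $\overline{\zeta_p}\zeta_n$ and $\frac{1}{\lambda_p-\lambda_{n-1}-1}$ (read off from Definition~2.13). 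Substituting these everywhere converts every occurrence of an unnormalized quantity into a product of a function of $\gamma$ with a monomial in the $\zeta$'s.

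Concretely, I would proceed in the following order. (1) Handle the $f_0$-block $\frac{1}{\langle f_0|\un\rangle}\sum_{p\geq 1}\frac{\langle\un|f_p\rangle}{\lambda_p-\lambda_0}\overline{f_0}f_p$: pairing $\overline{f_0}f_p$ against $e^{ix}$ gives $\langle S^*(\overline{f_0}f_p)|\un\rangle$ which, after decomposing $S^*f_p$ along $(f_n)_n$ and using that only one coefficient survives the pairing with $\un$, yields a single term proportional to $\overline{\zeta_0}\zeta_1$ times a function of $\gamma$ (here $\zeta_0=1$); similarly pairing against $e^{-ix}$ produces $\zeta_0\overline{\zeta_1}$-type terms via $S$ instead of $S^*$. (2) Handle each $\psi_k$: both sums defining $\psi_k$ are of the form $\sum_p (\text{coeff}) f_a\overline{f_b}$ with $\{a,b\}$ adjacent indices $\{k,k-1\}$ or involving a summation index $p$; pairing $f_a\overline{f_b}$ against $e^{ix}$ again reduces, via the $M$-matrix and the collapse of the $\un$-pairing, to terms of the shape $(\text{function of }\gamma)\,\overline{\zeta_j}\zeta_{j+1}$. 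The prefactors $\frac{1}{\langle f_k|Sf_{k-1}\rangle}$ contribute $\frac{1}{\sqrt{\mu_k}}\cdot\frac{\sqrt{\kappa_k}}{\sqrt{\kappa_{k-1}}}\cdot\frac{1}{\overline{\zeta_{k-1}}\zeta_k}$, and one must check that the $\frac{1}{\overline{\zeta_{k-1}}\zeta_k}$ singular factor is always cancelled by a matching $\overline{\zeta_{k-1}}\zeta_k$ coming from $\psi_k$, so that after simplification $c_n^\pm$ is a genuine (nonsingular) combination $\sum_k A_{n,k}^\pm(\gamma)\overline{\zeta_k}\zeta_{k+1}$. (3) Finally, collect all contributions, read off the $A_{n,k}^\pm(\gamma)$, and verify the bounds: each $A_{n,k}^\pm$ is a finite algebraic combination of $\kappa_j^*$, $\mu_j^*$, $a_j^*$, and factors $\frac{1}{\lambda_p-\lambda_q-1}$ and $\frac{1}{\lambda_p-\lambda_q}$, so the uniform bound $|A_{n,k}^\pm(\gamma)|\leq C(R)$ and the Lipschitz estimate $|\d A_{n,k}^\pm(\gamma).h|\leq C(R)\|h\|_{\ell^1_+}$ follow from Lemma~2.8 (bounds on $\kappa_n,\mu_n$), Lemma~2.10 (bounds on their logarithmic differentials), the inequalities $|p-n|\leq|\lambda_p-\lambda_n|\leq|p-n|+\|\gamma\|_{\ell^1_+}$, the bound $|\d\lambda_n^*[\gamma].h|\leq\|h\|_{\ell^1_+}$, and the product/chain rule — exactly as in the proof of Lemma~2.14. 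The $\classeC^1$ regularity is inherited from that of $\kappa_n^*$, $\mu_n^*$, $\lambda_n^*$.

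The main obstacle I anticipate is bookkeeping of the singular $\frac{1}{\overline{\zeta_{k-1}}\zeta_k}$ factors in the prefactors $\frac{1}{\langle f_k|Sf_{k-1}\rangle}$: a priori $\widetilde{\xi_n}$ is only defined when the relevant actions are nonzero, and one must show the final expression extends continuously (indeed $\classeC^1$) across the set $\{\gamma : \gamma_j=0\text{ for some }j\}$. The clean way around this is to do the computation first under the assumption $\gamma_j\neq0$ for all $j$, obtain the explicit block formula $c_n^\pm(\zeta)=\sum_k A_{n,k}^\pm(\gamma)\overline{\zeta_k}\zeta_{k+1}$ with $A_{n,k}^\pm$ manifestly regular in $\gamma$ alone, and then invoke density and continuity of $\d\zeta_n[u].\cos$, $\d\zeta_n[u].\sin$ (known from Lemma~3.4 in \cite{GerardKappeler2019}) to conclude that the identity and the estimates persist in general — the same device used in the proof of Lemma~2.12. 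A secondary, purely technical point is keeping track of which index the $M$-matrix shifts (it pairs $f_n$ with $f_p$ only for $p=n+1$ in the "diagonal" $\mu$-term and off-diagonally otherwise), so that the adjacency pattern $\overline{\zeta_k}\zeta_{k+1}$ comes out correctly rather than some other pair; this is routine once Definition~2.13 is substituted carefully.
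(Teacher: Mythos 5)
Your overall strategy is the same as the paper's: expand $c_n^\pm$ by pairing the $f_0$-block and each $\psi_k$ against $e^{\pm ix}$, rewrite every bracket through the matrix $M_{n,p}=\langle f_p|Sf_n\rangle$ of Definition~\ref{def:M} and the identity $\langle \un|f_p\rangle=\sqrt{\kappa_p}\,\zeta_p$, observe that the diagonal products $\overline{\zeta_p}\zeta_p=\gamma_p$ collapse into functions of the actions so that only blocks $A_{n,k}^\pm(\gamma)\,\overline{\zeta_k}\zeta_{k+1}$ (resp. $\zeta_k\overline{\zeta_{k+1}}$) survive, and then estimate the coefficients via Lemma~\ref{lem:kappa}, Lemma~\ref{lem:dkappa}, $|\lambda_p-\lambda_n|\geq|p-n|$ and $|\d\lambda_n^*[\gamma].h|\leq\|h\|_{\ell^1_+}$. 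That is exactly the computation carried out in the paper.

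However, there is a concrete error at the center of your plan: the prefactor $\frac{1}{\langle f_k|Sf_{k-1}\rangle}$ is \emph{not} singular. The bracket $\langle f_k|Sf_{k-1}\rangle=M_{k-1,k}$ is the case $p=n+1$ of Definition~\ref{def:M}, hence equals $\sqrt{\mu_k}$ -- a function of the actions alone, bounded above and below by $C(R)$ on $\{\|\gamma\|_{\ell^1_+}\leq R\}$ by Lemma~\ref{lem:kappa} -- and not $\frac{1}{\sqrt{\mu_k}}\frac{\sqrt{\kappa_k}}{\sqrt{\kappa_{k-1}}}\frac{1}{\overline{\zeta_{k-1}}\zeta_k}$ as you write (a quick sanity check: by the normalization of the basis, $\langle f_k|Sf_{k-1}\rangle>0$, so it cannot be proportional to $\overline{\zeta_{k-1}}\zeta_k$). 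Consequently the ``main obstacle'' you identify, the cancellation of a $\frac{1}{\overline{\zeta_{k-1}}\zeta_k}$ factor, is a phantom; and if you actually divided by $\overline{\zeta_{k-1}}\zeta_k$, the cancellation you hope for would fail, because $\langle\psi_k|e^{ix}\rangle$ contains contributions of the form $\gamma_k\sum_{p\neq k-1,k}(\cdots)\,\overline{\zeta_p}\zeta_{p+1}$ which carry no factor $\overline{\zeta_{k-1}}\zeta_k$, so your expansion would produce genuinely singular terms instead of the claimed block structure. The fix is simply to use $\frac{1}{\langle f_k|Sf_{k-1}\rangle}=\frac{1}{\sqrt{\mu_k}}$; then no degeneracy issue arises and no density/continuity argument across $\{\gamma_j=0\}$ is needed (Definition~\ref{def:M} already covers $\zeta_{k}=0$, which is how the paper proceeds). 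A secondary remark: the uniform bounds $|A^\pm_{n,k}(\gamma)|\leq C(R)$ are not purely formal consequences of boundedness of $\kappa$, $\mu$; they use the decay $\kappa_p\leq C(R)/(p+1)$ and index comparisons such as $\frac{\sqrt{\kappa_k\kappa_{k-1}}}{\kappa_{p+1}|\lambda_p-\lambda_{k-1}|}\leq C(R)$ (comparing $k-1$ with $p/2$), together with $\sum_l\gamma_l\leq R$ to control the sums over $p$ and over $1\leq l\leq n$ uniformly in $n$; your plan gestures at this but this is where the actual work lies.
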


\begin{proof}
$\bullet$ By definition of $M_{n,p}=\langle f_p|Sf_n\rangle$, we have
\[
c_n^+
	=\frac{1}{\sqrt{\kappa_0}}\sum_{p=1}^{+\infty}\frac{\sqrt{\kappa_p}\zeta_p}{\lambda_p-\lambda_0}M_{0,p}
	-\sum_{k=1}^n \frac{1}{\sqrt{\mu_k}}\langle\psi_k|e^{ix}\rangle
\]
and
\begin{align*}
\langle \psi_k|e^{ix}\rangle
	&=\sum_{p\neq k}\frac{M_{k-1,p}}{\lambda_p-\lambda_k}M_{p,k}-\sum_{p\neq k-1}\frac{\overline{M_{p,k}}}{\lambda_p-\lambda_{k-1}}M_{p,k-1}.
\end{align*}

We replace the terms $M_{n,p}$ by their expressions (see Definition~\ref{def:M}), and make use of colors to emphasize the oscillating terms (of the form $\overline{\zeta_k}\zeta_{k+1}$ or $\zeta_k\overline{\zeta_{k+1}}$). Since $\zeta_0=1$, we have
\begin{align*}
c_n^+
	&=\sqrt{\frac{\mu_1\kappa_1}{\kappa_0}}\frac{{\color{teal} \overline{\zeta_0}\zeta_1}}{1+\gamma_1}+\sqrt{\frac{\mu_{1}}{\kappa_0\kappa_1}}{\color{teal} \overline{\zeta_0}\zeta_1}\sum_{p=2}^{+\infty}\frac{\kappa_p\overline{\zeta_p}\zeta_p}{(\lambda_p-\lambda_0)(\lambda_p-\lambda_0-1)}
	-\sum_{k=1}^n \frac{1}{\sqrt{\mu_k}}\langle\psi_k|e^{ix}\rangle.
\end{align*}
Moreover, by isolating the indices $p=k-1$ and $p=k-2$ in the first and second sum respectively in the formula for $\langle \psi_k|e^{ix}\rangle$, we get
\begin{multline*}
\langle \psi_k|e^{ix}\rangle
	=\frac{\mu_k}{1+\gamma_k}\sqrt{\frac{\kappa_{k-1}}{\kappa_k}}{\color{blue} \overline{\zeta_{k-1}}\zeta_k}\\
	+\overline{\zeta_k}\zeta_k\sum_{p\neq k-1,k}\sqrt{\mu_k\mu_{p+1}}\sqrt{\frac{
\kappa_p}{\kappa_{p+1}}}\frac{{\color{violet}\overline{\zeta_p}\zeta_{p+1}}}{(\lambda_p-\lambda_k)(\lambda_p-\lambda_{k-1}-1)(\lambda_k-\lambda_p-1)}\\
	+\frac{\mu_{k-1}}{(1+\gamma_{k-1})(1+\gamma_k+\gamma_{k-1})}\sqrt{\frac{\kappa_k}{\kappa_{k-1}}}{\color{blue}\zeta_k\overline{\zeta_{k-1}}}\\
	-{\color{blue}\zeta_k\overline{\zeta_{k-1}}}\sum_{p\neq k-2,k-1}\mu_{p+1}\frac{\sqrt{\kappa_k\kappa_{k-1}}}{\kappa_{p+1}(\lambda_p-\lambda_{k-1})}\frac{\zeta_{p+1}\overline{\zeta_{p+1}}}{(\lambda_k-\lambda_p-1)(\lambda_{k-1}-\lambda_p-1)}.
\end{multline*}

One can therefore write $c_n^+=\sum_{k=1}^{+\infty}A^{+}_{n,k-1}\overline{\zeta_{k-1}}\zeta_{k}$, with
\begin{align*}
A^+_{n,k-1}
=&{\color{teal}\un_{k=1}\left(\sqrt{\frac{\mu_1\kappa_1}{\kappa_0}}\frac{1}{1+\gamma_1}
	+\sqrt{\frac{\mu_{1}}{\kappa_0\kappa_1}}\sum_{p=2}^{+\infty}\frac{\kappa_p\gamma_p}{(\lambda_p-\lambda_0)(\lambda_p-\lambda_0-1)}\right)}\\
	&{\color{blue}+\un_{1\leq k\leq n}\Big(\frac{\sqrt{\mu_k}}{1+\gamma_k}\sqrt{\frac{\kappa_{k-1}}{\kappa_k}}}\\
	&{\color{blue}+\frac{\mu_{k-1}}{\sqrt{\mu_k}(1+\gamma_{k-1})(1+\gamma_k+\gamma_{k-1})}\sqrt{\frac{\kappa_k}{\kappa_{k-1}}}}\\
	&{\color{blue}-\sum_{p\neq k-2,k-1}\frac{\mu_{p+1}}{\sqrt{\mu_k}}\frac{\sqrt{\kappa_k\kappa_{k-1}}}{\kappa_{p+1}(\lambda_p-\lambda_{k-1})}\frac{\gamma_{p+1}}{(\lambda_k-\lambda_p-1)(\lambda_{k-1}-\lambda_p-1)}\Big)}\\
	&{\color{violet}+\sum_{\substack{1\leq l\leq n\\k-1\neq l-1,l}}\sqrt{\mu_k}\sqrt{\frac{
\kappa_{k-1}}{\kappa_{k}}}\frac{\gamma_l}{(\lambda_{k-1}-\lambda_l)(\lambda_{k-1}-\lambda_{l-1}-1)(\lambda_l-\lambda_{k-1}-1)}}.
\end{align*}

$\bullet $ We now prove bounds and Lipschitz estimates on finite balls of $\ell^1_+$.

We know that for $p\neq k$, $\frac{1}{|\lambda_p-\lambda_k|}\leq \frac{1}{|p-k|}$, and for $p\neq k+1$, $\frac{1}{|\lambda_p-\lambda_k-1|}\leq 1$. Moreover, $\mu_p\leq C(R)$ and $\kappa_p\leq\frac{C(R)}{p+1}$.

The first term is therefore bounded as
\[
\sqrt{\frac{\mu_1\kappa_1}{\kappa_0}}\frac{1}{1+\gamma_1}
	+\sqrt{\frac{\mu_{1}}{\kappa_0\kappa_1}}\sum_{p=2}^{+\infty}\frac{\kappa_p\gamma_p}{(\lambda_p-\lambda_0)(\lambda_p-\lambda_0-1)}
	\leq C(R)\left(1+\sum_{p=2}^{+\infty}\kappa_p\gamma_p\right)
	\leq C'(R).
\]
Then, for $1\leq k\leq n$, we have
\[
\frac{\sqrt{\mu_k}}{1+\gamma_k}\sqrt{\frac{\kappa_{k-1}}{\kappa_k}}\\
	+\frac{\mu_{k-1}}{\sqrt{\mu_k}(1+\gamma_{k-1})(1+\gamma_k+\gamma_{k-1})}\sqrt{\frac{\kappa_k}{\kappa_{k-1}}}
	\leq C(R)
\]
and since the following estimate can be obtained by comparing $(k-1)$ to $\frac{p}{2}$
\[
\frac{\sqrt{\kappa_k\kappa_{k-1}}}{\kappa_{p+1}(\lambda_p-\lambda_{k-1})}
	\leq C(R)\frac{p}{k|p-(k-1)|}
	\leq C'(R),
\]
we get
\begin{align*}
\left|\sum_{p\neq k-2,k-1}\frac{\mu_{p+1}}{\sqrt{\mu_k}}\frac{\sqrt{\kappa_k\kappa_{k-1}}}{\kappa_{p+1}(\lambda_p-\lambda_{k-1})}\frac{\gamma_{p+1}}{(\lambda_k-\lambda_p-1)(\lambda_{k-1}-\lambda_p-1)}\right|
	&\leq C(R)\sum_{p\neq k-2,k-1}\gamma_{p+1}\\
	&\leq C'(R).
\end{align*}
Finally,
\begin{align*}
\sum_{\substack{1\leq l\leq n\\k-1\neq l-1,l}}\sqrt{\mu_k}\sqrt{\frac{
\kappa_{k-1}}{\kappa_{k}}}\frac{\gamma_l}{(\lambda_{k-1}-\lambda_l)(\lambda_{k-1}-\lambda_{l-1}-1)(\lambda_l-\lambda_{k-1}-1)}
	&\leq C(R)\sum_{\substack{1\leq l\leq n\\k-1\neq l-1,l}}\gamma_l\\
	&\leq C'(R).
\end{align*}
We conclude that \[|A^+_{n,p}|\leq C(R).\]

We finally check that for all $h\in\ell^1_+$,
\[
\left|\d A^+_{n,p}(\gamma).h\right|\leq C(R)\|h\|_{\ell^1_+}.
\]
This is a consequence of the formulas, and the fact that for all $n\in\N$, the spectral parameters, seen as functions of the actions $\gamma$, satisfy $|\d\lambda_n^*(\gamma).h|\leq \|h\|_{\ell^1_+}$, $|\d\mu_n^*(\gamma).h|\leq C(R)\|h\|_{\ell^1_+}$ and $|(n+1)\d\kappa_n^*(\gamma).h|\leq C(R)\|h\|_{\ell^1_+}$ if $\|\gamma\|_{\ell^1_+}\leq R$.

$\bullet$ Similarly, we write the formula for $c_n^-$
\[
c_n^-
	=\frac{1}{\sqrt{\kappa_0}}\sum_{p=1}^{+\infty}\frac{\sqrt{\kappa_p}\zeta_p}{\lambda_p-\lambda_0}\overline{M_{p,0}}
	-\sum_{k=1}^n \frac{1}{\sqrt{\mu_k}}\langle\psi_k|e^{-ix}\rangle,
\]
where
\begin{align*}
\langle\psi_k|e^{-ix}\rangle
	&=\sum_{p\neq k}\frac{M_{k-1,p}}{\lambda_p-\lambda_k}\overline{M_{k,p}}-\sum_{p\neq k-1}\frac{\overline{M_{p,k}}}{\lambda_p-\lambda_{k-1}}\overline{M_{k-1,p}}.
\end{align*}
Using the expression of $M_{n,p}$, we get
\begin{align*}
c_n^-
	&=\sum_{p=1}^{+\infty}\sqrt{\mu_{p+1}}\sqrt{\frac{\kappa_p}{\kappa_{p+1}}}\frac{{\color{teal} \zeta_p\overline{\zeta_{p+1}}}}{(\lambda_p-\lambda_0)(\lambda_0-\lambda_p-1)}
	-\sum_{k=1}^n \frac{1}{\sqrt{\mu_k}}\langle\psi_k|e^{-ix}\rangle,
\end{align*}
where we isolate the terms $p=k+1$ and $p=k$ in the first and second term respectively in the formula for $\langle\psi_k|e^{-ix}\rangle$ 
\begin{multline*}
\langle\psi_k|e^{-ix}\rangle
	=\frac{\sqrt{\mu_k\mu_{k+1}}}{(1+\gamma_{k+1}+\gamma_k)(1+\gamma_{k+1})}\sqrt{\frac{\kappa_{k+1}}{\kappa_k}}{\color{blue} \overline{\zeta_{k+1}}\zeta_k}\\
	+{\color{blue} \zeta_k\overline{\zeta_{k+1}}}\sum_{p\neq k,k+1}\sqrt{\mu_k\mu_{k+1}}\frac{\kappa_p}{\sqrt{\kappa_k\kappa_{k+1}}(\lambda_p-\lambda_k)}\frac{\zeta_p\overline{\zeta_p}}{(\lambda_p-\lambda_{k-1}-1)(\lambda_p-\lambda_k-1)}\\
	+\frac{\sqrt{\mu_{k+1}\mu_k}}{1+\gamma_k}\sqrt{\frac{\kappa_k}{\kappa_{k+1}}}{\color{blue} \zeta_k\overline{\zeta_{k+1}}}\\
	-\overline{\zeta_k}\zeta_k\sum_{p\neq k-1,k}\sqrt{\mu_{p+1}\mu_k}\sqrt{\frac{\kappa_p}{\kappa_{p+1}}}\frac{{\color{violet} \overline{\zeta_{p+1}}\zeta_p}}{(\lambda_p-\lambda_{k-1})(\lambda_k-\lambda_p-1)(\lambda_p-\lambda_{k-1}-1)}.
\end{multline*}
One can therefore write $c_n^-=\sum_kA^-_{n,k}\zeta_k\overline{\zeta_{k+1}}$, with
\begin{align*}
A^-_{n,k}
	=&{\color{teal}\un_{k\neq 0}\sqrt{\mu_{k+1}}\sqrt{\frac{\kappa_k}{\kappa_{k+1}}}\frac{1}{(\lambda_k-\lambda_0)(\lambda_0-\lambda_k-1)}}\\
	&{\color{blue} +\un_{1\leq k\leq n}\Big(\frac{\sqrt{\mu_{k+1}}}{(1+\gamma_{k+1}+\gamma_k)(1+\gamma_{k+1})}\sqrt{\frac{\kappa_{k+1}}{\kappa_k}}}\\
	&{\color{blue} +\sum_{p\neq k,k+1}\sqrt{\mu_{k+1}}\frac{\kappa_p}{\sqrt{\kappa_k\kappa_{k+1}}(\lambda_p-\lambda_k)}\frac{\gamma_p}{(\lambda_p-\lambda_{k-1}-1)(\lambda_p-\lambda_k-1)}}\\
	&{\color{blue} +\frac{\sqrt{\mu_{k+1}}}{1+\gamma_k}\sqrt{\frac{\kappa_k}{\kappa_{k+1}}}\Big)}\\
	&{\color{violet} -\sum_{\substack{1\leq l\leq n\\k\neq l-1,l}}\sqrt{\mu_{k+1}}\sqrt{\frac{\kappa_k}{\kappa_{k+1}}}\frac{\gamma_l}{(\lambda_k-\lambda_{l-1})(\lambda_l-\lambda_k-1)(\lambda_k-\lambda_{l-1}-1)}}.
\end{align*}
The proof of the estimates
\[|A^-_{n,p}|\leq C(R)\]
and
\[
\left|\d A^-_{n,p}(\gamma).h\right|\leq C(R)\|h\|_{\ell^1_+}
\]
are similar to the terms $A_{n,p}^+$.
\end{proof}

\subsubsection{The case of \texorpdfstring{$\delta_\perp\langle \un|f_n\rangle$}{delta_perp}}

Finally, we focus on the term
\[
\delta_\perp\langle \un|f_n\rangle
	=\sum_{p\neq n}\frac{\langle f_p\overline{f_n}|h \rangle}{\lambda_p-\lambda_n}\langle\un|f_p\rangle
\]
when $h=\cos$ and $h=\sin$.

\begin{lem}\label{lem:delta_perp_lip}
For $n\geq 0$, let us write
\[
b_n^{\pm}
	:=\sum_{p\neq n}\frac{\langle f_p\overline{f_n}|e^{\pm ix} \rangle}{\lambda_p-\lambda_n}\langle\un|f_p\rangle.
\]
Then there exist $p_n^*, B_{n,k}^*,q_n^*\in\classeC^1(\ell^1_+,\R)$, $n,k\in\N$, such that for all $n\in\N$, for all $\zeta\in h^{\half}_+$,
\begin{equation*}
\frac{b_n^+}{\sqrt{\kappa_n}}
	=q_n^*(\gamma)\zeta_{n+1}
\end{equation*}
and
\begin{equation*}
\frac{b_n^-}{\sqrt{\kappa_n}}
	=p_n^*(\gamma)\zeta_{n-1}+\left(\sum_{k=0}^{+\infty}B_{n,k}^*(\gamma)\zeta_k\overline{\zeta_{k+1}}\right)\zeta_n.
\end{equation*}
The terms $p_n^*$, $B_{n,k}^*$ and $q_n^*$ are uniformly bounded and Lipschitz on finite balls: for all $R>0$, there exists $C(R)>0$ such that for all $\gamma\in \ell^1_+$ satisfying $\|\gamma\|_{\ell^1_+}\leq R$,  for all $n,k\geq 0$, we have
\begin{equation*}
|p_n^*(\gamma)|+|B_{n,k}^*(\gamma)|+|q_n^*(\gamma)|\leq C(R)
\end{equation*}
and for all $h\in \ell^1_+$,
\begin{equation*}
|\d p_n^*(\gamma).h|+|\d B_{n,k}^*(\gamma).h|+|\d q_n^*(\gamma).h|\leq C(R)\|h\|_{\ell^1_+}.
\end{equation*}
\end{lem}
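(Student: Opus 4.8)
The plan is to express $b_n^\pm$ directly through the matrix $M$ of Definition~\ref{def:M} and the Birkhoff coordinates, then collapse the ``diagonal'' interactions by means of the identity $\overline{\zeta_p}\zeta_p=\gamma_p$, which is what makes the resulting coefficients depend on the actions only. Concretely, I would first use $\langle f_p\overline{f_n}|e^{ix}\rangle=\langle f_p|Sf_n\rangle=M_{n,p}$ and $\langle f_p\overline{f_n}|e^{-ix}\rangle=\langle Sf_p|f_n\rangle=\overline{M_{p,n}}$, together with $\langle\un|f_p\rangle=\sqrt{\kappa_p}\,\zeta_p$ (the identity $\zeta_p=\langle\un|f_p\rangle/\sqrt{\kappa_p}$ rearranged), to write
\[
b_n^+=\sum_{p\neq n}\frac{M_{n,p}}{\lambda_p-\lambda_n}\sqrt{\kappa_p}\,\zeta_p,\qquad
b_n^-=\sum_{p\neq n}\frac{\overline{M_{p,n}}}{\lambda_p-\lambda_n}\sqrt{\kappa_p}\,\zeta_p.
\]
Then I substitute the expression of $M$ from Definition~\ref{def:M} in the form involving the $\mu_\bullet$ and the $\zeta$'s (which remains valid when some $\zeta_{n+1}=0$, since then $\mu_{n+1}=1$), isolating the single index where $M$ sits on the shifted diagonal: $p=n+1$ in $b_n^+$, with $M_{n,n+1}=\sqrt{\mu_{n+1}}$, and $p=n-1$ in $b_n^-$, with $\overline{M_{n-1,n}}=\sqrt{\mu_n}$. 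For any other index, $M_{n,p}=\sqrt{\mu_{n+1}}\tfrac{\sqrt{\kappa_p}}{\sqrt{\kappa_{n+1}}}\overline{\zeta_p}\zeta_{n+1}\tfrac{1}{\lambda_p-\lambda_n-1}$, so in $b_n^+$ the factor $\overline{\zeta_p}$ combines with the $\zeta_p$ coming from $\langle\un|f_p\rangle$ into $\overline{\zeta_p}\zeta_p=\gamma_p$; hence every summand of $b_n^+$ carries exactly one factor $\zeta_{n+1}$ and all of its other factors depend on $\gamma$ only.

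This yields $b_n^+/\sqrt{\kappa_n}=q_n^*(\gamma)\zeta_{n+1}$ with
\[
q_n^*(\gamma)=\sqrt{\mu_{n+1}}\left(\frac{\sqrt{\kappa_{n+1}}}{\sqrt{\kappa_n}\,(\lambda_{n+1}-\lambda_n)}+\frac{1}{\sqrt{\kappa_n\kappa_{n+1}}}\sum_{p\neq n,n+1}\frac{\kappa_p\gamma_p}{(\lambda_p-\lambda_n)(\lambda_p-\lambda_n-1)}\right),
\]
with the convention $\gamma_0=|\zeta_0|^2=1$. Likewise $\overline{M_{p,n}}=\sqrt{\mu_{p+1}}\tfrac{\sqrt{\kappa_n}}{\sqrt{\kappa_{p+1}}}\zeta_n\overline{\zeta_{p+1}}\tfrac{1}{\lambda_n-\lambda_p-1}$ for $p\neq n-1$, so in $b_n^-$ every term but $p=n-1$ factors as $\zeta_n$ times $\zeta_p\overline{\zeta_{p+1}}$, giving $b_n^-/\sqrt{\kappa_n}=p_n^*(\gamma)\zeta_{n-1}+\big(\sum_kB_{n,k}^*(\gamma)\zeta_k\overline{\zeta_{k+1}}\big)\zeta_n$ with
\[
p_n^*(\gamma)=\frac{\sqrt{\mu_n}\,\sqrt{\kappa_{n-1}}}{\sqrt{\kappa_n}\,(\lambda_{n-1}-\lambda_n)}\qquad(\text{and }p_0^*:=0),
\]
\[
B_{n,k}^*(\gamma)=\frac{\sqrt{\mu_{k+1}}\,\sqrt{\kappa_k}}{\sqrt{\kappa_{k+1}}\,(\lambda_k-\lambda_n)(\lambda_n-\lambda_k-1)}\quad(k\neq n,n-1),\qquad B_{n,n}^*=B_{n,n-1}^*=0.
\]
All these expressions involve only $\mu_\bullet^*,\kappa_\bullet^*$ and $\lambda_n^*(\gamma)=n-\sum_{j>n}\gamma_j$, hence they are $\classeC^1$ functions of $\gamma\in\ell^1_+$ by Lemmas~\ref{lem:kappa} and~\ref{lem:dkappa}.

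For the uniform bounds, write $C(R)$ for constants depending only on $R=\|\gamma\|_{\ell^1_+}$. Lemma~\ref{lem:kappa} gives $\mu_p^*\leq C(R)$ and $\kappa_p^*\in[\tfrac{1}{C(R)(p+1)},\tfrac{C(R)}{p+1}]$, so all ratios $\kappa_m^*/\kappa_{m'}^*$ with $|m-m'|\leq1$ are bounded; moreover $|\lambda_{n+1}-\lambda_n|=1+\gamma_{n+1}\geq1$, $|\lambda_{n-1}-\lambda_n|=1+\gamma_n\geq1$, and the bound $|\lambda_p-\lambda_n|\geq|p-n|$ recalled above yields $|\lambda_p-\lambda_n|\geq|p-n|\geq1$ for $p\neq n$ and $|\lambda_n-\lambda_p-1|\geq1$ for $p\neq n-1$. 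These immediately give $|p_n^*|+|B_{n,k}^*|\leq C(R)$ and bound the first summand of $q_n^*$. The only delicate point is the series in $q_n^*$: since $1/\sqrt{\kappa_n\kappa_{n+1}}\leq C(R)(n+2)$, it suffices to show
\[
\sup_{n\geq0}\ \sup_{p\neq n,n+1}\ \frac{(n+2)\,\kappa_p^*}{|\lambda_p-\lambda_n|\,|\lambda_p-\lambda_n-1|}\leq C(R),
\]
and then $\sum_p\gamma_p\leq R$ closes the estimate $|q_n^*|\leq C(R)$. I would prove this supremum bound by cases: for $p\leq n-1$ one has $|\lambda_p-\lambda_n|\geq n-p$ and $|\lambda_p-\lambda_n-1|\geq n-p+1$, so the quotient is at most $C(R)\tfrac{n+2}{(p+1)(n-p)(n-p+1)}\leq C(R)$ (split according to whether $p\geq n/2$ or $p<n/2$); for $p\geq n+2$ one has $|\lambda_p-\lambda_n|\geq p-n$ and $|\lambda_p-\lambda_n-1|\geq p-n-1\geq1$, so the quotient is at most $C(R)\tfrac{n+2}{(p+1)(p-n)(p-n-1)}$, which is decreasing in $p$ and at $p=n+2$ is $\leq C(R)$. (With these bounds, Proposition~\ref{prop:A(zeta)} then shows, after conjugation since $B_{n,k}^*$ is real, that $\zeta\mapsto\sum_kB_{n,k}^*(\gamma)\zeta_k\overline{\zeta_{k+1}}$ is bounded and Lipschitz on balls of $\ell^2_+$.)

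Finally, the Lipschitz estimates follow by differentiating the closed formulas with the chain rule, using from Lemmas~\ref{lem:kappa} and~\ref{lem:dkappa} that $|\d\lambda_p^*(\gamma).h|\leq\|h\|_{\ell^1_+}$, $|\d\mu_p^*(\gamma).h|\leq C(R)\|h\|_{\ell^1_+}$ and $|(p+1)\,\d\kappa_p^*(\gamma).h|\leq C(R)\|h\|_{\ell^1_+}$ whenever $\|\gamma\|_{\ell^1_+}\leq R$. For $p_n^*$ and $B_{n,k}^*$ this is immediate. For $q_n^*$ one differentiates the series term by term; the resulting terms split into those hitting a factor $\kappa_p$, those hitting a factor $\gamma_p$ (which produces $h_p$) and those hitting a resolvent factor $(\lambda_p-\lambda_n)^{-1}$ or $(\lambda_p-\lambda_n-1)^{-1}$, and each family is dominated, via the same supremum estimate as above (and its analogue carrying one extra power of $|\lambda_p-\lambda_n|^{-1}$ or $|\lambda_p-\lambda_n-1|^{-1}$), by $C(R)\|h\|_{\ell^1_+}$ once one sums $\sum_p\gamma_p\leq R$ or $\sum_p|h_p|=\|h\|_{\ell^1_+}$; this also justifies the term-by-term differentiation and confirms the $\classeC^1$ regularity. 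The main obstacle throughout is precisely this uniform-in-$n$ control of the series defining $q_n^*$ against the linear growth of $1/\kappa_n^*$ in $n$; once it is secured, the rest is bookkeeping from Definition~\ref{def:M} and Lemmas~\ref{lem:kappa}--\ref{lem:dkappa}.
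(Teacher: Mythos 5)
Your proposal is correct and takes essentially the same route as the paper: expand $b_n^\pm$ via $M_{n,p}$ and $\overline{M_{p,n}}$ with $\langle\un|f_p\rangle=\sqrt{\kappa_p}\zeta_p$, isolate the shifted-diagonal index ($p=n+1$, resp.\ $p=n-1$), collapse $\overline{\zeta_p}\zeta_p=\gamma_p$, and arrive at the same formulas for $p_n^*$, $q_n^*$, $B_{n,k}^*$, with the bounds coming from Lemmas~\ref{lem:kappa}--\ref{lem:dkappa} and the balance of $\kappa_p\lesssim (p+1)^{-1}$ against $1/\sqrt{\kappa_n\kappa_{n+1}}\lesssim n+1$. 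The only cosmetic difference is in estimating the series in $q_n^*$: you take a supremum in $p$ of $(n+2)\kappa_p/(|\lambda_p-\lambda_n||\lambda_p-\lambda_n-1|)$ and then use $\sum_p\gamma_p\leq R$, while the paper bounds $\gamma_p\leq R$ and sums $\sum_{p\neq n}\frac{1}{p|p-n|}\leq \frac{C}{n+1}$; both hinge on the same comparison of $p$ with $n/2$.
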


\begin{proof}
We first simplify the formulas for $b_n^+$ and $b_n^-$: on the one hand,
\begin{align*}
b_n^+
	&= \sum_{p\neq n}\frac{M_{n,p}}{\lambda_p-\lambda_n}\sqrt{\kappa_p}\zeta_p\\
	&= \sqrt{\mu_{n+1}}\zeta_{n+1}\left(\frac{\sqrt{\kappa_{n+1}}}{1+\gamma_{n+1}}+ \frac{1}{\sqrt{\kappa_{n+1}}}\sum_{p\neq n, n+1}\frac{\kappa_p}{(\lambda_p-\lambda_n)(\lambda_p-\lambda_n-1)}\zeta_p\overline{\zeta_p}\right),
\end{align*}
and on the other hand,
\begin{align*}
b_n^-
	&= \sum_{p\neq n}\frac{\overline{M_{p,n}}}{\lambda_p-\lambda_n}\sqrt{\kappa_p}\zeta_p\\
	&=-\frac{\sqrt{\mu_n}\sqrt{\kappa_{n-1}}}{1+\gamma_n}\zeta_{n-1}
	+\sqrt{\kappa_n}\zeta_n \sum_{p\neq n,n-1}\frac{\sqrt{\mu_{p+1}}}{(\lambda_p-\lambda_n)(\lambda_n-\lambda_p-1)}\sqrt{\frac{\kappa_p}{\kappa_{p+1}}}\zeta_p\overline{\zeta_{p+1}}.
\end{align*}

We now define
\begin{equation}\label{def:pn}
p_n^*(\gamma)=-\frac{\sqrt{\mu_n}\sqrt{\kappa_{n-1}}}{\sqrt{\kappa_n}(1+\gamma_n)},
\end{equation}
\[
B_{n,k}^*(\gamma)=\un_{k\neq n,n-1}\frac{\sqrt{\mu_{k+1}}}{(\lambda_k-\lambda_n)(\lambda_n-\lambda_k-1)}\sqrt{\frac{\kappa_k}{\kappa_{k+1}}}
\]
and
\begin{equation}\label{def:qn}
q_n^*(\gamma)=\frac{\sqrt{\mu_{n+1}}}{\sqrt{\kappa_n}}\left(\frac{\sqrt{\kappa_{n+1}}}{1+\gamma_{n+1}}+ \frac{1}{\sqrt{\kappa_{n+1}}}\sum_{p\neq n, n+1}\frac{\kappa_p\gamma_p}{(\lambda_p-\lambda_n)(\lambda_p-\lambda_n-1)}\right).
\end{equation}
The estimates on $p_n^*$ and $B_{n,k}^*$ follow from their definitions. Note that in order to bound the sum on the right in the expression of $q_n^*$, we use the inequality $|\lambda_p-\lambda_n|\geq |p-n|$ and the estimate
\[
\sum_{\substack{p\geq 1\\p\neq n}}\frac{1}{p|p-n|}\leq \frac{C}{n+1},
\]
which can be obtained when comparing $p$ to $\frac{n}{2}$.
\end{proof}

\begin{proof}[Proof of Theorem~\ref{thm:dzeta.cos}]
Starting from the expression~\eqref{eq:dzeta} of $\d\zeta_n[u].\cos$ and $\d\zeta_n[u].\sin$, we have the formulas
\[
\d\zeta_n[u].\cos=\zeta_n\left(-i\Im(\langle \widetilde{\xi_n}|\cos\rangle)-\frac{\d\ln(\kappa_n)[u].\cos}{2}\right)+\frac{b_n^++b_n^-}{2\sqrt{\kappa_n}}
\]
and
\[
\d\zeta_n[u].\sin=\zeta_n\left(-i\Im(\langle \widetilde{\xi_n}|\sin\rangle)-\frac{\d\ln(\kappa_n)[u].\sin}{2}\right)-\frac{b_n^+-b_n^-}{2i\sqrt{\kappa_n}}.
\]
The proof is now a direct consequence of Lemmas~\ref{lem:dkappa_lip},~\ref{lem:delta_parallel_lip} and~\ref{lem:delta_perp_lip}.
\end{proof}


\subsubsection{Lipschitz properties}\label{part:dzeta_lipschitz}

We now deduce from Theorem~\ref{thm:dzeta.cos} that the terms $\d\zeta_n[u].\cos$ and $\d\zeta_n[u].\sin$ are bounded and Lipschitz functions of $\zeta$ on finite balls for the norms
\[h^{\half-s}_+=\{(\zeta_n)_{n\geq 1}\in\C^\N\mid \sum_{n=1}^{+\infty}n^{1-2s}|\zeta_n|^2<+\infty\},
\quad 0\leq s<\half.
\]

\begin{cor}[Bounds and Lipschitz properties]\label{cor:dzeta.cos} Fix $R>0$. There exists $C(R)>0$ such that the following holds.

Let $s< -\half$. If $\zeta\in h^{\half}_+$ and $\|\zeta\|_{\ell^2_+}\leq R$, then writing $u=\Phi^{-1}(\zeta)$,
\[
\|(\d\zeta_n[u].\cos)_n\|_{h^{\half+s}_+}+\|(\d\zeta_n[u].\sin)_n\|_{h^{\half+s}_+}
	\leq C(R)\|\zeta\|_{h^{\half+s}_+}.
\]
Moreover, for all $\zeta^{(1)},\zeta^{(2)}\in h^{\half}_+$ satisfying $\|\zeta^{(1)}\|_{h^{\half+s}_+},\|\zeta^{(2)}\|_{h^{\half+s}_+}\leq R$, if $u^{(1)}=\Phi^{-1}(\zeta^{(1)})$ and $u^{(2)}=\Phi^{-1}(\zeta^{(2)})$, then
\[
\|(\d\zeta_n[u^{(1)}].\cos)_n-(\d \zeta_n[u^{(2)}].\cos)_n\|_{h^{\half+s}_+}
	\leq C(R)\|\zeta^{(1)}-\zeta^{(2)}\|_{h^{\half+s}_+}
\]
and
\[
\|(\d\zeta_n[u^{(1)}].\sin)_n-(\d \zeta_n[u^{(2)}].\sin)_n\|_{h^{\half+s}_+}
	\leq C(R)\|\zeta^{(1)}-\zeta^{(2)}\|_{h^{\half+s}_+}.
\]
\end{cor}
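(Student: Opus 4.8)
The plan is to feed the explicit decomposition from Theorem~\ref{thm:dzeta.cos} into Proposition~\ref{prop:A(zeta)} and then estimate each of the three types of terms appearing in
\[
\d\zeta_n[u].\cos
	=p_n^*(\gamma)\zeta_{n-1}
	+q_n^*(\gamma)\zeta_{n+1}
	+\left(\sum_{k=0}^{+\infty}A_{n,k}^*(\gamma)\overline{\zeta_k}\zeta_{k+1}+B_{n,k}^*(\gamma)\zeta_k\overline{\zeta_{k+1}}\right)\zeta_n
\]
separately, measuring the output sequence in the $h^{\half+s}_+$ norm with $s<-\half$ (so the exponent $1+2s<0$; beware the typo in the statement, which should read $s<-\half$, not $s<\half$, consistently with the enclosing subsection introducing $h^{\half-s}_+$ for $0\le s<\half$). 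First I would fix $R>0$, set $u=\Phi^{-1}(\zeta)$, and recall that $\|\zeta\|_{\ell^2_+}\le R$ forces $\|\gamma\|_{\ell^1_+}\le R^2$, so that all the spectral coefficients $p_n^*,q_n^*,A_{n,k}^*,B_{n,k}^*$ obey the uniform bound $C(R)$ and the Lipschitz-in-$\gamma$ bound from Theorem~\ref{thm:dzeta.cos}.

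For the boundedness statement: the shift terms $p_n^*(\gamma)\zeta_{n-1}$ and $q_n^*(\gamma)\zeta_{n+1}$ are handled by
\[
\sum_{n\ge 1} n^{1+2s}|p_n^*(\gamma)|^2|\zeta_{n-1}|^2
	\le C(R)^2\sum_{n\ge 1} n^{1+2s}|\zeta_{n-1}|^2
	\le C'(R)^2\|\zeta\|_{h^{\half+s}_+}^2,
\]
using $n^{1+2s}\le C(n-1)^{1+2s}$ for $n\ge 2$ (here $1+2s<0$, so the comparison of consecutive integer weights is harmless; the convention $\zeta_0=1$ contributes only the finite $n=1$ term, which is $\le C(R)^2$) and symmetrically for $q_n^*(\gamma)\zeta_{n+1}$. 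The remaining "cubic'' term is of the form $A(\zeta)\zeta_n$ where $A(\zeta)=\sum_{k\ge0}A_{n,k}^*(\gamma)\overline{\zeta_k}\zeta_{k+1}$; by Proposition~\ref{prop:A(zeta)} applied uniformly in $n$ (the constant there depends only on $\sup_{n,k}|A_{n,k}^*|$ and its Lipschitz constant, which are bounded by $C(R)$) one gets $|A(\zeta)|\le C(R)$ uniformly in $n$, hence
\[
\Bigl\|\bigl(A(\zeta)\zeta_n\bigr)_n\Bigr\|_{h^{\half+s}_+}
	\le C(R)\|\zeta\|_{h^{\half+s}_+},
\]
and the three estimates combine to the claimed bound.

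For the Lipschitz statement I would write the difference termwise and use the standard "product of Lipschitz-and-bounded'' device. For the shift terms,
\[
p_n^*(\gamma^{(1)})\zeta^{(1)}_{n-1}-p_n^*(\gamma^{(2)})\zeta^{(2)}_{n-1}
	=\bigl(p_n^*(\gamma^{(1)})-p_n^*(\gamma^{(2)})\bigr)\zeta^{(1)}_{n-1}
	+p_n^*(\gamma^{(2)})\bigl(\zeta^{(1)}_{n-1}-\zeta^{(2)}_{n-1}\bigr);
\]
the second piece is bounded in $h^{\half+s}_+$ by $C(R)\|\zeta^{(1)}-\zeta^{(2)}\|_{h^{\half+s}_+}$ as above, while for the first piece one uses $|p_n^*(\gamma^{(1)})-p_n^*(\gamma^{(2)})|\le C(R)\|\gamma^{(1)}-\gamma^{(2)}\|_{\ell^1_+}$ (integrating the differential bound of Theorem~\ref{thm:dzeta.cos} along the segment, which stays in the ball of radius $R^2$ by convexity) together with $\|\gamma^{(1)}-\gamma^{(2)}\|_{\ell^1_+}\le C(R)\|\zeta^{(1)}-\zeta^{(2)}\|_{\ell^2_+}\le C(R)\|\zeta^{(1)}-\zeta^{(2)}\|_{h^{\half+s}_+}$ (since $h^{\half+s}_+\hookrightarrow\ell^2_+$ for $s<\tfrac12$) and the fact that $(\zeta^{(1)}_{n-1})_n$ has finite $h^{\half+s}_+$ norm bounded by $R+C$. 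The cubic term $A^{(1)}(\zeta^{(1)})\zeta^{(1)}_n-A^{(2)}(\zeta^{(2)})\zeta^{(2)}_n$ is split the same way, with the difference $|A^{(1)}(\zeta^{(1)})-A^{(2)}(\zeta^{(2)})|$ controlled by the Lipschitz part of Proposition~\ref{prop:A(zeta)} (which gives $\le C(R)\|\zeta^{(1)}-\zeta^{(2)}\|_{\ell^2_+}$, again dominated by the $h^{\half+s}_+$ distance) plus the Lipschitz-in-$\gamma$ dependence of the $A^*_{n,k}$; summing over $n$ yields the result, and the sine estimates follow verbatim from the second decomposition in Theorem~\ref{thm:dzeta.cos}. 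The only mildly delicate point — and the one I would be most careful about — is keeping the weight comparison $n^{1+2s}\asymp (n\pm1)^{1+2s}$ clean when $1+2s<0$ and making sure the $k=0$ / $n=0$ boundary terms coming from the convention $\zeta_0=1$ are absorbed into $C(R)$ rather than into the $\|\zeta\|_{h^{\half+s}_+}$ factor; everything else is routine bookkeeping on top of Proposition~\ref{prop:A(zeta)} and Theorem~\ref{thm:dzeta.cos}.
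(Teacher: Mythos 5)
Your strategy coincides with the paper's: insert the decomposition of Theorem~\ref{thm:dzeta.cos}, use the uniform bounds and Lipschitz estimates on $p_n^*,q_n^*,A_{n,k}^*,B_{n,k}^*$ (via $\|\gamma\|_{\ell^1_+}\le R^2$) together with Proposition~\ref{prop:A(zeta)} for the blocks $\sum_k A_{n,k}^*(\gamma)\overline{\zeta_k}\zeta_{k+1}$, and sum against the weight $n^{1+2s}$. The only organizational difference is in the Lipschitz part: the paper bounds the differential of $S_n(\zeta)=\d\zeta_n[\Phi^{-1}(\zeta)].\cos$ on balls of $h^{\half+s}_+$, using $H_n=\zeta_n\overline{h_n}+\overline{\zeta_n}h_n$ and $\|H\|_{\ell^1_+}\le 2\|\zeta\|_{\ell^2_+}\|h\|_{\ell^2_+}$, whereas you split the differences termwise into (bounded)$\times$(Lipschitz) pieces; these are equivalent, and your bookkeeping of the shifted weights and of the $\zeta_0=1$ boundary term (which, as you note, really produces an additive constant that the stated homogeneous bound silently absorbs — a blemish shared by the paper's own proof and harmless for the way the corollary is used) is fine.

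The one genuine problem is your resolution of the sign convention on $s$. You declare the intended regime to be $s<-\half$, i.e.\ $1+2s<0$; it is the opposite. The hypothesis in the statement is a typo for $s>-\half$ — the paper's proof begins ``Let $s>-\half$'' — and this is exactly what the application in Theorem~\ref{thm:F_loc_lipschitz} requires, since there the corollary is invoked in $h^{\half-s}_+$ with $0\le s<\half$, i.e.\ with exponents $1-2s\in(0,1]$. Read with $s<-\half$, your own argument breaks at two points: the embedding $h^{\half+s}_+\hookrightarrow\ell^2_+$, which you invoke (and also misstate as valid for $s<\tfrac12$), holds precisely when $1+2s\ge 0$ and fails for $s<-\half$; and in the Lipschitz part the hypothesis $\|\zeta^{(i)}\|_{h^{\half+s}_+}\le R$ would then control neither $\|\zeta^{(i)}\|_{\ell^2_+}$ nor $\|\gamma^{(i)}\|_{\ell^1_+}$, so the uniform constants $C(R)$ for $p_n^*,q_n^*,A_{n,k}^*,B_{n,k}^*$ and for Proposition~\ref{prop:A(zeta)} could not be invoked at all. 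In the correct range $-\half<s\le 0$ every estimate you wrote goes through (the weight comparisons $n^{1+2s}\asymp (n\pm1)^{1+2s}$ are harmless there as well), so the fix is simply to read the hypothesis as $s>-\half$ rather than $s<-\half$.
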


\begin{proof}
Let $\zeta\in h^{\half}_+$ such that $\|\zeta\|_{\ell^2_+}\leq R$. We write
\begin{equation*}
\d\zeta_n[u].\cos
	=p_n^*(\gamma)\zeta_{n-1}+\left(\sum_{k=0}^{+\infty}A_{n,k}^*(\gamma)\overline{\zeta_k}\zeta_{k+1}+B_{n,k}^*(\gamma)\zeta_k\overline{\zeta_{k+1}}\right)\zeta_n
	+q_n^*(\gamma)\zeta_{n+1},
\end{equation*}
where the terms $p_n^*$, $q_n^*$, $A_{n,k}^*$ and $B_{n,k}^*$ are bounded and Lipschitz on balls of finite radius in~$\ell^1_+$, and $\gamma=(|\zeta_n|^2)_n\in\ell^1_+$. We have $\|\gamma\|_{\ell^1_+}\leq R^2$, therefore there exists $C(R)>0$ such that
\begin{equation*}
|p_n^*(\gamma)|+|q_n^*(\gamma)|+|A_{n,k}^*(\gamma)|+|B_{n,k}^*(\gamma)|\leq C(R)
\end{equation*}
and for all $h\in \ell^1_+$,
\begin{equation*}
|\d p_n^*(\gamma).h|+|\d q_n^*(\gamma).h|+|\d A_{n,k}^*(\gamma).h|+|\d B_{n,k}^*(\gamma).h|\leq C(R)\|h\|_{\ell^1_+}.
\end{equation*}
We deduce that for all $n\geq 1$,
\[
|\d\zeta_n[u].\cos|
	\leq C(R)(|\zeta_{n-1}|+|\zeta_n|+|\zeta_{n+1}|).
\]
Let $s>-\half$  and assume $\|\zeta\|_{\ell^2_+}\leq R$, a summation leads to
\[
\sum_{n=1}^{+\infty}n^{1+2s}|\d\zeta_n[u].\cos|^2
	\leq C(R)\sum_{n=1}^{+\infty}n^{1+2s}|\zeta_n|^2
	= C(R)\|\zeta\|_{h^{\half+s}_+}^2.
	\]

Now, define, for $\zeta\in h^{\half}_+$,
\[
S_n(\zeta):=\d\zeta_n[\Phi^{-1}(\zeta)].\cos.
\]
We prove that the differential of $S_n$ is bounded on balls of finite radius of $h^{\half+s}_+$ for $s>-\half$. Fix $h\in\ell^2_+$ and denote $H=(H_n)_{n\geq 1}$ the differential of the map $\zeta\in\ell^2_+\mapsto (|\zeta_n|^2)_{n\geq1}\in\ell^1_+$ at $\zeta$ applied to $h$. For all $n\geq 1$, we have $H_n=\zeta_n\overline{h_n}+\overline{\zeta_n}h_n$, therefore $H$ is in $\ell^1_+$ and
\[
\|H\|_{\ell^1_+}\leq 2\|\zeta\|_{\ell^2_+}\|h\|_{\ell^2_+}.
\]
The differential of $S_n$ at $\zeta$ applied to $h$ now writes
\begin{align*}
\d S_n(\zeta).h
	=&\d p_n^*(\gamma).H \zeta_{n-1}
	+\d q_n^*(\gamma).H\zeta_{n+1}
	+p_n^*(\gamma)h_{n-1}+q_n^*(\gamma)h_{n+1}\\
	&+\left(\sum_{k=0}^{+\infty}\d A_{n,k}^*(\gamma).H \overline{\zeta_k}\zeta_{k+1}+\d B_{n,k}^*(\gamma).H \zeta_k\overline{\zeta_{k+1}}\right)\zeta_n\\
	&+\left(\sum_{k=0}^{+\infty} A_{n,k}^*(\gamma)(\overline{h_k}\zeta_{k+1}+\overline{\zeta_k}h_{k+1})+ B_{n,k}^*(\gamma)(h_k\overline{\zeta_{k+1}}+\zeta_k\overline{h_{k+1}})\right)\zeta_n\\
	&+\left(\sum_{k=0}^{+\infty}A_{n,k}^*(\gamma)\overline{\zeta_k}\zeta_{k+1}+B_{n,k}^*(\gamma)\zeta_k\overline{\zeta_{k+1}}\right)h_n.
\end{align*}
When $\|\zeta\|_{h^{\half+s}_+}\leq R$, a summation leads to
\begin{align*}
\sum_{n=1}^{+\infty}n^{1-2s}|\d S_n(\zeta).h|^2
	&\leq C(R)\|H\|_{\ell^1_+}^2+C(R)\sum_{n=1}^{+\infty}n^{1+2s}|h_n|^2\\
	&\leq C'(R)\|h\|_{h^{\half+s}_+}^2.
\end{align*}
These arguments are also valid in the case of $\d\zeta_n[u].\sin$.
\end{proof}

\section{Flow map}\label{section:flow_map}

In this section, we prove global well-posedness for equation~\eqref{eq:damped} and establish the weak sequential continuity of the flow map.

Thanks to Theorem~\ref{thm:dzeta.cos}, we have simplified the system of ODEs satisfied by the Birkhoff coordinates $\zeta$. In order for the vector field to be Lipschitz along the second variable, we need to remove the oscillatory part in the frequencies $\omega_n=n^2-2\sum_{k\geq 1}\min(k,n)|\zeta_k|^2$. Therefore, we introduce a change of functions $z_n(t)=e^{-in^2t}\zeta_n(t)$, $n\geq1$: the new coordinates $z=(z_n)_{n\geq1}$ are solution to an ODE
\begin{equation}\label{eq:damped_z}
z'(t)=F(t,z(t)).
\end{equation}
The vector field $F=(F_n)_{n\geq 1}$ is defined  as
\begin{equation}\label{eq:def_F}
F_n(t,z)
	=e^{-in^2t}\left(i\widetilde{\omega}_n(z)e^{in^2t}z_n
	-\alpha \widetilde{Z}_n(t,z)\right),
\end{equation}
where $\widetilde{\omega}_n$ are the new frequencies
\[
\widetilde{\omega}_n(z)
	=-2\sum_{k=1}^{+\infty}\min(k,n)|z_k|^2
\]
and
\begin{equation}\label{def:Z_n}
\widetilde{Z}_n(t,z)
	=Z_n(\zeta(t,z))
	:=\langle u|\cos\rangle \d\zeta_n[u].\cos+\langle u|\sin\rangle \d\zeta_n[u].\sin,
\end{equation}
with
\[
\zeta_n(t,z)=e^{in^2t}z_n, \quad n\geq 1
\]
and
\[
u(t,z)=\Phi^{-1}\left(\zeta(t,z)\right).
\]

We apply the Cauchy-Lipschitz theorem for ordinary differential equations in part~\ref{part:lwp} to get local well-posedness for equation~\eqref{eq:damped}. In part~\ref{part:gwp}, global well-posedness follows from the existence of a Lyapunov functional controlling the $L^2$ norm of the solution and a compactness argument. Finally, we prove the weak sequential continuity of the flow map in part~\ref{part:weak_sequential}.

\subsection{Local well-posedness}\label{part:lwp}

In this part, we prove that the vector field $F$ is locally Lipschitz with respect to the second variable and apply the Cauchy-Lipschitz theorem for ODEs.

\begin{thm}\label{thm:F_loc_lipschitz}
The map $F$ is bounded and Lipschitz on finite balls in the following sense. Let $R>0$. Then there exists $C(R)>0$ such that if $\|z^{(1)}\|_{h^{\half}_+}\leq R$ and $\|z^{(2)}\|_{h^{\half}_+}\leq R$, then for all $t\in\R$,
\[
\|F(t,z^{(1)})\|_{h^{\half}_+}+\|F(t,z^{(2)})\|_{h^{\half}_+}
	\leq C(R),
\]
and
\[
\|F(t,z^{(2)})-F(t,z^{(1)})\|_{h^{\half}_+}
	\leq C(R)\|z^{(2)}-z^{(1)}\|_{h^{\half}_+}.
\]
Moreover, $F$ is weakly sequentially continuous with respect to the second variable.
\end{thm}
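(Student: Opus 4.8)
The plan is to split $F_n(t,z)=i\widetilde{\omega}_n(z)z_n-\alpha\e^{-in^2t}\widetilde{Z}_n(t,z)$ into its frequency part and its damping part and to treat each separately; throughout I would use that, for fixed $t$, the map $z\mapsto\zeta=(\e^{in^2t}z_n)_{n\geq 1}$ is an isometry of both $h^{\half}_+$ and $\ell^2_+$, so that all bounds come out uniform in $t$ (and the phase factor $\e^{-in^2t}$ in front of $\widetilde{Z}_n$ is harmless since $|\e^{-in^2t}|=1$). For the frequency part the crucial elementary fact is $\min(k,n)\leq k$, which gives $|\widetilde{\omega}_n(z)|\leq 2\sum_{k\geq1}k|z_k|^2=2\|z\|_{h^{\half}_+}^2$ uniformly in $n$; summing $n\,|\widetilde{\omega}_n(z)z_n|^2$ then yields $\|(i\widetilde{\omega}_n(z)z_n)_n\|_{h^{\half}_+}\leq 2\|z\|_{h^{\half}_+}^3$. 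For the Lipschitz estimate I would write
\[
\widetilde{\omega}_n(z^{(1)})z_n^{(1)}-\widetilde{\omega}_n(z^{(2)})z_n^{(2)}
=\bigl(\widetilde{\omega}_n(z^{(1)})-\widetilde{\omega}_n(z^{(2)})\bigr)z_n^{(1)}
+\widetilde{\omega}_n(z^{(2)})\bigl(z_n^{(1)}-z_n^{(2)}\bigr),
\]
bound $|\widetilde{\omega}_n(z^{(1)})-\widetilde{\omega}_n(z^{(2)})|\leq 2\bigl(\|z^{(1)}\|_{h^{\half}_+}+\|z^{(2)}\|_{h^{\half}_+}\bigr)\|z^{(1)}-z^{(2)}\|_{h^{\half}_+}$ uniformly in $n$ (again via $\min(k,n)\leq k$ and Cauchy--Schwarz), and finish by summing against the weight of $h^{\half}_+$.

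For the damping part, recall $\widetilde{Z}_n(t,z)=\langle u|\cos\rangle\,\d\zeta_n[u].\cos+\langle u|\sin\rangle\,\d\zeta_n[u].\sin$ with $u=\Phi^{-1}(\zeta)$. Since $\|\zeta\|_{\ell^2_+}=\|z\|_{\ell^2_+}\leq\|z\|_{h^{\half}_+}\leq R$, Corollary~\ref{cor:<u|e^ix>_lip} controls $|\langle u|\cos\rangle|+|\langle u|\sin\rangle|\leq C(R)$ with the matching Lipschitz bound in $\|\zeta^{(1)}-\zeta^{(2)}\|_{\ell^2_+}$, while Corollary~\ref{cor:dzeta.cos} applied with $s=0$ controls $\|(\d\zeta_n[u].\cos)_n\|_{h^{\half}_+}+\|(\d\zeta_n[u].\sin)_n\|_{h^{\half}_+}\leq C(R)\|z\|_{h^{\half}_+}$ together with the Lipschitz bound in $\|\zeta^{(1)}-\zeta^{(2)}\|_{h^{\half}_+}=\|z^{(1)}-z^{(2)}\|_{h^{\half}_+}$. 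Multiplying these gives the bound on $\|(\widetilde{Z}_n(t,z))_n\|_{h^{\half}_+}$, and for the Lipschitz estimate I would telescope $\widetilde{Z}_n(t,z^{(1)})-\widetilde{Z}_n(t,z^{(2)})$ in the usual way, combining the Lipschitz halves of the two corollaries with the uniform bounds just obtained. Adding the frequency and damping contributions yields the first two displayed inequalities.

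Finally, for weak sequential continuity I would fix $t$ and let $z^{(m)}\rightharpoonup z$ in $h^{\half}_+$; then $(z^{(m)})_m$ is bounded and, by the compact embedding $h^{\half}_+\hookrightarrow\ell^2_+$, in fact $z^{(m)}\to z$ strongly in $\ell^2_+$, hence $\zeta^{(m)}\to\zeta$ in $\ell^2_+$ and $\gamma^{(m)}=(|\zeta^{(m)}_k|^2)_k\to\gamma$ in $\ell^1_+$. Since $(F(t,z^{(m)}))_m$ is bounded in $h^{\half}_+$ by the first two paragraphs, it suffices to prove the coordinate-wise convergence $F_n(t,z^{(m)})\to F_n(t,z)$ for each fixed $n$. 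The frequency term is fine because $z\mapsto\widetilde{\omega}_n(z)$ is a bounded quadratic form on $\ell^2_+$ (as $\min(k,n)\leq n$), hence continuous for strong $\ell^2_+$ convergence, and $z^{(m)}_n\to z_n$. For the damping term, $\langle u^{(m)}|\cos\rangle-i\langle u^{(m)}|\sin\rangle=-\sum_{k\geq0}a_k^*(\gamma^{(m)})\overline{\zeta^{(m)}_k}\zeta^{(m)}_{k+1}$ is a block of the form~\eqref{eq:A}, so Proposition~\ref{prop:A(zeta)} and $\zeta^{(m)}\to\zeta$ in $\ell^2_+$ give convergence of $\langle u^{(m)}|\cos\rangle$ and $\langle u^{(m)}|\sin\rangle$; and for each fixed $n$ the decomposition of Theorem~\ref{thm:dzeta.cos} expresses $\d\zeta_n[u].\cos$ and $\d\zeta_n[u].\sin$ through $p_n^*(\gamma),q_n^*(\gamma)$ (continuous on $\ell^1_+$), the coordinates $\zeta_{n-1},\zeta_n,\zeta_{n+1}$, and a block of type~\eqref{eq:A} together with its complex conjugate (the $B^*_{n,k}$ term, $B^*_{n,k}$ being real-valued), all of which pass to the limit. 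Hence $\widetilde{Z}_n(t,z^{(m)})\to\widetilde{Z}_n(t,z)$, and therefore $F_n(t,z^{(m)})\to F_n(t,z)$, as required.

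I expect this last step to be the main obstacle: one must justify passing to the limit inside the infinite sums defining $\langle u|\cos\rangle$, $\langle u|\sin\rangle$ and the $\gamma$-dependent blocks of $\d\zeta_n[u].\cos$ and $\d\zeta_n[u].\sin$. What makes it go through is precisely the gain furnished by the compact embedding $h^{\half}_+\hookrightarrow\ell^2_+$, which upgrades weak convergence in the energy space to strong convergence in $\ell^2_+$ (and strong convergence of the actions in $\ell^1_+$), so that the Lipschitz-on-balls estimates of Section~\ref{section:structure} apply verbatim; the only other point requiring care, the frequency term $\widetilde{\omega}_n$, is dispatched by the two cheap bounds $\min(k,n)\leq k$ (for the $h^{\half}_+$ estimates) and $\min(k,n)\leq n$ (for the fixed-$n$ continuity).
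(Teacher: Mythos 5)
Your proof is correct, and for the quantitative part (splitting $F$ into the frequency term $i\widetilde{\omega}_n(z)z_n$ and the damping term $\widetilde{Z}_n$, bounding $|\widetilde{\omega}_n(z)|\leq 2\|z\|_{h^{\half}_+}^2$, telescoping for the Lipschitz estimates, and invoking Corollaries~\ref{cor:<u|e^ix>_lip} and~\ref{cor:dzeta.cos}) it coincides with the paper's argument. The one place where you genuinely diverge is the weak sequential continuity of the damping part: the paper uses the Lipschitz estimate of Corollary~\ref{cor:dzeta.cos} in the intermediate spaces $h^{\half-s}_+$, $0<s<\half$, together with the compact embedding $h^{\half}_+\hookrightarrow h^{\half-s}_+$, so that $\widetilde{Z}(z^{(k)})\to\widetilde{Z}(z)$ strongly in $h^{\half-s}_+$ and hence weakly in $h^{\half}_+$ --- this is precisely why the variable exponent was built into that corollary. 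You instead work at the endpoint: compactness only into $\ell^2_+$ (and $\ell^1_+$ convergence of the actions), then coordinate-wise convergence of each $\widetilde{Z}_n$ by passing to the limit inside the decomposition of Theorem~\ref{thm:dzeta.cos} and the blocks of Proposition~\ref{prop:A(zeta)}, upgraded to weak convergence in $h^{\half}_+$ via the uniform bound on $\|F(t,z^{(m)})\|_{h^{\half}_+}$ (bounded plus coordinate-wise convergence implies weak convergence, since finitely supported sequences span a dense subset of the dual). Both routes are sound: the paper's yields strong convergence in every weaker norm at once and never re-opens the structure theorem at this stage, while yours is more elementary in terms of function spaces but needs the extra (standard) density argument and the fixed-$n$ limit passage inside the infinite sums, which your strong $\ell^2_+$ and $\ell^1_+$ convergences do justify.
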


\begin{proof}
We start from the definition of $F$ from~\eqref{eq:def_F}:\[
F_n(t,z)
	=e^{-in^2t}\left(i\widetilde{\omega}_n(z)e^{in^2t}z_n
	-\alpha \widetilde{Z}_n(t,z)\right).
\]
We consider the terms $i\widetilde{\omega}_n(z)z_n$ and $\widetilde{Z}_n$ separately.

$\bullet$ First, the part $(t,z)\in\R\times h^{\half}_+\mapsto (i\widetilde{\omega}_n(z)z_n)_{n\geq 1}\in h^{\half}_+$ is bounded and Lipschitz on finite balls with respect to the variable $z$. Indeed, fix  $z^{(1)},z^{(2)}\in h^{\half}_+$ bounded by $R$ in $h^{\half}_+$. We have
\begin{equation*}
|\widetilde{\omega}_n(z^{(1)})|
	=2\sum_{k=1}^{+\infty}\min(k,n)|z^{(1)}_k|^2\\
	\leq 2R^2,
\end{equation*}
so that
\begin{equation*}
\|(\widetilde{\omega}_n(z^{(1)})z_n^{(1)})_{n\geq 1}\|_{h^{\half}_+}
	\leq 2R^3.
\end{equation*}
Moreover, thanks to Cauchy-Schwarz' inequality,
\begin{align*}
|\widetilde{\omega}_n(z^{(2)})-\widetilde{\omega}_n(z^{(1)})|
	&=2\left|\sum_{k=1}^{+\infty}\min(k,n)\left(|z^{(2)}_k|^2-|z^{(1)}_k|^2\right)\right|\\
	&\leq C(R)\|z^{(2)}-z^{(1)}\|_{h^{\half}_+},
\end{align*}
so that
\begin{equation*}
\|(\widetilde{\omega}_n(z^{(2)})z_n^{(2)})_{n\geq 1}-(\widetilde{\omega}_n(z^{(1)})z_n^{(1)})_{n\geq 1}\|_{h^{\half}_+}
	\leq C'(R)\|z^{(2)}-z^{(1)}\|_{h^{\half}_+}.
\end{equation*}

Let us now establish the weak sequential continuity. Let $z^{(k)}$, $k\geq 1$, be a sequence of elements of $h^{\half}_+$ weakly convergent to some $z\in h^{\half}_+$. By compactness, this sequence is strongly convergent in the space $\ell^2_+$. Therefore, for all $n\geq 1$,
\[
|\widetilde{\omega}_n(z^{(k)})-\widetilde{\omega}_n(z)|
	\leq 2n\sum_{p=1}^{+\infty}\left||z^{(k)}_p|^2-|z_p|^2\right|
\]
converges to zero as $k$ goes to infinity. We conclude that the sequence $(\widetilde{\omega}_n(z^{(k)})z_n^{(k)})_{n\geq 1}$ is weakly convergent to $(\widetilde{\omega}_n(z)z_n)_{n\geq 1}$ in $h^{\half}_+$ as $k$ goes to infinity.

$\bullet$ We now prove that $\widetilde{Z}=(\widetilde{Z}_n)_n$ defines a bounded and Lipschitz map on finite balls from $h^{\half-s}_+$ to itself for all $s\in [0,\half)$. 
The introduction of a variable Sobolev space $h^{\half-s}_+$ in the Lipschitz properties will be useful when considering the weak sequential continuity.

From Corollary~\ref{cor:<u|e^ix>_lip}, we know that $\zeta\in h^{\half}_+\mapsto \langle u|\cos\rangle\in\R$ and $\zeta\in h^{\half}_+\mapsto \langle u|\sin\rangle\in\R$, where $u=\Phi^{-1}(\zeta)$, define real-valued maps which are bounded by $C(R)$, and Lipschitz with respect to the $\ell^2_+$ norm: if $\|\zeta^{(1)}\|_{\ell^2_+}\leq R$ and $\|\zeta^{(2)}\|_{\ell^2_+}\leq R$, then
\[
|\langle u^{(1)}|\cos\rangle|+|\langle u^{(1)}|\sin\rangle|+|\langle u^{(2)}|\cos\rangle|+|\langle u^{(2)}|\sin\rangle|\leq C(R)
\]
and
\[
|\langle u^{(1)}|\cos\rangle-\langle u^{(2)}|\cos\rangle|
	+|\langle u^{(1)}|\sin\rangle-\langle u^{(2)}|\sin\rangle|
	\leq C(R)\|\zeta^{(1)}-\zeta^{(2)}\|_{\ell^2_+}.
\]

From Corollary~\ref{cor:dzeta.cos}, we also know that the terms $\d\zeta_n[u].\cos$ and $\d\zeta_n[u].\sin$, when restricted to a ball of radius $R$, are bounded by $C(R)$ and Lipschitz with respect to the $h^{\half-s}_+$ norm for all $s\in[0,\half)$: if $\|\zeta^{(1)}\|_{h^{\half-s}_+}\leq R$ and $\|\zeta^{(2)}\|_{h^{\half-s}_+}\leq R$, then
\[
\|(\d\zeta_n[u^{(1)}].\cos)_n\|_{h^{\half-s}_+}+\|(\d \zeta_n[u^{(1)}].\sin)_n\|_{h^{\half-s}_+}
	\leq C(R),
\]
\[\|(\d\zeta_n[u^{(2)}].\cos)_n\|_{h^{\half-s}_+}+\|(\d \zeta_n[u^{(2)}].\sin)_n\|_{h^{\half-s}_+}
	\leq C(R),
\]
\[
\|(\d\zeta_n[u^{(1)}].\cos)_n-(\d \zeta_n[u^{(2)}].\cos)_n\|_{h^{\half-s}_+}
	\leq C(R)\|\zeta^{(1)}-\zeta^{(2)}\|_{h^{\half-s}_+}
\]
and
\[
\|(\d\zeta_n[u^{(1)}].\sin)_n-(\d \zeta_n[u^{(2)}].\sin)_n\|_{h^{\half-s}_+}
	\leq C(R)\|\zeta^{(1)}-\zeta^{(2)}\|_{h^{\half-s}_+}.
\]

We deduce that $Z=(Z_n)_n$ is bounded and Lipschitz with respect to the $h^{\half-s}_+$ norm when $s\in[0,\half)$: if $\|\zeta^{(1)}\|_{h^{\half}_+}\leq R$ and $\|\zeta^{(2)}\|_{h^{\half}_+}\leq R$, then
\[
\|Z(\zeta^{(1)})\|_{h^{\half-s}_+}+\|Z(\zeta^{(2)})\|_{h^{\half-s}_+}
	\leq C(R)
\]
and
\[
\|Z(\zeta^{(1)})-Z(\zeta^{(2)})\|_{h^{\half-s}_+}
	\leq C(R)\|\zeta^{(1)}-\zeta^{(2)}\|_{h^{\half-s}_+}.
\]
Now, fix $t\in\R$ and $z^{(1)},z^{(2)}\in h^{\half}_+$ such that $\|z^{(1)}\|_{h^{\half}_+}\leq R$ and $\|z^{(2)}\|_{h^{\half}_+}\leq R$. Then by definition of $\zeta^{(1)}$ and $\zeta^{(2)}$ as $\zeta_n^{(k)}(t,z)=e^{in^2t}z_n^{(k)}$, for $n\geq 1$ and $k=1,2$, we see that $\|\zeta^{(k)}\|_{h^{\half}_+}=\|z^{(k)}\|_{h^{\half}_+}$, moreover, for $s\in[0,\half)$, $\|\zeta^{(1)}-\zeta^{(2)}\|_{h^{\half-s}_+}=\|z^{(1)}-z^{(2)}\|_{h^{\half-s}_+}$. We have therefore proven that
\[
\|\widetilde{Z}(z^{(1)})\|_{h^{\half-s}_+}+\|\widetilde{Z}(z^{(2)})\|_{h^{\half-s}_+}
	\leq C(R)
\]
and
\begin{equation}\label{ineq:Ptilde_Lipschitz}
\|\widetilde{Z}(z^{(1)})-\widetilde{Z}(z^{(2)})\|_{h^{\half-s}_+}
	\leq C(R)\|z^{(1)}-z^{(2)}\|_{h^{\half-s}_+}.
\end{equation}

We now prove that $\widetilde{Z}$ is weakly sequentially continuous with respect to~$z$. Indeed, let $z^{(k)}$, $k\geq 1$, be a sequence of elements of $h^{\half}_+$ weakly convergent to some $z\in h^{\half}_+$. We fix $s\in(0,\half)$ and use the fact that the embedding $h^{\half}_+\hookrightarrow h^{\half-s}_+$ is compact: from the Rellich theorem, $z^{(k)}$ is strongly convergent to $z$ in $h^{\half-s}_+$. But inequality~\eqref{ineq:Ptilde_Lipschitz} now implies that $\widetilde{Z}(z^{(k)})$ is strongly convergent to $\widetilde{Z}(z)$ in $h^{\half-s}_+$, and therefore weakly convergent in~$h^{\half}_+$.
\end{proof}

We deduce from the Cauchy-Lipschitz theorem for ODEs that the initial value problem for the damped Benjamin-Ono equation in Birkhoff coordinates~\eqref{eq:damped_z} is locally well-posed in $h^{\half}_+$.

\begin{cor}\label{cor:LWP}
For all $z(0)\in h^{\half}_+$, there exists $T>0$ such that equation~\eqref{eq:damped_z}
\[
z'(t)=F(t,z(t))
\]
with initial data $z(0)$ admits a unique solution $z\in \classeC^1([0,T],h^{\half}_+)$. Moreover, the solution map is continuous from $h^{\half}_+$ to $\classeC^1([0,T],h^{\half}_+)$.
\end{cor}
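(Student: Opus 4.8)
The plan is to read this off as a textbook application of the Cauchy--Lipschitz theorem for ODEs in the Banach space $h^{\half}_+$, all the analytic content having already been placed in Theorem~\ref{thm:F_loc_lipschitz}. Given $z(0)\in h^{\half}_+$, I would fix $R:=\|z(0)\|_{h^{\half}_+}+1$ and let $C(R)$ be the constant furnished by Theorem~\ref{thm:F_loc_lipschitz}, then run a fixed point argument for the Picard map $\Psi(z)(t)=z(0)+\int_0^tF(s,z(s))\,ds$ on the complete metric space of continuous curves $z\colon[0,T]\to h^{\half}_+$ with $\sup_{t\in[0,T]}\|z(t)\|_{h^{\half}_+}\leq R$, equipped with the uniform norm. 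Since $F$ is bounded by $C(R)$ on that ball uniformly in $t$, one has $\|\Psi(z)(t)-z(0)\|_{h^{\half}_+}\leq TC(R)$, so $\Psi$ preserves the ball as soon as $TC(R)\leq 1$, and the uniform Lipschitz estimate makes $\Psi$ a contraction once $T<1/C(R)$; note this $T$ depends only on $R$. The Banach fixed point theorem then yields a unique solution of the integral equation in $\classeC([0,T],h^{\half}_+)$, and a standard continuation argument removes the a priori size restriction to give uniqueness among all $\classeC([0,T],h^{\half}_+)$ solutions.

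Next I would upgrade the solution from $\classeC$ to $\classeC^1$, which amounts to checking that $t\mapsto F(t,z(t))$ is continuous on $[0,T]$. Splitting $\|F(t,z(t))-F(t_0,z(t_0))\|_{h^{\half}_+}\leq C(R)\|z(t)-z(t_0)\|_{h^{\half}_+}+\|F(t,z(t_0))-F(t_0,z(t_0))\|_{h^{\half}_+}$, the first term vanishes as $t\to t_0$ by continuity of $z$, and for the second, with $w:=z(t_0)$ fixed, I would use that the transport flow $\zeta_n\mapsto e^{in^2t}\zeta_n$ is an isometry of $h^{\half}_+$, so $\|\zeta(t,w)\|_{h^{\half}_+}=\|w\|_{h^{\half}_+}$ and dominated convergence shows $t\mapsto\zeta(t,w)$ is continuous into $h^{\half}_+$; composing with the continuity of $Z$ on bounded sets (Corollary~\ref{cor:dzeta.cos}) and noting that the phases $e^{-in^2t}$ in the definition of $F$ vary continuously, one gets continuity of $t\mapsto F(t,w)$. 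Hence $z\in\classeC^1([0,T],h^{\half}_+)$ and solves~\eqref{eq:damped_z} in the classical sense, uniquely.

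Finally, for the continuity of the solution map I would use the usual Grönwall argument: for two data $z(0),\tilde z(0)$ in a fixed ball, the corresponding solutions are defined on a common interval $[0,T]$ with $T=T(R)$ as above, and subtracting the integral equations gives $\|z(t)-\tilde z(t)\|_{h^{\half}_+}\leq\|z(0)-\tilde z(0)\|_{h^{\half}_+}+C(R)\int_0^t\|z(s)-\tilde z(s)\|_{h^{\half}_+}\,ds$, whence $\sup_{[0,T]}\|z(t)-\tilde z(t)\|_{h^{\half}_+}\leq e^{C(R)T}\|z(0)-\tilde z(0)\|_{h^{\half}_+}$; then $\|z'(t)-\tilde z'(t)\|_{h^{\half}_+}=\|F(t,z(t))-F(t,\tilde z(t))\|_{h^{\half}_+}\leq C(R)\|z(t)-\tilde z(t)\|_{h^{\half}_+}$ upgrades this to continuity (indeed local Lipschitz continuity) from $h^{\half}_+$ into $\classeC^1([0,T],h^{\half}_+)$. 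There is no genuine obstacle at this stage — everything of substance is in Theorem~\ref{thm:F_loc_lipschitz}; the only points requiring a little care are bookkeeping ones: the existence time $T$ depends on the size of the data, so the continuity statement is to be understood on bounded subsets of $h^{\half}_+$, and the time-continuity of $F$ needed for the $\classeC^1$ regularity rests on the isometry property of the transport flow together with dominated convergence.
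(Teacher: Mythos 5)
Your proposal is correct and follows the same route as the paper: the paper deduces Corollary~\ref{cor:LWP} directly from the Cauchy--Lipschitz theorem once Theorem~\ref{thm:F_loc_lipschitz} gives the bound and the Lipschitz estimate for $F$ on finite balls of $h^{\half}_+$, uniformly in $t$. Your write-up merely fills in the standard Picard--Gr\"onwall details (including the time-continuity of $F$ via the isometric phase flow), which is exactly what the paper leaves implicit.
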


Applying the inverse of the Birkhoff map, we conclude that the damped Benjamin-Ono equation~\eqref{eq:damped} is locally well-posed $L^2_{r,0}(\T)$: for all initial data $u_0\in L^2_{r,0}(\T)$, there exists $T>0$ such that equation~\eqref{eq:damped} admits a unique solution $u\in \classeC([0,T],L^2_{r,0}(\T))$ with initial data $u_0$ in the distribution sense; moreover, the solution map is continuous. Indeed, thanks to formula (4.5) in Lemma 4.2 in~\cite{GerardKappeler2019}, one can prove that the inverse Birkhoff map is admits a differential from~$h^{\half}_+$ to $L^2_{r,0}(\T)$.

\subsection{Global well-posedness}\label{part:gwp}

Thanks to a Lyapunov functional controlling the $L^2$ norm, we now establish global well-posedness for the damped Benjamin-Ono equation~\eqref{eq:damped} in $L^2_{r,0}(\T)$.
\begin{prop}[Lyapunov functional]\label{prop:normL2}
Let $u_0\in L^2_{r,0}(\T)$, and $([0,T^*),u)$ be the corresponding maximal solution. Then for all $t\in [0,T^*)$, 
\[
\frac{\d}{\d t}\|u(t)\|_{L^2(\T)}^2+2\alpha|\langle u(t)|e^{ix}\rangle|^2
	=0.
\]
Moreover,
\[
2\alpha\int_{0}^{T^*}|\langle u(t)|e^{ix}\rangle|^2\d t
	\leq \|u_0\|_{L^2(\T)}^2
\]
and if $T^*=+\infty$, then $|\langle u(t)|e^{ix}\rangle|$ tends to $0$ as $t$ goes to $+\infty$.
\end{prop}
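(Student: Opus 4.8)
The plan is to differentiate $\|u(t)\|_{L^2(\T)}^2$ along the trajectory, working in the Birkhoff variables, where Corollary~\ref{cor:LWP} provides a $\classeC^1$ flow. Set $z_n(t)=e^{-in^2t}\zeta_n(u(t))$, so that $z\in\classeC^1([0,T^*),h^{\half}_+)$; since $|z_n(t)|=|\zeta_n(u(t))|$ and $\|u\|_{L^2(\T)}^2=2\sum_{n\geq1}n|\zeta_n(u)|^2$ (see~\cite{GerardKappeler2019}), we have $\|u(t)\|_{L^2(\T)}^2=2\|z(t)\|_{h^{\half}_+}^2$, a $\classeC^1$ function of $t$ with
\[
\frac{\d}{\d t}\|u(t)\|_{L^2(\T)}^2
	=4\,\Re\langle F(t,z(t)),z(t)\rangle_{h^{\half}_+}
	=4\sum_{n\geq1}n\,\Re\big(\overline{z_n}F_n(t,z)\big),
\]
the series converging absolutely because $F(t,z)\in h^{\half}_+$ by Theorem~\ref{thm:F_loc_lipschitz}. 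From the definition~\eqref{eq:def_F} of $F_n$ and the reality of the frequencies $\widetilde{\omega}_n$, we get $\Re(\overline{z_n}F_n)=-\alpha\,\Re(\overline{\zeta_n}\,Z_n(\zeta))$ with $\zeta_n=e^{in^2t}z_n$, so by~\eqref{def:Z_n} everything reduces to proving, for $h=\cos$ and $h=\sin$, the identity
\[
\sum_{n\geq1}n\,\Re\big(\overline{\zeta_n}\,\d\zeta_n[u].h\big)=\tfrac12\langle u|h\rangle,
\]
for then $\sum_{n\geq1}n\,\Re(\overline{\zeta_n}Z_n)=\tfrac12(\langle u|\cos\rangle^2+\langle u|\sin\rangle^2)=\tfrac12|\langle u|e^{ix}\rangle|^2$ and the claimed identity follows.

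To prove it I would use $2\Re(\overline{\zeta_n}\,\d\zeta_n[u].h)=\d\gamma_n[u].h$ together with the computation from the proof of Lemma~\ref{lem:dkappa_lip}, namely $\d\gamma_n[u].\cos-i\,\d\gamma_n[u].\sin=m_{n-1}-m_n$ with $m_n=M_{n,n}=-a_n^*(\gamma)\overline{\zeta_n}\zeta_{n+1}$. Cauchy--Schwarz and $\zeta\in h^{\half}_+$ give $\sum_n n|m_n|<+\infty$, hence $Nm_N\to0$, and the partial sums telescope:
\[
\sum_{n=1}^{N}n\big(\d\gamma_n[u].\cos-i\,\d\gamma_n[u].\sin\big)
	=\sum_{k=0}^{N-1}m_k-Nm_N
	\longrightarroww{N\to+\infty}{}\sum_{k\geq0}m_k=\langle u|e^{ix}\rangle,
\]
the last equality being Lemma~\ref{lem:<u|e^ix>}. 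Taking real and imaginary parts and using $\langle u|e^{ix}\rangle=\langle u|\cos\rangle-i\langle u|\sin\rangle$ yields $\sum_n n\,\d\gamma_n[u].\cos=\langle u|\cos\rangle$ and $\sum_n n\,\d\gamma_n[u].\sin=\langle u|\sin\rangle$, which is the required identity after dividing by $2$. Hence $\frac{\d}{\d t}\|u(t)\|_{L^2(\T)}^2+2\alpha|\langle u(t)|e^{ix}\rangle|^2=0$.

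Integrating this on $[0,t]$ gives
\[
\|u(t)\|_{L^2(\T)}^2+2\alpha\int_0^t|\langle u(s)|e^{ix}\rangle|^2\d s=\|u_0\|_{L^2(\T)}^2,
\]
whence $2\alpha\int_0^{T^*}|\langle u(s)|e^{ix}\rangle|^2\d s\leq\|u_0\|_{L^2(\T)}^2$. If $T^*=+\infty$, then $\|u(t)\|_{L^2(\T)}\leq\|u_0\|_{L^2(\T)}$ for all $t$, and testing the equation (valid in the distribution sense) against $e^{ix}$ shows that $t\mapsto\langle u(t)|e^{ix}\rangle$ is $\classeC^1$ with a derivative bounded by a constant depending only on $\|u_0\|_{L^2(\T)}$, the contributions of $H\partial_{xx}u$, of $\partial_x(u^2)$ and of the damping term being each controlled by $\|u\|_{L^2(\T)}+\|u\|_{L^2(\T)}^2$. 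Thus $t\mapsto|\langle u(t)|e^{ix}\rangle|$ is uniformly Lipschitz on $\R_+$, and a uniformly continuous square-integrable function on $\R_+$ tends to $0$; this gives the last assertion.

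The delicate point, and the reason for routing the argument through the Birkhoff coordinates, is the rigorous justification of the energy identity at $L^2$ regularity: there $\partial_t u$ is not in $L^2$ and the equation cannot be paired with $u$ directly. One could instead prove the identity first for smooth data by pairing with $u$ (the Benjamin--Ono part $H\partial_{xx}u-\partial_x(u^2)$ being $L^2$-conservative, and smoothness being propagated by Corollary~\ref{cor:dzeta.cos}) and then extend by density and continuity of the flow map; in any case, the other step requiring care is the bookkeeping that turns the sum over nonlinear Fourier modes $\sum_n n\,\d\gamma_n[u].h$ into the physical-space functional $\langle u|h\rangle$, carried out above via the telescoping sum and Lemma~\ref{lem:<u|e^ix>}.
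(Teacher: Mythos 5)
Your proof is correct, but it takes a genuinely different route to the energy identity than the paper. The paper simply pairs the equation with $u$: $\frac{\d}{\d t}\|u\|_{L^2}^2=2\Re\langle\partial_t u|u\rangle$, uses that the Benjamin--Ono part $\partial_x(H\partial_x u-u^2)$ contributes nothing, and reads off $-2\alpha|\langle u|e^{ix}\rangle|^2$ from the damping term — a three-line computation which is formal at $L^2$ regularity (exactly the point you flag). You instead work entirely in the Birkhoff variables, where Corollary~\ref{cor:LWP} gives an honest $\classeC^1$ curve in $h^{\half}_+$, and reduce the identity to $\sum_{n\geq1}n\,\d\gamma_n[u].h=\langle u|h\rangle$ for $h=\cos,\sin$, which you obtain by telescoping $\sum_{n\le N} n(m_{n-1}-m_n)=\sum_{k<N}m_k-Nm_N$ together with Lemma~\ref{lem:<u|e^ix>}; the needed ingredients ($\d\gamma_n[u].\cos-i\,\d\gamma_n[u].\sin=m_{n-1}-m_n$, $a_n^*$ bounded, $Nm_N\to0$ from $\sum n|m_n|<\infty$) are all available in the paper, so the argument closes. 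What your route buys is a rigorous derivation at $L^2$ regularity without any density/approximation step; what it costs is rediscovering in Birkhoff coordinates the fact that the conservative part drops out — note your key identity is nothing but the differential of the Parseval formula $\|u\|_{L^2}^2=2\sum_n n\gamma_n(u)$ in the direction $h$, which is a useful sanity check. For the final assertion your argument (uniform bound on $\frac{\d}{\d t}\langle u(t)|e^{ix}\rangle$ from testing the equation against $e^{ix}$, hence uniform continuity, combined with square integrability) is essentially the paper's, which bounds $\frac{\d}{\d t}|\langle u(t)|\cos\rangle|^2$ and $\frac{\d}{\d t}|\langle u(t)|\sin\rangle|^2$ instead; the two are interchangeable.
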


\begin{proof}
Using the equation,
\begin{align*}
\frac{\d}{\d t}\|u(t)\|_{L^2(\T)}^2
	&=2\Re\left[\langle \partial_t u|u(t)\rangle\right]\\
	&=-2\alpha (|\langle u(t)|\cos\rangle|^2+|\langle u(t)|\sin\rangle|^2)+2\Re\left[\langle \partial_x(H\partial_x u-u^2)|u(t)\rangle\right]\\
	&=-2\alpha |\langle u(t)|e^{ix}\rangle|^2.
\end{align*}

It remains to show that $|\langle u(t)|e^{ix}\rangle|$ tends to $0$ as $t$ goes to $+\infty$. Taking the derivative,
\begin{multline*}
\frac{\d}{\d t}|\langle u(t)|\cos\rangle|^2
	=2\Re\left[\langle \partial_t u|\cos\rangle\langle\cos|u(t)\rangle\right]\\
	=-2\alpha\Re\left[\left(\langle u(t)|\cos\rangle\langle\cos|\cos\rangle+\langle u(t)|\sin\rangle\langle\sin|\cos\rangle\right)\langle\cos|u(t)\rangle\right]\\
	+2\Re\left[\langle H\partial_{xx} u-\partial_x(u^2)|\cos\rangle\langle\cos|u(t)\rangle\right]\\
	=-\alpha|\langle u(t)|\cos\rangle|^2+2\Re\left[(-\langle u(t)|H\partial_{xx}\cos\rangle+\langle u^2(t)|\partial_x\cos\rangle)\langle\cos|u(t)\rangle\right].
\end{multline*}
Since $H\cos=\sin$, we deduce that
\begin{align*}
\frac{\d}{\d t}|\langle u(t)|\cos\rangle|^2
	&=-\alpha|\langle u(t)|\cos\rangle|^2+2\Re\left[(\langle u(t)|\sin\rangle-\langle u^2(t)|\sin\rangle)\langle\cos|u(t)\rangle\right].
\end{align*}
But
\[
|\langle u(t)|\cos\rangle|
	\leq \|u\|_{L^2(\T)}
	\leq \|u_0\|_{L^2(\T)},
\]
\[
|\langle u(t)|\sin\rangle|
	\leq \|u_0\|_{L^2(\T)}
\]
and
\[
|\langle u^2(t)|\sin\rangle|
		\leq \|u\|_{L^2(\T)}^2
		\leq \|u_0\|_{L^2(\T)}^2,
\]
therefore $\frac{\d}{\d t}|\langle u(t)|\cos\rangle|^2$ is bounded, and the same can be proven for $\frac{\d}{\d t}|\langle u(t)|\sin\rangle|^2$. This observation combined with the fact that $|\langle u(t)|e^{ix}\rangle|$ is square integrable implies that $|\langle u(t)|e^{ix}\rangle|$ tends to zero as time goes to infinity if $T^*=+\infty$.
\end{proof}

\begin{prop}\label{prop:GWP}
For all $z(0)\in h^{\half}_+$, the maximal solution $t\mapsto z(t)\in h^{\half}_+$ is global.
\end{prop}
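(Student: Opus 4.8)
The plan is to rule out finite-time blow-up for the maximal solution of the ODE~\eqref{eq:damped_z} by transporting the a priori control on the $L^2$ norm provided by the Lyapunov functional (Proposition~\ref{prop:normL2}) back to the Birkhoff side. Recall first that, since the vector field $F$ is, by Theorem~\ref{thm:F_loc_lipschitz}, bounded and Lipschitz on balls of $h^{\half}_+$ \emph{uniformly in $t$}, the local existence time furnished by the Cauchy--Lipschitz theorem depends only on a bound for $\|z\|_{h^{\half}_+}$; consequently the maximal solution $z\in\classeC^1([0,T^*),h^{\half}_+)$ from Corollary~\ref{cor:LWP} satisfies the blow-up alternative: either $T^*=+\infty$, or $\limsup_{t\to T^*}\|z(t)\|_{h^{\half}_+}=+\infty$.

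Next I would record that $\|z(t)\|_{h^{\half}_+}$ is controlled by the $L^2$ norm of the corresponding solution of~\eqref{eq:damped}. Indeed, set $\zeta_n(t)=e^{in^2t}z_n(t)$ and $u(t)=\Phi^{-1}(\zeta(t))$; by the discussion following Corollary~\ref{cor:LWP}, $u$ is the maximal solution of the damped Benjamin--Ono equation~\eqref{eq:damped} with initial data $u_0=\Phi^{-1}(z(0))$, defined on the same interval $[0,T^*)$. Since $|z_n(t)|=|\zeta_n(t)|$ for every $n\geq1$, we have $\|z(t)\|_{h^{\half}_+}=\|\Phi(u(t))\|_{h^{\half}_+}$, and by Proposition~5 in Appendix~A of~\cite{GerardKappelerTopalov2020} (applied with $s=0$) the Birkhoff map $\Phi$ sends the ball $\{v\in L^2_{r,0}(\T):\|v\|_{L^2(\T)}\leq\rho\}$ into a ball of $h^{\half}_+$ of radius $C(\rho)$. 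Hence
\[
\|z(t)\|_{h^{\half}_+}\leq C\big(\|u(t)\|_{L^2(\T)}\big),\qquad t\in[0,T^*).
\]

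The a priori bound then comes from Proposition~\ref{prop:normL2}: the identity $\frac{\d}{\d t}\|u(t)\|_{L^2(\T)}^2=-2\alpha|\langle u(t)|e^{ix}\rangle|^2\leq0$, valid on $[0,T^*)$, gives $\|u(t)\|_{L^2(\T)}\leq\|u_0\|_{L^2(\T)}$ for all $t\in[0,T^*)$, so that $\|z(t)\|_{h^{\half}_+}\leq C(\|u_0\|_{L^2(\T)})$ uniformly on $[0,T^*)$. This is incompatible with the blow-up alternative unless $T^*=+\infty$, which proves the proposition. For negative times one argues identically, replacing the monotonicity of $\|u(t)\|_{L^2(\T)}^2$ by the Grönwall bound $\|u(t)\|_{L^2(\T)}^2\leq\|u_0\|_{L^2(\T)}^2 e^{2\alpha|t|}$, which follows from $\frac{\d}{\d t}\|u(t)\|_{L^2(\T)}^2\geq-2\alpha\|u(t)\|_{L^2(\T)}^2$.

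There is essentially no serious obstacle here: the argument is the classical ``a priori bound plus blow-up alternative'' scheme. The only points requiring a little care are that the local existence time in Corollary~\ref{cor:LWP} be bounded below purely in terms of $\|z\|_{h^{\half}_+}$ --- which is exactly the content of the uniform-in-$t$ estimates in Theorem~\ref{thm:F_loc_lipschitz} --- and the transfer of the $L^2$ bound on $u$ to an $h^{\half}_+$ bound on $z$, which uses nothing more than $|z_n|=|\zeta_n|$ together with the boundedness of $\Phi$ on bounded sets.
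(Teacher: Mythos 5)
Your argument is correct, but it is not the route the paper takes. You convert the Lyapunov bound of Proposition~\ref{prop:normL2} into a uniform bound on $\|z(t)\|_{h^{\half}_+}$ (via the Parseval identity $\|u\|_{L^2(\T)}^2=2\sum_n n|\zeta_n|^2$, or the boundedness of $\Phi$ on bounded sets of $L^2_{r,0}(\T)$) and then invoke the blow-up alternative, which is indeed legitimate here because Theorem~\ref{thm:F_loc_lipschitz} gives bounds and Lipschitz constants for $F$ on balls of $h^{\half}_+$ that are uniform in $t$, so the local existence time depends only on the $h^{\half}_+$ norm (equivalently, boundedness of $F$ on the ball makes $t\mapsto z(t)$ Lipschitz up to $T^*$, so the solution extends). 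The paper instead builds, for the given $z(0)$, a weighted space $\ell(w)$ with $w_n\to+\infty$ and $1\leq w_{n+1}/w_n\leq 2$ containing $z(0)$ (Lemma~\ref{lem:astuce_somme}), proves via the estimate~\eqref{ineq:gamma_n_dot} on $\dot\gamma_n$, Cauchy--Schwarz and Gronwall that $\|z(t)\|_{\ell(w)}^2\leq \|z(0)\|_{\ell(w)}^2 e^{C(R)\sqrt{t}}$ (Lemma~\ref{lem:astuce_gronwall}), and concludes from the compactness of $\ell(w)\hookrightarrow h^{\half}_+$ that the trajectory stays in a compact set on finite time intervals, hence cannot blow up. Both proofs ultimately rest on Proposition~\ref{prop:normL2}; yours is shorter and needs only a bound in the norm in which local well-posedness holds, while the paper's weighted-functional Gronwall argument avoids an explicit appeal to the blow-up alternative and is of a piece with the higher-order Lyapunov functionals $P_s=\sum_n w_n\gamma_n$ used in Section~\ref{section:higher_Hs}. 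Your paragraph on negative times is a harmless extra: the paper (consistently with Theorem~\ref{thm:GWP}) only treats $t\geq 0$, and your backward Gronwall bound, using $|\langle u|e^{ix}\rangle|^2\leq\|u\|_{L^2(\T)}^2$, is correct.
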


The proof of this proposition relies on the fact that, given an initial condition $z(0)\in h^{\half}_+$, it is possible to construct a weighted space $\ell(w)$ which contains $z(0)$ and which compactly embeds into $h^{\half}_+$. With a Gronwall type argument, we then prove that the maximal solution $t\mapsto z(t)$ is bounded in $\ell(w)$. Because of the compactness of the embedding $\ell(w)\hookrightarrow h^{\half}_+$ and the local well-posedness of the Cauchy problem, this ensures that the maximal solution is global.

\begin{lem}\label{lem:astuce_somme}
Let $(x_n)_{n\geq 1}\in(\R_+)^{\N}$ be a sequence of non-negative real numbers such that the series with general term $x_n$ is convergent: $\sum_{n\geq 1}x_n<+\infty$. Then there exist positive weights $(w_n)_{n\geq1}$ such that $w_n\to+\infty$ and
\[
\sum_{n\geq1}w_nx_n<+\infty.
\]
Moreover, one can assume that for all $n\geq 1$, $1\leq \frac{w_{n+1}}{w_n}\leq 2$.
\end{lem}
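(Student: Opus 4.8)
The plan is to read off the weights from the tails of the series and then regularize them so that they grow slowly. Set $r_n:=\sum_{k\ge n}x_k$, so that $(r_n)_{n\ge1}$ is non-increasing, $r_n\to0$, and $x_n=r_n-r_{n+1}$. If only finitely many of the $x_n$ are nonzero, then the sequence $w_n:=2^{\lfloor\log_2 n\rfloor}$ already works: it tends to $+\infty$, satisfies $1\le w_{n+1}/w_n\le2$ for all $n$, and $\sum_{n\ge1}w_nx_n$ is a finite sum. So from now on I assume that infinitely many $x_n$ are nonzero, equivalently $r_n>0$ for every $n$.

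In that case I would first take $w_n^{(0)}:=1/\sqrt{r_n}$. This is a positive, non-decreasing sequence (hence $w_{n+1}^{(0)}/w_n^{(0)}\ge1$), and it tends to $+\infty$ because $r_n\to0$. The key estimate is $\sum_{n\ge1}w_n^{(0)}x_n<+\infty$: since $1/\sqrt{r_n}\le 1/\sqrt t$ for $t\in[r_{n+1},r_n]$, one has
\[
w_n^{(0)}x_n=\frac{r_n-r_{n+1}}{\sqrt{r_n}}\le\int_{r_{n+1}}^{r_n}t^{-1/2}\d t=2\sqrt{r_n}-2\sqrt{r_{n+1}},
\]
and summing this telescopes to $\sum_{n\ge1}w_n^{(0)}x_n\le 2\sqrt{r_1}<+\infty$.

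It remains to enforce $w_{n+1}/w_n\le2$ without destroying the other properties. I would cap the growth: set $w_1:=w_1^{(0)}$ and recursively $w_{n+1}:=\min\bigl(w_{n+1}^{(0)},\,2w_n\bigr)$. An immediate induction gives $0<w_n\le w_n^{(0)}$, whence $\sum_{n\ge1}w_nx_n\le\sum_{n\ge1}w_n^{(0)}x_n<+\infty$; moreover $w_{n+1}\ge w_n$ because $w_{n+1}^{(0)}\ge w_n^{(0)}\ge w_n$ and $2w_n\ge w_n$, while $w_{n+1}\le2w_n$ holds by construction. The one point requiring a short argument — and the only place where anything could go wrong — is that $w_n\to+\infty$: being non-decreasing, $(w_n)$ has a limit $L\in(0,+\infty]$; if $L<+\infty$, then since $w_n^{(0)}\to+\infty$ we would have $w_{n+1}^{(0)}>2L\ge2w_n$ for all large $n$, so $w_{n+1}=2w_n$ eventually, forcing $w_n$ to grow geometrically and contradicting $L<+\infty$. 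Hence $L=+\infty$, and the constructed weights $(w_n)$ satisfy all the requirements.
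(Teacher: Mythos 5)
Your proof is correct and follows essentially the same route as the paper: weights of the form $1/\sqrt{\text{tail}}$ whose summability against $(x_n)$ is checked by the telescoping square-root estimate, followed by the same recursive capping $w_{n+1}:=\min\bigl(w_{n+1}^{(0)},2w_n\bigr)$ to enforce $1\le w_{n+1}/w_n\le 2$. The only differences are cosmetic: the paper adds a $2^{-n}$ in the denominator to avoid treating the finitely-supported case separately, and your contradiction argument for $w_n\to+\infty$ (eventual forced doubling) is a slightly cleaner version of the paper's case analysis.
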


\begin{proof}
Let $r_n:=\sum_{p>n}x_p$ and define
\[
w_n:=\frac{1}{2^{-n}+\sqrt{r_n}+\sqrt{r_{n-1}}}.
\]
Then $w_n\to+\infty$, and
\begin{align*}
w_nx_n
	&=\frac{r_{n-1}-r_{n}}{2^{-n}+\sqrt{r_n}+\sqrt{r_{n-1}}}\\
	&\leq \sqrt{r_{n-1}}-\sqrt{r_{n}}.
\end{align*}
The general term of the upper bound defines a telescopic sum, so that the series $\sum_{n\geq 1} w_nx_n$ is convergent.

We now need to ensure that for all $n\geq 1$, $1\leq \frac{w_{n+1}}{w_n}\leq 2$. Let $\widetilde{w}_0:=w_0$ and define by induction
\[
\widetilde{w}_n:=\min(w_n,2\widetilde{w}_{n-1}), \quad n\geq 1.
\]
We check that $\widetilde{w}_n$ satisfies the required assumptions.
\begin{itemize}
\item Since $\widetilde{w}_n\leq w_n$, we know that $\sum_{n\geq 1}\widetilde{w}_nx_n<+\infty$.
\item By definition, the sequence $(w_n)_n$ is increasing, therefore
\[
\widetilde{w}_n\geq \min(w_{n-1},2\widetilde{w}_{n-1})\geq \widetilde{w}_{n-1}.
\]
Moreover, the other side of the inequality is immediate
\(\widetilde{w}_n\leq 2\widetilde{w}_{n-1}\): we have proven that $1\leq \frac{\widetilde{w}_n}{\widetilde{w}_{n-1}}\leq 2$.
\item We now prove that $(\widetilde{w}_n)_n$ tends to infinity. Let $(n_k)_{k\geq 1}$ be the increasing sequence of indexes $n$ for which $\widetilde{w}_n=w_n$ (this sequence might be finite).

If $(n_k)_{k\geq 1}$ is finite, then there exists $N$ such that for all $n\geq N$,
\[
\widetilde{w}_n:=2\widetilde{w}_{n-1}.
\]
The weights being positive, this implies that $(\widetilde{w}_n)_n$ tends to infinity.

Otherwise, since the sequence $(w_n)_n$ goes to infinity, so does the sequence $(w_{n_k})_k$ as $k$ goes to infinity. The sequence $(\widetilde{w}_n)_n$ being increasing, we deduce that $(\widetilde{w}_n)_n$ also goes to infinity.
\end{itemize}
\end{proof}

We now fix a sequence of positive weights $w=(w_n)_{n\geq 1}$ and define the weighted space
\[
\ell(w):=\Big\{z\in h^{\half}_+ \mid \|z\|_{\ell(w)}^2=\sum_{n\geq 1}nw_n|z_n|^2<+\infty\Big\}. 
\]
\begin{lem}\label{lem:astuce_gronwall}
Assume that $w_n\to+\infty$ and $1\leq \frac{w_{n+1}}{w_n}\leq 2$ for all $n\geq 1$. Fix $z(0)\in \ell(w)$. Then the maximal solution $t\mapsto z(t)\in h^{\half}_+$ is global.
\end{lem}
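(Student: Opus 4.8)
The plan is to argue by contradiction. Let $([0,T^*),z)$ be the maximal solution of \eqref{eq:damped_z} with data $z(0)$ given by Corollary~\ref{cor:LWP}, and suppose $T^*<+\infty$. I will show that $\|z(t)\|_{\ell(w)}$, hence also $\|z(t)\|_{h^{\half}_+}$, stays bounded on $[0,T^*)$; this contradicts the continuation criterion built into the Cauchy--Lipschitz theorem, since by Theorem~\ref{thm:F_loc_lipschitz} the local existence time from a point depends only on the radius of the $h^{\half}_+$-ball containing it. The first step is an a priori bound on the actions. On $[0,T^*)$ the function $u(t)=\Phi^{-1}(\zeta(t))$, with $\zeta_n(t)=e^{in^2t}z_n(t)$, solves the damped Benjamin--Ono equation, so Proposition~\ref{prop:normL2} gives $\|u(t)\|_{L^2(\T)}\le\|u_0\|_{L^2(\T)}$; since the Birkhoff map sends bounded subsets of $L^2_{r,0}(\T)$ to bounded subsets of $h^{\half}_+$, there is $R_0=R_0(z(0))$ with $\|\gamma(t)\|_{\ell^1_+}=\|z(t)\|_{\ell^2_+}^2\le R_0$ for all $t\in[0,T^*)$.

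The heart of the argument is a Gronwall estimate in $\ell(w)$. Because $\widetilde\omega_n$ is real, \eqref{eq:damped_z} gives $\frac{\d}{\d t}|z_n|^2=-2\alpha\,\Re(\overline{\zeta_n}Z_n(\zeta))$ with $Z_n=\langle u|\cos\rangle\,\d\zeta_n[u].\cos+\langle u|\sin\rangle\,\d\zeta_n[u].\sin$. By Theorem~\ref{thm:dzeta.cos} together with $\|\gamma(t)\|_{\ell^1_+}\le R_0$, the bounds $|\langle u|\cos\rangle|,|\langle u|\sin\rangle|\le\|u_0\|_{L^2(\T)}$, and the estimate $\sum_k|\zeta_k||\zeta_{k+1}|\le\|\zeta\|_{\ell^2_+}^2\le R_0$ (so that the block term $\bigl(\sum_k A_{n,k}^*\overline{\zeta_k}\zeta_{k+1}+B_{n,k}^*\zeta_k\overline{\zeta_{k+1}}\bigr)\zeta_n$ only contributes an $O(|\zeta_n|)$ amount), one gets the tridiagonal bound $|Z_n(\zeta)|\le C(R_0)(|\zeta_{n-1}|+|\zeta_n|+|\zeta_{n+1}|)$. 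Hence on $[0,T^*)$
\[
\frac{\d}{\d t}\|z(t)\|_{\ell(w)}^2
=-2\alpha\sum_{n\ge1}nw_n\,\Re(\overline{\zeta_n}Z_n)
\le C(R_0)\sum_{n\ge1}nw_n|\zeta_n|\bigl(|\zeta_{n-1}|+|\zeta_n|+|\zeta_{n+1}|\bigr).
\]
I would bound each of the three pieces by $C\|z(t)\|_{\ell(w)}^2$ using $|\zeta_n||\zeta_{n\pm1}|\le\frac12(|\zeta_n|^2+|\zeta_{n\pm1}|^2)$ and the index shifts $\sum_n nw_n|\zeta_{n+1}|^2=\sum_m(m-1)w_{m-1}|\zeta_m|^2\le\|z\|_{\ell(w)}^2$ (using $w_{m-1}\le w_m$) and $\sum_n nw_n|\zeta_{n-1}|^2=\sum_m(m+1)w_{m+1}|\zeta_m|^2\le 4\|z\|_{\ell(w)}^2$ (using $w_{m+1}\le2w_m$ and $m+1\le2m$): this is exactly where the two-sided bound $1\le\frac{w_{n+1}}{w_n}\le2$ enters. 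We obtain $\frac{\d}{\d t}\|z(t)\|_{\ell(w)}^2\le C(R_0)\|z(t)\|_{\ell(w)}^2$, so $\|z(t)\|_{\ell(w)}^2\le\|z(0)\|_{\ell(w)}^2 e^{C(R_0)t}$ on $[0,T^*)$, a finite bound since $z(0)\in\ell(w)$ and $T^*<+\infty$. (To be fully rigorous one either differentiates the truncated sums $\sum_{n\le N}$ and lets $N\to+\infty$, or notes that, again thanks to $1\le w_{n+1}/w_n\le2$, $F$ is bounded and Lipschitz on finite balls of $\ell(w)$ just as in Theorem~\ref{thm:F_loc_lipschitz}, so one may run the computation on the maximal $\ell(w)$-solution, which equals $z$ by uniqueness in $h^{\half}_+$.)

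Finally, since $w_n\to+\infty$ the embedding $\ell(w)\hookrightarrow h^{\half}_+$ is compact, hence continuous, so $\sup_{t<T^*}\|z(t)\|_{h^{\half}_+}<+\infty$; as $F$ is then bounded along the trajectory (Theorem~\ref{thm:F_loc_lipschitz}), $\|z(t)-z(s)\|_{h^{\half}_+}\le C|t-s|$, so $z(t)$ converges in $h^{\half}_+$ as $t\to T^*$, and solving \eqref{eq:damped_z} from this limit at time $T^*$ extends the solution past $T^*$, contradicting maximality. Thus $T^*=+\infty$, i.e. the solution is global. The main obstacle is the estimate of the second paragraph: one must check that the off-diagonal couplings to $\zeta_{n\pm1}$ and the block term in $\d\zeta_n[u].\cos$, $\d\zeta_n[u].\sin$ merely shuffle the weighted energy $\|z\|_{\ell(w)}^2$ between neighbouring, comparable weights — which is precisely why the weights are required to vary by at most a factor $2$ from one index to the next, a constraint that is compatible with $w_n\to+\infty$ by Lemma~\ref{lem:astuce_somme}.
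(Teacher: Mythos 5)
Your proof is correct and follows essentially the same strategy as the paper: an a priori bound on the actions, a Gronwall estimate for $\|z(t)\|_{\ell(w)}^2$ exploiting the nearest-neighbour structure of the vector field in Birkhoff coordinates together with the weight condition $1\leq \frac{w_{n+1}}{w_n}\leq 2$, then the embedding $\ell(w)\hookrightarrow h^{\half}_+$ to rule out finite-time blowup. The differences are minor: the paper keeps the factor $|\langle u(t)|e^{ix}\rangle|$ in the bound on $\dot{\gamma}_n$ (inequality~\eqref{ineq:gamma_n_dot}), yielding growth $e^{C\sqrt{t}}$ instead of your $e^{Ct}$ — immaterial for globality — and it concludes via compactness of the embedding where you use continuity plus a Cauchy-criterion continuation, both of which are valid.
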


\begin{proof}
Let $I$ be the maximal time interval on which the solution $t\mapsto z(t)$ is defined. On this interval, we consider the functional $f:t\mapsto \|z(t)\|_{\ell(w)}^2$.

We prove that $f$ stays bounded on the interval $I$. Indeed,
\begin{align*}
\frac{\d}{\d t}f(t)
	&=\sum_{n\geq 1}nw_n\frac{\d}{\d t}\gamma_n.
\end{align*}
For $n\geq 1$, the time derivative of $\gamma_n$ is
\begin{align*}
\frac{\d}{\d t}\gamma_n(u(t))
	&=\d\gamma_n[u(t)].\partial_t u\\
	&=-\alpha\langle u(t)|\cos\rangle \d\gamma_n[u(t)].\cos -\alpha\langle u(t)|\sin\rangle \d\gamma_n[u(t)].\sin.
\end{align*}
From the expression~\eqref{eq:dgamma_n} of the differential of $\gamma_n$, we have
\begin{align*}
\d\gamma_n[u(t)].\cos
	=\Re(\langle |f_{n-1}|^2-|f_n|^2| e^{ix}\rangle)
\end{align*}
and
\begin{align*}
\d\gamma_n[u(t)].\sin
	=-\Im(\langle |f_{n-1}|^2-|f_n|^2 | e^{ix}\rangle),
\end{align*}
where
\begin{equation*}
\langle |f_{n-1}|^2-|f_n|^2 | e^{ix}\rangle
	=-\sqrt{\mu_{n}}\frac{\sqrt{\kappa_{n-1}}}{\sqrt{\kappa_n}}\overline{\zeta_{n-1}}\zeta_n+\sqrt{\mu_{n+1}}\frac{\sqrt{\kappa_n}}{\sqrt{\kappa_{n+1}}}\overline{\zeta_n}\zeta_{n+1}.
\end{equation*}
Since $\mu_n$ and $\frac{\kappa_n}{\kappa_{n+1}}$ are bounded by $C(R)$, we see that actually
\begin{equation}\label{ineq:gamma_n_dot}
\left|\frac{\d}{\d t}\gamma_n\right|
	\leq C(R)|\langle u(t)|e^{ix}\rangle|\left(|\zeta_{n-1}\zeta_n|+|\zeta_n\zeta_{n+1}|\right)
\end{equation}
so that
\begin{equation*}
\left|\frac{\d}{\d t}f(t)\right|
	\leq C(R)|\langle u(t)|e^{ix}\rangle|\sum_{n\geq 1}nw_n\left(|\zeta_{n-1}\zeta_n|+|\zeta_n\zeta_{n+1}|\right).
\end{equation*}
Using Cauchy-Schwarz' inequality, we deduce that
\begin{multline*}
\left|\frac{\d}{\d t}f(t)\right|
	\leq C(R)|\langle u(t)|e^{ix}\rangle|\left(\sum_{n\geq 1}nw_n|\zeta_{n}|^2\right)^{\half}\\
	\left(\left(\sum_{n\geq 1}nw_n|\zeta_{n-1}|^2\right)^{\half}
	+\left(\sum_{n\geq 1}nw_n|\zeta_{n+1}|^2\right)^{\half}\right).
\end{multline*}
For all $n\geq 1$, because $1\leq \frac{w_{n+1}}{w_n}\leq 2$, we have $1\leq \frac{(n+1)w_{n+1}}{nw_n}\leq 4$. Hence there exists $C(R)>0$ such that for all $t\in I$,
\begin{equation*}
\left|\frac{\d}{\d t}f(t)\right|
	\leq C(R)|\langle u(t)|e^{ix}\rangle|f(t).
\end{equation*}
Now, Gronwall's lemma implies that for all $t\in I$,
\begin{equation*}
|f(t)|
	\leq |f(0)|e^{C(R)\int_0^t|\langle u(t)|e^{ix}\rangle|\d t}.
\end{equation*}
Applying Proposition~\ref{prop:normL2}, we deduce that
\begin{equation*}
|f(t)|
	\leq |f(0)|e^{C'(R)\sqrt{t}}.
\end{equation*}

To conclude, for all $T>0$, $f$ stays on a bounded subset of $\ell(w)$ on the time interval $([0,T^*)\cap[0,T]$. Since the embedding $\ell(w)\hookrightarrow h^{\half}_+$ is compact because of the condition $w_n\to+\infty$, the solution $t\mapsto z(t)$ stays in a compact set of $h^{\half}_+$. This implies that there cannot be finite time blowup: $[0,T)\subset [0,T^*)$, therefore the solution is global.
\end{proof}

\begin{proof}[Proof of Proposition~\ref{prop:GWP}]
Let $z(0)\in h^{\half}_+$. Thanks to Lemma~\ref{lem:astuce_somme}, we construct a sequence $(w_n)_{n\geq 1}$ of positive weights such that $z(0)\in\ell(w)$, $w_n\to+\infty$ and $1\leq \frac{w_{n+1}}{w_n}\leq 2$ for all $n\geq 1$. Now Lemma~\ref{lem:astuce_gronwall} with the weights $(w_n)_n$ ensures that the maximal solution $t\mapsto z(t)\in h^{\half}_+$ is global.
\end{proof}


\subsection{Weak sequential continuity of the flow map} \label{part:weak_sequential}

In this part, we prove that the flow map for the damped Benjamin-Ono equation~\eqref{eq:damped} is weakly sequentially continuous. Note that in view of the weak sequential continuity of the Birkhoff map and its inverse, it is equivalent to establish the weak sequential continuity of the flow map for the damped Benjamin-Ono equation in Birkhoff coordinates~\eqref{eq:damped_z}.


\begin{prop}\label{prop:flow_weak_C0}
The flow map for the damped Benjamin-Ono equation~\eqref{eq:damped} is weakly sequentially continuous.

More precisely, let $u^{(k)}(0)\rightharpoonup u(0)$ in $L^2_{r,0}(\T)$. Then for all $T>0$, the sequence of solutions $u^{(k)}$ associated to the initial data $u^{(k)}(0)$ converges to the solution $u$ associated to the initial data $u_0$ in $\classeC_w([0,T],L^2_{r,0}(\T))$ (with the weak topology).
\end{prop}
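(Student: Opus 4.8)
The plan is to pass to the Birkhoff coordinates of~\eqref{eq:damped_z} and argue by compactness. Since $\Phi$ and $\Phi^{-1}$ are continuous and weakly sequentially continuous, it suffices to show: if $z^{(k)}(0)\rightharpoonup z(0)$ in $h^{\half}_+$, then the associated global solutions $z^{(k)}$ of~\eqref{eq:damped_z} (global by Proposition~\ref{prop:GWP}) converge to the solution $z$ with data $z(0)$ in $\classeC_w([0,T],h^{\half}_+)$ for every $T>0$. Weak convergence of the initial data gives $R:=\sup_k\|z^{(k)}(0)\|_{h^{\half}_+}<\infty$; transporting to $L^2_{r,0}(\T)$, using that the $L^2$ norm is nonincreasing along the flow of~\eqref{eq:damped} (Proposition~\ref{prop:normL2}), and transporting back, all the $z^{(k)}(t)$ and $z(t)$ stay in one fixed ball $B_{R'}$ of $h^{\half}_+$, for all $t\ge 0$. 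Then $\|F(t,z^{(k)}(t))\|_{h^{\half}_+}\le C(R')$ by Theorem~\ref{thm:F_loc_lipschitz}, so, writing $z^{(k)}(t)-z^{(k)}(t')=\int_{t'}^t F(s,z^{(k)}(s))\,\d s$, the maps $z^{(k)}$ are uniformly Lipschitz from $[0,T]$ into $h^{\half}_+$, hence into $h^{\half-s}_+$ as well, for any fixed $s\in(0,\half)$.

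Fix such an $s$. The family $\{z^{(k)}\}_k$ is equicontinuous in $\classeC([0,T],h^{\half-s}_+)$, and for each $t$ the set $\{z^{(k)}(t)\}_k$ is bounded in $h^{\half}_+$, which embeds compactly into $h^{\half-s}_+$ by Rellich's theorem. By Arzelà--Ascoli, every subsequence of $\{z^{(k)}\}_k$ has a further subsequence converging in $\classeC([0,T],h^{\half-s}_+)$ to some $z^{\infty}$ with $\sup_{[0,T]}\|z^{\infty}(t)\|_{h^{\half}_+}\le R'$; along it, $z^{(k)}(t)\rightharpoonup z^{\infty}(t)$ in $h^{\half}_+$ for each $t$, and $z^{\infty}(0)=z(0)$. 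By the weak sequential continuity of $F(t,\cdot)$ (Theorem~\ref{thm:F_loc_lipschitz}), $F_n(s,z^{(k)}(s))\to F_n(s,z^{\infty}(s))$ for each fixed $n$, with uniform bound $C(R')/\sqrt n$, so dominated convergence in the componentwise Duhamel identity $z^{(k)}_n(t)=z^{(k)}_n(0)+\int_0^t F_n(s,z^{(k)}(s))\,\d s$ shows that $z^{\infty}$ solves~\eqref{eq:damped_z} componentwise with data $z(0)$.

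It remains to identify $z^{\infty}$ with $z$. Since $F(t,\cdot)$ is $C$-Lipschitz on $B_{R'}$ for the $h^{\half}_+$ norm (Theorem~\ref{thm:F_loc_lipschitz}), applying Cauchy--Schwarz in $s$ to the truncated sums $\sum_{n\le N}n\,|z^{\infty}_n(t)-z_n(t)|^2$ and letting $N\to+\infty$ gives $\|z^{\infty}(t)-z(t)\|_{h^{\half}_+}^2\le C^2 t\int_0^t\|z^{\infty}(\tau)-z(\tau)\|_{h^{\half}_+}^2\,\d\tau$, whence $z^{\infty}=z$ by Gronwall's lemma. Thus every subsequence of $\{z^{(k)}\}_k$ has a further subsequence converging to $z$ in $\classeC([0,T],h^{\half-s}_+)$, so the whole sequence converges there; combined with the uniform $h^{\half}_+$ bound and a truncation of the test sequence, this upgrades to $z^{(k)}\to z$ in $\classeC_w([0,T],h^{\half}_+)$. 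Transporting back through $\Phi^{-1}$ yields $u^{(k)}(t)\rightharpoonup u(t)$ in $L^2_{r,0}(\T)$ for each $t$, and this is promoted to $\classeC_w([0,T],L^2_{r,0}(\T))$ convergence using the uniform $L^2$ bound and the equicontinuity of $t\mapsto\langle u^{(k)}(t)\,|\,\varphi\rangle$ for smooth $\varphi$ (read off from equation~\eqref{eq:damped}).

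The main obstacle is the clash of topologies: since $z^{(k)}(0)$ converges only weakly in $h^{\half}_+$, one must descend to $h^{\half-s}_+$ to gain compactness, yet the frequency part $i\widetilde{\omega}_n(z)z_n$ of $F$ is merely weakly sequentially continuous --- not Lipschitz --- in the $h^{\half-s}_+$ norm on balls of $h^{\half}_+$, so no quantitative stability bound for $z^{(k)}-z$ is available in $h^{\half-s}_+$. This is exactly why the argument is organized as compactness (Arzelà--Ascoli) followed by identification of the limit through uniqueness of~\eqref{eq:damped_z} in $h^{\half}_+$, rather than as a direct contraction estimate; the remaining topological transfers through $\Phi^{\pm1}$ are routine.
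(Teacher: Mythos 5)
Your proof is correct and follows essentially the same route as the paper's: uniform $h^{\half}_+$ bounds from the $L^2$ Lyapunov functional and Parseval, a uniform bound on $\frac{\d}{\d t}z^{(k)}$ from Theorem~\ref{thm:F_loc_lipschitz}, an Ascoli-type compactness argument, passage to the limit in the equation via the weak sequential continuity of $F$, and identification of the limit with the solution issued from $z(0)$. The only cosmetic differences are that you extract compactness in $\classeC([0,T],h^{\half-s}_+)$ via Rellich instead of working directly in $\classeC_w([0,T],h^{\half}_+)$, and you re-derive uniqueness by a Gronwall estimate in $h^{\half}_+$ where the paper simply invokes Corollary~\ref{cor:LWP}.
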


\begin{proof}
We consider a sequence $u^{(k)}(0)$ weakly convergent to $u(0)$ in $L^2_{r,0}(\T)$.

For $k\in\N$, denote $z^{(k)}(0):=\Phi(u^{(k)}(0))$. By weak sequential continuity of the Birkhoff map $\Phi$ (see~\cite{GerardKappelerTopalov2020}, Remark 6 (iii)), we have $z^{(k)}(0)\rightharpoonup z(0):=\Phi(u(0))$ in $h^{\half}_+$, therefore this sequence is bounded in $h^{\half}_+$ by some $R>0$. But the solutions of the damped Benjamin-Ono equation have a decreasing norm in $L^2_{r,0}(\T)$ (Proposition~\ref{prop:normL2}) and from the Parseval formula $\|u\|_{L^2(\T)}^2=2\sum_{n\geq1}n|\zeta_n|^2$ (Remark 1.2 (i) in~\cite{GerardKappeler2019}), their Birkhoff coordinates have a decreasing norm in $h^{\half}_+$. Therefore for all $t\geq 0$, the sequence $(z^{(k)}(t))_k$ is also bounded in $h^{\half}_+$ by $R$. 

Recall from Theorem~\ref{thm:F_loc_lipschitz} that there exists $C(R)>0$ such that for all $t\geq 0$,
\begin{align*}
\|\frac{\d z^{(k)}(t)}{\d t}\|_{h^{\half}_+}
	= \|F(t,z^{(k)}(t))\|_{h^{\half}_+}
	\leq C(R).
\end{align*}
We conclude that the sequence $\|\frac{\d z^{(k)}(t)}{\d t}\|_{h^{\half}_+}$, $k\in\N$, is bounded in $h^{\half}_+$ uniformly in time.

Fix $T>0$. From Ascoli's theorem, we know that up to a subsequence, $z^{(k)}$ converges in $\classeC_w([0,T],h^{\half}_+)$ (with the weak topology) to a function $\widetilde{z}$.
In particular, since $\Phi^{-1}$ is weakly sequentially continuous , for all $t\in[0,T]$, the sequence $u^{(k)}(t)=\Phi^{-1}(z^{(k)}(t))$ is weakly convergent to $\widetilde{u}(t):=\Phi^{-1}(\widetilde{z}(t))$. But $F$ is sequentially weakly continuous with respect to the second variable (see Theorem~\ref{thm:F_loc_lipschitz}), therefore $F(t,z^{(k)}(t))$ is weakly convergent to $F(t,\widetilde{z}(t))$.

Passing to the limit in the equation, we conclude that $\widetilde{z}$ is a solution on $[0,T]$ to the original equation~\eqref{eq:damped_z} in the distribution sense with initial data $z(0)$. By uniqueness of such solutions (see Corollary~\ref{cor:LWP}), we deduce that $\widetilde{z}=z$.
\end{proof}

\section{Long time asymptotics}\label{section:limit_points}

In this section, we describe the weak limit points for the flow map (point 1 of Theorem~\ref{thm:LaSalle}) in part~\ref{part:LaSalle}, and prove that the convergence to these weak limit points is actually strong in $L^2_{r,0}(\T)$ (points 2 and 3 of Theorem~\ref{thm:LaSalle}) in part~\ref{part:strong_convergence}. In order to get the strong convergence, we show that the integral \(
\int_0^{+\infty}\sum_{n=0}^{+\infty}\gamma_n(t)\gamma_{n+1}(t)\d t
\) is finite in part~\ref{part:time_integrability_product}.

\subsection{Weak limit points of trajectories as \texorpdfstring{$t\to+\infty$}{t tends to infinity}}\label{part:LaSalle}

Using the LaSalle principle, we study the limit points of $(u(t))_{t\in\R}$ for the weak topology in $L^2_{r,0}(\T)$ as $t$ goes to $+\infty$ and prove Theorem~\ref{thm:LaSalle}.

\begin{prop}
Let $u$ be a solution of the damped equation~\eqref{eq:damped} with initial data $u_0\in L^2_{r,0}(\T)$. Then any weak limit $v_0$ of the sequence $(u(t))_{t\in\R}$ in $L^2_{r,0}(\T)$ as $t$ goes to $+\infty$ defines a solution $v$ to the Benjamin-Ono equation~\eqref{eq:bo} such that for all $t\geq 0$,
\[
\langle v(t)|e^{ix}\rangle=0.
\]
\end{prop}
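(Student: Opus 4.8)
The plan is to argue by a LaSalle-type compactness principle, combining the Lyapunov identity of Proposition~\ref{prop:normL2} with the weak sequential continuity of the flow map established in Proposition~\ref{prop:flow_weak_C0}.

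First I would fix a sequence $t_k\to+\infty$ with $u(t_k)\rightharpoonup v_0$ in $L^2_{r,0}(\T)$ and introduce the time-translated functions $u_k(t):=u(t_k+t)$. Since~\eqref{eq:damped} is autonomous, $u_k$ is the solution of~\eqref{eq:damped} with initial datum $u(t_k)$, and Proposition~\ref{prop:normL2} gives $\|u_k(t)\|_{L^2(\T)}\le\|u_0\|_{L^2(\T)}=:R$ for all $t\ge0$ and all $k\in\N$. By weak sequential continuity of the flow map of~\eqref{eq:damped} (Proposition~\ref{prop:flow_weak_C0}), for every $T>0$ the sequence $u_k$ converges in $\classeC_w([0,T],L^2_{r,0}(\T))$ to the solution $w$ of~\eqref{eq:damped} with initial datum $v_0$; letting $T\to+\infty$ this holds on $[0,+\infty)$. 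In particular $u_k(t)\rightharpoonup w(t)$ in $L^2_{r,0}(\T)$ for every fixed $t\ge0$, so $\langle u_k(t)|e^{ix}\rangle\to\langle w(t)|e^{ix}\rangle$ pointwise in $t$, with $|\langle u_k(t)|e^{ix}\rangle|\le R$.

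Next I would use that $t\mapsto|\langle u(t)|e^{ix}\rangle|^2$ is integrable on $[0,+\infty)$, again by Proposition~\ref{prop:normL2}. Hence for each $T>0$,
\[
\int_0^T|\langle u_k(t)|e^{ix}\rangle|^2\d t=\int_{t_k}^{t_k+T}|\langle u(s)|e^{ix}\rangle|^2\d s\longrightarrow 0
\]
as $k\to+\infty$, being the tail of a convergent integral. Combined with the pointwise convergence and the uniform bound $R$, dominated convergence gives $\int_0^T|\langle w(t)|e^{ix}\rangle|^2\d t=0$, so $\langle w(t)|e^{ix}\rangle=0$ for almost every $t\in[0,T]$, hence for every $t\in[0,T]$ by time-continuity of $w$ (Theorem~\ref{thm:GWP}); since $T$ is arbitrary, $\langle w(t)|e^{ix}\rangle=0$ for all $t\ge0$. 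The damping term $\alpha(\langle w(t)|\cos\rangle\cos+\langle w(t)|\sin\rangle\sin)$ therefore vanishes identically along $w$, so $w$ solves the Benjamin-Ono equation~\eqref{eq:bo} with initial datum $v_0$; by uniqueness for~\eqref{eq:bo} in $L^2_{r,0}(\T)$ we conclude $w=v$, whence $\langle v(t)|e^{ix}\rangle=0$ for all $t\ge0$.

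The only genuinely delicate point is the passage to the limit in the quadratic nonlinearity $\partial_x(u_k^2)$: weak $L^2$ convergence is not preserved by this term, and the identification of the limit $w$ as a solution of the (damped, then undamped) equation rests entirely on Proposition~\ref{prop:flow_weak_C0}, which itself is proved through the Birkhoff-coordinate ODE reformulation and an Ascoli argument. The remaining ingredients --- the Lyapunov inequality, the dominated-convergence step, and the upgrade from almost-everywhere to everywhere vanishing of $\langle w(t)|e^{ix}\rangle$ --- are routine.
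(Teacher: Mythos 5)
Your argument is correct and follows essentially the same LaSalle route as the paper: translate in time, use the weak sequential continuity of the flow (Proposition~\ref{prop:flow_weak_C0}) to identify the limit as the damped solution issued from $v_0$, and kill $\langle v(t)|e^{ix}\rangle$ via the Lyapunov identity of Proposition~\ref{prop:normL2}. The only (harmless) difference is that you use square-integrability of $|\langle u(t)|e^{ix}\rangle|$ plus dominated convergence on finite time intervals and continuity in time, whereas the paper invokes directly the pointwise decay $|\langle u(t)|e^{ix}\rangle|\to 0$ already established in Proposition~\ref{prop:normL2}.
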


\begin{proof}
Let $v_0$ be a weak limit of some sequence $(u(t_k))_{k\in\N}$ in $L^2_{r,0}(\T)$, where $t_k\to+\infty$, and let $v$ be the solution to the damped Benjamin-Ono equation~\eqref{eq:damped} with initial data~$v_0$. By weak sequential continuity of the flow in $L^2_{r,0}(\T)$ (Proposition~\ref{prop:flow_weak_C0}), for all $t\in\R$, we have
\[
u(t+t_k)\weaklim{k\to+\infty}{} v(t)
\]
and in particular
\[
\langle u(t+t_k)|e^{ix}\rangle
	\longrightarroww{k\to+\infty}{} \langle v(t)|e^{ix}\rangle.
\]
However, from Proposition~\ref{prop:normL2},  $\langle u(t+t_k)|e^{ix}\rangle$ tends to $0$ as $k$ goes to $+\infty$. We deduce that $\langle v(t)|e^{ix}\rangle=0$ for all $t\in\R$.
\end{proof}

\begin{prop}
An initial data $v_0\in L^2_{r,0}(\T)$ defines a solution $v$ to the Benjamin-Ono equation such that for all $t\in\R$,
\[
\langle v(t)|e^{ix}\rangle=0
\]
if and only if $v_0$ does not have two consecutive nonzero gaps:
\[
\forall n\in\N, \quad \zeta_n(v_0)\zeta_{n+1}(v_0)=0.
\]
\end{prop}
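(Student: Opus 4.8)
The plan is to work in Birkhoff coordinates, combining the formula for $\langle u|e^{ix}\rangle$ from Lemma~\ref{lem:<u|e^ix>} with the explicit linearization of the Benjamin-Ono flow. Recall that the solution $v$ to \eqref{eq:bo} with initial data $v_0$ satisfies $\zeta_n(v(t))=\zeta_n(v_0)e^{i\omega_n(v_0)t}$ for $n\geq1$, with $\omega_n(v_0)=n^2-2\sum_{k\geq1}\min(k,n)\gamma_k(v_0)$; in particular the actions $\gamma_n=|\zeta_n|^2$ are conserved along the flow. Setting $\omega_0:=0$, the relation also holds for $n=0$ with the convention $\zeta_0=1$. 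Applying Lemma~\ref{lem:<u|e^ix>} at time $t$ and recalling \eqref{def:a_n}, I would write
\[
\langle v(t)|e^{ix}\rangle=-\sum_{n\geq0}a_n^*(\gamma(v_0))\,\overline{\zeta_n(v_0)}\,\zeta_{n+1}(v_0)\,e^{i\nu_n t},\qquad \nu_n:=\omega_{n+1}(v_0)-\omega_n(v_0).
\]
With $R:=\|\gamma(v_0)\|_{\ell^1_+}<\infty$, inequality \eqref{ineq:a_n_bound} gives $a_n^*(\gamma(v_0))\leq C(R)$, while $\sum_{n\geq0}|\zeta_n(v_0)||\zeta_{n+1}(v_0)|\leq1+\|\Phi(v_0)\|_{\ell^2_+}^2<\infty$; hence the series converges absolutely and uniformly in $t\in\R$.

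One implication is then immediate: if $\zeta_n(v_0)\zeta_{n+1}(v_0)=0$ for every $n\geq0$, all coefficients in the series vanish and $\langle v(t)|e^{ix}\rangle\equiv0$. For the converse, the key point is the explicit shape of the frequencies,
\[
\nu_n=2n+1-2\sum_{k\geq n+1}\gamma_k(v_0),
\]
which shows that $n\mapsto\nu_n$ is strictly increasing: for $m>n$, $\nu_m-\nu_n=2(m-n)+2\sum_{k=n+1}^{m}\gamma_k(v_0)\geq2(m-n)>0$. Thus the $\nu_n$, $n\geq0$, are pairwise distinct and $t\mapsto\langle v(t)|e^{ix}\rangle$ is a uniformly convergent superposition of exponentials with distinct frequencies. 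I would then recover each coefficient by time-averaging: for fixed $m\geq0$, interchanging the (uniformly dominated) sum with the integral,
\[
\frac{1}{2T}\int_{-T}^{T}\langle v(t)|e^{ix}\rangle e^{-i\nu_m t}\,\d t=-\sum_{n\geq0}a_n^*(\gamma(v_0))\overline{\zeta_n(v_0)}\zeta_{n+1}(v_0)\,\frac{1}{2T}\int_{-T}^{T}e^{i(\nu_n-\nu_m)t}\,\d t.
\]
Since $\bigl|\tfrac{1}{2T}\int_{-T}^{T}e^{i\mu t}\,\d t\bigr|\leq1$ and $\tfrac{1}{2T}\int_{-T}^{T}e^{i\mu t}\,\d t\to\mathbf{1}_{\{\mu=0\}}$ as $T\to+\infty$, dominated convergence in $n$ gives that this average tends to $-a_m^*(\gamma(v_0))\,\overline{\zeta_m(v_0)}\,\zeta_{m+1}(v_0)$. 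If $\langle v(t)|e^{ix}\rangle\equiv0$ the average is identically $0$, and since $a_m^*(\gamma(v_0))\geq\frac{1}{C(R)}>0$ by \eqref{ineq:a_n_bound}, we conclude $\zeta_m(v_0)\zeta_{m+1}(v_0)=0$ for all $m\geq0$.

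I do not expect a serious obstacle here. The single structural input is the pairwise distinctness of the frequencies $\nu_n$ — this is exactly what lets the time-average isolate one Birkhoff-coefficient product — and it falls out of the monotonicity computation above. The rest is bookkeeping: folding the $n=0$ term into the general pattern via $\zeta_0=1$, $\omega_0=0$, and justifying the term-by-term limit, both of which are controlled by the uniform bound $\sum_{n\geq0}a_n^*(\gamma(v_0))|\zeta_n(v_0)||\zeta_{n+1}(v_0)|<\infty$ provided by Lemma~\ref{lem:kappa} (via \eqref{ineq:a_n_bound}) together with $\Phi(v_0)\in h^{\half}_+\subset\ell^2_+$.
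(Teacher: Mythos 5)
Your proof is correct and follows essentially the same route as the paper: express $\langle v(t)|e^{ix}\rangle$ via Lemma~\ref{lem:<u|e^ix>} as an almost-periodic sum $-\sum_n a_n^*(\gamma(v_0))\overline{\zeta_n(v_0)}\zeta_{n+1}(v_0)e^{i(\omega_{n+1}-\omega_n)t}$, note the trivial direction, and for the converse exploit the strict monotonicity (hence distinctness) of the frequency gaps $\omega_{n+1}-\omega_n$ to isolate each coefficient by time-averaging, then use $a_n^*>0$. The only difference is cosmetic: you average over $[-T,T]$ and invoke dominated convergence in $n$, while the paper averages over $[0,T]$ and bounds the off-diagonal terms explicitly by $O(1/T)$; both are valid.
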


\begin{proof}
Assume that $v$ is a solution to the Benjamin-Ono equation with initial data $v_0$. Then the Birkhoff coordinates evolve as
\[
\zeta_n(v(t))=e^{i\omega_n(v_0)t}\zeta_n(v_0),
\]
where
\[
\omega_n(v_0)=n^2-2\sum_{p=1}^{+\infty}\min(p,n)\gamma_p.
\]
Applying Lemma~\ref{lem:<u|e^ix>}, we have
\[
\langle v(t)|e^{ix}\rangle
	=\sum_{n\geq 0} m_ne^{i(\omega_n(v_0)-\omega_{n+1}(v_0))t},
\]
where $m_n$ is constant along the flow of the Benjamin-Ono equation:
\[
m_n
	=M_{n,n}
	=-\overline{\zeta_n(v_0)}\zeta_{n+1}(v_0)\sqrt{\frac{\kappa_n(v_0)}{\kappa_{n+1}(v_0)}}\sqrt{\mu_{n+1}(v_0)}.
\]

One can see that if the condition $\overline{\zeta_n(v_0)}\zeta_{n+1}(v_0)=0$ is satisfied for all $n\in\N$, then all the coefficients $m_n$, $n\in\N$, vanish, therefore $\langle v(t)|e^{ix}\rangle=0$ for all $t\in\R$.

Conversely, assume that $\langle v(t)|e^{ix}\rangle=0$ for all $t\in\R$. Since
\[
\omega_{n+1}(v_0)-\omega_n(v_0)
	=2n+1-2\sum_{p=n+1}^{+\infty}\gamma_p,
\]
the sequence $(\omega_{n+1}(v_0)-\omega_n(v_0))_{n\in\N}$ is strictly increasing. Fix $n\in\N$, and choose $T>0$. Then
\begin{align*}
0
	&=\frac{1}{T}\int_0^{T}e^{i(\omega_{n+1}(v_0)-\omega_n(v_0))t}\langle v(t)|e^{ix}\rangle\d t\\
	&=m_n+\frac{1}{T}\sum_{p\neq n}m_p\frac{e^{i(\omega_{n+1}(v_0)-\omega_n(v_0)+\omega_p(v_0)-\omega_{p+1}(v_0))T}-1}{i(\omega_{n+1}(v_0)-\omega_n(v_0)+\omega_p(v_0)-\omega_{p+1}(v_0))}.
\end{align*}
Taking $T\to+\infty$ in this equality, we deduce that $m_n=0$.
Note that $\kappa_n>0$, $\kappa_{n+1}>0$ and $\mu_{n+1}>0$, therefore, we have $\overline{\zeta_n(v_0)}\zeta_{n+1}(v_0)=0$.
\end{proof}

\subsection{Time integrability for products of two consecutive modes}\label{part:time_integrability_product}

Let $u\in \classeC(\R_+,L^2_{r,0}(\T))$ be a solution to the damped Benjamin-Ono equation~\eqref{eq:damped}. We denote $\gamma_n(t):=\gamma_n(u(t))$ for all $n\geq 1$ and $t\in\R_+$ (with the convention $\gamma_0(t)=1$).

\begin{prop}\label{prop:I(T,T')_n,p}
Let $R:=\|u_0\|_{L^2(\T)}$. Then there exists $C(R)>0$ such that
\[
\int_0^{+\infty}\sum_{n=0}^{+\infty}\gamma_n(t)\gamma_{n+1}(t)\d t\leq C(R).
\]
Moreover, there exists a map $\varepsilon_u$ such that $\varepsilon_u(T)\to 0$ as $T\to+\infty$ and the following holds. For all $n\in\N$, fix $a_n\in\classeC^1(\R_+,\C)$ satisfying:
\begin{equation}\label{hyp:I(T,T')}
\forall t\geq 0,
\quad |a_n(t)|\leq 1
\quad \text{and}\quad
|\dot{a_n}(t)|\leq |\langle u(t)|e^{ix}\rangle|.
\end{equation}
Then for all $T\geq 0$, we have
\[
\int_T^{+\infty}\left|\sum_{n=0}^{+\infty}a_n(t)\zeta_n(t)\overline{\zeta_{n+1}(t)}\right|^2\d t\leq \varepsilon_u(T).
\]
\end{prop}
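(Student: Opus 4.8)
Write $\beta(t):=\langle u(t)|e^{ix}\rangle$ and $\eta(t):=\sum_{n\ge 0}\gamma_n(t)\gamma_{n+1}(t)$ (convention $\gamma_0\equiv 1$). Since $\|u(t)\|_{L^2}\le R$ (Proposition~\ref{prop:normL2}) and $\|u\|_{L^2}^2=2\sum_{n\ge 1}n\gamma_n$, one has $\|\gamma(t)\|_{\ell^1_+}\le R^2/2$, hence $\sum_{n\ge0}\gamma_n(t)\le C(R)$ and $\eta(t)\le C(R)$ for all $t$; the Lyapunov identity also gives $\int_0^{+\infty}|\beta|^2\d t\le R^2/(2\alpha)$. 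By Lemma~\ref{lem:<u|e^ix>}, $\beta=\sum_{n\ge0}m_n$ with $m_n:=-a_n^*(\gamma)\overline{\zeta_n}\zeta_{n+1}$ and, by~\eqref{ineq:a_n_bound}, $c(R)\le a_n^*\le C(R)$; in particular $\sum_n|m_n|^2\ge c(R)\eta$, $\sum_n|m_n|\le C(R)$ and $\|(m_n)\|_{\ell^2}\le C(R)\sqrt\eta$. The plan is to expand $|\beta(t)|^2=\sum_n|m_n|^2+\mathcal O(t)$ with $\mathcal O:=\sum_{n\ne k}m_n\overline{m_k}$, to prove by one integration by parts that $\big|\int_0^T\mathcal O\d t\big|\le C(R)+C(R)\big(\int_0^T\eta\big)^{1/2}$, and to close the first estimate by Young's inequality: then $c(R)\int_0^T\eta\le\int_0^T|\beta|^2+\big|\int_0^T\mathcal O\big|\le C(R)+C(R)\big(\int_0^T\eta\big)^{1/2}$ forces $\int_0^T\eta\le C(R)$ uniformly in $T$, and $T\to+\infty$ gives the bound on $\int_0^{+\infty}\sum_n\gamma_n\gamma_{n+1}$.

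\textbf{The integration by parts.} Set $g_{n,k}:=m_n\overline{m_k}=a_n^*a_k^*\,\overline{\zeta_n}\zeta_{n+1}\zeta_k\overline{\zeta_{k+1}}$. From~\eqref{eq:dot_zeta}, $\dot\zeta_j=i\omega_j(\zeta)\zeta_j-\alpha\widetilde Z_j$ (notation of~\eqref{def:Z_n}), differentiation gives $\dot g_{n,k}=i\Omega_{n,k}(\zeta)\,g_{n,k}+r_{n,k}$ with $\Omega_{n,k}:=-\omega_n+\omega_{n+1}+\omega_k-\omega_{k+1}$. Since $\omega_{n+1}-\omega_n=2n+1-2\sum_{p>n}\gamma_p$, one has $\Omega_{n,k}=2(n-k)+2\big(\sum_{p>k}\gamma_p-\sum_{p>n}\gamma_p\big)$, whose two summands have the same sign, so $|\Omega_{n,k}|\ge 2|n-k|$ for $n\ne k$, and $|\tfrac{\mathrm d}{\mathrm dt}\Omega_{n,k}(\zeta(t))|\le 4\|\dot\gamma(t)\|_{\ell^1_+}\le C(R)|\beta(t)|$ by~\eqref{ineq:gamma_n_dot}. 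The remainder $r_{n,k}$ gathers the terms where the time derivative hits $a_n^*$ or $a_k^*$ --- bounded by $C(R)|\beta|\sqrt{\gamma_n\gamma_{n+1}\gamma_k\gamma_{k+1}}$ via~\eqref{ineq:a_n_lipschitz} and $\|\dot\gamma\|_{\ell^1_+}\le C(R)|\beta|$ --- and $a_n^*a_k^*$ times the four damping contributions $\alpha\,\overline{\widetilde Z_n}\zeta_{n+1}\zeta_k\overline{\zeta_{k+1}}$, $\alpha\,\overline{\zeta_n}\widetilde Z_{n+1}\zeta_k\overline{\zeta_{k+1}}$, etc.; by Theorem~\ref{thm:dzeta.cos} and $|\langle u|\cos\rangle|,|\langle u|\sin\rangle|\le|\beta|$ one has $|\widetilde Z_j|\le C(R)|\beta|\big(|\zeta_{j-1}|+|\zeta_j|+|\zeta_{j+1}|\big)$, so each such contribution has the form $C(R)|\beta|\,y_n x_k$ (or $y_k x_n$) with $x_k:=\sqrt{\gamma_k\gamma_{k+1}}$ and $y_n$ a product of two of $|\zeta_{n-1}|,\dots,|\zeta_{n+2}|$, whence $\sum_n y_n\le C(R)$ and $\|x\|_{\ell^2}=\sqrt\eta$. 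Writing $g_{n,k}=\tfrac1{i\Omega_{n,k}}(\dot g_{n,k}-r_{n,k})$ and integrating by parts,
\[
\int_0^T\mathcal O\d t=\sum_{n\ne k}\Big[\tfrac{g_{n,k}(t)}{i\Omega_{n,k}(\zeta(t))}\Big]_{t=0}^{t=T}+\sum_{n\ne k}\int_0^T\tfrac{g_{n,k}\,\dot\Omega_{n,k}}{i\Omega_{n,k}^2}\d t-\sum_{n\ne k}\int_0^T\tfrac{r_{n,k}}{i\Omega_{n,k}}\d t,
\]
all series and integrals converging absolutely thanks to the bounds below.

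\textbf{Estimates after integration by parts.} Each group is controlled using $\tfrac1{|\Omega_{n,k}|}\le\tfrac1{2|n-k|}$ together with the elementary bounds $\sum_{k\ne n}\tfrac{x_k}{|n-k|}\le\tfrac{\pi}{\sqrt3}\|x\|_{\ell^2}$ (Cauchy--Schwarz and $\sum_{j\ge1}j^{-2}=\pi^2/6$), hence $\sum_{n\ne k}\tfrac{y_nx_k}{|n-k|}\le\tfrac{\pi}{\sqrt3}\|y\|_{\ell^1}\|x\|_{\ell^2}$. The boundary term is $\le\tfrac12\sum_{n\ne k}\tfrac{|m_n||m_k|}{|n-k|}\le C(R)\|(m_n)\|_{\ell^1}\|(m_n)\|_{\ell^2}\le C(R)\sqrt{\eta(t)}\le C(R)$ at $t=0$ and $t=T$. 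For the second group, $|g_{n,k}|\le\tfrac{C(R)}2(\gamma_n\gamma_{n+1}+\gamma_k\gamma_{k+1})$ and $\sum_{k\ne n}(n-k)^{-2}=\pi^2/3$ give $\sum_{n\ne k}\tfrac{|g_{n,k}|\,|\dot\Omega_{n,k}|}{\Omega_{n,k}^2}\le C(R)|\beta|\eta$. For the third, the structure of $r_{n,k}$ above gives $\sum_{n\ne k}\tfrac{|r_{n,k}|}{|\Omega_{n,k}|}\le C(R)|\beta|\sqrt\eta$. Since $\eta\le C(R)$, both bulk terms are $\le C(R)|\beta|\sqrt\eta$ pointwise, so by Cauchy--Schwarz $\big|\int_0^T\mathcal O\d t\big|\le C(R)+C(R)\big(\int_0^T|\beta|^2\big)^{1/2}\big(\int_0^T\eta\big)^{1/2}\le C(R)+C(R)\big(\int_0^T\eta\big)^{1/2}$, which is what was needed.

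\textbf{The second assertion.} First, $|\dot\eta(t)|\le C(R)$ (again by~\eqref{ineq:gamma_n_dot} and $|\beta|\le R$), so the integrable nonnegative function $\eta$ tends to $0$ at $+\infty$. Writing $P(t):=\sum_n a_n(t)\zeta_n\overline{\zeta_{n+1}}$ and $|P|^2=\sum_n|a_n|^2\gamma_n\gamma_{n+1}+\mathcal O_P$, the diagonal is $\le\eta$, contributing $\le\int_T^{+\infty}\eta\to0$. For $\mathcal O_P=\sum_{n\ne k}\tilde g_{n,k}$ with $\tilde g_{n,k}:=a_n\overline{a_k}\,\zeta_n\overline{\zeta_{n+1}}\,\overline{\zeta_k}\zeta_{k+1}$, the same computation gives $\dot{\tilde g}_{n,k}=-i\Omega_{n,k}\tilde g_{n,k}+\tilde r_{n,k}$ with $|\tilde r_{n,k}|\le 2|\beta|\sqrt{\gamma_n\gamma_{n+1}\gamma_k\gamma_{k+1}}+|\rho_{n,k}|$, $\rho_{n,k}$ being the same damping remainder as before; crucially only $|a_n|\le1$ and $|\dot a_n|\le|\beta|$ enter, so the resulting bounds do not depend on the choice of $(a_n)$. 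Integrating by parts on $[T,T']$ as above, the two boundary terms are $\le C(R)\big(\sqrt{\eta(T)}+\sqrt{\eta(T')}\big)$ and the two bulk terms are $\le C(R)\big(\int_T^{+\infty}|\beta|^2\big)^{1/2}\big(\int_T^{+\infty}\eta\big)^{1/2}$; letting $T'\to+\infty$ the term at $T'$ disappears since $\eta(T')\to0$ (and $\int_T^{+\infty}|P|^2<+\infty$ by these same bounds). Altogether
\[
\int_T^{+\infty}|P(t)|^2\d t\le\varepsilon_u(T):=\int_T^{+\infty}\eta+C(R)\sqrt{\eta(T)}+C(R)\Big(\int_T^{+\infty}|\beta|^2\Big)^{1/2}\Big(\int_T^{+\infty}\eta\Big)^{1/2},
\]
and $\varepsilon_u(T)\to0$ by the first assertion and the Lyapunov bound. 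The one genuinely delicate point is the remainder $r_{n,k}$: dividing only by $\tfrac1{|\Omega_{n,k}|}\le\tfrac12$ leaves the damping contribution bounded but \emph{not} time-integrable, so one must keep the full factor $\tfrac1{|n-k|}$ and combine the Cauchy--Schwarz gain with the $\ell^1$-summability of the companion factor in order to upgrade it to the square-integrable-in-time quantity $|\beta|\sqrt\eta$; the bookkeeping of the several terms produced by differentiating the fourfold products is routine but tedious.
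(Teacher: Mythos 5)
Your proposal is correct, and its core mechanism is the same as the paper's: expand the square modulus of $\langle u|e^{ix}\rangle$ (resp.\ of $\sum_n a_n\zeta_n\overline{\zeta_{n+1}}$) into a diagonal part plus off-diagonal interactions, integrate the off-diagonal terms by parts against the phases $\Omega_{n,k}$ using $|\Omega_{n,k}|\geq 2|n-k|$, estimate the boundary term and the two bulk terms (derivative of $a_na_k/\Omega_{n,k}$, and the damping remainder via the bound $|\widetilde Z_j|\leq C(R)|\langle u|e^{ix}\rangle|(|\zeta_{j-1}|+|\zeta_j|+|\zeta_{j+1}|)$), and close the first estimate with Young's inequality; this is precisely the content of Lemma~\ref{lem:J(T,T')} and its application. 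The one genuine point of divergence is how the decay of the boundary term as $T\to+\infty$ is obtained. The paper cuts the double sum at a level $N$ and invokes the first point of Theorem~\ref{thm:LaSalle} (through the weak sequential continuity of the flow and of the Birkhoff map) to get $\gamma_n(t)\gamma_{n+1}(t)\to 0$ for each fixed $n$, whereas you first establish $\int_0^{+\infty}\sum_n\gamma_n\gamma_{n+1}\,\mathrm{d}t\leq C(R)$ using only the $O(C(R))$ bound on the boundary term, and then deduce that $\eta(t)=\sum_n\gamma_n(t)\gamma_{n+1}(t)$ tends to zero by a Barbalat-type argument ($\eta\geq 0$ integrable with $|\dot\eta|\leq C(R)$), which yields the boundary decay $C(R)\sqrt{\eta(T)}$. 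Your route is more self-contained — it removes the dependence of Proposition~\ref{prop:I(T,T')_n,p} on the LaSalle description of weak limit points — and it even gives the slightly stronger fact that the full sum $\eta(t)$ vanishes at infinity; the paper's packaging, a single lemma uniform in the family $a$ and in the interval $[T,T']$, is what gets reused later (e.g.\ via Remark~\ref{rk:I(T)_n,p} in the proof of Lemma~\ref{lem:dot_hamilton}), but your bound is likewise uniform in $a$, so it serves the same purpose.
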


\begin{rk}\label{rk:I(T)_n,p}
Note that by homogeneity, if the maps $a_n\in\classeC^1(\R_+,\C)$ satisfy:
\begin{equation*}
\forall t\geq 0,
\quad |a_n(t)|\leq \frac{K}{n+1}
\quad \text{and}\quad
|\dot{a_n}(t)|\leq K|\langle u(t)|e^{ix}\rangle|,
\end{equation*}
then for all $T>0$,
\[
\int_T^{+\infty}\left|\sum_{n=0}^{+\infty}a_n(t)\zeta_n(t)\overline{\zeta_{n+1}(t)}\right|^2\d t\leq K^2\varepsilon_u(T).
\]
\end{rk}

Let $0\leq T<T'<+\infty$. We denote
\[
I(T,T')
	:=\int_T^{T'}\sum_{n=0}^{+\infty}\gamma_n(t)\gamma_{n+1}(t)\d t.
\]
For a family $a=(a_n)_n$ of maps satisfying assumption~\eqref{hyp:I(T,T')} of Proposition~\ref{prop:I(T,T')_n,p}, we also define
\[
J_a(T,T')
	=\int_T^{T'}\sum_{\substack{n,p\\n\neq p}}a_n(t)a_p(t)\overline{\zeta_n(t)}\zeta_{n+1}(t)\zeta_p(t)\overline{\zeta_{p+1}(t)}\d t.
\]
The integrals $I(T,T')$ and $J_a(T,T')$ are well defined since for all $t\in\R_+$, $\sum_nn\gamma_n(t)\leq R^2/2$ thanks to the Lyapunov functional (see Proposition~\ref{prop:normL2}) and the Parseval formula (see~\cite{GerardKappeler2019}, Remark 1.2(i)).

The key step in the proof of Proposition~\ref{prop:I(T,T')_n,p} is the following estimation of $J_a(T,T')$ depending on $I(T,T')$.
\begin{lem}\label{lem:J(T,T')}
Let $R=\|u_0\|_{L^2(\T)}$. There exist $C(R)>0$ and a map $\varepsilon_u$ with $\varepsilon_u(T)\to 0$ as $T\to+\infty$ such that the following holds. For all $0\leq T<T'<+\infty$ and for all family $a=(a_n)_n$ of maps satisfying assumption~\eqref{hyp:I(T,T')} of Proposition~\ref{prop:I(T,T')_n,p},
\[
|J_a(T,T')|\leq \varepsilon_u(T)(1+\sqrt{ I(T,T')}).
\]
Moreover, for $T=0$, one has $|\varepsilon_u(0)|\leq C(R).$
\end{lem}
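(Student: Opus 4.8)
The plan is to exploit the separation between the phases carried by distinct pairs of consecutive modes through a single integration by parts in time. For $n\neq p$ I set $\Omega_{n,p}:=-\omega_n+\omega_{n+1}+\omega_p-\omega_{p+1}=(\omega_{n+1}-\omega_n)-(\omega_{p+1}-\omega_p)$, where $\omega_n(\zeta)=n^2-2\sum_{k}\min(k,n)\gamma_k$; since $\omega_{m+1}-\omega_m=2m+1-2\sum_{k\geq m+1}\gamma_k$ is strictly increasing in $m$ with consecutive gaps at least $2$, one has $|\Omega_{n,p}|\geq 2|n-p|\geq 2$. Writing $Z_m:=\langle u|\cos\rangle\,\d\zeta_m[u].\cos+\langle u|\sin\rangle\,\d\zeta_m[u].\sin$, so that $\dot\zeta_m=i\omega_m\zeta_m-\alpha Z_m$ by~\eqref{eq:dot_zeta}, the starting point is the identity
\[
\frac{\d}{\d t}\bigl(\overline{\zeta_n}\zeta_{n+1}\zeta_p\overline{\zeta_{p+1}}\bigr)=i\,\Omega_{n,p}\,\overline{\zeta_n}\zeta_{n+1}\zeta_p\overline{\zeta_{p+1}}+\alpha\,R_{n,p},
\]
where $R_{n,p}$ is the sum of the four terms in which exactly one of $\overline{\zeta_n},\zeta_{n+1},\zeta_p,\overline{\zeta_{p+1}}$ is replaced by $-\overline{Z_n},-Z_{n+1},-Z_p,-\overline{Z_{p+1}}$. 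Solving for $\overline{\zeta_n}\zeta_{n+1}\zeta_p\overline{\zeta_{p+1}}$, multiplying by $a_na_p$, summing over $n\neq p$, and integrating by parts on $[T,T']$, I arrive at
\[
J_a(T,T')=\Bigl[\,\sum_{n\neq p}\tfrac{a_na_p}{i\Omega_{n,p}}\overline{\zeta_n}\zeta_{n+1}\zeta_p\overline{\zeta_{p+1}}\,\Bigr]_T^{T'}-\int_T^{T'}\!\sum_{n\neq p}\tfrac{\d}{\d t}\!\Bigl(\tfrac{a_na_p}{i\Omega_{n,p}}\Bigr)\overline{\zeta_n}\zeta_{n+1}\zeta_p\overline{\zeta_{p+1}}\,\d t-\alpha\!\int_T^{T'}\!\sum_{n\neq p}\tfrac{a_na_p}{i\Omega_{n,p}}R_{n,p}\,\d t,
\]
with $\tfrac{\d}{\d t}\bigl(\tfrac{a_na_p}{i\Omega_{n,p}}\bigr)=\tfrac{\dot a_na_p+a_n\dot a_p}{i\Omega_{n,p}}-\tfrac{a_na_p\dot\Omega_{n,p}}{i\Omega_{n,p}^2}$. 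All interchanges of sum and integral are legitimate because $\sum_n\sqrt{\gamma_n\gamma_{n+1}}\leq\|\gamma\|_{\ell^1_+}\leq C(R)$ makes every series uniformly absolutely convergent.

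Next I would bound the two interior integrals, each of which carries a factor $|\langle u|e^{ix}\rangle|$: by~\eqref{hyp:I(T,T')} one has $|\dot a_n|\leq|\langle u|e^{ix}\rangle|$, while summing~\eqref{ineq:gamma_n_dot} over $n$ gives $\sum_k|\dot\gamma_k|\leq C(R)|\langle u|e^{ix}\rangle|$ and hence $|\dot\Omega_{n,p}|\leq C(R)|\langle u|e^{ix}\rangle|$. With $|a_n|\leq 1$ and $|\Omega_{n,p}|\geq 2|n-p|$, both integrands are dominated by $C(R)|\langle u|e^{ix}\rangle|$ times $G:=\sum_{n\neq p}\tfrac{1}{|n-p|}\sqrt{\gamma_n\gamma_{n+1}}\sqrt{\gamma_p\gamma_{p+1}}$, and the elementary lemma I would prove is $G\leq C(R)\bigl(\sum_n\gamma_n\gamma_{n+1}\bigr)^{1/2}$: one splits at $|n-p|\leq N$ and $|n-p|>N$, bounds the far part by $\tfrac1N\bigl(\sum_n\sqrt{\gamma_n\gamma_{n+1}}\bigr)^2\leq C(R)/N$ and the near part by $2\bigl(\sum_{k=1}^N\tfrac1k\bigr)\sum_n\gamma_n\gamma_{n+1}$, then optimizes in $N$, using the crude bound $\sum_n\gamma_n\gamma_{n+1}\leq C(R)$ to absorb the residual logarithm. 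Cauchy--Schwarz in time together with the Lyapunov identity~\eqref{eq:normL2}, which gives $\int_T^{T'}|\langle u|e^{ix}\rangle|^2\d t\leq\varepsilon_1(T)$ where $\varepsilon_1(T):=\tfrac{1}{2\alpha}\bigl(\|u(T)\|_{L^2}^2-\lim_{s\to+\infty}\|u(s)\|_{L^2}^2\bigr)$ tends to $0$ as $T\to+\infty$, then bounds the two interior integrals by $C(R)\sqrt{\varepsilon_1(T)}\,\sqrt{I(T,T')}$.

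I expect the $\alpha$-dependent integral $\alpha\int_T^{T'}\sum_{n\neq p}\tfrac{a_na_p}{i\Omega_{n,p}}R_{n,p}\,\d t$ to be the main obstacle. The naive bound $|Z_m|\leq C(R)|\langle u|e^{ix}\rangle|(|\zeta_{m-1}|+|\zeta_m|+|\zeta_{m+1}|)$ produces, after multiplying by the three remaining factors of $R_{n,p}$, products such as $\gamma_{m+1}\sqrt{\gamma_p\gamma_{p+1}}$ or $\sqrt{\gamma_{m-1}\gamma_{m+1}}\sqrt{\gamma_p\gamma_{p+1}}$, which are \emph{not} controlled by $\sum_n\gamma_n\gamma_{n+1}$ and, summed against $\tfrac1{|n-p|}$, only yield $C(R)$, hence the useless estimate $\alpha C(R)\int_T^{T'}|\langle u|e^{ix}\rangle|\d t\sim\sqrt{T'-T}$. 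To overcome this I would keep the fine structure of Theorem~\ref{thm:dzeta.cos}, namely $Z_m=P_m\zeta_{m-1}+Q_m\zeta_{m+1}+B_m\zeta_m$ with $|P_m|+|Q_m|\leq C(R)|\langle u|e^{ix}\rangle|$ and $|B_m|\leq C(R)|\langle u|e^{ix}\rangle|\sum_k\sqrt{\gamma_k\gamma_{k+1}}$: the piece $B_m\zeta_m$ pairs $\zeta_m$ with the neighbouring factor of $R_{n,p}$, producing the "good" product $\sqrt{\gamma_n\gamma_{n+1}}\sqrt{\gamma_p\gamma_{p+1}}$ (times $\sum_k\sqrt{\gamma_k\gamma_{k+1}}\leq C(R)$), controlled by $G$; the pieces $P_m\zeta_{m-1}$, $Q_m\zeta_{m+1}$ carry the factors $\gamma_{m-1},\gamma_{m+1}$, each at most $\tfrac{R^2}{2(m+1)}$ by Parseval, so that after summing against $\tfrac1{|n-p|}$ via the estimate $\sum_{n\neq p}\tfrac{1}{(n+1)|n-p|}\leq\tfrac{C}{p+1}$ from the proof of Lemma~\ref{lem:delta_perp_lip}, followed by Cauchy--Schwarz with $\bigl(\sum_p\tfrac{1}{(p+1)^2}\bigr)^{1/2}\bigl(\sum_p\gamma_p\gamma_{p+1}\bigr)^{1/2}$, one again gets $C(R)|\langle u|e^{ix}\rangle|\bigl(\sum_n\gamma_n\gamma_{n+1}\bigr)^{1/2}$; the symmetric pieces with $m\in\{p,p+1\}$ are identical after exchanging $n$ and $p$. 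So this integral is also $\leq C(R)\sqrt{\varepsilon_1(T)}\,\sqrt{I(T,T')}$ (with a constant depending on the fixed parameter $\alpha$).

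Finally, the boundary term is bounded, via the same estimate, by $C(R)\bigl(\sum_n\gamma_n(t)\gamma_{n+1}(t)\bigr)^{1/2}$ at $t=T$ and at $t=T'$. Since $\sum_n\gamma_n(t)\gamma_{n+1}(t)\leq C(R)$ for every $t$, this at once gives the statement at $T=0$, with $|\varepsilon_u(0)|\leq C(R)$. For the decay $\varepsilon_u(T)\to 0$, one needs $\sum_n\gamma_n(t)\gamma_{n+1}(t)\to 0$ as $t\to+\infty$: this function is nonnegative, has time-derivative bounded by $C(R)|\langle u|e^{ix}\rangle|\leq C(R)$ uniformly in $t$ (again by~\eqref{ineq:gamma_n_dot}), and is integrable on $\R_+$ by the first conclusion of Proposition~\ref{prop:I(T,T')_n,p}; a nonnegative integrable function with bounded derivative tends to $0$. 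The mild interdependence is harmless: that first conclusion is itself deduced from the present lemma using only $|\varepsilon_u(0)|\leq C(R)$, by feeding $|J_a(0,T')|\leq C(R)(1+\sqrt{I(0,T')})$ into a quadratic inequality for $\sqrt{I(0,T')}$, after which the decay of $\varepsilon_u$ is available on a second pass. Taking $\varepsilon_u(T):=C(R)\bigl(\sup_{t\geq T}\sum_n\gamma_n(t)\gamma_{n+1}(t)\bigr)^{1/2}+C(R)\sqrt{\varepsilon_1(T)}$ then gives $\varepsilon_u(T)\to 0$, $|\varepsilon_u(0)|\leq C(R)$, and $|J_a(T,T')|\leq\varepsilon_u(T)\bigl(1+\sqrt{I(T,T')}\bigr)$, as required.
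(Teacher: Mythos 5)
Your argument is correct and shares the paper's skeleton — integration by parts against the phase $\Omega_{n,p}$ with $|\Omega_{n,p}|\geq 2|n-p|$, the bounds $|\dot a_n|\leq|\langle u|e^{ix}\rangle|$ and $|\dot\Omega_{n,p}|\leq C(R)|\langle u|e^{ix}\rangle|$, and Cauchy--Schwarz in time against the Lyapunov functional — but it diverges in two places. First, your diagnosis that the crude bound on $Z_m$ is useless for the $\alpha$-term is not accurate: since the untouched pair $\zeta_p\overline{\zeta_{p+1}}$ always supplies a product of consecutive modes, the paper simply splits $\frac{|Z_n|\,|\zeta_{n+1}\zeta_p\zeta_{p+1}|}{|n-p|}$ as $\frac{|Z_n|}{|n-p|}\cdot|\zeta_{n+1}\zeta_p\zeta_{p+1}|$ and applies Cauchy--Schwarz over $(n,p)$ and $t$, using only $\sum_n|Z_n|^2\leq C(R)|\langle u|e^{ix}\rangle|^2$ and $\sum_{p\neq n}|n-p|^{-2}\leq C$, to obtain $C(R)\bigl(\int_T^{+\infty}|\langle u|e^{ix}\rangle|^2\bigr)^{1/2}\sqrt{I(T,T')}$; your detour through the fine structure $Z_m=P_m\zeta_{m-1}+Q_m\zeta_{m+1}+B_m\zeta_m$ of Theorem~\ref{thm:dzeta.cos} plus the Parseval decay $\gamma_m\leq R^2/(2m)$ is valid (the logarithm actually hidden in $\sum_{n\neq p}\frac{1}{(n+1)|n-p|}$ is harmless after the final Cauchy--Schwarz in $p$), but it is an unnecessary complication. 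Second, for the decay of the boundary term you ask for $S(t)=\sum_n\gamma_n(t)\gamma_{n+1}(t)\to0$, deduced from the integrability of $S$ (first conclusion of Proposition~\ref{prop:I(T,T')_n,p}) and a Barbalat-type argument, which forces the two-pass bootstrap you describe; the paper instead splits the sum at a level $N$, controls the tail by $\sum_{n>N}\gamma_n\leq C(R)/(N+1)$, and sends each fixed product $\gamma_n(t)\gamma_{n+1}(t)$ to zero using the LaSalle description of weak limit points (point 1 of Theorem~\ref{thm:LaSalle}, already proved in part~\ref{part:LaSalle}) together with weak sequential continuity of the Birkhoff map, so no dependence on Proposition~\ref{prop:I(T,T')_n,p} arises and the lemma stays self-contained. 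Your bootstrap is nonetheless logically sound, since the derivation of $\int_0^{+\infty}S\,\d t<+\infty$ only uses the crude bound $|\varepsilon_u(0)|\leq C(R)$, which you establish independently; what the paper's route buys is a cleaner dependency order, while yours buys independence from the weak-continuity/LaSalle machinery at the price of interleaving the lemma with the proposition.
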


Given Lemma~\ref{lem:J(T,T')}, the strategy of proof for Proposition~\ref{prop:I(T,T')_n,p} will be the following. We observe that for the choice of family~$a^*=(a^*_n)_n$ given by~\eqref{def:a_n}
\begin{equation*}
a_n^*(t)=\sqrt{\mu_{n+1}^*(\gamma(t))}\sqrt{\frac{\kappa_n^*(\gamma(t))}{\kappa_{n+1}^*(\gamma(t))}},
\end{equation*}
we have thanks to the Lyapunov functional (Proposition~\ref{prop:normL2})
\[
I(0,T')\leq C(R)+C(R)|J_{a^*}(0,T')|.
\]
We deduce from Lemma~\ref{lem:J(T,T')} that $I(0,T')$ is bounded independently of $T'$. As a consequence, a new application of Lemma~\ref{lem:J(T,T')} implies that for all family~$a=(a_n)_n$ satisfying~\eqref{hyp:I(T,T')}, $J_a(T,T')$ is bounded by some $\varepsilon_u(T)$, where this upper bound is independent of~$T'$ and of the choice of $a$.

\begin{proof}[Proof of Lemma~\ref{lem:J(T,T')}]
For $n,p\geq 0$, $n\neq p$, we denote $\eta_{n,p}:=\overline{\zeta_n}\zeta_{n+1}\zeta_p\overline{\zeta_{p+1}}$.

Fix a family $a=(a_n)_n$ of elements of $\classeC^1(\R_+,\C)$ satisfying assumption~\eqref{hyp:I(T,T')}. Since for all $t\in\R_+$, $\sum_nn\gamma_n(t)\leq R^2/2$, there exists $C(R)>0$ such that for all $t\in\R_+$, \[\sum_{\substack{n,p,\\n\neq p}}|a_n(t)a_p(t)\eta_{n,p}(t)|\leq C(R).\] Therefore, one can exchange the summation sign with the time integral:
\begin{equation}\label{ineq:J(T,T')-1}
J_a(T,T')
	=\sum_{\substack{n,p\\n\neq p}}\int_T^{T'}a_n(t)a_p(t)\eta_{n,p}(t)\d t.
\end{equation}
For each term in the series over the indexes $n$ and $p$, we perform an integration by parts by using the differential equation satisfied by $\eta_{n,p}$, which we will now establish.

Recall from~\eqref{eq:dot_zeta} that the time derivative of $\zeta_n$ is
\[
\frac{\d}{\d t}\zeta_n(t)
	=i\omega_n(t)\zeta_n(t)-\alpha Z_n(t)
\]
with
\[
\omega_n(t)=n^2-2\sum_{k=1}^{+\infty}\min(k,n)\gamma_k(t)
\]
and $Z_n$ defined in~\eqref{def:Z_n} as
\begin{equation*}
Z_n(t)
	=\langle u(t)|\cos\rangle \d\zeta_n[u(t)].\cos+\langle u(t)|\sin\rangle \d\zeta_n[u(t)].\sin.
\end{equation*}
Therefore, the time derivative of $\eta_{n,p}=\overline{\zeta_n}\zeta_{n+1}\zeta_p\overline{\zeta_{p+1}}$ writes
\begin{equation}\label{eq:dot_eta}
\frac{\d}{\d t}\eta_{n,p}(t)
	=i\Omega_{n,p}(t)\eta_{n,p}(t)
	-\alpha F_{n,p}(t)
\end{equation}
where
\[
\Omega_{n,p}
	=-\omega_n+\omega_{n+1}+\omega_p-\omega_{p+1}
\]
and
\begin{equation}\label{def:F_n,p}
F_{n,p}
	=\left(\overline{Z_n} \zeta_{n+1}+\overline{\zeta_n}Z_{n+1}\right)\zeta_p\overline{\zeta_{p+1}}
	+\left(Z_p \overline{\zeta_{p+1}}+\zeta_p\overline{Z_{p+1}}\right)\overline{\zeta_n}\zeta_{n+1}.
\end{equation}

Note that
\[
\Omega_{n,p}
	=2\left((n-p)-\sum_{k\geq n+1}\gamma_k+\sum_{k\geq p+1}\gamma_k\right),
\]
hence for $n\neq p$,
\[
|\Omega_{n,p}|\geq 2|n-p|>0.
\]
When $n\neq p$, we can therefore divide the differential equation satisfied by $\eta_{n,p}$ by the factor $i\Omega_{n,p}$  and get
\begin{align*}
\int_T^{T'}a_n(t)a_p(t)\eta_{n,p}(t)\d t
	&=\int_T^{T'}\frac{a_n(t)a_p(t)}{i\Omega_{n,p}(t)}\left(\frac{\d}{\d t}\eta_{n,p}(t)+\alpha F_{n,p}(t)\right)\d t.
\end{align*}
An integration by parts now leads to
\begin{multline}\label{eq:J(T,T')-IPP}
\int_T^{T'}a_n(t)a_p(t)\eta_{n,p}(t)\d t
	=\left[\frac{a_n(t)a_p(t)}{i\Omega_{n,p}(t)}\eta_{n,p}(t)\right]_T^{T'}-\int_T^{T'}\frac{\d}{\d t}\left(\frac{a_n(t)a_p(t)}{i\Omega_{n,p}(t)}\right)\eta_{n,p}(t)\d t
	\\+\alpha\int_T^{T'}\frac{ a_n(t)a_p(t) }{i\Omega_{n,p}(t)}F_{n,p}(t)\d t.
\end{multline}
In order to determine an upper bound for $\sum_{n,p,\,n\neq p}|\int_T^{T'}a_n(t)a_p(t)\eta_{n,p}(t)\d t|$, we consider the three terms in the right-hand side of this equality separately.

\begin{enumerate}
\item For all $t\geq 0$, we have
\[
\sum_{\substack{n,p,\\n\neq p}}\left|\frac{a_n(t)a_p(t)}{i\Omega_{n,p}(t)}\eta_{n,p}(t)\right|
	\leq \left(\sum_{n=0}^{+\infty}\gamma_n(t)\right)^2
	\leq R^4,
\]
therefore the series with general term  $\sum_{n,p,\,\substack{n\neq p}}\left[\frac{a_n(t)a_p(t)}{i\Omega_{n,p}(t)}\eta_{n,p}(t)\right]_T^{T'}$ is absolutely convergent and bounded by some constant $C(R)>0$ independent of $T$ and $T'$.

Moreover, for $N\in\N$, one can cut the sum between the indexes $n\leq N$ and the indexes $n>N$ to deduce
\begin{align*}
\sum_{\substack{n,p,\\n\neq p}}\left|\frac{a_n(t)a_p(t)}{i\Omega_{n,p}(t)}\eta_{n,p}(t)\right|
	&\leq \sum_{\substack{n,p,\, n\neq p\\n\leq N,\, p\leq N}}\left|\frac{a_n(t)a_p(t)}{i\Omega_{n,p}(t)}\eta_{n,p}(t)\right|
	+\hspace{-5pt}\sum_{\substack{n,p,\, n\neq p\\n>N\text{ or }p>N}}\left|\frac{a_n(t)a_p(t)}{i\Omega_{n,p}(t)}\eta_{n,p}(t)\right|\\
	&\leq \left(\sum_{n=0}^N\sqrt{\gamma_n(t)\gamma_{n+1}(t)}\right)^2+2\left(\sum_{n=N+1}^{+\infty}\gamma_n(t)\right)\left(\sum_{p=0}^{+\infty}\gamma_p(t)\right).
\end{align*}
Let $\varepsilon>0$.
Since
\[
\left(\sum_{n=N+1}^{+\infty}\gamma_n(t)\right)\left(\sum_{p=0}^{+\infty}\gamma_p(t)\right)
	\leq \frac{1}{N+1}\left(\sum_{n=N+1}^{+\infty}n\gamma_n(t)\right)\left(\sum_{p=0}^{+\infty}\gamma_p(t)\right)
	\leq \frac{C(R)}{N+1},
\]
we know that there exists $N=N(\varepsilon)\in\N$ such that for all $t\in\R_+$, 
\[
\left(\sum_{n=N+1}^{+\infty}\gamma_n(t)\right)\left(\sum_{p=0}^{+\infty}\gamma_p(t)\right)
	\leq \varepsilon.
	\]
Besides, for all $n$, we have $\gamma_n(t)\gamma_{n+1}(t)\to 0$ as $t\to+\infty$ by weak sequential continuity of the Birkhoff map and the description of the weak limit points in Birkhoff coordinates (see the first point of Theorem~\ref{thm:LaSalle}). Therefore, there exists $T_0=T_0(\varepsilon)$ such that for all $t\geq T_0$,
\[
\left(\sum_{n=0}^N\sqrt{\gamma_n(t)\gamma_{n+1}(t)}\right)^2
	\leq \varepsilon.
\]
We conclude that there exists $\varepsilon_u$ such that $\varepsilon_u(T)\to 0$ as $T\to+\infty$ and for all $T<T'$,
\[\sum_{\substack{n,p\\n\neq p}}\left|\left[\frac{a_n(t)a_p(t)}{i\Omega_{n,p}(t)}\eta_{n,p}(t)\right]_T^{T'}\right|
	\leq \varepsilon_u(T),
\]
moreover $\varepsilon_u$ is independent of the choice of family $a$.

\item Next, we develop the time derivative
\[
\frac{\d}{\d t}\left(\frac{a_n(t)a_p(t)}{i\Omega_{n,p}(t)}\right)
	=\frac{\dot{a_n}(t)a_p(t)+a_n(t)\dot{a_p}(t)}{i\Omega_{n,p}(t)}
	-\frac{a_n(t)a_p(t)\dot{\Omega}_{n,p}(t)}{i\Omega_{n,p}(t)^2}.
\]
From the definition of $\Omega_{n,p}$, we know that
\[
|\dot{\Omega}_{n,p}(t)|
	\leq 2\|\dot{\gamma}(t)\|_{\ell^1_+}.
\]
However, the time derivative of $\gamma_n$ is bounded from inequality~\eqref{ineq:gamma_n_dot} as
\[
|\dot{\gamma_n}(t)|
	\leq C(R)|\langle u(t)|e^{ix}\rangle|\left(|\zeta_{n-1}\zeta_n|+|\zeta_n\zeta_{n+1}|\right),\]
so that
\begin{equation}\label{ineq:dot_gamma}
\|\dot{\gamma}(t)\|_{\ell^1_+}
	\leq C(R)|\langle u(t)|e^{ix}\rangle|,
\end{equation}
and therefore
\[
|\dot{\Omega}_{n,p}(t)|
	\leq 2C(R)|\langle u(t)|e^{ix}\rangle|.
\]
Using the assumptions~\eqref{hyp:I(T,T')} on $a_n$, this implies that there exists $C(R)>0$ such that
\begin{equation}\label{eq:J(T,T')-term2}
\int_T^{T'}\left|\frac{\d}{\d t}\left(\frac{a_n(t)a_p(t)}{i\Omega_{n,p}(t)}\right)\eta_{n,p}(t)\right|\d t
	\leq C(R)\int_T^{T'}|\langle u(t)|e^{ix}\rangle|\frac{|\eta_{n,p}(t)|}{|n-p|}\d t.
\end{equation}
Applying the Cauchy-Schwarz' inequality, we deduce
\begin{multline*}
\sum_{\substack{n,p\\n\neq p}}\int_T^{T'}\left|\frac{\d}{\d t}\left(\frac{a_n(t)a_p(t)}{i\Omega_{n,p}(t)}\right)\eta_{n,p}(t)\right|\d t\\
	\leq C(R)\left(\sum_{\substack{n,p\\n\neq p}}\int_T^{T'}|\langle u(t)|e^{ix}\rangle|^2\frac{\gamma_n(t)}{|n-p|^2}\d t\right)^{\half}
	\left(\sum_{\substack{n,p\\n\neq p}}\int_T^{T'}\gamma_{n+1}(t)\gamma_p(t)\gamma_{p+1}(t)\d t\right)^{\half}.
\end{multline*}
But there exists $C>0$ such that for all $n$, $\sum_{p,\,p\neq n}\frac{1}{|n-p|^2}\leq C$, moreover, for all $t\geq0$, we have $\sum_{n}\gamma_n(t)\leq R^2/2$. We obtain
\begin{equation*}
\sum_{\substack{n,p\\n\neq p}}\int_T^{T'}\left|\frac{\d}{\d t}\left(\frac{a_n(t)a_p(t)}{i\Omega_{n,p}(t)}\right)\eta_{n,p}(t)\right|\d t
	\leq C(R)\left(\int_T^{+\infty}|\langle u(t)|e^{ix}\rangle|^2\d t\right)^{\half}\sqrt{I(T,T')}.
\end{equation*}
	
\item Finally, we prove that
\[
\sum_{\substack{n,p\\n\neq p}}\int_T^{T'}\left|\frac{ a_n(t)a_p(t) }{i\Omega_{n,p}(t)}F_{n,p}(t)\right|\d t
	\leq C(R)\left(\int_T^{+\infty}|\langle u(t)|e^{ix}\rangle|^2\d t\right)^{\half}\sqrt{I(T,T')}.
\]

Note that from the definition of $Z_n=\langle u(t)|\cos\rangle \d\zeta_n[u(t)].\cos+\langle u(t)|\sin\rangle \d\zeta_n[u(t)].\sin$ in formula~\eqref{def:Z_n}, and from Corollary~\ref{cor:dzeta.cos} about the bounds on $\d\zeta_n[u(t)].\cos$ and $\d\zeta_n[u(t)].\sin$, we have 
\begin{equation*}
\sum_{n\geq 0}n|Z_n(t)|^2
	\leq C(R)|\langle u(t)|e^{ix}\rangle|^2.
\end{equation*}
One can apply Cauchy-Schwarz' inequality to deduce
\begin{align*}
\sum_{\substack{n,p\\n\neq p}}\int_T^{T'}\left|\frac{ a_n(t)a_p(t) }{i\Omega_{n,p}(t)}\right|&|\overline{Z_n(t)} \zeta_{n+1}(t)\zeta_p(t)\overline{\zeta_{p+1}(t)}|\d t\\
	&\leq \left(\int_T^{T'}\sum_{\substack{n,p\\n\neq p}} \frac{|Z_n(t)|^2}{|n-p|^2}\d t\right)^{\half}\left(\int_T^{T'}\sum_{\substack{n,p\\n\neq p}}\gamma_{n+1}\gamma_p\gamma_{p+1}\d t\right)^{\half}\\
	&\leq C(R)\left(\int_T^{+\infty}|\langle u(t)|e^{ix}\rangle|^2\d t\right)^{\half}\sqrt{I(T,T')}.
\end{align*}
Using the same strategy to the three other terms composing $F_{n,p}$ (up to exchanging the roles of $n$ and $n+1$ and the roles of $n$ and $p$), we get the desired inequality.
\end{enumerate}

To conclude, we use the square integrability in time of $|\langle u(t)|e^{ix}\rangle|$ (Proposition~\ref{prop:normL2}) to get
\[\int_0^{+\infty}|\langle u(t)|e^{ix}\rangle|^2\d t\leq R^2.\] We have therefore proven that there exists $\varepsilon_u$ satisfying $\varepsilon_u(T)\to 0$ as $T\to+\infty$, and such that for all family $a=(a_n)_n$ satisfying the assumptions in~\eqref{hyp:I(T,T')},
\begin{equation*}
\sum_{\substack{n,p\\n\neq p}}\left|\int_T^{T'}a_n(t)a_p(t)\eta_{n,p}(t)\d t\right|
	\leq \varepsilon_u(T)(1+\sqrt{I(T,T')}),
\end{equation*}
moreover, one has $|\varepsilon_u(0)|\leq C(R)$. Plugging this into equality~\eqref{ineq:J(T,T')-1}, we deduce that
\begin{equation*}
|J_a(T,T')|\leq \varepsilon_u(T)(1+\sqrt{I(T,T')}).
\end{equation*}
\end{proof}

\begin{proof}[Proof of Proposition~\ref{prop:I(T,T')_n,p}]
We first establish a bound of $I(0,T')$. We start from the formula for $\langle u(t)|e^{ix}\rangle 
$ from Lemma~\ref{lem:<u|e^ix>}:
\[
\langle u(t)|e^{ix}\rangle 
	=-\sum_{n=0}^{+\infty}a_n^*(t)\overline{\zeta_n(t)}\zeta_{n+1}(t),
\]
where
\[
a_n^*(t)
	=\sqrt{\mu_{n+1}(t)}\sqrt{\frac{\kappa_n(t)}{\kappa_{n+1}(t)}}.
\]
Now, we expand
\[
|\langle u(t)|e^{ix}\rangle|^2
	=\sum_{n=0}^{+\infty}a_n^*(t)^2\gamma_n(t)\gamma_{n+1}(t)+\Re\left(\sum_{\substack{n,p\\n\neq p}}a_n^*(t)a_p^*(t)\overline{\zeta_n}(t)\zeta_{n+1}(t)\zeta_p(t)\overline{\zeta_{p+1}(t)}\right)
\]
and since $a_n^*\geq\frac{1}{C(R)}$ (see inequality~\eqref{ineq:a_n_bound}), we deduce that
\begin{equation*}
\frac{1}{C(R)^2}\sum_{n=0}^{+\infty}\gamma_n(t)\gamma_{n+1}(t)
	\leq |\langle u(t)|e^{ix}\rangle|^2-\Re\left(\sum_{\substack{n,p\\n\neq p}}a_n^*(t)a_p^*(t)\eta_{n,p}(t)\right).
\end{equation*}
In particular, if we denote $a^*=(a_n^*)_n$, an integration in time leads to the inequality
\begin{equation}\label{ineq:I(T,T')}
I(0,T')
	\leq \int_0^{T'}|\langle u(t)|e^{ix}\rangle|^2\d t+|J_{a^*}(0,T')|
	\leq C(R)+|J_{a^*}(0,T')|.
\end{equation}

We now use Lemma~\ref{lem:J(T,T')} applied to the family $a^*$.
In only remains to check that up to division by some constant $C(R)$, this family satisfies assumption~\eqref{hyp:I(T,T')} of Proposition~\ref{prop:I(T,T')_n,p}. Using inequalities~\eqref{ineq:a_n_bound} and~\eqref{ineq:a_n_lipschitz}, we have
\[
|a_n^*(t)|\leq C(R)
\]
and
\[
|\dot{a_n^*}(t)|
	\leq C(R)\|\dot{\gamma}(t)\|_{\ell^1_+},
\]
so that from inequality~\eqref{ineq:dot_gamma},
there exists $C(R)>0$ such that
\[
|\dot{a_n^*}(t)|
	\leq C(R)|\langle u(t)|e^{ix}\rangle|.
\]

Now, Remark~\ref{rk:I(T)_n,p} following Lemma~\ref{lem:J(T,T')} implies that
\[
|J_{a^*}(0,T')|\leq C_1(R)(1+\sqrt{I(0,T')}).
\]
Therefore, one can use the inequality $2xy\leq x^2+y^2$ on the second term of the right-hand side to get
\[
|J_{a^*}(0,T')|\leq C_2(R)+\frac{1}{2} I(0,T').
\]
Plugging this into inequality~\eqref{ineq:I(T,T')}, we deduce an inequality of the form
\[
I(0,T')\leq C_3(R)+\half I(0,T').
\]
We conclude that $I(0,T')$ is bounded by some constant $2C_3(R)$ independent of $T'$. In particular, one can pass to the limit $T'\to+\infty$ and deduce that for some $C(R)>0$,
\[
\int_0^{+\infty}\sum_{n=0}^{+\infty}\gamma_n(t)\gamma_{n+1}(t)\d t\leq C(R).
\]

To conclude, fix family $a$ satisfying assumption~\eqref{hyp:I(T,T')}. From Lemma~\ref{lem:J(T,T')} and the bound $I(T,T')\leq C(R)$, we deduce that there exists a map $\varepsilon_u$ independent of $a$ such that $\varepsilon_u(T)\to 0$ as $T\to+\infty$ and for all $0\leq T<T'<+\infty$,
\[
\left|J_a(T,T')\right|
	\leq \varepsilon_u(T).
\]
Finally, expanding
\begin{align*}
\int_T^{T'}\left|\sum_{n=0}^{+\infty}a_n(t)\zeta_n(t)\overline{\zeta_{n+1}(t)}\right|^2\d t
	&\leq\int_T^{T'}\sum_{n=0}^{+\infty}|a_n(t)|^2\gamma_n(t)\gamma_{n+1}(t)\d t+\left|\Re(J_a(T,T'))\right|\\
	&\leq I(T,T')+|J_a(T,T')|\\
	&\leq \varepsilon_u(T),
\end{align*}
we see that this integral is bounded independently of $T'$ and $a$, and the bound $\varepsilon_u(T)$ goes to $0$ as $T\to+\infty$. We deduce the second part of the proposition.
\end{proof}

\subsection{Strong relative compactness of trajectories as \texorpdfstring{$t\to+\infty$}{t tends to infinity}}\label{part:strong_convergence}
In this part, we prove points 2 and 3 of Theorem~\ref{thm:LaSalle}: let $u$ be a solution to the damped Benjamin-Ono equation~\eqref{eq:damped}, then there exists a sequence $(\gamma_n^{\infty})_{n\geq 1}\in \ell^1_+$ such that for every weak limit $u_\infty$ in $L^2_{r,0}(\T)$ of $(u(t))_{t\geq 0}$ as $t\to+\infty$, and for all $n\geq 1$, we have $\gamma_n(u_\infty)=\gamma_n^{\infty}$; moreover, if $u_\infty$ is a weak limit associated to a subsequence $(u(t_k))_{k\in\N}$ with $t_k\to+\infty$, then the convergence of $(u(t_k))_{k\in\N}$ to $u_\infty$ is strong in $L^2_{r,0}(\T)$.

We start with the proof of point 2. Fix $n\geq 1$ and recall inequality~\eqref{ineq:gamma_n_dot}
\[
|\dot{\gamma_n}(t)|
	\leq C(R)|\langle u(t)|e^{ix}\rangle|\left(|\zeta_{n-1}\zeta_n|+|\zeta_n\zeta_{n+1}|\right).
\]
Since $|\langle u(t)|e^{ix}\rangle|\in L^2(\R_+)$ (see Proposition~\ref{prop:normL2}) and $|\zeta_{n-1}\zeta_n|\in L^2(\R_+)$ (see Proposition~\ref{prop:I(T,T')_n,p}), we get that $\dot\gamma_n\in L^1(\R_+)$. Therefore, there exists $\gamma_n^{\infty}$ such that $\gamma_n(t)\to\gamma_n^{\infty}$ as $t\to+\infty$. Let now $u_\infty$ be a weak limit in $L^2_{r,0}(\T)$ of a subsequence $(u(t_k))_{k\in\N}$ with $t_k\to+\infty$. By weak sequential continuity of the Birkhoff map, we get that for all $n$, $\gamma_n(u(t_k))\to\gamma_n(u_\infty)$, and therefore $\gamma_n(u_\infty)=\gamma_n^\infty$.

We now prove that $\|u(t)\|_{L^2(\T)}\to\|u_\infty\|_{L^2(\T)}$ as $t\to+\infty$, so that the convergence of $(u(t_k))_{k\in\N}$ to $u_\infty$ is strong in $L^2_{r,0}(\T)$. In order to do so, we consider the generating function
\[
\hamilton_\mu(u)=\sum_{n=0}^{+\infty}\frac{\kappa_n\gamma_n}{\lambda_n+\mu}.
\]
The strategy is as follows. Let $R=\|\zeta(u_0)\|_{h^{\half}_+}$. Using the differential equation satisfied by $t\mapsto \hamilton_\mu(u(t))$, we prove that there exists a map $\varepsilon_u$ such that $\varepsilon_u(t)\to0$ as $t\to+\infty$ and for any weak limit $u_\infty$, for all $\mu\geq R^2+1$,
\[
|\hamilton_\mu(u(t))-\hamilton_\mu(u_\infty)|
	\leq C(R)\frac{\varepsilon_u(t)}{\mu^3}.
\]
Then, for fixed $t\in\R_+$, we compare the asymptotic expansion of $\hamilton_{\mu}(u(t))$ as $\mu\to+\infty$ with the norm $\|u(t)\|_{L^2(\T)}$. Finally, we combine these two steps to get the convergence of $\|u(t)\|_{L^2(\T)}$ to $\|u_\infty\|_{L^2(\T)}$ when $t\to+\infty$.

Fix $\mu\geq R^2+1$. 
For all $t\geq 0$, $\|\zeta(t)\|_{h^{\half}_+}\leq R$, we have $\mu+\lambda_0(t)\geq\mu-R^2>0$, therefore $\hamilton_\mu(u(t))$ is well-defined. Moreover, by weak sequential continuity of the generating function $v\in L^2_{r,0}(\T)\mapsto \hamilton_\mu(v)\in\R$ (see~\cite{GerardKappelerTopalov2020}, Lemma 7), we have $\hamilton_\mu(u(t_k))\to\hamilton_\mu(u_\infty)$ as $k\to+\infty$. We now  quantify the rate of convergence of $\hamilton_\mu(u(t_k))$ to $\hamilton_\mu(u_\infty)$ by estimating the time derivative of $\hamilton_\mu(u(t))$.

\begin{lem}\label{lem:dot_hamilton}
Let $R=\|\zeta(u_0)\|_{h^{\half}_+}$. Then there exists a map $\varepsilon_u$ with $\varepsilon_u(T)\to 0$ as $T\to+\infty$, such that the following holds. For all $T>0$ and for all $\mu\geq R^2+1$, we have
\[
\int_T^{+\infty}\left|\frac{\d}{\d t}\hamilton_{\mu}(u(t))
\right|\d t\leq \frac{\varepsilon_u(T)}{(\mu-R^2)^3}.
\]
\end{lem}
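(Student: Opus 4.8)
The plan is to use that $\hamilton_\mu$ is a conservation law of~\eqref{eq:bo}, reduce $\frac{\d}{\d t}\hamilton_\mu(u(t))$ to the damping contribution, compute the latter in closed form as a single ``block'' in the Birkhoff coordinates with an explicit $(\mu-R^2)^{-3}$ prefactor, and then feed that block into Proposition~\ref{prop:I(T,T')_n,p} together with the Lyapunov functional of Proposition~\ref{prop:normL2}.

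\textbf{Step 1: only the damping contributes.} Since $\kappa_n,\lambda_n,\gamma_n$ are functions of the actions alone and the actions are invariant under~\eqref{eq:bo}, $\hamilton_\mu$ is conserved by the Benjamin--Ono flow, so $d\hamilton_\mu[u].(H\partial_{xx}u-\partial_x(u^2))=0$. Writing the damping as $g=\langle u|\cos\rangle\cos+\langle u|\sin\rangle\sin$, equation~\eqref{eq:damped} gives
\[
\frac{\d}{\d t}\hamilton_\mu(u(t))
	=-\alpha\,d\hamilton_\mu[u].g
	=-\alpha\,\Re\!\big[\langle u|e^{ix}\rangle\,\overline{\delta\hamilton_\mu}\big],
	\qquad \delta\hamilton_\mu:=d\hamilton_\mu[u].\cos-i\,d\hamilton_\mu[u].\sin,
\]
the last equality because $d\hamilton_\mu[u].\cos$ and $d\hamilton_\mu[u].\sin$ are real. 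Everything is well defined along the solution because $\|u(t)\|_{L^2}\le R$ forces $\|\gamma(t)\|_{\ell^1_+}\le R^2$, hence $\lambda_0(t)+\mu\ge\mu-R^2\ge1>0$.

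\textbf{Step 2: closed form for $\delta\hamilton_\mu$.} From $\hamilton_\mu(u)=\langle(L_u+\mu)^{-1}\un|\un\rangle$ and $L_u=D-T_u$ one gets, for real $h$, $d\hamilton_\mu[u].h=\langle|g_\mu|^2\,|\,h\rangle$ with $g_\mu:=(L_u+\mu)^{-1}\un\in L^2_+(\T)$, so that $\delta\hamilton_\mu=\langle|g_\mu|^2|e^{ix}\rangle$. Expanding $g_\mu=\sum_p\frac{\langle\un|f_p\rangle}{\lambda_p+\mu}f_p$ and using $\langle f_m\overline{f_n}|e^{ix}\rangle=M_{n,m}$ (Definition~\ref{def:M}), $\langle\un|f_p\rangle=\sqrt{\kappa_p}\,\zeta_p$ and $\lambda_{n+1}-\lambda_n-1=\gamma_{n+1}$, the double sum over $(m,n)$ collapses: for each $n$ the inner sum over $m$ equals $\sum_{m\ge0}\frac{\kappa_m\gamma_m}{(\lambda_m+\mu)(\lambda_m-\lambda_n-1)}$, and a partial-fraction decomposition combined with the identity $\hamilton_{-\lambda_n-1}(u)=0$ (already used in the proof of Lemma~\ref{lem:<u|e^ix>}) turns this into $-\hamilton_\mu(u)/(\lambda_n+1+\mu)$. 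One thus obtains
\[
\delta\hamilton_\mu=\sum_{n\ge0}b_n^{(\mu)}(\gamma)\,\overline{\zeta_n}\zeta_{n+1},
	\qquad
	b_n^{(\mu)}(\gamma)=-\hamilton_\mu(u)\,\frac{\sqrt{\kappa_n\mu_{n+1}/\kappa_{n+1}}}{(\lambda_n+\mu)(\lambda_n+1+\mu)},
\]
with $\zeta_0=1$. (Equivalently, the same formula follows by differentiating, directly in Birkhoff coordinates, the series or product formula for $\hamilton_\mu$.)

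\textbf{Step 3: size of the block.} For $\|\gamma\|_{\ell^1_+}\le R^2$ one has $0<\hamilton_\mu(u)=\sum_n\frac{\kappa_n\gamma_n}{\lambda_n+\mu}\le\frac1{\mu-R^2}$, since $\sum_n\kappa_n\gamma_n=\|\un\|^2=1$; together with Lemma~\ref{lem:kappa} (giving $\sqrt{\kappa_n\mu_{n+1}/\kappa_{n+1}}\le C(R)$) and $\lambda_n+\mu,\lambda_n+1+\mu\ge\mu-R^2\ge1$, this yields $|b_n^{(\mu)}(\gamma)|\le C(R)(\mu-R^2)^{-3}$. Differentiating in $\gamma$ and using $|d\ln\kappa_n^*[\gamma].h|,|d\ln\mu_n^*[\gamma].h|\le C(R)\|h\|_{\ell^1_+}$ (Lemma~\ref{lem:dkappa}), $|d\lambda_n^*[\gamma].h|\le\|h\|_{\ell^1_+}$, and the easily checked bound $|d\hamilton_\mu[\gamma].h|\le\frac{C(R)}{\mu-R^2}\|h\|_{\ell^1_+}$, the product rule gives $|db_n^{(\mu)}[\gamma].h|\le C(R)(\mu-R^2)^{-3}\|h\|_{\ell^1_+}$; along the solution, $|\dot b_n^{(\mu)}(t)|\le C(R)(\mu-R^2)^{-3}\|\dot\gamma(t)\|_{\ell^1_+}\le C(R)(\mu-R^2)^{-3}|\langle u(t)|e^{ix}\rangle|$ by~\eqref{ineq:dot_gamma}.

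\textbf{Step 4: conclusion, and the main obstacle.} Put $K:=C(R)(\mu-R^2)^{-3}$. By Step~3 the family $(b_n^{(\mu)}/K)_n$ satisfies assumption~\eqref{hyp:I(T,T')}, so Proposition~\ref{prop:I(T,T')_n,p} gives $\int_T^{+\infty}|\delta\hamilton_\mu(t)|^2\d t\le K^2\varepsilon_u(T)$. By Cauchy--Schwarz in time and Proposition~\ref{prop:normL2} (which gives $\int_0^{+\infty}|\langle u|e^{ix}\rangle|^2\le R^2$, hence $\delta(T):=\int_T^{+\infty}|\langle u|e^{ix}\rangle|^2\to0$),
\[
\int_T^{+\infty}\Big|\frac{\d}{\d t}\hamilton_\mu(u(t))\Big|\d t
	\le\alpha\Big(\int_T^{+\infty}|\langle u|e^{ix}\rangle|^2\Big)^{\frac12}\Big(\int_T^{+\infty}|\delta\hamilton_\mu|^2\Big)^{\frac12}
	\le\frac{\alpha\,C(R)\sqrt{\delta(T)\varepsilon_u(T)}}{(\mu-R^2)^3},
\]
which is the assertion, with $\varepsilon_u(T)$ replaced by $\alpha C(R)\sqrt{\delta(T)\varepsilon_u(T)}\to0$; for $T=0$ this is $\le C'(R)$ since $\varepsilon_u(0)\le C(R)$ and $\delta(0)\le R^2$. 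The delicate point is Step~2: the naive estimate on $\delta\hamilton_\mu$ only produces $O((\mu-R^2)^{-2})$, and the extra power of $1/\mu$ is recovered \emph{only} through the exact cancellation in the inner $m$-sum, i.e. through $\hamilton_{-\lambda_n-1}(u)=0$. Identifying this closed form for $d\hamilton_\mu[u].e^{ix}$ is the crux; the rest is bookkeeping with the estimates of Section~\ref{section:structure} and the time-integrability of Proposition~\ref{prop:I(T,T')_n,p}.
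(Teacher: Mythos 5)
Your argument is correct, and at the decisive step it takes a genuinely different route from the paper. Both proofs reduce, via the conservation of $\hamilton_\mu$ along~\eqref{eq:bo}, to $\frac{\d}{\d t}\hamilton_\mu(u(t))=-\alpha\,\Re\big(\langle u|e^{ix}\rangle\,\overline{\d\hamilton_\mu[u].e^{ix}}\big)$ with $\d\hamilton_\mu[u].h=\langle |w_\mu^u|^2|h\rangle$, $w_\mu^u=(L_u+\mu\id)^{-1}\un$, and both finish with Cauchy--Schwarz in time against Proposition~\ref{prop:normL2}. The difference is how the third power of $(\mu-R^2)^{-1}$ is produced. The paper uses the orthogonality $\langle e^{ix}w_\mu^u|\un\rangle=0$ to replace $w_\mu^u$ by $w_\mu^u-\un/\mu$, expands in the basis $(f_n)_n$, and then splits the resulting expression into an off-diagonal double sum, estimated by Cauchy--Schwarz against the tail of $\int\sum_n\gamma_n\gamma_{n+1}\d t$, plus a diagonal block $\sum_n b_n\zeta_n\overline{\zeta_{n+1}}$ with $|b_n|\leq C(R)(\mu-R^2)^{-2}$ handled by Remark~\ref{rk:I(T)_n,p}. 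You instead collapse the double sum exactly: the partial fraction $\frac{1}{(\lambda_m+\mu)(\lambda_m-\lambda_n-1)}=\frac{1}{\lambda_n+1+\mu}\big(\frac{1}{\lambda_m-\lambda_n-1}-\frac{1}{\lambda_m+\mu}\big)$ together with $\hamilton_{-\lambda_n-1}(u)=0$ (the same identity exploited in the proof of Lemma~\ref{lem:<u|e^ix>}) turns $\d\hamilton_\mu[u].e^{ix}$ into the single block $\sum_n b_n^{(\mu)}\overline{\zeta_n}\zeta_{n+1}$ with $|b_n^{(\mu)}|\leq C(R)(\mu-R^2)^{-3}$, so that one rescaled application of Proposition~\ref{prop:I(T,T')_n,p} suffices. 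In your version the gain of decay comes from the vanishing of $\hamilton_{-\lambda_n-1}(u)$, in the paper from the vanishing of $\langle e^{ix}w_\mu^u|\un\rangle$; your route yields a cleaner closed formula and avoids the off-diagonal estimate, while the paper's is more mechanical and does not require spotting the exact cancellation. Your appeal to the Remark with a uniform bound $|b_n^{(\mu)}|\leq K$ (rather than $K/(n+1)$) is exactly what assumption~\eqref{hyp:I(T,T')} rescales to, and is also how the paper itself uses it, so this is fine.

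Two small points to tidy: when $\gamma_{n+1}=0$ the identity $\hamilton_{-\lambda_n-1}(u)=0$ is not available, but then $M_{n,m}=\delta_{m,n+1}$ and $\zeta_{n+1}=0$, so both sides of your closed form vanish --- worth a sentence. Also, since $R=\|\zeta(u_0)\|_{h^{\half}_+}$, the bound you actually need is $\|\gamma(t)\|_{\ell^1_+}\leq\|\zeta(t)\|_{h^{\half}_+}^2\leq R^2$ (the detour through $\|u(t)\|_{L^2(\T)}\leq R$ is off by a factor $\sqrt{2}$, though harmless).
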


\begin{proof}
Using the expression $\hamilton_\mu(u)=\langle (L_u+\mu\id)^{-1}\un|\un\rangle$, the time derivative of $t\mapsto \hamilton_{\mu}(u(t))$ writes
\[
\frac{\d}{\d t}\hamilton_{\mu}(u(t))
	=\langle T_{\partial_tu}(w_{\mu}^{u(t)})|w_{\mu}^{u(t)}\rangle,
\]
where we denote $w_{\mu}^{u(t)}=(L_{u(t)}+\mu\id)^{-1}\un$. Since $\un\in L^2_+$, one can remove the projector $\Pi$ in the expression of $T_{\partial_t u}$, so that
\[
\frac{\d}{\d t}\hamilton_\mu(u(t))
	=\langle \partial_tu \cdot w_{\mu}^{u(t)}|w_{\mu}^{u(t)}\rangle.
\]
Write $\partial_tu=\partial_x \nabla\hamilton(u)-\alpha P_1(u)$, where $\hamilton(u)$ is the Hamiltonian for the Benjamin-Ono equation without damping~\eqref{eq:bo} and \[P_1(u)=\langle u|\cos\rangle\cos+\langle u|\sin\rangle\sin.
\]
Then
\[
\langle \partial_x \nabla\hamilton(u(t)) \cdot w_{\mu}^{u(t)}|w_{\mu}^{u(t)}\rangle
	=\frac{\d}{\d t}\hamilton_\mu(v(t)),
\]
where $v$ is the global solution to \eqref{eq:bo} satisfying $v(t)=u(t)$ at time $t$. Since the map $t'\mapsto \hamilton_\mu(v(t'))$ is constant, we deduce $\langle \partial_x \nabla\hamilton(u(t)) \cdot w_{\mu}^{u(t)}|w_{\mu}^{u(t)}\rangle=0$. Therefore,
\[
\frac{\d}{\d t}\hamilton_\mu(u(t))
	=-\alpha\langle P_1(u(t)) w_{\mu}^{u(t)}|w_{\mu}^{u(t)}\rangle.
\]
We factor $\langle P_1(u) w_\mu^u|w_\mu^u\rangle$ by using complex numbers
\begin{align*}
\langle P_1(u) w_\mu^u|w_\mu^u\rangle	&=\langle u|\cos\rangle\langle\cos w_\mu^u|w_\mu^u\rangle+\langle u|\sin\rangle\langle\sin w_\mu^u|w_\mu^u\rangle\\
	&=\Re(\langle u|e^{ix}\rangle\langle e^{ix}w_\mu^u|w_\mu^u\rangle).
\end{align*}

In order to complete the proof, it is now enough to show that $\langle e^{ix}w_\mu^u|w_\mu^u\rangle\in L^2(\R_+)$, and that there holds an estimate of the form
\begin{equation}\label{eq:estimate_<Sw|w>}
\left(\int_T^{+\infty}\left|\langle e^{ix}w_\mu^{u(t)}|w_\mu^{u(t)}\rangle\right|^2\d t\right)^{\half}
	\leq \frac{\varepsilon_u(T)}{(\mu-R^2)^3},
\end{equation}
with $\varepsilon_u(T)\to 0$ as $T\to+\infty$. 

First, note that for all $t\geq 0$, since $w_\mu^{u(t)}=(L_{u(t)}+\mu\id)^{-1}\un\in L^2_+$, we have the cancellation $\langle e^{ix}w_\mu^{u(t)}|\un\rangle=0$. Therefore one can write
\(
\langle e^{ix}w_\mu^u|w_\mu^u\rangle=\langle e^{ix}w_\mu^u|w_\mu^u-\un/\mu\rangle.
\)
Then, let us decompose $w_\mu^u$ and $w_\mu^u-\un/\mu$ along the basis of eigenfunctions $(f_n)_n$ of $L_u$ to get
\begin{align*}
\langle e^{ix}w_\mu^u|w_\mu^u\rangle
	&=\langle e^{ix}w_\mu^u|w_\mu^u-\un/\mu\rangle\\
	&=\sum_{n,p\in\N}\langle w_\mu^u|f_n\rangle\overline{\langle w_\mu^u-\un/\mu|f_p\rangle}\langle e^{ix}f_n|f_p\rangle.
\end{align*}
By definition of $w_\mu^u$ as $w_\mu^u=(L_{u}+\mu\id)^{-1}\un$, we have
\[
\langle w_\mu^u|f_n\rangle
	=\frac{\langle \un|f_n\rangle}{\lambda_n+\mu}	
	=\frac{\sqrt{\kappa_n}\zeta_n}{\lambda_n+\mu},
\]
and
\[
\langle w_\mu^u-\un/\mu|f_p\rangle
	=-\frac{\langle \un|f_p\rangle \lambda_p}{\mu(\lambda_p+\mu)}	
	=-\frac{\sqrt{\kappa_p}\zeta_p \lambda_p}{\mu(\lambda_p+\mu)}.
\]
From the formula $\langle e^{ix}f_n|f_p\rangle=\overline{M_{n,p}}$ and Definition~\ref{def:M} of $M_{n,p}$, we conclude
\begin{multline*}
-\mu\langle e^{ix}w_\mu^u|w_\mu^u\rangle
	=\sum_{ \substack{n,p\\p\neq n+1}}\frac{\kappa_p \lambda_p\sqrt{\kappa_n}}{\sqrt{\kappa_{n+1}}}\frac{\sqrt{\mu_{n+1}}\zeta_n\overline{\zeta_p}\zeta_p\overline{\zeta_{n+1}}}{(\lambda_p-\lambda_n-1)(\lambda_n+\mu)(\lambda_p+\mu)}\\
	+\sum_n \frac{\sqrt{\kappa_n\kappa_{n+1}} \lambda_{n+1} \sqrt{\mu_{n+1}}\zeta_n\overline{\zeta_{n+1}}}{(\lambda_n+\mu)(\lambda_{n+1}+\mu)}.
\end{multline*}
Let us consider the two sums separately.

On the one hand, the Cauchy-Schwarz' inequality on the sums over indexes $n$ implies
\begin{multline*}
\left|\sum_{ \substack{n,p,\, p\neq n+1}}\frac{\kappa_p \lambda_p\sqrt{\kappa_n}}{\sqrt{\kappa_{n+1}}}\frac{\sqrt{\mu_{n+1}}\zeta_n\overline{\zeta_p}\zeta_p\overline{\zeta_{n+1}}}{(\lambda_p-\lambda_n-1)(\lambda_n+\mu)(\lambda_p+\mu)}\right|\\
	\leq C(R)\sum_p \frac{\kappa_p\gamma_p \lambda_p}{\lambda_p+\mu}\left(\sum_n\gamma_n\gamma_{n+1}\right)^{\half}\left(\sum_{n,\, n\neq p-1} \frac{1}{(\lambda_p-\lambda_n-1)^2(\lambda_n+\mu)^2}\right)^{\half}.
\end{multline*}
For all $p\in\N$, we have $\sum_{n,\, n\neq p-1}\frac{1}{(\lambda_p-\lambda_n-1)^2}\leq C$, moreover, for all $n\in\N$,
\[
\frac{1}{\lambda_n+\mu}\leq \frac{1}{\mu-R^2}
\] 
and finally,
\[\sum_p \frac{\kappa_p\gamma_p \lambda_p}{\lambda_p+\mu}
	\leq \frac{C(R)}{\mu-R^2}.\]
Therefore,
\begin{equation*}
\left|\sum_{ \substack{n,p,\,p\neq n+1}}\frac{\kappa_p \lambda_p\sqrt{\kappa_n}}{\sqrt{\kappa_{n+1}}}\frac{\sqrt{\mu_{n+1}}\zeta_n\overline{\zeta_p}\zeta_p\overline{\zeta_{n+1}}}{(\lambda_p-\lambda_n-1)(\lambda_n+\mu)(\lambda_p+\mu)}\right|
	\leq \frac{C(R)}{(\mu-R^2)^2}\left(\sum_n\gamma_n\gamma_{n+1}\right)^{\half},
\end{equation*}
so that
\begin{multline*}
\int_T^{+\infty}\left|\sum_{ \substack{n,p,\,p\neq n+1}}\frac{\kappa_p \lambda_p\sqrt{\kappa_n}}{\sqrt{\kappa_{n+1}}}\frac{\sqrt{\mu_{n+1}}\zeta_n\overline{\zeta_p}\zeta_p\overline{\zeta_{n+1}}}{(\lambda_p-\lambda_n-1)(\lambda_n+\mu)(\lambda_p+\mu)}\right|^2\d t\\
	\leq \frac{C(R)}{(\mu-R^2)^4} \int_T^{+\infty}\sum_n\gamma_n(t)\gamma_{n+1}(t)\d t.
\end{multline*}

On the other hand, let us denote
\[
b_n
	:=\frac{\sqrt{\kappa_n\kappa_{n+1}} \lambda_{n+1} \sqrt{\mu_{n+1}}}{(\lambda_n+\mu)(\lambda_{n+1}+\mu)}.
\]
Then, there exists $C(R)>0$ such that for all $n\in\N$,
\[
|b_n|\leq \frac{C(R)}{(\mu-R^2)^2}.
\]
and
\[
|\dot{b_n}|\leq \frac{C(R)}{(\mu-R^2)^2}|\langle u|e^{ix}\rangle|.
\]
As a consequence, Remark~\ref{rk:I(T)_n,p} following Proposition~\ref{prop:I(T,T')_n,p}, applied to the family $b=(b_n)_n$, implies
\begin{align*}
\int_T^{+\infty}\left|\sum_n \frac{\sqrt{\kappa_n\kappa_{n+1}} \lambda_{n+1} \sqrt{\mu_{n+1}}\zeta_n\overline{\zeta_{n+1}}}{(\lambda_n+\mu)(\lambda_{n+1}+\mu)}\right|^2\d t
	&\leq \frac{\varepsilon_u(T)}{(\mu-R^2)^4}.
\end{align*}

To conclude, we have proven the existence of $\varepsilon_u(T)\to 0$ as $T\to+\infty$ such that for all $\mu\geq R^2+1$,
\[
\int_T^{+\infty}\left|\mu\langle e^{ix}w_{\mu}^{u(t)}|w_{\mu}^{u(t)}\rangle\right|^2\d t
	\leq \frac{\varepsilon_u(T)}{(\mu-R^2)^4}.
\]
This completes the proof of inequality~\eqref{eq:estimate_<Sw|w>}, and therefore of the lemma.
\end{proof}

We now study the asymptotic expansion of $\hamilton_\mu(u)$ when $\mu\to+\infty$.

\begin{lem}
Let $u\in L^2_{r,0}(\T)$. Then we have the following asymptotic expansion at order $3$ of the generating function when $\mu\to+\infty$:
\[
\hamilton_\mu(u)
	=\frac{1}{\mu}+\frac{1}{2\mu^3}\|u\|_{L^2(\T)}^2+o\left(\frac{1}{\mu^3}\right).
\]
\end{lem}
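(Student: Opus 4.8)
The plan is to start from the spectral formula $\hamilton_\mu(u)=\sum_{n\geq 0}\frac{\kappa_n\gamma_n}{\lambda_n+\mu}$ recalled just above the statement, and to use the \emph{exact} three-term identity
\[
\frac{1}{\lambda_n+\mu}=\frac1\mu-\frac{\lambda_n}{\mu^2}+\frac{\lambda_n^2}{\mu^2(\lambda_n+\mu)}.
\]
Summing in $n$ — which is legitimate term by term, since each of the three resulting series converges absolutely: indeed $\un$ lies in the domain of $L_u$ (because $L_u\un=-\Pi u\in L^2$), so $\sum_n\lambda_n^2\kappa_n\gamma_n<+\infty$, and $|\lambda_n+\mu|\geq\mu-R^2>0$ for $\mu$ large — one obtains
\[
\hamilton_\mu(u)=\frac1\mu\sum_{n\geq 0}\kappa_n\gamma_n-\frac1{\mu^2}\sum_{n\geq 0}\kappa_n\gamma_n\lambda_n+\frac1{\mu^2}\sum_{n\geq 0}\frac{\kappa_n\gamma_n\lambda_n^2}{\lambda_n+\mu}.
\]

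Next I would identify the first two sums. Using $\langle\un|f_n\rangle=\sqrt{\kappa_n}\zeta_n$ and Parseval in the orthonormal eigenbasis $(f_n)_n$, one has $\sum_n\kappa_n\gamma_n=\sum_n|\langle\un|f_n\rangle|^2=\|\un\|_{L^2(\T)}^2=1$, which produces the leading term $1/\mu$. Then $\sum_n\kappa_n\gamma_n\lambda_n=\sum_n\lambda_n|\langle\un|f_n\rangle|^2=\langle L_u\un|\un\rangle$; since $D\un=0$ and $T_u\un=\Pi(u)$, we have $L_u\un=-\Pi u$, hence $\langle L_u\un|\un\rangle=-\langle\Pi u|\un\rangle=-\widehat u(0)=0$ because $u$ has zero mean. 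Finally $\sum_n\kappa_n\gamma_n\lambda_n^2=\|L_u\un\|_{L^2(\T)}^2=\|\Pi u\|_{L^2(\T)}^2$, and as $u$ is real-valued with zero mean, $\widehat u(-n)=\overline{\widehat u(n)}$, so Parseval gives $\|u\|_{L^2(\T)}^2=2\sum_{n\geq 1}|\widehat u(n)|^2=2\|\Pi u\|_{L^2(\T)}^2$.

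It then remains to show the third sum contributes $\frac1{2\mu^3}\|u\|_{L^2(\T)}^2+o(\mu^{-3})$, i.e. that $\mu\sum_n\frac{\kappa_n\gamma_n\lambda_n^2}{\lambda_n+\mu}\to\sum_n\kappa_n\gamma_n\lambda_n^2$ as $\mu\to+\infty$. This follows from dominated convergence: since the eigenvalues are increasing with $\lambda_0=-\sum_{k\geq 1}\gamma_k\geq-\frac12\|u\|_{L^2(\T)}^2$, for $\mu$ large enough one has $0<\frac{\mu}{\lambda_n+\mu}\leq 2$ uniformly in $n$, while $\frac{\mu}{\lambda_n+\mu}\to 1$ for each fixed $n$, and the dominating sequence $2\kappa_n\gamma_n\lambda_n^2$ is summable by the previous paragraph. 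Collecting the three contributions yields $\hamilton_\mu(u)=\frac1\mu+0+\frac1{2\mu^3}\|u\|_{L^2(\T)}^2+o(\mu^{-3})$, as claimed.

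The only genuine subtlety is that $L_u$ is unbounded, so one cannot push a Neumann expansion of $(L_u+\mu\id)^{-1}$ past the first few terms; the order-$3$ truncation is exactly the point at which the error is still controllable using only $u\in L^2_{r,0}(\T)$ (equivalently $\sum_n\lambda_n^2\kappa_n\gamma_n<+\infty$), and the dominated-convergence step is what supplies the needed uniform-in-$\mu$ control of the tail. Going to the next order would require $\sum_n\lambda_n^3\kappa_n\gamma_n<+\infty$, i.e. extra Sobolev regularity on $u$, which is why the expansion stops at $\mu^{-3}$.
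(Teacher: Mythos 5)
Your proof is correct and follows essentially the same route as the paper: the same exact three-term expansion of $\frac{1}{\lambda_n+\mu}$, the same identifications $\sum_n\kappa_n\gamma_n=1$, $\sum_n\lambda_n\kappa_n\gamma_n=-\langle\Pi u|\un\rangle=0$, $\sum_n\lambda_n^2\kappa_n\gamma_n=\|\Pi u\|_{L^2(\T)}^2=\tfrac12\|u\|_{L^2(\T)}^2$, and the same uniform bound $\frac{\mu}{\lambda_n+\mu}\leq 2$ for $\mu$ large to pass to the limit in the remainder. Your added justifications (that $L_u\un=-\Pi u$ so $\un$ lies in the domain of $L_u$, and the Parseval identity for real zero-mean $u$) are exactly what the paper uses implicitly, so nothing further is needed.
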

\begin{proof}
By using the identity $\frac{1}{1+x}=1-x+\frac{x^2}{1+x}$, we have
\[
\hamilton_\mu(u)
	=\frac{1}{\mu}\sum_{n=0}^{+\infty}\frac{\kappa_n\gamma_n}{\lambda_n/\mu+1}
	=\frac{1}{\mu}\sum_{n=0}^{+\infty}\kappa_n\gamma_n-\frac{1}{\mu^2}\sum_{n=0}^{+\infty}\lambda_n\kappa_n\gamma_n+\frac{1}{\mu^3}\sum_{n=0}^{+\infty}\frac{\lambda_n^2}{\lambda_n/\mu+1}\kappa_n\gamma_n.
\]
Remark that
\[\sum_{n=0}^{+\infty}\kappa_n\gamma_n=\sum_{n=0}^{+\infty}|\langle \un|f_n\rangle|^2=1\]
 and 
\[\sum_{n=0}^{+\infty}\lambda_n\kappa_n\gamma_n
	=\sum_{n=0}^{+\infty}\lambda_n|\langle \un|f_n\rangle|^2
	=-\langle \Pi u|\un\rangle
	=0.
\]
Consequently, the asymptotic development simplifies as
\[
\hamilton_\mu(u)
	=\frac{1}{\mu}+\frac{1}{\mu^3}\sum_{n=0}^{+\infty}\frac{\lambda_n^2}{\lambda_n/\mu+1}\kappa_n\gamma_n.
\]
For $\mu\geq -2\lambda_0(u),$ we have $\lambda_0/\mu+1\geq \frac{1}{2}$, so that for all $n$, $\frac{\lambda_n^2}{\lambda_n/\mu+1}\leq 2\lambda_n^2$. Moreover,
\[\sum_{n=0}^{+\infty}\lambda_n^2\kappa_n\gamma_n=\langle \Pi u|\Pi u\rangle=\half\|u\|_{L^2(\T)}^2.
\]
Therefore, one can pass to the limit $\mu\to+\infty$ in the summation sum and deduce
\[
\hamilton_\mu(u)
	=\frac{1}{\mu}+\frac{1}{2\mu^3}\|u\|_{L^2(\T)}^2+o\left(\frac{1}{\mu^3}\right).
\]
\end{proof}

We apply this lemma to
\[
\hamilton_{\mu}(u(T))=\frac{1}{\mu}+\frac{\|u(T)\|_{L^2(\T)}^2}{2\mu^3}+o_T\left(\frac{1}{\mu^3}\right)
\]
and
\[
\hamilton_{\mu}(u_\infty)=\frac{1}{\mu}+\frac{\|u_\infty\|_{L^2(\T)}^2}{2\mu^3}+o\left(\frac{1}{\mu^3}\right).
\]
Note that all the possible weak limits $u_\infty$ have the same norm in $L^2_{r,0}(\T)$, indeed, we know from point 2 in Theorem~\ref{thm:LaSalle} that for all $n\geq 1$, $\gamma_n(u_\infty)=\gamma_n^\infty$.

In light of Lemma~\ref{lem:dot_hamilton}, we deduce that for fixed $T>0$,
\[
\left|\hamilton_{\mu}(u(T))-\hamilton_{\mu}(u_\infty)\right|
	=\left|\frac{\|u(T)\|_{L^2(\T)}^2-\|u_\infty\|_{L^2(\T)}^2}{2\mu^3}+o_T\left(\frac{1}{\mu^3}\right)\right|
	\leq \frac{\varepsilon_u(T)}{(\mu-R^2)^3}.
\]
We now consider the limit $\mu\to+\infty$ and get
\[
\left|\|u(T)\|_{L^2(\T)}^2-\|u_\infty\|_{L^2(\T)}^2\right|
	\leq 2\varepsilon_u(T).
\]
Taking the limit $T\to+\infty$ in this inequality, we conclude that $\|u(T)\|_{L^2(\T)}\to\|u_\infty\|_{L^2(\T)}$. We have proven that for any weak limit $u_\infty$ of a subsequence $(u(t_k))_k$ in $L^2_{r,0}(\T)$, the convergence is strong in $L^2_{r,0}(\T)$.

\section{Higher-order Sobolev norms}\label{section:higher_Hs}

In this section, we consider a solution with higher regularity: we assume that the initial data $u_0$ belongs to $H^s_{r,0}(\T)$ for some exponent $s\geq 0$. The Birkhoff transformation and its inverse transformation map bounded subsets of  $H^s_{r,0}(\T)$ to bounded subsets of $h^{\half+s}_+$ (see~\cite{GerardKappelerTopalov2020-2}, Proposition 5 in Appendix A). Therefore, it is equivalent to study the image of the $H^s$ norm by this transformation and consider $\sum_{n\geq 0}n^{1+2s}\gamma_n$. This leads us to introduce a Lyapunov functional of the form $P_s=\sum_n w_n\gamma_n$, where $w_n\approx n^{1+2s}$, in order to prove Theorem~\ref{thm:higher_Hs}.

Note that the Cauchy problem for~\eqref{eq:damped} is locally well-posed in $H^s_{r,0}(\T)$, $s\geq 0$, by a simple adaptation of the proof in part~\ref{part:lwp} and by using Corollary~\ref{cor:dzeta.cos}.
\subsection{Formula for the derivative of Sobolev norms}

\begin{prop}
Fix $s\geq 0$, and let $u$ be a solution to the damped Benjamin-Ono equation~\eqref{eq:damped} with initial data $u_0\in H^{s}_{r,0}(\T)$. Define $c_n:=n^{2s}$ with $c_0=0$, $w_n:=\sum_{k=1}^{n-1}c_k$, so that we have $\frac{n^{1+2s}}{C}\leq w_n\leq Cn^{1+2s}$, and denote
\[
P_s(t):=
	\sum_{n\geq 1}w_n\gamma_n(t).
\]
Then
\[
\frac{\d}{\d t}P_s(t)
	=-\alpha\sum_{n=0}^{+\infty}c_na_n^*(t)^2\gamma_n(t)\gamma_{n+1}(t)-\frac{\alpha}{2}\sum_{\substack{n,p\geq0\\n \neq p}}(c_n+c_p)a_n^*(t)a_p^*(t)\eta_{n,p}(t),
\]
with the notation
$a_n^*=\sqrt{\mu_{n+1}}\frac{\sqrt{\kappa_n}}{\sqrt{\kappa_{n+1}}}$ and $\eta_{n,p}=\overline{\zeta_n}\zeta_{n+1}\zeta_p\overline{\zeta_{p+1}}$.
\end{prop}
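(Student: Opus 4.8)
The plan is to differentiate $P_s(t)=\sum_{n\ge1}w_n\gamma_n(t)$ term by term and feed in the structure of $\frac{\d}{\d t}\gamma_n$ already isolated in Section~\ref{section:structure}. First, since $\gamma_n=|\zeta_n|^2$ is a conservation law of the undamped equation~\eqref{eq:bo}, writing $\partial_tu=\partial_x\nabla\hamilton(u)-\alpha P_1(u)$ with $P_1(u)=\langle u|\cos\rangle\cos+\langle u|\sin\rangle\sin$, the part $\partial_x\nabla\hamilton(u)$ does not contribute to $\frac{\d}{\d t}\gamma_n=\d\gamma_n[u].\partial_tu$, hence
\[
\frac{\d}{\d t}\gamma_n=-\alpha\left(\langle u|\cos\rangle\,\d\gamma_n[u].\cos+\langle u|\sin\rangle\,\d\gamma_n[u].\sin\right).
\]
Because $u$ and $\gamma_n$ are real, the right-hand side equals $-\alpha\,\Re\!\left(\langle u|e^{ix}\rangle\,\overline{\d\gamma_n[u].\cos-i\,\d\gamma_n[u].\sin}\right)$, and formula~\eqref{eq:dgamma_n} together with the computation $\langle|f_{n-1}|^2-|f_n|^2\,|\,e^{ix}\rangle=M_{n-1,n-1}-M_{n,n}$ already done in the proof of Lemma~\ref{lem:dkappa_lip} gives $\d\gamma_n[u].\cos-i\,\d\gamma_n[u].\sin=m_{n-1}-m_n$ with $m_n=M_{n,n}=-a_n^*(\gamma)\overline{\zeta_n}\zeta_{n+1}$. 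Thus $\frac{\d}{\d t}\gamma_n=-\alpha\,\Re\!\left(\langle u|e^{ix}\rangle\,\overline{m_{n-1}-m_n}\right)$.

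Next I would sum against the weights. Thanks to~\eqref{ineq:gamma_n_dot} and the bound $\sum_n w_n(|\zeta_{n-1}\zeta_n|+|\zeta_n\zeta_{n+1}|)\le C\|u(t)\|_{H^s(\T)}^2$, the series $\sum_n w_n\frac{\d}{\d t}\gamma_n$ converges uniformly on compact time intervals, so $P_s$ is differentiable with $\frac{\d}{\d t}P_s=\sum_{n\ge1}w_n\frac{\d}{\d t}\gamma_n$. Using $w_{n+1}-w_n=c_n$ for $n\ge1$, $w_1=0$ and $c_0=0$, an Abel summation gives $\sum_{n\ge1}w_n(m_{n-1}-m_n)=\sum_{n\ge0}c_nm_n$; the rearrangement is legitimate since $\sum_n w_n|m_n|\le C(R)\left(\sum_n n^{2s}\gamma_n\right)^{1/2}\left(\sum_n n^{2s}\gamma_{n+1}\right)^{1/2}<+\infty$ on the interval of existence in $H^s_{r,0}(\T)$. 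Therefore
\[
\frac{\d}{\d t}P_s(t)=-\alpha\,\Re\!\left(\langle u(t)|e^{ix}\rangle\,\overline{\textstyle\sum_{n\ge0}c_nm_n(t)}\right).
\]

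Finally, plugging in $\langle u|e^{ix}\rangle=\sum_{k\ge0}m_k$ from Lemma~\ref{lem:<u|e^ix>}, expanding the product (again justified by absolute convergence of $\sum_{k,n}c_n|m_k||m_n|$, which follows from $a_k^*\le C(R)$ and $\sum_n n^{1+2s}\gamma_n(t)\le C\|u(t)\|_{H^s(\T)}^2$), and using $m_k\overline{m_n}=a_k^*(\gamma)a_n^*(\gamma)\,\eta_{k,n}$ with $\eta_{k,n}=\overline{\zeta_k}\zeta_{k+1}\zeta_n\overline{\zeta_{n+1}}$, I would split the double sum into its diagonal and off-diagonal parts. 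On the diagonal $\eta_{n,n}=\gamma_n\gamma_{n+1}$ is real, producing the term $-\alpha\sum_n c_na_n^*(t)^2\gamma_n\gamma_{n+1}$ (the $n=0$ term vanishing as $c_0=0$). For the off-diagonal part I would symmetrize in $(k,n)$: since $\eta_{n,k}=\overline{\eta_{k,n}}$ and $a_k^*a_n^*$ is symmetric, the sum $\sum_{k\ne n}(c_k+c_n)a_k^*a_n^*\eta_{k,n}$ is real, and $\Re\!\left(\sum_{k\ne n}c_na_k^*a_n^*\eta_{k,n}\right)=\tfrac12\sum_{k\ne n}(c_k+c_n)a_k^*a_n^*\eta_{k,n}$, which is exactly the stated formula. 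The \emph{only} points requiring care are these interchanges of summation and the term-by-term differentiation, all of which are controlled by the $H^s$ bound on $\sum_n n^{1+2s}\gamma_n(t)$ together with the uniform bounds on $a_n^*$ from Section~\ref{part:def_spectral}; there is no genuine obstacle beyond this bookkeeping.
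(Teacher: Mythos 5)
Your proof is correct and follows essentially the same route as the paper: the expression of $\frac{\d}{\d t}\gamma_n$ through $m_{n-1}-m_n$ and $\langle u|e^{ix}\rangle=\sum_{p}m_p$, Abel summation against the weights $w_n$ using $w_{n+1}-w_n=c_n$, and then taking the real part of the double sum with the diagonal/off-diagonal split and symmetrization in $(n,p)$. The only harmless slip is in your Cauchy--Schwarz bound for $\sum_n w_n|m_n|$, where the weights should be $n^{1+2s}$ rather than $n^{2s}$; your extra care about term-by-term differentiation and interchange of sums, which the paper leaves implicit, is otherwise fine.
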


\begin{proof}
We have already seen in the proof of Lemma~\ref{lem:astuce_gronwall} that the time derivative of $\gamma_n$ is
\[
\frac{\d}{\d t}\gamma_n(u(t))
	=-\alpha\left(\langle u(t)|\cos\rangle \d\gamma_n[u(t)].\cos
	+\langle u(t)|\sin\rangle \d\gamma_n[u(t)].\sin\right),
\]
where
\[
\d\gamma_n[u(t)].\cos-i\d\gamma_n[u(t)].\sin
	=m_{n-1}-m_n
\]
and
\[
m_n
	=-\sqrt{\mu_{n+1}}\frac{\sqrt{\kappa_n}}{\sqrt{\kappa_{n+1}}}\overline{\zeta_n}\zeta_{n+1}.
\]
Moreover, from Lemma~\ref{lem:<u|e^ix>}, the following formula holds
\[
\langle u(t)|\cos\rangle-i\langle u(t)|\sin\rangle
	=\sum_{p\geq 0}m_p.
\]
Therefore, we have
\begin{align*}
\frac{\d}{\d t}\gamma_n(u(t))
	&=-\alpha \left(\Re(m_{n-1}-m_n)\Re\left(\sum_{p\geq 0}m_p\right)+\Im(m_{n-1}-m_n)\Im\left(\sum_{p\geq 0}m_p\right)\right)\\
	&=-\alpha\Re\left((m_{n-1}-m_n)\overline{\sum_{p\geq 0}m_p}\right).
\end{align*}
In particular, a summation leads to
\[
\frac{\d}{\d t}P_s(t)
	=-\alpha\Re\left(\sum_{n\geq1}w_n(m_{n-1}-m_n)\overline{\sum_{p\geq 0}m_p}\right).
\]
We rewrite
\[
\sum_{n\geq1}w_n(m_{n-1}-m_n)
	=\sum_{n\geq0}c_nm_n
\]
with
\[
c_n=w_{n+1}-w_n=n^{2s}.
\]
Then
\[
\frac{\d}{\d t}P_s(t)
	=-\alpha\Re\left(\sum_{n,p\geq0}c_nm_n\overline{m_p}\right),
\]
and taking the real part leads to the formula.
\end{proof}

We now consider the decomposition
\begin{equation*}
\frac{\d}{\d t}P_s(t)
	=-\alpha\sum_{n=0}^{+\infty}c_na_n^*(t)^2\gamma_n(t)\gamma_{n+1}(t)-\frac{\alpha}{2}\sum_{\substack{n,p\geq0\\n\neq p}}(c_n+c_p)a_n^*(t)a_p^*(t)\eta_{n,p}(t).
\end{equation*}
We establish an improvement of Lemma~\ref{lem:J(T,T')} so as to see the second term in the right-hand side as a remainder term compared to the first term in the right-hand side.

In what follows, we drop the star exponent to the term $a_n^*$ because we are only going to consider the only family $a_n=\sqrt{\mu_{n+1}}\sqrt{\frac{\kappa_n}{\kappa_{n+1}}}$. For $T\geq 0$, let us define
\[
J^s(T)
	:=\int_0^{T}\sum_{\substack{n,p\\n\neq p}}(c_n+c_p)a_n(t)a_p(t)\eta_{n,p}(t)\d t.
\]
Assume that for all $t\in\R_+$, we have $\sum_p c_p\gamma_p(t)\leq C(R)$ (in particular, we already know that this condition is always satisfied if $s\leq \half$). Then one can see that $J^s(T)$ is well-defined for all $T\geq 0$.

Our aim is to estimate $J^s(T)$ depending on 
\[
I^s(T)
	:=\int_0^T\sum_{n=0}^{+\infty}c_n\gamma_n(t)\gamma_{n+1}(t)\d t.
\]
In the following lemma, we show that provided some upper bound on $J^s(T)$ is satisfied (assumption~\eqref{hyp:Js} below), then the higher-order Lyapunov functional $P_s$ is bounded.

\begin{lem}
Fix $s\geq 0$. Assume that for some $R> 0$, and for all $t\geq 0$, the Birkhoff coordinates of the solution are bounded in $h^{s}_+$:
\[\sum_n c_n\gamma_n(t)\leq R\]
(the constant $R$ depends on the choice of $s$ since $c_n=n^{2s}$). Also assume that for all $\varepsilon>0$, there exists $C_s(R,\varepsilon)>0$ such that for all $t\geq 0$,
\begin{equation}\label{hyp:Js}
J^s(t)\leq C_s(R,\varepsilon)+C_s(R,\varepsilon)\sqrt{I^s(t)}+\varepsilon I^s(t).
\end{equation}
If the initial data belongs to $h^{\half+s}_+$, i.\@e.\@ if $P_s(0)$ is finite, then $P_s$ is bounded: there exists $C_s'(R)$ such that for all $t\geq 0$,
\[
P_s(t)\leq C_s'(R)+P_s(0),
\]
moreover,
\[
\int_0^{+\infty}\sum_{n=0}^{+\infty}c_n\gamma_n(t)\gamma_{n+1}(t)\d t\leq C_s'(R)+P_s(0).
\]
\end{lem}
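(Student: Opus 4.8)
The plan is to integrate the formula for $\frac{\d}{\d t}P_s$ established above over $[0,T]$ and then absorb the off-diagonal term using hypothesis~\eqref{hyp:Js}. All computations take place on the maximal interval of existence of the solution in $H^s_{r,0}(\T)$ — on which $P_s(t)$ is finite and the derivative formula holds — and the uniform bound obtained below will prevent finite-time blow-up of the $H^s$ norm, so that this interval is in fact $[0,+\infty)$. First I would record that, since the $L^2$ norm is non-increasing (Proposition~\ref{prop:normL2}) and $\|u\|_{L^2(\T)}^2=2\sum_n n\gamma_n$ by the Parseval formula, one has $\|\gamma(t)\|_{\ell^1_+}\leq\half\|u_0\|_{L^2(\T)}^2$ for all $t$; hence by Lemma~\ref{lem:kappa} the coefficients $a_n(t)=\sqrt{\mu_{n+1}(t)}\sqrt{\kappa_n(t)/\kappa_{n+1}(t)}$ are bounded above and, crucially, bounded below: there is $c(R)>0$ with $a_n(t)\geq c(R)$ for all $n$ and $t$. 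Writing $\widetilde{I}^s(T):=\int_0^T\sum_{n\geq0}c_n a_n(t)^2\gamma_n(t)\gamma_{n+1}(t)\d t$, the derivative formula integrates to
\[
P_s(T)+\alpha\,\widetilde{I}^s(T)+\frac{\alpha}{2}J^s(T)=P_s(0),
\]
where $P_s(T)\geq0$ and $c(R)^2 I^s(T)\leq\widetilde{I}^s(T)$.

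For the bound on $P_s$ I would rewrite this as $P_s(T)=P_s(0)-\big(\alpha\widetilde{I}^s(T)+\frac{\alpha}{2}J^s(T)\big)$ and bound the bracketed quantity from below. Using $\widetilde{I}^s(T)\geq c(R)^2 I^s(T)$ and hypothesis~\eqref{hyp:Js} (which, $J^s$ being real with a nonnegative right-hand side, also bounds $-J^s(T)$, so that $|J^s(T)|\leq C_s(R,\varepsilon)+C_s(R,\varepsilon)\sqrt{I^s(T)}+\varepsilon I^s(T)$) for a parameter $\varepsilon$ to be chosen,
\[
\alpha\widetilde{I}^s(T)+\frac{\alpha}{2}J^s(T)\geq\alpha c(R)^2 I^s(T)-\frac{\alpha}{2}\big(C_s(R,\varepsilon)+C_s(R,\varepsilon)\sqrt{I^s(T)}+\varepsilon I^s(T)\big).
\]
Choosing $\varepsilon:=c(R)^2$, which depends only on $R$ and $s$, makes the coefficient of $I^s(T)$ on the right equal to $\frac{\alpha}{2}c(R)^2>0$, so the right-hand side is an upward parabola in $x=\sqrt{I^s(T)}\geq0$ and is therefore bounded below by some constant $-C_s'(R)$ depending only on $R$ and $s$. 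Hence $P_s(T)\leq P_s(0)+C_s'(R)$ for all $T$, which is the first assertion (and, as noted, forces the $H^s$ solution to be global).

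For the integral bound I would instead use $P_s(T)\geq0$ in the same identity, which gives $\alpha\widetilde{I}^s(T)\leq P_s(0)+\frac{\alpha}{2}|J^s(T)|$, hence, with the same $\varepsilon=c(R)^2$,
\[
\alpha c(R)^2 I^s(T)\leq P_s(0)+\frac{\alpha}{2}\big(C_s(R,\varepsilon)+C_s(R,\varepsilon)\sqrt{I^s(T)}+\varepsilon I^s(T)\big).
\]
This is an inequality of the form $\frac{\alpha}{2}c(R)^2 I^s(T)\leq P_s(0)+C_s'(R)+C_s'(R)\sqrt{I^s(T)}$; absorbing the last term via $2xy\leq x^2+y^2$ gives a uniform bound $I^s(T)\leq C_s'(R)+P_s(0)$. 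Since $T\mapsto I^s(T)$ is nondecreasing, monotone convergence as $T\to+\infty$ yields $\int_0^{+\infty}\sum_{n\geq0}c_n\gamma_n(t)\gamma_{n+1}(t)\d t\leq C_s'(R)+P_s(0)$.

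There is no deep obstacle here: the argument is a Gronwall-free absorption. The points requiring care — rather than cleverness — are the justification that the series defining $\frac{\d}{\d t}P_s$, $\widetilde{I}^s$ and $J^s$ converge absolutely and that summation commutes with differentiation and with the time integral, which rests on the standing assumption $\sum_n c_n\gamma_n(t)\leq R$ together with the $\ell^1_+$ bound on the actions; the uniform lower bound $a_n(t)\geq c(R)>0$, which is what turns the diagonal term of $\frac{\d}{\d t}P_s$ into a genuine coercive dissipation controlling $I^s$; and the bootstrap that upgrades the a priori bound into global $H^s_{r,0}(\T)$ well-posedness. The only mildly delicate choice is picking $\varepsilon$ small enough (here $\varepsilon=c(R)^2$) that the quadratic dissipative term dominates the linear and $\varepsilon$-linear contributions coming from~\eqref{hyp:Js}; once $\varepsilon$ is fixed in terms of $R$, the constant $C_s(R,\varepsilon)$ is simply a constant $C_s'(R)$.
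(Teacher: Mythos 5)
Your proof is correct and follows essentially the same route as the paper's: integrate the derivative formula for $P_s$, use the uniform lower bound $a_n\geq 1/C(R)$ to make the diagonal term coercive in $I^s(T)$, absorb $J^s(T)$ through hypothesis~\eqref{hyp:Js} with $\varepsilon$ chosen of size $c(R)^2$ (the paper takes $\varepsilon=\tfrac{1}{2C(R)^2}$ and absorbs the $\sqrt{I^s(T)}$ term the same way), and then let $T\to+\infty$ by monotonicity. The only blemish is the parenthetical claim that the one-sided bound $J^s(t)\leq\dots$ automatically controls $-J^s(t)$ — that inference is not valid as stated — but it is harmless here because the hypothesis is meant, and is verified later in the paper, as a bound on $|J^s(t)|$, which is exactly how the paper's own proof (and yours) uses it.
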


\begin{proof}
We use the formula for the time derivative of $P_s$
\[
\frac{\d}{\d t}P_s(t)
	=-\alpha\sum_{n=0}^{+\infty}c_na_n(t)^2\gamma_n(t)\gamma_{n+1}(t)-\frac{\alpha}{2}\sum_{\substack{n,p\geq0\\n \neq p}}(c_n+c_p)a_n(t)a_p(t)\eta_{n,p}(t).
\]
Since for all $n$, $a_n(t)\geq \frac{1}{C(R)}$, an integration in time leads to
\[
P_s(T)-P_s(0)
	\leq -\frac{\alpha}{C(R)^2}
I^s(T)+\frac{\alpha}{2}|J^s(T)|.
\]
We fix $\varepsilon:=\frac{1}{2C(R)^2}$. Using that by assumption,
\begin{align*}
|J^s(T)|
	&\leq C_s(R,\varepsilon)+C_s(R,\varepsilon)\sqrt{I^s(T)}+\frac{1}{2C(R)^2}I^s(T)\\
	&\leq C_s'(R,\varepsilon)+\frac{1}{C(R)^2}I^s(T),
\end{align*}
we deduce that
\[
P_s(T)+\frac{\alpha}{2C(R)^2}I^s(T)
	\leq P_s(0)+\frac{\alpha }{2} C_s'(R,\varepsilon).
\]
\end{proof}

\subsection{The case \texorpdfstring{$s<\half$}{s<1/2}}

In the parts that follow, we prove that assumption~\eqref{hyp:Js} on $J^s(T)$ is satisfied. In this purpose, we first remove the part of $J^s(T)$ that can be bounded with the same strategy as the proof of Lemma~\ref{lem:J(T,T')}. In the case $s<\half$, we obtain a bound on $J^s(T)$ itself, and we deduce that if $u_0\in H^s_{r,0}(\T)$, then the solution $u$ stays bounded in $H^s_{r,0}(\T)$.

\begin{lem}
Let $s\geq 0$. We assume that for all $t\geq 0$, $\sum_{n\geq 0}c_n\gamma_n(t)\leq R$ (where $c_n=n^{2s}$). Then there exists $C(R)>0$ such that the following holds. For all $T>0$,
\[
\left|J^s(T)-\alpha K^s(T)\right|
	\leq C(R),
\]
where 
\begin{multline}\label{eq:def_K^s}
K^s(T)
	=\sum_{\substack{n,p\\n\neq p}}\int_0^T\frac{ a_n(t)a_p(t) }{i\Omega_{n,p}(t)}
	\Big(c_p\left(\overline{Z_n(t)} \zeta_{n+1}(t)+\overline{\zeta_n(t)}Z_{n+1}(t)\right)\zeta_p(t)\overline{\zeta_{p+1}(t)}\\
	+c_n\left(Z_p(t) \overline{\zeta_{p+1}(t)}+\zeta_p(t)\overline{Z_{p+1}(t)}\right)\overline{\zeta_n(t)}\zeta_{n+1}(t)\Big)\d t
\end{multline}
and from~\eqref{def:Z_n},
\begin{equation*}
Z_n
	=\langle u|\cos\rangle \d\zeta_n[u].\cos+\langle u|\sin\rangle \d\zeta_n[u].\sin.
\end{equation*}

Moreover, if $s<\half$, there exists $C_s(R)>0$ such that
\[
\left|J^s(T)\right|
	\leq C_s(R)+C_s(R)\sqrt{ I^s(T)}.
\]
\end{lem}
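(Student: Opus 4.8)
The plan is to mirror the proof of Lemma~\ref{lem:J(T,T')}. Under the hypothesis $\sum_n c_n\gamma_n(t)\le R$ the series defining $J^s(T)$ converges absolutely and uniformly in $t$, so one may exchange the sum over $n\neq p$ with the time integral; for each pair $n\neq p$ one then integrates by parts using the evolution equation~\eqref{eq:dot_eta} of $\eta_{n,p}$, after dividing by $i\Omega_{n,p}$ (legitimate since $|\Omega_{n,p}|\ge 2|n-p|>0$). This produces, for the $(n,p)$ term, a boundary contribution $\bigl[\tfrac{(c_n+c_p)a_na_p}{i\Omega_{n,p}}\eta_{n,p}\bigr]_0^{T}$, a contribution involving $\tfrac{\d}{\d t}\bigl(\tfrac{(c_n+c_p)a_na_p}{i\Omega_{n,p}}\bigr)$, and the source term $\alpha\tfrac{(c_n+c_p)a_na_p}{i\Omega_{n,p}}F_{n,p}$ with $F_{n,p}$ as in~\eqref{def:F_n,p}. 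Writing $F_{n,p}=P_{n,p}+Q_{n,p}$ with $P_{n,p}=(\overline{Z_n}\zeta_{n+1}+\overline{\zeta_n}Z_{n+1})\zeta_p\overline{\zeta_{p+1}}$ and $Q_{n,p}=(Z_p\overline{\zeta_{p+1}}+\zeta_p\overline{Z_{p+1}})\overline{\zeta_n}\zeta_{n+1}$, the identity $Q_{p,n}=\overline{P_{n,p}}$ together with the relabelling $n\leftrightarrow p$ (and $\Omega_{p,n}=-\Omega_{n,p}$) shows that the sum of the source terms equals $\alpha K^s(T)+\alpha\mathcal{R}(T)$, where
\[
\mathcal{R}(T)=2\sum_{\substack{n,p\\ n\neq p}}\int_0^{T}\frac{a_n(t)a_p(t)\,c_n}{\Omega_{n,p}(t)}\,\Im\!\bigl(P_{n,p}(t)\bigr)\,\d t .
\]

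It then suffices to bound by $C(R)$ the sum over $n\neq p$ of the boundary terms, of the $\tfrac{\d}{\d t}(\cdots)$ terms, and of $\mathcal{R}(T)$. For the boundary terms one uses $|a_n|\le C(R)$, $|\Omega_{n,p}|^{-1}\le\tfrac12$, the inequality $|\eta_{n,p}|\le\tfrac14(\gamma_n+\gamma_{n+1})(\gamma_p+\gamma_{p+1})$, and factors the double sum as $\bigl(\sum_n c_n(\gamma_n+\gamma_{n+1})\bigr)\bigl(\sum_p(\gamma_p+\gamma_{p+1})\bigr)$, the first factor being $\le C(R)$ by $\sum_n c_n\gamma_n\le R$ (using $c_n\le c_{n+1}$ after reindexing) and the second by the bound $\sum_n\gamma_n\le C(R)$ coming from Proposition~\ref{prop:normL2} and the Parseval formula. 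For the $\tfrac{\d}{\d t}(\cdots)$ terms one uses $|\dot a_n|\le C(R)|\langle u|e^{ix}\rangle|$ (from~\eqref{ineq:a_n_lipschitz} and~\eqref{ineq:dot_gamma}) and $|\dot\Omega_{n,p}|\le C(R)|\langle u|e^{ix}\rangle|$, obtaining an integrand $\le C(R)|\langle u|e^{ix}\rangle|\tfrac{c_n+c_p}{|n-p|}|\eta_{n,p}|$; splitting $c_n=\sqrt{c_n}\cdot\sqrt{c_n}$ and applying the Cauchy-Schwarz inequality bounds the sum by a product of $\bigl(\int_0^{T}|\langle u|e^{ix}\rangle|^2\sum_{n\neq p}\tfrac{c_n\gamma_n}{|n-p|^2}\d t\bigr)^{1/2}\le C(R)$ (using $\sum_{p\neq n}|n-p|^{-2}\le C$, $\sum_n c_n\gamma_n\le R$ and $\int_0^{+\infty}|\langle u|e^{ix}\rangle|^2\le R^2$) and $\bigl(\int_0^{T}\sum_{n\neq p}c_n\gamma_{n+1}\gamma_p\gamma_{p+1}\d t\bigr)^{1/2}\le C(R)$ (using $\sum_n c_n\gamma_{n+1}\le R$ and $\int_0^{+\infty}\sum_p\gamma_p\gamma_{p+1}\le C(R)$ from Proposition~\ref{prop:I(T,T')_n,p}). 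The term $\mathcal{R}(T)$ is estimated the same way: there the weight $c_n$ sits on the same index as $Z_n,\gamma_n,\gamma_{n+1}$, so it is distributed between the two Cauchy-Schwarz factors, the only new input being $\sum_n c_n|Z_n|^2\le C(R)|\langle u|e^{ix}\rangle|^2$, which follows from the pointwise bound $|Z_n|\le C(R)|\langle u|e^{ix}\rangle|(|\zeta_{n-1}|+|\zeta_n|+|\zeta_{n+1}|)$ (consequence of Theorem~\ref{thm:dzeta.cos} and Corollary~\ref{cor:dzeta.cos}) and $\sum_n c_n(\gamma_{n-1}+\gamma_n+\gamma_{n+1})\le C(R)$. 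This proves $|J^s(T)-\alpha K^s(T)|\le C(R)$ for any $s\ge0$.

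For the last assertion, one assumes $s<\half$ and estimates $K^s(T)$ directly. Bounding $|a_na_p|\le C(R)$, $|\Omega_{n,p}|^{-1}\le\tfrac1{2|n-p|}$, $|P_{n,p}|\le(|Z_n|\sqrt{\gamma_{n+1}}+\sqrt{\gamma_n}|Z_{n+1}|)\sqrt{\gamma_p\gamma_{p+1}}$, and using the $n\leftrightarrow p$ symmetry of the two pieces $c_pP_{n,p}$ and $c_nQ_{n,p}$, one reduces to controlling $\sum_{n\neq p}\int_0^{T}\tfrac{c_p}{|n-p|}(|Z_n|\sqrt{\gamma_{n+1}}+\sqrt{\gamma_n}|Z_{n+1}|)\sqrt{\gamma_p\gamma_{p+1}}\,\d t$. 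Splitting $c_p=\sqrt{c_p}\cdot\sqrt{c_p}$ and applying Cauchy-Schwarz, the first factor is $\bigl(\int_0^{T}\sum_n|Z_n|^2\sum_{p\neq n}\tfrac{c_p}{|n-p|^2}\,\d t\bigr)^{1/2}$ (and analogously with $|Z_{n+1}|$); here the hypothesis $s<\half$ enters through the elementary estimate $\sum_{p\neq n}\tfrac{p^{2s}}{|n-p|^2}\le C_s(n^{2s}+1)$, valid precisely because the tail $\sum_{p>2n}p^{2s-2}$ converges when $s<\half$; hence this factor is $\le C_s(R)\bigl(\int_0^{T}\sum_n(c_n+1)|Z_n|^2\,\d t\bigr)^{1/2}\le C_s(R)$, using the bound on $\sum_n c_n|Z_n|^2$ and $\sum_n|Z_n|^2\le C(R)|\langle u|e^{ix}\rangle|^2$. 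The second factor is $\bigl(\int_0^{T}\sum_{n\neq p}c_p\gamma_{n+1}\gamma_p\gamma_{p+1}\,\d t\bigr)^{1/2}\le\bigl(C(R)\int_0^{T}\sum_p c_p\gamma_p\gamma_{p+1}\,\d t\bigr)^{1/2}=C(R)\sqrt{I^s(T)}$, since $\sum_n\gamma_{n+1}\le C(R)$. Combining with the first part yields
\[
|J^s(T)|\le |J^s(T)-\alpha K^s(T)|+\alpha|K^s(T)|\le C_s(R)+C_s(R)\sqrt{I^s(T)} .
\]

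The main difficulty, and the reason the method degrades as $s$ grows, lies in the very last step: in $K^s(T)$ the weight $c_p=p^{2s}$ is attached to the index $p$, which is \emph{far} from the gain factor $Z_n$, so it has to be absorbed by $|n-p|^{-2}$, and this absorption—equivalently the estimate $\sum_{p\neq n}p^{2s}|n-p|^{-2}\le C_s(n^{2s}+1)$—is available only for $s<\half$. Treating $s\ge\half$ would require transferring the weight onto the $Z$-index before taking absolute values, i.e. a further, more delicate integration by parts (cf. Remark~\ref{rk:higher_Hs}). The secondary point demanding care is performing correctly the symmetrisation that isolates $K^s(T)$ from the integration by parts and checking that the leftover term $\mathcal{R}(T)$ is indeed bounded by $C(R)$, for which it is essential that $c_n$ in $\mathcal{R}(T)$ is carried by the $Z_n$-index.
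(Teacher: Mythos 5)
Your proposal is correct and follows essentially the same route as the paper: exchange of sum and time integral, integration by parts via the ODE for $\eta_{n,p}$, bounding the boundary and $\tfrac{\d}{\d t}$ contributions by $C(R)$, identifying the leftover source terms (weight on the $Z$-index) and estimating them with $\sum_n c_n|Z_n|^2\leq C(R)|\langle u|e^{ix}\rangle|^2$ together with Proposition~\ref{prop:I(T,T')_n,p}, and for $s<\half$ the absorption $\sum_{p\neq n}p^{2s}|n-p|^{-2}\leq C_s(n^{2s}+1)$ plus Cauchy--Schwarz to reach $C_s(R)\sqrt{I^s(T)}$. Your symmetrisation writing the remainder as $2\sum_{n\neq p}\int c_n a_na_p\Omega_{n,p}^{-1}\Im(P_{n,p})$ is just a repackaging of the paper's ``by symmetry'' treatment of the $c_nP_{n,p}$ and $c_pQ_{n,p}$ pieces, so no genuinely different ideas are involved.
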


\begin{proof}
We follow the strategy of proof of Lemma~\ref{lem:J(T,T')}. For $n,p\geq 0$, $n\neq p$, recall the notation  $\eta_{n,p}=\overline{\zeta_n}\zeta_{n+1}\zeta_p\overline{\zeta_{p+1}}$. Since for all $t\geq 0$, $\sum_nc_n\gamma_n(t)\leq R$, there exists $C(R)>0$ such that for all $t\in\R$, 
\[\sum_{\substack{n,p\\n\neq p}}|(c_n+c_p)a_n(t)a_p(t)\eta_{n,p}(t)|\leq C(R).\]
 Therefore, one can exchange the summation sign with the time integral:
\begin{equation*}
J^s(T)
	=\sum_{\substack{n,p\\n\neq p}}\int_0^T(c_n+c_p)a_n(t)a_p(t)\eta_{n,p}(t)\d t.
\end{equation*}
For each term in the series over the indexes $n$ and $p$, we use  equality~\eqref{eq:J(T,T')-IPP} from the proof of Lemma~\ref{lem:J(T,T')}, which came from the differential equation~\eqref{eq:dot_eta} satisfied by $\eta_{n,p}$:
\begin{multline*}
\int_0^Ta_n(t)a_p(t)\eta_{n,p}(t)\d t
	=\left[\frac{a_n(t)a_p(t)}{i\Omega_{n,p}(t)}\eta_{n,p}(t)\right]_0^{T}-\int_0^T\frac{\d}{\d t}\left(\frac{a_n(t)a_p(t)}{i\Omega_{n,p}(t)}\right)\eta_{n,p}(t)\d t
	\\+\alpha\int_0^T\frac{ a_n(t)a_p(t) }{i\Omega_{n,p}(t)}F_{n,p}(t)\d t,
\end{multline*}
with
\begin{equation*}
F_{n,p}
	=(\overline{Z_n} \zeta_{n+1}+\overline{\zeta_n}Z_{n+1})\zeta_p\overline{\zeta_{p+1}}
	+(Z_p \overline{\zeta_{p+1}}+\zeta_p\overline{Z_{p+1}})\overline{\zeta_n}\zeta_{n+1}.
\end{equation*}
We now consider the three terms in the right-hand side separately.

\begin{enumerate}
\item For all $t\geq 0$, we have
\[
\sum_{\substack{n,p\\n\neq p}}c_n\left|\frac{a_n(t)a_p(t)}{i\Omega_{n,p}(t)}\eta_{n,p}(t)\right|
	\leq C(R)\left(\sum_{n=0}^{+\infty}c_n\gamma_n(t)\right)\left(\sum_{p=0}^{+\infty}\gamma_p(t)\right)
	\leq C'(R).
\]
The upper bound is independent of $t$ by assumption. Since $n$ and $p$ play symmetric roles, we also have the estimate $\sum_{\substack{n,p,\,n\neq p}}c_p\left|\frac{a_n(t)a_p(t)}{i\Omega_{n,p}(t)}\eta_{n,p}(t)\right|\leq C'(R)$.

Therefore, the series with general term  $\sum_{n,p,\,\substack{n\neq p}}(c_n+c_p)\left[\frac{a_n(t)a_p(t)}{i\Omega_{n,p}(t)}\eta_{n,p}(t)\right]_0^{T}$ is absolutely convergent and bounded by some constant $C''(R)$.

\item Next, recall from inequality~\eqref{eq:J(T,T')-term2} that
\begin{align*}
\int_0^T\left|\frac{\d}{\d t}\left(\frac{a_n(t)a_p(t)}{i\Omega_{n,p}(t)}\right)\eta_{n,p}(t)\right|\d t
	&\leq C(R)\int_0^T|\langle u(t)|e^{ix}\rangle|\frac{|\eta_{n,p}(t)|}{|n-p|}\d t.
\end{align*}
Since $n$ and $p$ play symmetric roles in the above upper bound, we only estimate
\begin{multline*}
\sum_{\substack{n,p\\n\neq p}}c_n\int_0^T\left|\frac{\d}{\d t}\left(\frac{a_n(t)a_p(t)}{i\Omega_{n,p}(t)}\right)\eta_{n,p}(t)\right|\d t\\
	\leq C(R)\Big(\sum_{\substack{n,p\\n\neq p}}\int_0^T|\langle u(t)|e^{ix}\rangle|^2\frac{c_n\gamma_n(t)}{|n-p|^2}\d t\Big)^{\half}
	\Big(\sum_{\substack{n,p\\n\neq p}}\int_0^Tc_n\gamma_{n+1}(t)\gamma_p(t)\gamma_{p+1}(t)\d t\Big)^{\half}.
\end{multline*}
But there exists $C>0$ such that for all $n$, $\sum_{p,\, p\neq n}\frac{1}{|n-p|^2}\leq C$ and moreover, by assumption, $\sum_{n}c_n\gamma_n(t)\leq R$. We conclude that
\begin{align*}
\sum_{\substack{n,p\\n\neq p}}c_n\int_0^T\left|\frac{\d}{\d t}\left(\frac{a_n(t)a_p(t)}{i\Omega_{n,p}(t)}\right)\eta_{n,p}(t)\right|\d t
	&\leq C(R)\left(\int_0^{+\infty}|\langle u(t)|e^{ix}\rangle|^2\d t\right)^{\half}\sqrt{I^0(T)}.
\end{align*}

\item Finally, we prove that
\begin{multline*}
\sum_{\substack{n,p\\n\neq p}}c_n\int_0^T\left|\frac{ a_n(t)a_p(t) }{i\Omega_{n,p}(t)}\left(\overline{Z_n(t)} \zeta_{n+1}(t)+\overline{\zeta_n(t)}Z_{n+1}(t)\right)\zeta_p(t)\overline{\zeta_{p+1}(t)}\right|\d t\\
	\leq C_s(R)\left(\int_0^{+\infty}|\langle u(t)|e^{ix}\rangle|^2\d t\right)^{\half}\sqrt{I^0(T)}.
\end{multline*}
By symmetry, we would also get
\begin{multline*}
\sum_{\substack{n,p\\n\neq p}}c_p\int_0^T\left|\frac{ a_n(t)a_p(t) }{i\Omega_{n,p}(t)}\left(Z_p(t) \overline{\zeta_{p+1}(t)}+\zeta_p(t)\overline{Z_{p+1}(t)}\right)\overline{\zeta_n(t)}\zeta_{n+1}(t)\right|\d t\\
	\leq C_s(R)\left(\int_0^{+\infty}|\langle u(t)|e^{ix}\rangle|^2\d t\right)^{\half}\sqrt{I^0(T)}.
\end{multline*}

In this purpose, we use Corollary~\ref{cor:dzeta.cos}: if for all $t\geq 0$, we have $\sum_{n}c_n\gamma_n(t)\leq R$ (recall that $c_n=n^{2s}$), then for all $t\geq 0$,
\[
\|d\zeta_n[u].\cos\|_{h^{s}_+}+\|d\zeta_n[u].\sin\|_{h^{s}_+}\leq C(R).
\]

We deduce that $Z_n	=\langle u|\cos\rangle \d\zeta_n[u].\cos+\langle u|\sin\rangle \d\zeta_n[u].\sin$ satisfies
\begin{equation}\label{eq:bound_Zn}
\sum_{n\geq 0}c_n|Z_n(t)|^2
	\leq C(R)|\langle u(t)|e^{ix}\rangle|^2.
\end{equation}
Applying Cauchy-Schwarz' inequality, we conclude
\begin{align*}
\sum_{\substack{n,p\\n\neq p}}c_n\int_0^T&\left|\frac{ a_n(t)a_p(t) }{i\Omega_{n,p}(t)}\overline{Z_n(t)} \zeta_{n+1}(t)\zeta_p(t)\overline{\zeta_{p+1}(t)}\right|\d t\\
	&\leq C(R)\left(\int_0^T\sum_{\substack{n,p\\n\neq p}} \frac{c_n|Z_n(t)|^2}{|n-p|^2}\d t\right)^{\half}\left(\int_0^T\sum_{\substack{n,p\\n\neq p}}c_n\gamma_{n+1}\gamma_p\gamma_{p+1}\d t\right)^{\half}\\
	&\leq C'(R)\left(\int_0^{+\infty}|\langle u(t)|e^{ix}\rangle|^2\d t\right)^{\half}\sqrt{I^0(T)}.
\end{align*}
One can apply the same strategy to the other term (involving $\overline{\zeta_n}Z_{n+1}\zeta_p\overline{\zeta_{p+1}}$) and get the desired inequality.
\end{enumerate}

To conclude, since $\int_0^{+\infty}|\langle u(t)|e^{ix}\rangle|^2\d t\leq R^2$, we have proven that there exists $C(R)>0$ such that for all $T\geq 0$,
\begin{equation*}
|J^s(T)-\alpha K^s(T)|
	\leq C(R)+C(R)\sqrt{I^0(T)}\leq C'(R).
\end{equation*}

For $s<\half$, we copy the proof of point 3. and show that actually
\[
\sum_{\substack{n,p,\,n\neq p}}(c_n+c_p)\int_0^T\left|\frac{ a_n(t)a_p(t) }{i\Omega_{n,p}(t)}F_{n,p}(t)\right|\d t
	\leq C_s(R)\left(\int_0^{+\infty}|\langle u(t)|e^{ix}\rangle|^2\d t\right)^{\half}\sqrt{I^s(T)}.
\]
Indeed, applying Cauchy-Schwarz' inequality, we have
\begin{multline*}
\sum_{\substack{n,p,\,n\neq p}}(c_n+c_p)\int_0^T\left|\frac{ a_n(t)a_p(t) }{i\Omega_{n,p}(t)}\right||\overline{Z_n(t)} \zeta_{n+1}(t)\zeta_p(t)\overline{\zeta_{p+1}(t)}|\d t\\
	\leq\left(\int_0^T\sum_{\substack{n,p,\,n\neq p}} \frac{(c_n+c_p)|Z_n(t)|^2}{|n-p|^2}\d t\right)^{\half}\left(\int_0^T\sum_{\substack{n,p,\,n\neq p}}(c_n+c_p)\gamma_{n+1}\gamma_p\gamma_{p+1}\d t\right)^{\half}.
\end{multline*}
Since $s<\half$, we have an estimate of the form $\sum_{p,\,p\neq n}\frac{c_p}{|n-p|^2}\leq C_sc_n$ by comparing $n$ to $\frac{p}{2}$. This leads to
\begin{multline*}
\sum_{\substack{n,p,\,n\neq p}}(c_n+c_p)\int_0^T\left|\frac{ a_n(t)a_p(t) }{i\Omega_{n,p}(t)}\right||\overline{Z_n(t)} \zeta_{n+1}(t)\zeta_p(t)\overline{\zeta_{p+1}(t)}|\d t\\
	\leq C_s(R)\left(\int_0^{+\infty}|\langle u(t)|e^{ix}\rangle|^2\d t\right)^{\half}\sqrt{I^s(T)}.
\end{multline*}
One can apply the same strategy to the three other terms composing $F_{n,p}$ (see the definition~\eqref{def:F_n,p} of $F_{n,p}$) and get the desired result.

\end{proof}

\subsection{The case \texorpdfstring{$\half\leq s<\frac{3}{2}$}{1/2<=s<3/2}}

We now consider the part left to study when $s\geq \half$, which has been defined in~\eqref{eq:def_K^s}:
\begin{multline*}
K^s(T)
	=\sum_{\substack{n,p\\n\neq p}}\int_0^T\frac{ a_n(t)a_p(t) }{i\Omega_{n,p}(t)}
	\Big(c_p\left(\overline{Z_n(t)} \zeta_{n+1}(t)+\overline{\zeta_n(t)}Z_{n+1}(t)\right)\zeta_p(t)\overline{\zeta_{p+1}(t)}\\
	+c_n\left(Z_p(t) \overline{\zeta_{p+1}(t)}+\zeta_p(t)\overline{Z_{p+1}(t)}\right)\overline{\zeta_n(t)}\zeta_{n+1}(t)\Big)\d t.
\end{multline*}
In order to find an upper bound for $K^s(T)$, we apply Theorem~\ref{thm:dzeta.cos} and get a decomposition for the term $Z_n=\langle u|\cos\rangle \d\zeta_n[u].\cos+\langle u|\sin\rangle \d\zeta_n[u].\sin$. We can therefore write $K^s(T)$ as a sum of terms of the form 
\[\sum_{n,p}\int_0^Tc_p\frac{a_na_p}{i\Omega_{n,p}} G_{n,p} \d t,
\]
where up to exchanging the roles of $n$ and $p$, $G_{n,p}$ is in one of the following two cases.
\begin{itemize}
\item In the first case, we take the terms corresponding to $p_n^*$ or $q_n^*$ from Theorem~\ref{thm:dzeta.cos}
\[
G_{n,p}=\langle u|h_0\rangle r_n\overline{\zeta_{n+\sigma}}\zeta_{n+1}\zeta_p\overline{\zeta_{p+1}},
\]
where $\sigma\in\{-1, 1\}$, $h_0=e^{ix}$ or $e^{-ix}$, and $r_n$ satisfies the estimates 
\[|r_n|\leq C(R) \quad\text{and}\quad |\dot{r_n}|\leq C(R)|\langle u|e^{ix}\rangle|,
\]
up to exchanging the roles of the indexes $n$, $n+1$, $p$ and $p+1$: we can also have
\[
G_{n,p}=\langle u|h_0\rangle r_n\overline{\zeta_{n}}\zeta_{n+1+\sigma}\zeta_p\overline{\zeta_{p+1}}.
\]
\item In the second case, we consider the terms corresponding to $A_{n,k}^*$ or $B_{n,k}^*$ from Theorem~\ref{thm:dzeta.cos}
\[
G_{n,p}=\langle u|h_0\rangle \sum_{k}A_{n,k}\zeta_k\overline{\zeta_{k+1}}\overline{\zeta_n}\zeta_{n+1}\zeta_p\overline{\zeta_{p+1}},
\]
or
\[
G_{n,p}=\langle u|h_0\rangle \sum_{k}A_{n,k}\overline{\zeta_k}\zeta_{k+1}\overline{\zeta_n}\zeta_{n+1}\zeta_p\overline{\zeta_{p+1}},
\]
where $h_0=e^{ix}$ or $e^{-ix}$, and $A_{n,k}$ satisfies the estimates 
\[|A_{n,k}|\leq C(R) \quad\text{and}\quad |\dot{A_{n,k}}|\leq C(R)|\langle u|e^{ix}\rangle|,
\]
up to replacing $A_{n,k}$ by $A_{n+1,k}$, $B_{n,k}$ or $B_{n+1,k}$, which satisfy the same estimates.
\end{itemize}
For the sake of simplicity, we only treat the first expression of $G_{n,p}$ for each situation and assume that $h_0=e^{ix}$:
\begin{itemize}
\item either
\[
G_{n,p}=\langle u|e^{ix}\rangle r_n\overline{\zeta_{n+\sigma}}\zeta_{n+1}\zeta_p\overline{\zeta_{p+1}}
\]
\item or
\[
G_{n,p}=\langle u|e^{ix}\rangle \sum_{k}A_{n,k}\zeta_k\overline{\zeta_{k+1}}\overline{\zeta_n}\zeta_{n+1}\zeta_p\overline{\zeta_{p+1}},
\]
\end{itemize}
but the other cases are similar. 

In both cases, we decompose $\langle u|h_0\rangle$ thanks to the formula from Lemma~\ref{lem:<u|e^ix>}
\[
\langle u|e^{ix}\rangle=-\sum_q a_q\overline{\zeta_q}\zeta_{q+1}
\]
(for treating the case $h_0=e^{-ix}$, we would need to take the conjugate of this expression). Then the strategy follows the proof of Lemma~\ref{lem:J(T,T')}, where $\eta_{n,p}$ is replaced by a longer expression, typically a product $\eta_{n,p,q}^\sigma$ of three or four of terms of the form $\overline{\zeta_q}\zeta_{q+1}\overline{\zeta_{n+\sigma}}\zeta_{n+1}\zeta_p\overline{\zeta_{p+1}}$ or $\overline{\zeta_q}\zeta_{q+1}\zeta_k\overline{\zeta_{k+1}}\overline{\zeta_n}\zeta_{n+1}\zeta_p\overline{\zeta_{p+1}}$. In particular, we use the differential equation satisfied by the oscillating part and integrate by parts.

\subsubsection{Border terms}\label{subpart:5.3.1}

\begin{lem}
We consider a term in the first situation
\[
G_{n,p}=\sum_q a_qr_n\overline{\zeta_q}\zeta_{q+1}\overline{\zeta_{n+\sigma}}\zeta_{n+1}\zeta_p\overline{\zeta_{p+1}}
\]
such that $\sigma\in\{-1, 1\}$,  and for all $t\geq 0$, $|r_n(t)|\leq C(R)|$ and $|\dot{r_n}(t)|\leq C(R)|\langle u(t)|e^{ix}\rangle|$. Assume that $\half\leq s<\frac{3}{2}$ and for all $t\geq 0$, $\sum_nc_n\gamma_n(t)\leq R$ (where $c_n=n^{2s}$). Then for all $\varepsilon>0$, there exists $C_s(R,\varepsilon)$ such that for all $T\geq 0$,
\[
\left|\sum_{\substack{n,p\\n\neq p}}c_p\int_0^T\frac{a_na_p}{i\Omega_{n,p}} G_{n,p}\d t\right|
	\leq C_s(R,\varepsilon)+C_s(R,\varepsilon)\sqrt{I^s(T)}+\varepsilon I^s(T).
\]
\end{lem}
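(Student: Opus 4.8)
The plan is to transpose the integration-by-parts scheme of Lemma~\ref{lem:J(T,T')} to the longer monomials that appear here. After expanding $\langle u|e^{ix}\rangle=-\sum_q a_q\overline{\zeta_q}\zeta_{q+1}$ (Lemma~\ref{lem:<u|e^ix>}) inside $G_{n,p}$, the quantity to estimate is
\[
\sum_{\substack{n,p\\ n\ne p}}\sum_q\int_0^T c_p\,\frac{a_n a_p a_q r_n}{i\Omega_{n,p}}\,\eta_{n,p,q}^\sigma(t)\d t,
\qquad \eta_{n,p,q}^\sigma:=\overline{\zeta_q}\zeta_{q+1}\,\overline{\zeta_{n+\sigma}}\zeta_{n+1}\,\zeta_p\overline{\zeta_{p+1}}.
\]
As in Lemma~\ref{lem:J(T,T')}, one first checks that this triple series converges absolutely and uniformly in $t$ (using $\sum_n c_n\gamma_n(t)\le R$, $\sum_n n\gamma_n(t)\le C(R)$ and the uniform bounds of part~\ref{part:def_spectral} on $a_n^*,\kappa_n^*,\mu_n^*$), so that $\sum$ and $\int$ commute, and that the outer factor contributes only through $|\Omega_{n,p}|\ge 2|n-p|$.

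Next I would split the $q$-sum (and, for $\sigma=-1$, the analogous collapsing indices) into a \emph{resonant} part and a \emph{non-resonant} part. For the resonant indices — typically $q=p$, where for $\sigma=1$ one has $\overline{\zeta_{n+1}}\zeta_{n+1}=\gamma_{n+1}$ and $\overline{\zeta_p}\zeta_{p+1}\zeta_p\overline{\zeta_{p+1}}=\gamma_p\gamma_{p+1}$ — the monomial degenerates to a product of actions times slowly varying coefficients. These contributions are handled directly: bounding $1/|\Omega_{n,p}|$ by $1/|n-p|$ and summing the index $n$ against $\sum_{n\ne p}\gamma_{n+1}(t)/|n-p|\le C(R)$, one is left with $\int_0^T\sum_p c_p\,\gamma_p\gamma_{p+1}\d t$-type expressions, which one controls by cutting at a level $N$: the finitely many low modes $p\le N$ cost at most $c_N\int_0^{+\infty}\sum_p\gamma_p\gamma_{p+1}\d t\le C(R)$ by Proposition~\ref{prop:I(T,T')_n,p} (a $T$-independent constant), while the tail $p>N$ contributes at most $\varepsilon\,I^s(T)$ once $N=N(\varepsilon)$ is chosen, using $\sum_{p>N}\gamma_p\le C(R)/N$. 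For the non-resonant indices, the factor $\overline{\zeta_q}\zeta_{q+1}\zeta_p\overline{\zeta_{p+1}}$ (resp.\ the full $\eta_{n,p,q}^\sigma$ when $\sigma=-1$) obeys a differential equation $\frac{\d}{\d t}(\cdot)=i\widetilde\Omega(\cdot)-\alpha\widetilde F(\cdot)$ with $\widetilde F$ built from the $Z_m$ as in~\eqref{def:F_n,p}, and where, via the telescoping identity $\omega_{m+1}-\omega_m=2m+1-2\sum_{k>m}\gamma_k$ together with the sign of the correction, one checks that $|\widetilde\Omega|$ is bounded below by a constant times the relevant index gap. One then divides by $i\widetilde\Omega$ and integrates by parts in $t$, exactly as in~\eqref{eq:J(T,T')-IPP}.

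This produces three families of terms, treated as in Lemma~\ref{lem:J(T,T')}. The boundary terms carry two phase denominators $1/(|n-p|\,|q-p|)$; summing one index against $\sum_{n\ne p}\gamma_{n+1}/|n-p|\le C(R)$ and applying Cauchy--Schwarz in the remaining index, together with $\sum_n c_n\gamma_n\le R$ and $\sum_n n\gamma_n\le C(R)$, bounds them by a constant $C_s(R)$. The terms where the time derivative falls on the coefficient $a_na_pa_qr_n/(\Omega_{n,p}\widetilde\Omega)$ (or on the slowly varying factor $\gamma_{n+1}$ when $\sigma=1$) each produce a factor $|\langle u(t)|e^{ix}\rangle|$, since $\dot a_m^*,\dot r_n,\dot\Omega,\dot\gamma_m$ are all $\lesssim|\langle u(t)|e^{ix}\rangle|$ (see~\eqref{ineq:a_n_lipschitz},~\eqref{ineq:dot_gamma},~\eqref{ineq:gamma_n_dot}); a Cauchy--Schwarz in $t$ then pairs $\|\langle u|e^{ix}\rangle\|_{L^2_t}\le R$ (Proposition~\ref{prop:normL2}) against the square root of a weighted index sum which, after the mode cutoff, is at most $C_s(R,\varepsilon)\sqrt{I^s(T)}$ or $C_s(R,\varepsilon)+\varepsilon I^s(T)$. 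The forcing terms involving $\widetilde F$ are identical, now using $\sum_n c_n|Z_n(t)|^2\le C(R)|\langle u(t)|e^{ix}\rangle|^2$ from Corollary~\ref{cor:dzeta.cos} (see~\eqref{eq:bound_Zn}). Combining the three families and using $\int_0^{+\infty}|\langle u(t)|e^{ix}\rangle|^2\d t\le R^2$ yields the claimed estimate.

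The main obstacle is the combinatorics of the weighted double/triple sums with denominators $1/(|n-p|\,|q-p|)$ and the weight $c_p=p^{2s}$, which is localized near the active indices but not where it is most convenient. Closing them requires distributing $c_p$ by arithmetic-geometric mean inequalities among the pairs $(p,p+1)$, $(q,q+1)$, $(n,n+1)$ so that each surviving block is either of the form $c_m\gamma_m\gamma_{m+1}$ (feeding $I^s$) or a product of actions summable to a constant; it also requires the convergence, after the integration by parts, of the purely numerical sums $\sum_{p\ne n}c_p/|n-p|^k$, which is precisely what restricts the argument to $s<\tfrac32$ (the exponent $k$ obtained after the iterated integration by parts is just large enough that $2s-k<-1$); and, for each symmetric variant of $G_{n,p}$ and for $\sigma=\pm1$, one must correctly identify the resonant set on which $\widetilde\Omega$ fails to be bounded below — the case $\sigma=-1$, where $\overline{\zeta_{n-1}}\zeta_{n+1}$ never collapses on its own, being the most delicate point. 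The remainder is bookkeeping patterned on the proofs of Lemma~\ref{lem:J(T,T')} and Proposition~\ref{prop:I(T,T')_n,p}.
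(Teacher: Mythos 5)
Your overall scheme is the one used in the paper: treat $\eta^\sigma_{n,p,q}=\overline{\zeta_q}\zeta_{q+1}\,\overline{\zeta_{n+\sigma}}\zeta_{n+1}\,\zeta_p\overline{\zeta_{p+1}}$ as the oscillatory unit, split according to whether its frequency $\Omega^\sigma_{n,p,q}=2\big((1-\sigma)n+q-p+O_R(1)\big)$ is bounded away from zero, integrate by parts in the non-resonant region, and close with the bound $\sum_n c_n|Z_n|^2\le C(R)|\langle u|e^{ix}\rangle|^2$ and $\int_0^{+\infty}|\langle u|e^{ix}\rangle|^2\d t\le R^2$; your location of the restriction $s<\tfrac32$ (convergence of $\sum_{p}c_p/\big(|n-p|^2(1+|(1-\sigma)n+q-p|)^2\big)$, i.e.\ $2s-4<-1$) is also correct. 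However, there is a genuine gap in your treatment of the resonant part, which is exactly where the term $\varepsilon I^s(T)$ of the conclusion must be produced. After you sum the index $n$ against $\sum_{n\neq p}\gamma_{n+1}/|n-p|\le C(R)$, you are left with $C(R)\int_0^T\sum_p c_p\gamma_p\gamma_{p+1}\d t=C(R)\,I^s(T)$ with a constant $C(R)$ that is not small, and your cut in $p$ cannot repair this: the tail $\int_0^T\sum_{p>N}c_p\gamma_p\gamma_{p+1}\d t$ is bounded by $I^s(T)$ with constant $1$, not by $\varepsilon I^s(T)$, and the estimate $\sum_{p>N}\gamma_p\le C(R)/N$ cannot be invoked without sacrificing one of the two factors $\gamma_p\gamma_{p+1}$ that make up the integrand of $I^s$ (what would remain, $\int_0^T\sum_p c_p\gamma_p\d t$, is not bounded uniformly in $T$). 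In the paper the smallness is extracted differently: Cauchy--Schwarz over $(n,p,q,t)$ is arranged so that one factor is $\big(\int_0^T\sum c_p\gamma_q\gamma_{q+1}\gamma_{n+\sigma}\big)^{1/2}\lesssim\sqrt{I^s(T)}$ and the other retains $\frac{c_p}{|n-p|^2}\gamma_{n+1}\gamma_p\gamma_{p+1}$; the dichotomy is then run in $n$ and $|n-p|$, not in $p$: if $n\ge N_0$ or $|n-p|\ge N_0$, the factor $\gamma_{n+1}/|n-p|^2$ supplies the $\varepsilon$ multiplying $\sum_p c_p\gamma_p\gamma_{p+1}$, while if both are $\le N_0$ the resonance constraint forces $c_p\le C_s N_0^{2s}$ (up to the harmless factor $c_nc_q$ absorbed as below), so the unweighted bound $\int_0^{+\infty}\sum_p\gamma_p\gamma_{p+1}\d t\le C(R)$ of Proposition~\ref{prop:I(T,T')_n,p} yields the $T$-independent constant.

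Relatedly, you flag but do not resolve the resonant set for $\sigma=-1$; no separate idea is needed. The resonance condition is simply $|2n+q-p|\le C_0(R)+1$, a band on which the monomial never collapses, and the direct estimate goes through because, for fixed $n,q$, the admissible $p$ lie in an interval of length $C(R)$ and $c_p\le C(R)c_nc_q$, so the weight is distributed onto $\sum_n c_n\gamma_{n+\sigma}\le C(R)$ and $\sum_q c_q\gamma_q\gamma_{q+1}$ (which feeds $I^s$). Once the resonant part is redone along these lines, the remainder of your outline --- boundary terms with the double denominator, derivative-on-coefficient terms, and forcing terms built from the $Z_m$ --- matches the paper's proof.
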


\begin{proof}
The idea is to decompose $G_{n,p}$ between  a part which has a small time derivative, and a part which oscillates rapidly for which we establish a differential equation. We have
\[
\frac{a_na_p}{i\Omega_{n,p}} G_{n,p}=\sum_q\frac{a_na_pa_qr_n}{i\Omega_{n,p}} \eta^{\sigma}_{n,p,q},
\]
where 
\[\eta^{\sigma}_{n,p,q}=\overline{\zeta_q}\zeta_{q+1}\overline{\zeta_{n+\sigma}}\zeta_{n+1}\zeta_p\overline{\zeta_{p+1}}
\]
(for the other possible forms for $G_{n,p}$, we would have a similar formula up to placing the $\sigma$ elsewhere in the product or taking the conjugate of $\overline{\zeta_q}\zeta_{q+1}$).

The oscillating part $\eta^{\sigma}_{n,p,q}$ satisfies the differential equation
\begin{equation}\label{eq:eta_npq}
\frac{\d}{\d t}\eta^{\sigma}_{n,p,q}=i\Omega^{\sigma}_{n,p,q}\eta^{\sigma}_{n,p,q}-\alpha H^{\sigma}_{n,p,q},
\end{equation}
where
\begin{multline*}
H_{n,p,q}=(\overline{Z_{q}}\zeta_{q+1}+\overline{\zeta_{q}}Z_{q+1})\overline{\zeta_{n+\sigma}}\zeta_{n+1}\zeta_p\overline{\zeta_{p+1}}
	+\overline{\zeta_q}\zeta_{q+1}(\overline{Z_{n+\sigma}}\zeta_{n+1}+\overline{\zeta_{n+\sigma}}Z_{n+1})\zeta_p\overline{\zeta_{p+1}}\\
	+\overline{\zeta_q}\zeta_{q+1}\overline{\zeta_{n+\sigma}}\zeta_{n+1}(Z_p\overline{\zeta_{p+1}}+\zeta_p\overline{Z_{p+1}}),
\end{multline*}
and
\begin{align*}
\Omega^{\sigma}_{n,p,q}
	=&-\omega_q+\omega_{q+1}-\omega_{n+\sigma}+\omega_{n+1}+\omega_p-\omega_{p+1}\\
	=&2((1-\sigma)n+q-p)
	+1-\sigma^2-2\sum_{k\geq q+1}\gamma_k\\
	&+2\sum_{k\geq n+\sigma+1}\min(k,n+\sigma)\gamma_k-2\sum_{k\geq n+2}\min(k,n+1)\gamma_k+2\sum_{k\geq p+1}\gamma_k.
\end{align*}
We write
\begin{align*}
\Omega^{\sigma}_{n,p,q}
	&=2\left((1-\sigma)n+q-p+\widetilde{\Omega}^{\sigma}_{n,p,q}\right),
\end{align*}
with $|\widetilde{\Omega}^{\sigma}_{n,p,q}|\leq C_0(R)$ and $|\frac{\d}{\d t}\widetilde{\Omega}^{\sigma}_{n,p,q}|\leq C_0(R)|\langle u|e^{ix}\rangle|$.
For all $n,p,q$ such that $|(1-\sigma)n+q-p|> C_0(R)+1$, one has $\Omega^{\sigma}_{n,p,q}\neq 0$, so that one can divide by $\Omega^{\sigma}_{n,p,q}$ in the differential equation satisfied by $\eta^{\sigma}_{n,p,q}$.

We split
\begin{multline*}
\sum_{\substack{n,p,\,n\neq p}}c_p\int_0^T\frac{a_na_p}{i\Omega_{n,p}} G_{n,p}\d t
	=\sum_{\substack{n,p,q,\,n\neq p\\|(1-\sigma)n+q-p|> C_0(R)+1}}c_p\int_0^T\frac{a_na_pa_qr_n}{i\Omega_{n,p}} \eta^{\sigma}_{n,p,q}\d t\\
	+\sum_{\substack{n,p,q,\,n\neq p\\|(1-\sigma)n+q-p|\leq C_0(R)+1}}c_p\int_0^T\frac{a_na_pa_qr_n}{i\Omega_{n,p}} \eta^{\sigma}_{n,p,q}\d t.
\end{multline*}

$\bullet$ We first show that for all $\varepsilon>0$, there exists $C_s(R,\varepsilon)$ such that the part with indexes $n,p,q$ satisfying $|(1-\sigma)n+q-p|\leq C_0(R)+1$ is bounded by $\varepsilon I^s(T)+ C_s(R,\varepsilon)\sqrt{I^s(T)}$. Indeed, since $|a_na_pa_qr_n|\leq C(R)$, we have
from the Cauchy-Schwarz' inequality
\begin{multline*}
\sum_{\substack{n,p,q,\,n\neq p\\|(1-\sigma)n+q-p|\leq C_0(R)+1}} c_p\left|\int_0^T\frac{a_na_pa_qr_n}{i\Omega_{n,p}} \eta^{\sigma}_{n,p,q}\d t\right|\\
	\leq  C(R)\left(\int_0^T\sum_{\substack{n,p,q,\,n\neq p\\|(1-\sigma)n+q-p|\leq C_0(R)+1}}c_p\gamma_q\gamma_{q+1}\gamma_{n+\sigma}\d t\right)^{\half}\\
	\left(\int_0^T\sum_{\substack{n,p,q,\,n\neq p\\|(1-\sigma)n+q-p|\leq C_0(R)+1}}\frac{c_p}{|n-p|^2}\gamma_{n+1}\gamma_p\gamma_{p+1}\d t\right)^{\half}.
\end{multline*}

For fixed $n$ and $q$, the possible indexes $p$ in the sum belong to an interval of length~$C(R)$. Moreover, since $p\leq C(R)+4n+q$, we have $c_p\leq C(R)\max(c_n,c_q)\leq C(R)c_nc_q$, so that
\[\sum_{\substack{n,p,q,\,n\neq p\\|(1-\sigma)n+q-p|\leq C_0(R)+1}}c_p\gamma_q\gamma_{q+1}\gamma_{n+\sigma}
	\leq C(R)\sum_{n}c_n\gamma_{n+\sigma}\sum_qc_q\gamma_q\gamma_{q+1}
	\leq  C'(R)\sum_qc_q\gamma_q\gamma_{q+1},
\]
and
\[
\int_0^T\sum_{\substack{n,p,q,\,n\neq p\\|(1-\sigma)n+q-p|\leq C_0(R)+1}}c_p\gamma_q\gamma_{q+1}\gamma_{n+\sigma}\d t
	\leq C'(R)I^s(T).\]
Now, fix $\varepsilon>0$. Then there exists $N_0=N_0(\varepsilon)$ such that for all $t\geq 0$, $\sum_{n\geq N_0}\gamma_{n+1}(t)\leq \varepsilon$ (recall that by assumption, for all $t\geq 0$, $\sum_nn^{2s}\gamma_n(t)\leq R$). Moreover, up to increasing $N_0$, we also have
\[\sum_{n,\,|n-p|\geq N_0}\frac{\gamma_{n+1}}{|n-p|^2}\leq C(R)\sum_{n,\,|n-p|\geq N_0}\frac{1}{|n-p|^2}\leq\varepsilon.\]
In the two cases $n\geq N_0$ or $|n-p|\geq N_0$, we deduce
\begin{align*}
\sum_{\substack{n,p,q,\,n\neq p\\ n\geq N_0 \text{ or } |n-p|\geq N_0\\|(1-\sigma)n+q-p|\leq C_0(R)+1}}\frac{c_p}{|n-p|^2}\gamma_{n+1}\gamma_p\gamma_{p+1}
	&\leq  C(R) \sum_{\substack{n,p,\,n\neq p\\ n\geq N_0 \text{ or } |n-p|\geq N_0}}\frac{c_p}{|n-p|^2}\gamma_{n+1}\gamma_p\gamma_{p+1}\\
	&\leq \varepsilon C'(R) \sum_pc_p\gamma_p\gamma_{p+1}.
\end{align*}
Otherwise, we have $n\leq N_0$ and $|n-p|\leq N_0$, therefore
\begin{align*}
\sum_{\substack{n,p,q,\,n\neq p\\ n\leq N_0 \text{ and } |n-p|\leq N_0\\|(1-\sigma)n+q-p|\leq C_0(R)+1}}\frac{c_p}{|n-p|^2}\gamma_{n+1}\gamma_p\gamma_{p+1}
	&\leq  C(R) \sum_{\substack{n,p,\,n\neq p\\ n\leq N_0 \text{ and } |n-p|\leq N_0}}\frac{c_p}{|n-p|^2}\gamma_{n+1}\gamma_p\gamma_{p+1}\\
	&\leq C'(R)N_0^{2s} \sum_p\gamma_p\gamma_{p+1}.
\end{align*}
Since $\int_0^T\sum_p\gamma_p\gamma_{p+1}\d t\leq C(R)$ (see Proposition~\ref{prop:I(T,T')_n,p}), we deduce that for any $\varepsilon>0$, there exists $C_s(R,\varepsilon)$ such that
\[
\int_0^T\sum_{\substack{n,p,q,\,n\neq p\\|(1-\sigma)n+q-p|\leq C_0(R)+1}}\frac{c_p}{|n-p|^2}\gamma_{n+1}\gamma_p\gamma_{p+1}\d t
	\leq \varepsilon C(R) I^s(T)+C_s(R,\varepsilon).
\]

Therefore we conclude that
\[
\sum_{\substack{n,p,q,\,n\neq p\\|(1-\sigma)n+q-p|\leq C_0(R)+1}}c_p\left|\int_0^T\frac{a_na_pa_qr_n}{i\Omega_{n,p}} \eta^{\sigma}_{n,p,q}\d t\right|\\
	\leq  C(R)\sqrt{I^s(T)}\sqrt{\varepsilon C(R) I^s(T)+C_s(R,\varepsilon)}.
\]
In other words, for all $\varepsilon>0$, there exists $C_s(R,\varepsilon)$ such that
\[\sum_{\substack{n,p,q,\,n\neq p\\|(1-\sigma)n+q-p|\leq C_0(R)+1}}c_p\left|\int_0^T\frac{a_na_pa_qr_n}{i\Omega_{n,p}} \eta^{\sigma}_{n,p,q}\d t\right|\\
	\leq \varepsilon I^s(T)+ C_s(R,\varepsilon)\sqrt{I^s(T)}.
\]

$\bullet$ We now tackle the indexes $n$, $p$ and $q$ such that $n\neq p$ and $|(1-\sigma)n+q-p|> C_0(R)+1$ (so that $\Omega^{\sigma}_{n,p,q}\neq0$). We use the differential equation~\eqref{eq:eta_npq} satisfied by $\eta^{\sigma}_{n,p,q}$, and get that
\[
\int_0^T\frac{a_na_p}{i\Omega_{n,p}} G_{n,p}\d t
	= -\int_0^T\frac{a_na_pa_qr_n}{\Omega_{n,p}\Omega^{\sigma}_{n,p,q}}\left(\frac{\d}{\d t}\eta^{\sigma}_{n,p,q}+\alpha H^{\sigma}_{n,p,q}\right)\d t.
\]
We perform an integration by parts for the first term in the right-hand side of this equality:
\begin{multline*}
\int_0^T\frac{a_na_p}{i\Omega_{n,p}}G_{n,p}\d t
	=\left[-\frac{a_na_pa_qr_n}{\Omega_{n,p}\Omega^{\sigma}_{n,p,q}}\eta^{\sigma}_{n,p,q}\right]_0^T
	+\int_0^T\frac{\d}{\d t}\left(\frac{a_na_pa_qr_n}{\Omega_{n,p}\Omega^{\sigma}_{n,p,q}}\right)\eta^{\sigma}_{n,p,q}\d t\\
	-\alpha\int_0^T\frac{a_na_pa_qr_n}{\Omega_{n,p}\Omega^{\sigma}_{n,p,q}} H^{\sigma}_{n,p,q}\d t.
\end{multline*}

It remains to study the summability properties for each of those three terms.

\begin{enumerate}
\item First, we see that since $\sum_pc_p\gamma_p(t)\leq R$ for all $t\geq0$, then
\begin{multline*}
\sum_{\substack{n,p,q,\,n\neq p\\|(1-\sigma)n+q-p|> C_0(R)+1}}c_p\left|\left[-\frac{a_na_pa_qr_n}{\Omega_{n,p}\Omega^{\sigma}_{n,p,q}}\eta^{\sigma}_{n,p,q}\right]_0^T\right|\\
	\leq C(R)\sum_{n,p,q}c_p\sqrt{\gamma_q\gamma_{q+1}\gamma_{n+\sigma}\gamma_{n+1}\gamma_p\gamma_{p+1}}
	\leq C'(R).
\end{multline*}

\item Then, we apply Cauchy-Schwarz' inequality to the second term
\begin{multline*}
\sum_{\substack{n,p,q,\,n\neq p\\ |(1-\sigma)n+q-p|> C_0(R)+1 }} c_p\int_0^T \left|\frac{\d}{\d t}\left(\frac{a_na_pa_qr_n}{\Omega_{n,p}\Omega^{\sigma}_{n,p,q}}\right)\eta^{\sigma}_{n,p,q}\right|\d t\\
	\leq C(R)\left(\int_0^T\sum_{\substack{n,p,q,\,n\neq p\\ |(1-\sigma)n+q-p|> C_0(R)+1 }} c_p\gamma_p\gamma_{n+\sigma}  \left|\frac{\d}{\d t}\left(\frac{a_na_pa_qr_n}{\Omega_{n,p}\Omega^{\sigma}_{n,p,q}}\right)\right|^2\d t\right)^{\half}\\
	\left(\int_0^T\sum_{\substack{n,p,q,\,n\neq p\\ |(1-\sigma)n+q-p|> C_0(R)+1 }}c_p\gamma_{p+1}\gamma_{n+1}\gamma_q\gamma_{q+1}\d t\right)^{\half}.
\end{multline*}

On the one hand, we have
\[
\int_0^T\sum_{n,p,q}c_p\gamma_{p+1}\gamma_{n+1}\gamma_q\gamma_{q+1}\d t
	\leq C(R)I^0(T).
\]

On the other hand, we establish a bound for $\left|\frac{\d}{\d t}\left(\frac{a_na_pa_qr_n}{\Omega_{n,p}\Omega^{\sigma}_{n,p,q}}\right)\right|$. We have already seen that
\[
\left|\frac{\d}{\d t}\left(\frac{a_na_pa_q}{i\Omega_{n,p}}\right)\right|
	\leq C(R) \frac{|\langle u|e^{ix}\rangle|}{|\Omega_{n,p}|}.
\]
Since we also have
\[
\left|\frac{\d}{\d t}\Omega^{\sigma}_{n,p,q}\right|\leq C(R)|\langle u|e^{ix}\rangle|
\]
and by assumption,
\[
\left|\frac{\d}{\d t} r_n\right|\leq C(R)|\langle u|e^{ix}\rangle|,
\]
we deduce
\[
\int_0^T c_p\gamma_p\gamma_{n+\sigma}\left|\frac{\d}{\d t}\left(\frac{a_na_pa_qr_n}{\Omega_{n,p}\Omega^{\sigma}_{n,p,q}}\right)\right|^2\d t
	\leq C(R)\int_0^T \frac{c_p\gamma_p\gamma_{n+\sigma}}{|\Omega_{n,p}\Omega^{\sigma}_{n,p,q}|^2}|\langle u|e^{ix}\rangle|^2\d t.
\]
But since $|\Omega^{\sigma}_{n,p,q}|\geq 2(|(1-\sigma)n+q-p|-C_0(R))\geq 1$, we can use that 
\[\sum_{\substack{q\\ |(1-\sigma)n+q-p|> C_0(R)+1 }}\frac{1}{|\Omega^{\sigma}_{n,p,q}|^2}\leq C(R),\]
and we get that the following series is convergent and bounded by $C(R)$:
\[
\sum_{\substack{n,p,q,\,n\neq p\\ |(1-\sigma)n+q-p|> C_0(R)+1 }}
	\frac{c_p\gamma_p\gamma_{n+\sigma}}{|\Omega_{n,p}\Omega^{\sigma}_{n,p,q}|^2}
		\leq C(R) \sum_{\substack{n,p\\n\neq p}} \frac{c_p\gamma_p\gamma_{n+\sigma} }{|n-p|^2}
		\leq C'(R).
\]
We deduce
\[
\int_0^T\sum_{\substack{n,p,q,\,n\neq p\\|(1-\sigma)n+q-p|> C_0(R)+1 }} c_p\gamma_p\gamma_{n+\sigma}  \left|\frac{\d}{\d t}\left(\frac{a_na_pa_qr_n}{\Omega_{n,p}\Omega^{\sigma}_{n,p,q}}\right)\right|^2\d t
	\leq C(R).
	\]

To conclude, we have proven that
\[
\sum_{\substack{n,p,q,\,n\neq p\\ |(1-\sigma)n+q-p|> C_0(R)+1 }} c_p\int_0^T \left|\frac{\d}{\d t}\left(\frac{a_na_pa_qr_n}{\Omega_{n,p}\Omega^{\sigma}_{n,p,q}}\right)\eta^{\sigma}_{n,p,q}\right|\d t
	\leq C(R)\sqrt{I^0(T)}.
\]

\item Finally, recall that 
\begin{multline*}
H^{\sigma}_{n,p,q}=(\overline{Z_{q}}\zeta_{q+1}+\overline{\zeta_{q}}Z_{q+1})\overline{\zeta_{n+\sigma}}\zeta_{n+1}\zeta_p\overline{\zeta_{p+1}}
	+\overline{\zeta_q}\zeta_{q+1}(\overline{Z_{n+\sigma}}\zeta_{n+1}+\overline{\zeta_{n+\sigma}}Z_{n+1})\zeta_p\overline{\zeta_{p+1}}\\
	+\overline{\zeta_q}\zeta_{q+1}\overline{\zeta_{n+\sigma}}\zeta_{n+1}(Z_p\overline{\zeta_{p+1}}+\zeta_p\overline{Z_{p+1}}).
\end{multline*}
We estimate for instance the term 
\begin{multline*}
\left|\sum_{\substack{n,p,q,\,n\neq p\\ |(1-\sigma)n+q-p|> C_0(R)+1 }} 
	c_p \int_0^T\frac{a_na_pa_qr_n}{\Omega_{n,p}\Omega^{\sigma}_{n,p,q}} \overline{Z_{q}}\zeta_{q+1}\overline{\zeta_{n+\sigma}}\zeta_{n+1}\zeta_p\overline{\zeta_{p+1}
}\d t\right|\\
	\leq C(R)
	\left(\sum_{\substack{n,p,q,\,n\neq p\\ |(1-\sigma)n+q-p|> C_0(R)+1 }} \int_0^T c_p\gamma_p\gamma_{p+1}\gamma_{q+1}\gamma_{n+\sigma}\d t\right)^{\half}\\
	\left(\sum_{\substack{n,p,q,\,n\neq p\\ |(1-\sigma)n+q-p|> C_0(R)+1 }}  \int_0^T\frac{c_p\gamma_{n+1}}{|\Omega_{n,p}\Omega^{\sigma}_{n,p,q}|^2} |Z_{q}|^2\d t\right)^{\half}.
\end{multline*}
On the one hand,
\[
\sum_{\substack{n,p,q,\,n\neq p\\ |(1-\sigma)n+q-p|> C_0(R)+1 }} \int_0^T c_p\gamma_p\gamma_{p+1}\gamma_{q+1}\gamma_{n+\sigma}\d t
	\leq C(R)I^s(T).
\]
On the other hand, we first estimate the sum over indexes $p$:
\begin{multline*}
\sum_{\substack{p,\, p\neq n\\|(1-\sigma)n+q-p|> C_0(R)+1 }}  \frac{c_p}{|\Omega_{n,p}\Omega^{\sigma}_{n,p,q}|^2}\\
	\leq C(R)\sum_{\substack{p,\, p\neq n\\ |(1-\sigma)n+q-p|> C_0(R)+1 }}  \frac{p^{2s}}{|n-p|^2(1+|(1-\sigma)n+q-p|)^2}.
\end{multline*}
When $p\geq 2n$ and $p\geq 2((1-\sigma)n+q)$, the general term of the series is bounded by $C_s\frac{p^{2s}}{p^4}$, and this defines a convergent series since $s<\frac32$. Otherwise, $p^{2s}\leq C_s\max(n,q)^{2s}$ and the series $\sum_{p, p\neq n}\frac{1}{|n-p|^2}$ is convergent. We deduce that
\[
\sum_{\substack{p,\, p\neq n\\|(1-\sigma)n+q-p|> C_0(R)+1 }}  \frac{c_p}{|\Omega_{n,p}\Omega^{\sigma}_{n,p,q}|^2}
	\leq C_s\max(n,q)^{2s},
\]
so that
\[
\sum_{\substack{n,p,q,\, n\neq p\\ |(1-\sigma)n+q-p|> C_0(R)+1 }} \frac{c_p\gamma_{n+1}}{|\Omega_{n,p}\Omega^{\sigma}_{n,p,q}|^2} |Z_{q}|^2
	\leq C_s\sum_{n,q} \max(n,q)^{2s}\gamma_{n+1}|Z_{q}|^2.
\]
But by assumption, $\sum_n n^{2s}\gamma_n\leq R$, moreover, we have seen in~\eqref{eq:bound_Zn} that 
\[
\sum_k c_k|Z_k|^2\leq C(R)|\langle u|e^{ix}\rangle|^2.
\]
Therefore, we get an estimate for this second term
\[
\sum_{\substack{n,p,q,\, n\neq p\\ |(1-\sigma)n+q-p|> C_0(R)+1 }}  \int_0^T\frac{c_p\gamma_{n+1}}{|\Omega_{n,p}\Omega^{\sigma}_{n,p,q}|^2} |Z_{q}|^2\d t
	\leq C_s(R).
\]

To conclude, have proven that
\begin{equation*}
\left|\sum_{\substack{n,p,q,\, n\neq p\\|(1-\sigma)n+q-p|> C_0(R)+1 }} 
	c_p \int_0^T\frac{a_na_pa_qr_n}{\Omega_{n,p}\Omega^{\sigma}_{n,p,q}} \overline{Z_{q}}\zeta_{q+1}\overline{\zeta_{n+\sigma}}\zeta_{n+1}\zeta_p\overline{\zeta_{p+1}
}\d t\right|
	\leq C_s(R)\sqrt{I^s(T)}.
\end{equation*}
This proof also works when exchanging the roles of $n$ and $q$ and with a small variant when exchanging the roles of $n$ and $p$.
\end{enumerate}
\end{proof}

\subsubsection{Central terms}\label{subpart:5.3.2}

\begin{lem}
Let us consider a term in the second situation
\[
G_{n,p}=\sum_{q,k}a_qA_{n,k}\eta_{n,p,q,k},
\]
with
\[
\eta_{n,p,q,k}
	=\overline{\zeta_q}\zeta_{q+1}\zeta_k\overline{\zeta_{k+1}}\overline{\zeta_n}\zeta_{n+1}\zeta_p\overline{\zeta_{p+1}},
\]
and such that for all $t\geq 0$, $|A_{n,k}(t)|\leq C(R)$ and $|\dot{A_{n,k}}(t)|\leq C(R)|\langle u|e^{ix}\rangle|$.
Fix $\half\leq s<\frac{3}{2}$, and assume that for all $t\geq 0$, $\sum_nc_n\gamma_n(t)\leq R$ (where $c_n=n^{2s}$). Then for all $\varepsilon>0$, there exists $C_s(R,\varepsilon)$ such that for all $T\geq 0$,
\[
\left|\sum_{\substack{n,p\\n\neq p}}c_p\int_0^T\frac{a_na_p}{i\Omega_{n,p}} G_{n,p}\d t\right|
	\leq C_s(R,\varepsilon)+C_s(R,\varepsilon)\sqrt{I^s(T)}+\varepsilon I^s(T).
\]
\end{lem}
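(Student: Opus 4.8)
The plan is to mirror the proof of the preceding lemma (Section~\ref{subpart:5.3.1}), the only new feature being that two extra summation indices $q$ and $k$ appear, coming respectively from the decomposition $\langle u|e^{ix}\rangle=-\sum_q a_q\overline{\zeta_q}\zeta_{q+1}$ of Lemma~\ref{lem:<u|e^ix>} and from the internal sum $\sum_k A_{n,k}\zeta_k\overline{\zeta_{k+1}}$ defining $G_{n,p}$. After this substitution the quantity to bound becomes $\sum_{n\neq p}c_p\int_0^T\frac{a_na_pa_qA_{n,k}}{i\Omega_{n,p}}\eta_{n,p,q,k}\d t$, and the first step is to record, analogously to \eqref{eq:eta_npq}, the differential equation $\frac{\d}{\d t}\eta_{n,p,q,k}=i\Omega_{n,p,q,k}\eta_{n,p,q,k}-\alpha H_{n,p,q,k}$, where $H_{n,p,q,k}$ is the sum of the eight terms obtained by replacing one factor $\zeta_\bullet$ or $\overline{\zeta_\bullet}$ in $\eta_{n,p,q,k}$ by $Z_\bullet$ or $\overline{Z_\bullet}$, and where a direct computation gives $\Omega_{n,p,q,k}=2(q-k+n-p+\widetilde\Omega_{n,p,q,k})$ with $|\widetilde\Omega_{n,p,q,k}|\leq C_0(R)$ and $|\frac{\d}{\d t}\widetilde\Omega_{n,p,q,k}|\leq C_0(R)|\langle u|e^{ix}\rangle|$ (the last bound again from \eqref{ineq:dot_gamma}). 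I would then split the $(n,p,q,k)$-sum into a resonant part, $|q-k+n-p|\leq C_0(R)+1$, and a non-resonant part, $|q-k+n-p|>C_0(R)+1$, on which $|\Omega_{n,p,q,k}|\geq 2$.

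On the resonant part, the key observation is that $p\leq q+n+C_0(R)+1$, so $c_p=p^{2s}\leq C_s(R)+C_s(c_q+c_n)$: the weight can be absorbed onto a weighted product of two consecutive modes, $c_q\gamma_q\gamma_{q+1}$ or $c_n\gamma_n\gamma_{n+1}$. I would then apply Cauchy--Schwarz (in $t$ and in the indices), splitting $|\eta_{n,p,q,k}|/|n-p|$ so that one factor carries such a weighted product (which integrates to at most $C(R)I^s(T)$ by Proposition~\ref{prop:I(T,T')_n,p}) and the complementary factor carries the remaining $\gamma$'s together with $|n-p|^{-2}$, the latter being uniformly bounded thanks to $\sum_{n\neq p}|n-p|^{-2}\leq C$, $\sum_m\gamma_m\leq C(R)$, and again $\int_0^{+\infty}\sum_m\gamma_m\gamma_{m+1}\d t\leq C(R)$; the bounded interval over which each of $p$ and $q$ ranges when the other indices are fixed contributes only harmless constants. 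This yields a bound $C_s(R)+C_s(R)\sqrt{I^s(T)}$ for the resonant part.

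On the non-resonant part, I would divide the equation for $\eta_{n,p,q,k}$ by $i\Omega_{n,p,q,k}$ and integrate by parts as in \eqref{eq:J(T,T')-IPP}, producing a boundary term, a term carrying $\frac{\d}{\d t}(\frac{a_na_pa_qA_{n,k}}{\Omega_{n,p}\Omega_{n,p,q,k}})$, and a term involving $H_{n,p,q,k}$. The boundary term is bounded by $C_s(R)$ after factoring the sum over $n,p,q,k$ and using $\sum_m\sqrt{\gamma_m\gamma_{m+1}}\leq C(R)$ together with $\sum_p c_p\sqrt{\gamma_p\gamma_{p+1}}\leq\half\sum_p(c_p\gamma_p+c_{p+1}\gamma_{p+1})\leq C(R)$. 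For the other two terms, using $\bigl|\frac{\d}{\d t}\bigl(\frac{a_na_pa_qA_{n,k}}{\Omega_{n,p}\Omega_{n,p,q,k}}\bigr)\bigr|\leq C(R)\frac{|\langle u|e^{ix}\rangle|}{|\Omega_{n,p}||\Omega_{n,p,q,k}|}$ (from $|\dot a_m|,|\dot A_{n,k}|,|\dot\Omega_{n,p}|,|\dot\Omega_{n,p,q,k}|\leq C(R)|\langle u|e^{ix}\rangle|$), I would apply Cauchy--Schwarz, splitting off a factor $\bigl(\int_0^T\sum c_p\gamma_{q+1}\gamma_{k+1}\gamma_{n+1}\gamma_p\gamma_{p+1}\d t\bigr)^{1/2}\leq C(R)\sqrt{I^s(T)}$, while the complementary factor is of the form $\int_0^T\sum_{n,p,q,k}\frac{c_p\gamma_n\gamma_k}{|\Omega_{n,p}|^2|\Omega_{n,p,q,k}|^2}W_q\d t$ with $W_q\in\{|Z_q|^2,|\langle u|e^{ix}\rangle|^2\}$. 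This complementary factor is estimated by summing over $p$ first, and the crucial point — the place where the restriction $s<\frac32$ enters — is the bound
\[
\sum_{\substack{p,\ p\neq n\\|q-k+n-p|>C_0(R)+1}}\frac{c_p}{|\Omega_{n,p}|^2|\Omega_{n,p,q,k}|^2}\leq C_s\,(c_n+c_q+c_k+1),
\]
obtained by comparing $p$ with $2\max(n,|q-k+n|)$: in the far regime the general term is $\lesssim p^{2s-4}$, which sums precisely because $2s<3$, and in the near regime $c_p\leq C_s\max(n,q,k)^{2s}\leq C_s(c_n+c_q+c_k)$ while $\sum_{p\neq n}|n-p|^{-2}\leq C$. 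Plugging this in and using $\sum_m c_m\gamma_m\leq R$, $\sum_m\gamma_m\leq C(R)$, $\sum_m c_m|Z_m|^2\leq C(R)|\langle u|e^{ix}\rangle|^2$ (inequality~\eqref{eq:bound_Zn}) and $\int_0^{+\infty}|\langle u|e^{ix}\rangle|^2\d t\leq R^2$, the complementary factor is bounded by $C_s(R)$, so the non-resonant part is $\leq C_s(R)\sqrt{I^s(T)}$. The eight terms of $H_{n,p,q,k}$ are handled the same way up to permuting the roles of the indices, and the same applies to the variant where $c_p$ is replaced by $c_n$.

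Summing the two contributions gives $\bigl|\sum_{n\neq p}c_p\int_0^T\frac{a_na_p}{i\Omega_{n,p}}G_{n,p}\d t\bigr|\leq C_s(R)+C_s(R)\sqrt{I^s(T)}$, which implies the statement since $C_s(R)\sqrt{I^s(T)}\leq\varepsilon I^s(T)+C_s(R,\varepsilon)$ for every $\varepsilon>0$ (alternatively, the $\varepsilon I^s(T)$ term can be produced directly by the $N_0$-cutoff used in the resonant estimate of Section~\ref{subpart:5.3.1}). The main obstacle I expect is purely organizational: choosing the Cauchy--Schwarz splittings so that exactly one factor integrates to $I^s(T)$ and the other stays uniformly bounded, while tracking the weight $c_p$ correctly — either absorbing it as $c_p\lesssim C_s(R)+c_n+c_q+c_k$ via the resonance relation, or exploiting the $p^{2s-4}$ tail summability in the non-resonant regime, which is exactly what forces $s<\frac32$.
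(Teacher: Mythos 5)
Your setup (the ODE for $\eta_{n,p,q,k}$, the formula $\Omega_{n,p,q,k}=2(q-k+n-p+\widetilde\Omega_{n,p,q,k})$, the resonant/non-resonant split, the boundary term, and the treatment of the $H_{n,p,q,k}$-terms via $\sum_{p\neq n}\frac{c_p}{|\Omega_{n,p}|^2|\Omega_{n,p,q,k}|^2}\leq C_s(c_n+c_q+c_k+1)$, which is where $s<\frac32$ enters) coincides with the paper's proof. But there is a genuine gap in your resonant estimate, and it is not cosmetic: you claim that after writing $c_p\lesssim C+c_q+c_n$ one can choose the Cauchy--Schwarz splitting so that one factor integrates to $C(R)I^s(T)$ while the complementary factor is \emph{uniformly bounded}, yielding an overall bound $C_s(R)+C_s(R)\sqrt{I^s(T)}$. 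This cannot work with the stated a priori information: the square of the summand carries the weight $c_p^2$, so whatever weight you put on $c_q\gamma_q\gamma_{q+1}$ (or $c_n\gamma_n\gamma_{n+1}$) in the first factor, the complementary factor still carries a weight of order $c_p\approx c_q+c_n$ sitting on $\gamma_p\gamma_{p+1}$ (or, after re-absorbing via the resonance relation, on $\gamma_q\gamma_{q+1}$), and its time integral is again of size $I^s(T)$, not $C(R)$; the only time-integrable quantities available are $I^0$ and $\int|\langle u|e^{ix}\rangle|^2$, and neither can carry the full weight $c_p$ when $q$ (hence $p$) is large and $n,k$ are small. Consequently this route only gives $C_s(R)\,I^s(T)$ with a large constant, which is strictly weaker than the lemma and useless for the later absorption argument, where the coefficient of $I^s(T)$ must be an arbitrarily small $\varepsilon$. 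The $N_0(\varepsilon)$-cutoff that you mention only parenthetically as an ``alternative'' is in fact the essential device in the paper: splitting into $\{n\geq N_0$ or $|n-p|\geq N_0\}$ (small tails give $\varepsilon\sum_p c_p\gamma_p\gamma_{p+1}$) versus $\{n\leq N_0,\ |n-p|\leq N_0\}$ (where $c_p\leq C N_0^{2s}$ and $I^0(T)\leq C(R)$ applies), so that the second Cauchy--Schwarz factor is $\varepsilon C(R)I^s(T)+C_s(R,\varepsilon)$ and the resonant part is $\varepsilon I^s(T)+C_s(R,\varepsilon)\sqrt{I^s(T)}$ --- the $\varepsilon I^s(T)$ term in the statement is not removable by this method.

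A secondary, fixable, mis-split: in the non-resonant region you propose one uniform Cauchy--Schwarz splitting, with first factor $\bigl(\int_0^T\sum c_p\gamma_{q+1}\gamma_{k+1}\gamma_{n+1}\gamma_p\gamma_{p+1}\d t\bigr)^{\half}$ and complementary factor $\int_0^T\sum\frac{c_p\gamma_n\gamma_k}{|\Omega_{n,p}|^2|\Omega_{n,p,q,k}|^2}W_q\d t$, for both the term with $\frac{\d}{\d t}\bigl(\frac{a_na_pa_qA_{n,k}}{\Omega_{n,p}\Omega_{n,p,q,k}}\bigr)$ ($W_q=|\langle u|e^{ix}\rangle|^2$) and the $H$-terms ($W_q=|Z_q|^2$). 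For the derivative term this complementary factor has no $\gamma_p$ and no $q$-localized factor left, and it diverges: summing over $q$ first leaves $\sum_{p\neq n}c_p/|n-p|^2=\infty$ for $s\geq\half$, while summing over $p$ first via your key bound leaves a divergent $\sum_q c_q$. The paper avoids this by using a different split for that term: keep $c_p\gamma_p\gamma_k\gamma_n$ (pointwise summable since $\sum_m c_m\gamma_m\leq R$) together with $\sum_q|\Omega_{n,p,q,k}|^{-2}\leq C(R)$ in the bounded factor, and put the unweighted $\gamma_q\gamma_{q+1}$ in the other factor, which is then controlled by $I^0(T)\leq C(R)$ rather than $I^s(T)$. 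Your treatment of the boundary term and of the $H$-terms is correct and matches the paper.
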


\begin{proof}
The proof follows the proof in the above part \ref{subpart:5.3.1}. We first compute the differential equation satisfied by  $\eta_{n,p,q,k}$:
\begin{equation}\label{eq:eta_npqk}
\frac{\d}{\d t}\eta_{n,p,q,k}=i\Omega_{n,p,q,k}\eta_{n,p,q,k}-\alpha H_{n,p,q,k},
\end{equation}
where
\begin{align*}
\Omega_{n,p,q,k}
	&=-\omega_q+\omega_{q+1}+\omega_k-\omega_{k+1}-\omega_n+\omega_{n+1}+\omega_p-\omega_{p+1}\\
	&=2(q-k+n-p)-2\sum_{l\geq q+1}\gamma_l+2\sum_{l\geq k+1}\gamma_l-2\sum_{l\geq n+1}\gamma_l+2\sum_{l\geq p+1}\gamma_l\\
	&=2((q-k+n-p)+\widetilde{\Omega}_{n,p,q,k}),
\end{align*}
where $|\widetilde{\Omega}_{n,p,q,k}|\leq C_0(R)$, $|\frac{\d}{\d t}\widetilde{\Omega}_{n,p,q,k}|\leq C_0(R)|\langle u|e^{ix}\rangle|$, and
\begin{multline*}
H_{n,p,q,k}
	=(\overline{Z_{q}}\zeta_{q+1}+\overline{\zeta_{q}}Z_{q+1})\zeta_k\overline{\zeta_{k+1}}\overline{\zeta_n}\zeta_{n+1}\zeta_p\overline{\zeta_{p+1}}
	+\overline{\zeta_q}\zeta_{q+1}(\overline{Z_{k}}\zeta_{k+1}+\overline{\zeta_{k}}Z_{k+1})\overline{\zeta_n}\zeta_{n+1}\zeta_p\overline{\zeta_{p+1}}\\
	+\overline{\zeta_q}\zeta_{q+1}\zeta_k\overline{\zeta_{k+1}}(\overline{Z_n}\zeta_{n+1}+\overline{\zeta_n}Z_{n+1})\zeta_p\overline{\zeta_{p+1}}
	+\overline{\zeta_q}\zeta_{q+1}\zeta_k\overline{\zeta_{k+1}}\overline{\zeta_n}\zeta_{n+1}(Z_p\overline{\zeta_{p+1}}+\zeta_p\overline{Z_{p+1}}).
\end{multline*}
For all $n,p,q,k$ such that $|q-k+n-p|-C_0(R)\geq 1$, one has $\Omega_{n,p,q,k}\neq 0$, so that one can divide by $\Omega_{n,p,q,k}$ in the differential equation satisfied by $\eta_{n,p,q,k}$.

We split
\begin{multline*}
\sum_{\substack{n,p\\n\neq p}}c_p\int_0^T\frac{a_na_p}{i\Omega_{n,p}} G_{n,p}\d t
	=\sum_{\substack{n,p,q,k,\, n\neq p\\|q-k+n-p|>C_0(R)+1}}c_p\int_0^T\frac{a_na_pa_qA_{n,k}}{i\Omega_{n,p}} \eta_{n,p,q,k}\d t\\
	+\sum_{\substack{n,p,q,k,\, n\neq p\\|q-k+n-p|\leq C_0(R)+1}}c_p\int_0^T\frac{a_na_pa_qA_{n,k}}{i\Omega_{n,p}} \eta_{n,p,q,k}\d t.
\end{multline*}

$\bullet$ We first show that the part with indexes $n,p,q$ and $k$ such that $n\neq p$ and $|q-k+n-p|\leq C_0(R)+1$ is bounded by $\varepsilon I^s(T)+ C_s(R,\varepsilon)\sqrt{I^s(T)}$. Indeed, we have
\begin{multline*}
\sum_{\substack{n,p,q,k,\, n\neq p\\|q-k+n-p|\leq C_0(R)+1}}c_p\left|\int_0^T\frac{a_na_pa_qA_{n,k}}{i\Omega_{n,p}} \eta_{n,p,q,k}\d t\right|\\
	\leq C(R)\sum_{\substack{n,p,q,k,\, n\neq p\\|q-k+n-p|\leq C_0(R)+1}}\frac{c_p}{|n-p|}\int_0^T\sqrt{\gamma_q\gamma_{q+1}\gamma_k\gamma_{k+1}\gamma_n\gamma_{n+1}\gamma_p\gamma_{p+1}}\d t,
\end{multline*}
so that from the Cauchy-Schwarz' inequality,
\begin{multline*}
\sum_{\substack{n,p,q,k,\, n\neq p\\|q-k+n-p|\leq C_0(R)+1}}
	c_p\left|\int_0^T\frac{a_na_pa_qA_{n,k}}{i\Omega_{n,p}} \eta_{n,p,q,k}\d t\right|\\
	\leq  C(R)\left(\int_0^T\sum_{\substack{n,p,q,k,\, n\neq p\\|q-k+n-p|\leq C_0(R)+1}}c_p\gamma_q\gamma_{q+1}\gamma_k\gamma_n\d t\right)^{\half}\\
	\left(\int_0^T\sum_{\substack{n,p,q,k,\, n\neq p\\|q-k+n-p|\leq C_0(R)+1}}\frac{c_p}{|n-p|^2}\gamma_{k+1}\gamma_{n+1}\gamma_p\gamma_{p+1}\d t\right)^{\half}.
\end{multline*}

For fixed $n,q$ and $k$, the possible indexes $p$ lie in an interval of length $C(R)$. Moreover, since $p\leq C(R)+q+k+n$, we have $c_p\leq C(R)\max(c_n,c_q,c_k)\leq C(R)c_nc_qc_k$, so that
\[\sum_{\substack{n,p,q,k,\, n\neq p\\|q-k+n-p|\leq C_0(R)+1}}c_p\gamma_q\gamma_{q+1}\gamma_k\gamma_n
	\leq C(R)\sum_{k}c_k\gamma_k\sum_{n}c_n\gamma_n\sum_qc_q\gamma_q\gamma_{q+1}
\]
and
\[\int_0^T\sum_{\substack{n,p,q,k,\, n\neq p\\|q-k+n-p|\leq C_0(R)+1}}c_p\gamma_q\gamma_{q+1}\gamma_k\gamma_n
	\leq C'(R)I^s(T).
\]

Moreover, fix $\varepsilon>0$. Then there exists $N_0=N_0(\varepsilon)$ such that for all $t\geq0$, 
\[\sum_{n\geq N_0}\gamma_{n+1}(t)\leq \varepsilon
\]
and
\[\sum_{n,\,|n-p|\geq N_0}\frac{\gamma_{n+1}}{|n-p|^2}\leq C(R)\sum_{n,\,|n-p|\geq N_0}\frac{1}{|n-p|^2}\leq\varepsilon.\]
In the two cases $n\geq N_0$ or $|n-p|\geq N_0$, we deduce
\begin{align*}
\sum_{\substack{n,p,q,k,\, n\neq p\\ n\geq N_0 \text{ or } |n-p|\geq N_0\\|q-k+n-p|\leq C_0(R)+1}}\frac{c_p}{|n-p|^2}\gamma_{n+1}\gamma_{k+1}\gamma_p\gamma_{p+1}
	&\leq  C(R) \sum_{\substack{n,p,\, n\neq p\\ n\geq N_0 \text{ or } |n-p|\geq N_0}}\frac{c_p}{|n-p|^2}\gamma_{n+1}\gamma_p\gamma_{p+1}\\
	&\leq \varepsilon C(R) \sum_pc_p\gamma_p\gamma_{p+1}.
\end{align*}
Otherwise,
\begin{align*}
\sum_{\substack{n,p,q,k,\, n\neq p\\ n\leq N_0 \text{ and } |n-p|\leq N_0\\|q-k+n-p|\leq C_0(R)+1}}\frac{c_p}{|n-p|^2}\gamma_{n+1}\gamma_{k+1}\gamma_p\gamma_{p+1}
	&\leq  C(R) \sum_{\substack{n,p,\, n\neq p\\ n\leq N_0 \text{ and } |n-p|\leq N_0}}\frac{c_p}{|n-p|^2}\gamma_{n+1}\gamma_p\gamma_{p+1}\\
	&\leq C'(R)N_0^{2s} \sum_p\gamma_p\gamma_{p+1}.
\end{align*}
Since $\int_0^T\sum_p\gamma_p\gamma_{p+1}\d t\leq C(R)$, we deduce that for any $\varepsilon>0$, there exists $C_s(R,\varepsilon)$ such that
\[
\int_0^T\sum_{\substack{n,p,q,k,\, n\neq p\\|q-k+n-p|\leq C_0(R)+1}}\frac{c_p}{|n-p|^2}\gamma_{n+1}\gamma_{k+1}\gamma_p\gamma_{p+1}\d t
	\leq \varepsilon C(R) I^s(T)+C_s(R,\varepsilon).
\]

We conclude that for all $\varepsilon>0$, there exists $C_s(R,\varepsilon)$ such that for all $T\geq 0$,
\[
\sum_{\substack{n,p,q,k,\, n\neq p\\|q-k+n-p|\leq C_0(R)+1}}c_p\left|\int_0^T\frac{a_na_pa_qA_{n,k}}{i\Omega_{n,p}} \eta_{n,p,q,k}\d t\right|\\
	\leq  \varepsilon I^s(T)+C_s(R,\varepsilon)\sqrt{I^s(T)}.
\]


$\bullet$ We now tackle the indexes $n$, $p$, $q$ and $k$ such that $|q-k+n-p|>C_0(R)+1$, so that $\Omega_{n,p,q,k}\neq 0$. We use the differential equation~\eqref{eq:eta_npqk} satisfied by $\eta_{n,p,q,k}$, and get that if $|q-k+n-p|>C_0(R)+1$, then
\[
\int_0^T\frac{a_na_p}{i\Omega_{n,p}} G_{n,p}\d t
	= -\int_0^T\frac{a_na_pa_qA_{n,k}}{\Omega_{n,p}\Omega_{n,p,q,k}}\left(\frac{\d}{\d t}\eta_{n,p,q,k}+\alpha H_{n,p,q,k}\right)\d t.
\]
We perform an integration by parts for the first term in the right-hand side of this equality:
\begin{multline*}
\int_0^T\frac{a_na_p}{i\Omega_{n,p}}G_{n,p}\d t
	=\left[-\frac{a_na_pa_qA_{n,k}}{\Omega_{n,p}\Omega_{n,p,q,k}}\eta_{n,p,q,k}\right]_0^T
	+\int_0^T\frac{\d}{\d t}\left(\frac{a_na_pa_qA_{n,k}}{\Omega_{n,p}\Omega_{n,p,q,k}}\right)\eta_{n,p,q,k}\d t\\
	-\alpha\int_0^T\frac{a_na_pa_qA_{n,k}}{\Omega_{n,p}\Omega_{n,p,q,k}} H_{n,p,q,k}\d t.
\end{multline*}

It remains to study the summability properties for each of those three terms.

\begin{enumerate}
\item First, we see that thanks to the assumption $\sum_pc_p\gamma_p\leq C(R)$, we have
\begin{multline*}
\sum_{\substack{n,p,q,k\\|q-k+n-p|>C_0(R)+1}} c_p\left|\left[-\frac{a_na_pa_qA_{n,k}}{\Omega_{n,p}\Omega_{n,p,q,k}}\eta_{n,p,q,k}\right]_0^T\right|\\
	\leq C(R)\sum_{n,p,q,k}c_p\sqrt{\gamma_q\gamma_{q+1}\gamma_k\gamma_{k+1}\gamma_n\gamma_{n+1}\gamma_p\gamma_{p+1}}
	\leq C'(R).
\end{multline*}

\item Then, we apply Cauchy-Schwarz' inequality to the second term:
\begin{multline*}
\sum_{\substack{n,p,q,k\\|q-k+n-p|>C_0(R)+1}} c_p\int_0^T \left|\frac{\d}{\d t}\left(\frac{a_na_pa_qA_{n,k}}{\Omega_{n,p}\Omega_{n,p,q,k}}\right)\eta_{n,p,q,k}\right|\d t\\
	\leq C(R)\left(\int_0^T\sum_{\substack{n,p,q,k\\|q-k+n-p|>C_0(R)+1}} c_p\gamma_p\gamma_k\gamma_n  \left|\frac{\d}{\d t}\left(\frac{a_na_pa_qA_{n,k}}{\Omega_{n,p}\Omega_{n,p,q,k}}\right)\right|^2\d t\right)^{\half}\\
	\left(\int_0^T\sum_{\substack{n,p,q,k\\|q-k+n-p|>C_0(R)+1}}c_p\gamma_{p+1}\gamma_{k+1}\gamma_{n+1}\gamma_q\gamma_{q+1}\d t\right)^{\half}.
\end{multline*}

On the one hand, we have
\[
\int_0^T\sum_{n,p,q}c_p\gamma_{p+1}\gamma_{k+1}\gamma_{n+1}\gamma_q\gamma_{q+1}\d t
	\leq C(R)I^0(T).
\]

On the other hand, we know that
\[
\left|\frac{\d}{\d t}\left(\frac{a_na_pa_qA_{n,k}}{\Omega_{n,p}\Omega_{n,p,q,k}}\right)\right|
	\leq C(R)\frac{|\langle u|e^{ix}\rangle|}{|\Omega_{n,p}\Omega_{n,p,q,k}|}.
\]
But using that $\sum_{\substack{q,\,|q-k+n-p|>C_0(R)+1}}\frac{1}{|\Omega_{n,p,q,k}|^2}\leq C(R)$, we get that the following series is convergent and bounded by $C(R)$:
\[
\sum_{\substack{n,p,q,k,\,n\neq p\\|q-k+n-p|>C_0(R)+1}}
	\frac{c_p\gamma_p\gamma_k\gamma_n}{|\Omega_{n,p}\Omega_{n,p,q,k}|^2}
		\leq C(R) \sum_{\substack{n,p,k\\n\neq p}} \frac{c_p\gamma_p\gamma_k\gamma_n }{|n-p|^2}
		\leq C'(R).
\]
We deduce
\[
\sum_{\substack{n,p,q,k,\,n\neq p\\|q-k+n-p|>C_0(R)+1}} c_p\int_0^T \left|\frac{\d}{\d t}\left(\frac{a_na_pa_qA_{n,k}}{\Omega_{n,p}\Omega_{n,p,q,k}}\right)\eta_{n,p,q,k}\right|\d t
	\leq C(R)\sqrt{I^0(T)}.
\]

\item Finally, recall that 
\begin{multline*}
H_{n,p,q,k}
	=(\overline{Z_{q}}\zeta_{q+1}+\overline{\zeta_{q}}Z_{q+1})\zeta_k\overline{\zeta_{k+1}}\overline{\zeta_n}\zeta_{n+1}\zeta_p\overline{\zeta_{p+1}}\\
	+\overline{\zeta_q}\zeta_{q+1}(\overline{Z_{k}}\zeta_{k+1}+\overline{\zeta_{k}}Z_{k+1})\overline{\zeta_n}\zeta_{n+1}\zeta_p\overline{\zeta_{p+1}}\\
	+\overline{\zeta_q}\zeta_{q+1}\zeta_k\overline{\zeta_{k+1}}(\overline{Z_n}\zeta_{n+1}+\overline{\zeta_n}Z_{n+1})\zeta_p\overline{\zeta_{p+1}}\\
	+\overline{\zeta_q}\zeta_{q+1}\zeta_k\overline{\zeta_{k+1}}\overline{\zeta_n}\zeta_{n+1}(Z_p\overline{\zeta_{p+1}}+\zeta_p\overline{Z_{p+1}}).
\end{multline*}

We estimate for instance the term 
\begin{multline*}
\left|\sum_{\substack{n,p,q,k,\,n\neq p\\|q-k+n-p|>C_0(R)+1}} 
	c_p \int_0^T\frac{a_na_pa_qA_{n,k}}{\Omega_{n,p}\Omega_{n,p,q,k}} \overline{Z_{q}}\zeta_{q+1}\zeta_k\overline{\zeta_{k+1}}\overline{\zeta_n}\zeta_{n+1}\zeta_p\overline{\zeta_{p+1}
}\d t\right|\\
	\leq C(R)
	\left(\sum_{\substack{n,p,q,k,\,n\neq p\\|q-k+n-p|>C_0(R)+1}}\int_0^T c_p\gamma_p\gamma_{p+1}\gamma_{q+1}\gamma_{k+1}\gamma_n\d t\right)^{\half}\\
	\left(\sum_{\substack{n,p,q,k,\,n\neq p\\|q-k+n-p|>C_0(R)+1}} \int_0^T\frac{c_p\gamma_{n+1}\gamma_k}{|\Omega_{n,p}\Omega_{n,p,q,k}|^2} |Z_{q}|^2\d t\right)^{\half}.
\end{multline*}
On the one hand,
\[
\sum_{\substack{n,p,q,k,\,n\neq p\\|q-k+n-p|>C_0(R)+1}} \int_0^T c_p\gamma_p\gamma_{p+1}\gamma_{q+1}\gamma_{k+1}\gamma_n\d t
	\leq C(R)I^s(T).
\]
On the other hand, we first estimate the sum over indexes $p$:
\begin{multline*}
\sum_{\substack{p,\,p\neq n\\|q-k+n-p|>C_0(R)+1}}  \frac{c_p}{|\Omega_{n,p}\Omega_{n,p,q,k}|^2}\\
	\leq C(R)\sum_{\substack{p,\,p\neq n\\|q-k+n-p|>C_0(R)+1}}  \frac{p^{2s}}{|n-p|^2(1+|q-k+n-p|)^2}.
\end{multline*}
When $p\geq 2n$ and $p\geq 2(q-k+n)$, the general term inside the summation term is bounded by $C_s\frac{p^{2s}}{p^4}$, and this defines a convergent series since $s<\frac32$. Otherwise, we have $p^{2s}\leq \max(n,q,k)^{2s}$ and the series $\sum_{p, p\neq n}\frac{1}{|n-p|^2}$ is convergent. We deduce that
\[
\sum_{\substack{p,\,p\neq n\\|q-k+n-p|>C_0(R)+1}}  \frac{c_p}{|\Omega_{n,p}\Omega_{n,p,q,k}|^2}
	\leq C_s \max(n,q,k)^{2s},
\]
so that
\[
\sum_{\substack{n,p,q,k,\,n\neq p\\|q-k+n-p|>C_0(R)+1}}\frac{c_p\gamma_{n+1}\gamma_k}{|\Omega_{n,p}\Omega_{n,p,q,k}|^2} |Z_{q}|^2
	\leq C_s\sum_{n,q,k} \max(n,q,k)^{2s}\gamma_{n+1}\gamma_k|Z_{q}|^2.
\]
Since $\sum_l l^{2s}\gamma_l\leq C(R)$, and since from inequality~\eqref{eq:bound_Zn} we have
\[
\sum_l l^{2s}|Z_l|^2\leq C(R)|\langle u|e^{ix}\rangle|^2,
\]
we get the bound
\[
\sum_{\substack{n,p,q,k,\,n\neq p\\|q-k+n-p|>C_0(R)+1}}  \int_0^T\frac{c_p\gamma_{n+1}\gamma_k}{|\Omega_{n,p}\Omega_{n,p,q,k}|^2} |Z_{q}|^2\d t
	\leq C_s(R).
\]

To conclude, we have proven that
\begin{equation*}
\left|\sum_{\substack{n,p,q,k,\,n\neq p\\|q-k+n-p|>C_0(R)+1}} 
	\hspace{-10pt} c_p \int_0^T\frac{a_na_pa_qA_{n,k}}{\Omega_{n,p}\Omega_{n,p,q,k}} \overline{Z_{q}}\zeta_{q+1}\zeta_k\overline{\zeta_{k+1}}\overline{\zeta_n}\zeta_{n+1}\zeta_p\overline{\zeta_{p+1}
}\d t\right|
	\leq C_s(R)\sqrt{I^s(T)}.
\end{equation*}

This proof also works when exchanging the roles of $n$ and $q$, and with a small variant when exchanging the roles of $n$ and $p$.
\end{enumerate}
\end{proof}

\begin{rk}\label{rk:higher_Hs}
We have proven that $\sum_n n^{1+2s}\gamma_n$ is bounded for exponents $s$ satisfying $s<\frac{3}{2}$. To increase the range of exponents $s$, we would need to further decompose the term $Z_q$ appearing in point~3 of subparts~\ref{subpart:5.3.1} and~\ref{subpart:5.3.2}. This would lead us to consider finite products of the following form. Let $h_0=\cos$ or $h_0=\sin$, $N=(n_1,\dots,n_k)$ be a finite set of indexes, and $\Sigma=(\sigma_1,\dots,\sigma_k)$ be a finite set of indexes bounded by some finite constant $c(k)$. Then we would have to study
\[
G_{N,\Sigma}=\langle u|h_0\rangle A_{N,\Sigma}\zeta_{n_1}\overline{\zeta_{n_1+\sigma_1}}\dots\zeta_{n_k}\overline{\zeta_{n_k+\sigma_k}},
\]
provided the uniform estimates
\[
|A_{N,\Sigma}|\leq C(R)
\quad\text{and}\quad
|\dot{A}_{N,\Sigma}|\leq C(R)|\langle u|e^{ix}\rangle|.
\]
In particular, we should use the differential equation satisfied by this term and integrate by parts in the same way as before, following the idea of proof from Lemma~\ref{lem:J(T,T')}.
\end{rk}

\bibliography{/home/gassot/Documents/these/references/mybib.bib}{}
\bibliographystyle{abbrv}
\Addresses

\end{document}